\documentclass{compositio}

\usepackage{amsmath,amssymb,amsthm}
\usepackage{hyperref}
\usepackage{enumerate}
\usepackage{enumitem}
\usepackage{xfrac}
\usepackage{nicefrac}
\usepackage[all]{xy}
\usepackage{graphicx}
\usepackage{float}
\usepackage{subfigure}
\usepackage{bbm}
\usepackage{tikz}
\usetikzlibrary{decorations.markings}

\usepackage{exscale}
\usepackage{relsize}

\numberwithin{equation}{section}
\numberwithin{figure}{section}

\theoremstyle{definition}
\newtheorem{definition}{Definition}[section]

\newtheorem{remark}[definition]{Remark}

\theoremstyle{plain}
\newtheorem{theorem}[definition]{Theorem}
\newtheorem{proposition}[definition]{Proposition}
\newtheorem{corollary}[definition]{Corollary}
\newtheorem{lemma}[definition]{Lemma}

\title{Siegel--Veech Measures of Convex Flat Cone Spheres}

\author{Kai Fu}
\email{kai.fu@mis.mpg.de}
\address{Max Planck Institute for Mathematics in the Sciences, Leipzig, Germany}

\classification{32G15, 30F60}
\keywords{flat cone spheres, saddle connections, Siegel--Veech measures, moduli spaces}
\thanks{}

\begin{document}
\begin{abstract}
A classical theorem of Siegel gives the average number of lattice points in bounded subsets of Euclidean space. Motivated by this, Veech introduced the Siegel--Veech formula for translation surfaces, describing the average number of saddle connections of bounded length. However, no such formula is known for flat surfaces with cone angles that are irrational multiples of $\pi$.

A convex flat cone sphere is the Riemann sphere with a flat cone metric whose cone angles lie in $(0,2\pi)$. In this paper, we extend the Siegel--Veech theory to convex flat cone spheres by introducing generalized transforms that are uniformly bounded on moduli space and give rise to Siegel--Veech measures on the positive real line. We prove that these measures are absolutely continuous and piecewise real analytic. In contrast to translation surfaces, which benefit from $GL(2,\mathbb{R})$ dynamics, our approach instead uses tools from o-minimality.

Finally, we analyze the asymptotic behavior of the Siegel--Veech measure near zero, obtaining results that parallel the computation of Siegel--Veech constants in the translation surface case.
\end{abstract}

\maketitle

\setcounter{tocdepth}{1}
\tableofcontents

\section{Introduction}\label{bigsec:intro}

\subsection{Flat Cone Spheres and Moduli Spaces}\label{sec:introflatspheresandmodulispace}
Let $\underline{x} := (x_1, \ldots, x_n)$ be $n$ distinct labeled points on the Riemann sphere $\mathbb{C}P^1$. A \textbf{flat cone sphere} $X$ is a triple $(\mathbb{C}P^1, \underline{x}, g)$, where $g$ is a conformal flat metric with conical singularities at the points $x_1, \ldots, x_n$. Let $a_i$ denote the cone angle at the singularity $x_i$, and define the \textbf{curvature} at $x_i$ as $k_i := \dfrac{2\pi - a_i}{2\pi}$. The Gauss--Bonnet formula implies that $\sum_{i=1}^n k_i = 2$. 

A flat cone sphere is said to be \textbf{convex} if all curvatures lie in the interval $(0,1)$. Convex polyhedra in Euclidean space $\mathbb{R}^3$ provide examples of convex flat cone spheres. A precise and general formulation is provided in Definition~\ref{def:flatspheres}.

A vector $\underline{k} := (k_1,\ldots,k_n)$ is called a \textbf{curvature vector} if $\sum_{i=1}^n k_i = 2$. Let $\mathbb{P}\Omega(\underline{k})$ denote the moduli space of convex flat cone spheres with fixed curvature vector $\underline{k}$, where two flat spheres are identified if they are related by an isometry and scaling that preserve the labeled singularities. 

Troyanov proved in~\cite{Tro1986} that the following forgetful map
$$
\mathbb{P}\Omega(\underline{k}) \to \mathcal{M}_{0,n}, \quad X = (\mathbb{C}P^1, \underline{x}, g) \mapsto (\mathbb{C}P^1, \underline{x}),
$$
is a bijection, where $\mathcal{M}_{0,n}$ denotes the moduli space of Riemman spheres with $n$ distinct labeled points. In particular, we endow $\mathbb{P}\Omega(\underline{k})$ with the complex structure of $\mathcal{M}_{0,n}$.

The moduli space $\mathbb{P}\Omega(\underline{k})$ has been studied in depth from various perspectives; see, for instance, \cite{DM, thu, McMullen, KN, SmillieTriangulations, sauvaget2024volumesmodulispacesflat, Ngu24}.

In~\cite{thu}, Thurston construct a complex hyperbolic metric on $\mathbb{P}\Omega(\underline{k})$, denoted by $h_{Thu}$, and studied its metric completion. Roughly speaking, the metric completion is stratified, with each boundary stratum isometric to a moduli space of convex flat cone spheres of lower dimension. The boundary strata correspond bijectively to the \textbf{$\underline{k}$-admissible partitions} of $\{1,\ldots,n\}$. We recall the details of this stratification structure in Section~\ref{sec:complexhyper}.

Denote by $\mu_{Thu}$ the smooth measure on $\mathbb{P}\Omega(\underline{k})$ induced by the complex hyperbolic metric $h_{Thu}$. In~\cite{thu}, Thuston showed that $\mu_{Thu}$ is a finite measure. The total volume $\mu_{Thu}\big(\mathbb{P}\Omega(\underline{k})\big)$ was first computed by McMullen~\cite{McMullen}. Later, Koziarz and Nguyen~\cite{KN} introduced an alternative approach to computing these volumes by studying the intersection theory of boundary divisors in the moduli spaces. Furthermore, Sauvaget~\cite{sauvaget2024volumesmodulispacesflat} derived the first explicit volume formulas for moduli spaces of flat cone surfaces in any genus with rational (not necessarily positive) curvatures. Nguyen~\cite{Ngu24} subsequently extended these results to arbitrary real curvatures in the case of genus zero.

\subsection{Counting Geodesics on Flat Cone Spheres}

A fundamental problem in the study of flat cone spheres is to understand the distribution of geodesics. In this article, we are interested in counting the number of geodesics of bounded length.

\smallskip

\noindent{\textit{\textbf{Counting functions.}}} 

We begin by introducing the families of geodesics that are of interest in this study.

\begin{itemize}
    \item A \textbf{trajectory} on a flat cone sphere $X$ is a compact geodesic segment that does not contain any singularity in the interior.
    
    \item A trajectory on $X$ is called a \textbf{saddle connection} if it joins two conical singularities. 
    
    \item A trajectory is called a \textbf{regular closed geodesic} if it is a closed geodesic on $X\setminus\underline{x}$.
\end{itemize}

Two regular closed geodesics are said to be \textbf{parallel} if they bound an immersed flat cylinder containing no singularity. In particular, two parallel closed geodesics have the same metric length. This definition of parallelism defines an equivalence relation on the set of regular closed geodesics.

We define the \textbf{normalized length} of a trajectory $\gamma$ as 
$$\ell_\gamma(X) := \frac{|\gamma|}{\sqrt{\operatorname{Area}(X)}},$$
where $|\gamma|$ is the metric length of $\gamma$ in $X$, and $\operatorname{Area}(X)$ is the area of $X$.

For each positive real number $R$, we define $N^{sc}(X,R)$ and $N^{cg}(X,R)$
as the number of saddle connections and equivalence classes of regular closed geodesics on $X$ of normalized length less than $R$, respectively.

\smallskip

\noindent{\textit{\textbf{{Irrational polygonal billiards.}}}

The study of these counting functions is closely related to the field of polygonal billiards. Let $G$ be a polygon in the Euclidean plane. It is called \textbf{rational} if all of its angles are rational multiples of~$\pi$; otherwise, it is called \textbf{irrational}. A \textbf{billiard path} in $G$ is a finite sequence of segments $(s_1, \ldots, s_m)$ such that each vertex $s_i \cap s_{i+1}$ lies in the interior of an edge of~$G$, and the angles that $s_i$ and $s_{i+1}$ make with this edge are complementary. 

A billiard path can be unfolded into a geodesic on a flat cone sphere as follows. Given a polygon $G$ in the Euclidean plane, consider $G$ together with a reflected copy $G'$, and identify their boundaries. This produces a flat cone sphere, denoted by $X_G$, which we refer to as the \textbf{pillowcase of the polygon $G$}. Further details can be found in~\cite{FoxKer, KZ}. Given a billiard path $(s_1, \ldots, s_n)$ in $G$, we lift $s_1$ to $G$ in $X_G$, then lift $s_2$ to $G'$, and continue in this alternating manner. The resulting path is a geodesic on $X_G$. Therefore, to study billiard dynamics in $G$, one can instead study the geodesics on the associated pillowcase $X_G$.

It is proved in~\cite{RSch06, RSch08, tokarsky2018point} that for a triangle $G$ with angles at most $112.3^\circ$, the pillowcase $X_G$ admits a regular closed geodesic. However, for a general irrational polygon $G$, the existence of a regular closed geodesic remains an open problem.

For saddle connections, Katok proved in~\cite{Kat} that the growth of $N^{sc}(X_G, R)$ is sub-exponential in $R$. Scheglov~\cite{Sch} later provided an explicit sub-exponential upper bound for almost every triangular billiard, but no general explicit upper bound is known for arbitrary polygons. For the lower bound, Hooper constructed in~\cite{HooperLowerBounds} that for any $k \in \mathbb{N}$, there is an irrational polygon $G$ for which $N^{cg}(X_G,R)$ grows at least like $R \log^k R$. For general polygons, however, no general lower bounds are known.

\smallskip

\noindent{\textit{\textbf{Translation surfaces and $d$-differentials.}}}

Many of these problems have been extensively studied in the setting of translation surfaces. 

A $d$-\textbf{differential} $\xi$ on a Riemann surface $M$ is a meromorphic section of the canonical line bundle on $M$. It is of the form $f(z)(dz)^d$ in a conformal chart $z$ on $M$. Note that a $d$-differential induces a flat cone metric on $M$.

When $d=1$, a holomorphic $1$-differential is a holomorphic $1$-form on $M$, usually denoted by $\omega$. We call the pair $(M,\omega)$ a \textbf{translation surface}. We refer the reader to the book by Athreya and Masur~\cite{AthreyaMasur2024} for further background.

Let $\mathcal{H}^{d}(\underline{\mu})$ denote the moduli space of $d$-differentials whose zeroes and poles are prescribed by the partition $\underline{\mu} = (\mu_1, \ldots, \mu_n) \in \mathbb{Z}^n$. 

When the curvatures $\underline{k}$ of a flat cone sphere $X$ are rational multiples of $2\pi$, the flat cone metric is induced by a $d$-differential for some $d\in\mathbb{N}$. It follows that $\mathbb{P}\Omega(\underline{k}) = \mathbb{P}\mathcal{H}^{d}(\underline{\mu})$. Moreover, such a flat cone sphere $X$ admits a canonical finite-degree branched cover by a translation surface. Hence, in the rational case, one could use the theory of trasnlation surfaces to study trajectories.

The moduli space $\mathcal{H}^1(\underline{\mu})$ of translation surfaces admits a $GL^+(2,\mathbb{R})$-action. Many results on saddle connections and regular closed geodesics on translation surfaces are derived by studying this action. Masur proved in~\cite{Mas86} that every translation surface contains a periodic geodesic, and showed in~\cite{Ma1, Ma2} that the number of periodic geodesics of bounded length satisfies quadratic upper and lower bounds. This quadratic growth rate is much sharper than the known subexponential upper bounds for flat cone spheres.

In~\cite{Veech}, Veech established a formula for the average number of saddle connections over the moduli space of translation surfaces, now known as the \textbf{Siegel--Veech formula}. We will revisit this formula in detail in Section~\ref{sec:comparisonsiegelveech}. With the help of this formula, Eskin and Masur~\cite{EM} showed that the number of saddle connections and closed geodesics of length at most $R$ grows asymptotically like $cR^2$ for almost every translation surface in the moduli space. Later, a weak version of the quadratic growth was established in~\cite{EMirMoh} for every translation surface in $\mathcal{H}^1(\underline{\mu})$. In~\cite{aygun2025countinggeodesicsprimeorderkdifferentials}, Aygun determines a similar weak quadratic growth for generic $d$-differentials in $\mathcal{H}^d(\underline{\mu})$, when $d$ is prime and the genus is greater than $2$.

Athreya, Cheung, and Masur~\cite{athreya2019siegel} developed a generalized Siegel--Veech formula for counting pairs of saddle connections on translation surfaces. Later, Athreya, Fairchild, and Masur~\cite{Athreya2023} used the formula to study the asymptotic growth of the number of such pairs.\newline

While these results provide a comprehensive understanding of counting functions for translation surfaces, studying counting functions for individual convex flat cone spheres remains challenging. Motivated by the success of Siegel--Veech formulas in the translation surface setting, we aim to establish an analogous formula for convex flat cone spheres. This allows us to study the statistical properties of counting functions in the moduli space.

\subsection{Curvature Gap}
To state the main result, we restrict our attention to curvature vectors with positive curvature gap. The notion of \textbf{curvature gap} for a curvature vector $\underline{k}$ was introduced in~\cite{fu2023boundssaddleconnectionsflat} as
\begin{equation}\label{equ:curvaturegap}
    \delta(\underline{k}) = \min_{I \subset \{1, \ldots, n\}} \left| 1 - \sum_{i \in I} k_i \right|.
\end{equation}

The positivity of the curvature gap of a flat cone sphere $X$ forbids the existence of \textbf{simple} regular closed geodesics on $X$, that is, regular closed geodesics without self-intersections. If $X$ admits a simple regular closed geodesic $\gamma$, then $\gamma$ divides $X$ into two domains, and the Gauss--Bonnet formula implies that the singularities in each domain have total curvature equal to one. Hence, the curvature gap of $X$ is zero. Furthermore, according to~\cite{thu}, the compactification $\overline{\mathbb{P}\Omega}(\underline{k})$ is compact if and only if the curvature gap $\delta(\underline{k})$ is positive. In this case, the metric completion yields a compactification of $\mathbb{P}\Omega(\underline{k})$.

\begin{remark}
    Note that zero curvature gap imposes affine subspaces 
$$
Z_I := \Big\{ \underline{k} \in (0,1)^n \,\mid\, \sum_{i \in I} k_i = 1 \Big\}, \quad I \subset \{1,\ldots,n\},
$$
on the space of curvature vectors $\left\{ \underline{k} \in (0,1)^n \,\middle|\, \sum k_i = 2 \right\}$. For any curvature vector $\underline{k}$ lying in the same connected component of the complement $\bigcup_{I \subset \{1,\ldots,n\}} Z_I$, Thurston's compactification $\overline{\mathbb{P}\Omega}(\underline{k})$ is structurally similar, in the sense that the set of $\underline{k}$-admissible partitions are the same. Moreover, the total volume of $\mathbb{P}\Omega(\underline{k})$ is continuous as a function of the curvature vector, but not continuously differentiable precisely along the subspaces $Z_I$; see~\cite{McMullen,KN}.
\end{remark}

\subsection{Siegel--Veech Transforms}
The Siegel--Veech transform was introduced by Veech in~\cite{Veech} in the context of translation surfaces. We adapt this notion to convex flat cone spheres as follows. We discuss the differences between Veech's definitions and the following definitions in~Section~\ref{sec:comparisonsiegelveech}, Remark~\ref{rmk:comparetransforms}.

\begin{definition}[Siegel--Veech transforms]\label{def:introsiegelveechtransform}
Let $\underline{k}\in(0,1)^n$ be a curvature vector. Given a compactly supported continuous function $f \in C_c(\mathbb{R}_{>0})$, we define the \textbf{Siegel--Veech transform for saddle connections} by
$$
\hat{f}^{sc}(X) := \sum_{\gamma} f(\ell_{\gamma}(X)),
$$
where the sum is taken over all saddle connections $\gamma$ on $X \in \mathbb{P}\Omega(\underline{k})$.

Similarly, the \textbf{Siegel--Veech transform for regular closed geodesics} is given by
$$
\hat{f}^{cg}(X) := \sum_{\gamma} f(\ell_{\gamma}(X)),
$$
where the sum is over all representatives of equivalence classes of regular closed geodesics on $X \in \mathbb{P}\Omega(\underline{k})$.
\end{definition}

Note that the above definition of transforms can be extended to larger function spaces; for instance, to the space of compactly supported bounded measurable functions. For example, if $f$ is the indicator function $\mathbbm{1}_{(0,R)}$ on the interval $(0, R)$, then
$$
\hat{\mathbbm{1}}^{sc}_{(0,R)}(X) = N^{sc}(X, R) \quad \text{and} \quad \hat{\mathbbm{1}}^{cg}_{(0,R)}(X) = N^{cg}(X, R).
$$

We prove in Theorem~\ref{thm:integrabilityandfinitedecomp} that if the curvature gap $\delta(\underline{k})$ is positive, then the Siegel--Veech transforms satisfy that
$$
\hat{f}^{sc}, \hat{f}^{cg} \in L^{\infty}\big(\mathbb{P}\Omega(\underline{k}), \mu_{Thu}\big).
$$

An interesting question concerns the case where the curvature gap is zero: what is the integrability of the Siegel--Veech transforms in this setting? More precisely, do they belong to $L^1$, or even to $L^\infty$, over $\mathbb{P}\Omega(\underline{k})$? The current method relies on the uniform length estimates established in~\cite{fu2023boundssaddleconnectionsflat, fu2024uniformlengthestimatestrajectories}, but these estimates fail when the curvature gap is zero. To treat this case, one would need new estimates for the counting functions.

\begin{remark}
In the setting of hyperbolic surfaces, Mirzakhani introduces in~\cite{Mirzakhani2007} geometric functions on $\mathcal{M}_{g,n}$. More precisely, for $X\in \mathcal{M}_{g,n}$, a simple closed curve $\gamma$ and a function $f:\mathbb{R}_{>0}\to \mathbb{R}$, she considers the transform
$$
f_\gamma(X)=\sum_{\alpha} f\bigl(L_X(\alpha)\bigr),
$$
where $L_X(\alpha)$ denotes the hyperbolic length of $\alpha$ on $X$, and the sum is over all simple closed geodesics on $X$ of the same topological type as $\gamma$. One result of~\cite{Mirzakhani2007} is an integration formula for such geometric functions on $\mathcal{M}_{g,n}$ with respect to the Weil--Petersson volume form; see~\cite[Theorem~7.1]{Mirzakhani2007}. In this paper, we study a similar problem for Siegel--Veech transforms on moduli spaces of convex flat cone spheres. See \cite{Wang2025CHWP} for a relation between the Weil--Petersson form and Thurston's complex hyperbolic form on the moduli space of flat cone spheres.
\end{remark}

\subsection{Siegel--Veech Measures}

The integrability of Siegel--Veech transforms allows us to define Radon measures on $\mathbb{R}_{>0}$ via the Riesz–Markov–Kakutani representation theorem. We refer to the Radon measures arising from our construction as \textbf{Siegel--Veech measures}, in analogy with the terminology introduced in~\cite{athreya2019siegel}.

\begin{definition}[Siegel--Veech measures]
    Let $\underline{k}\in (0,1)^n$ be a curvature vector with positive curvature gap. The \textbf{Siegel--Veech measure for saddle connections} associated with $\mathbb{P}\Omega(\underline{k})$ is the unique Radon measure on $\mathbb{R}_{>0}$ corresponding to the continuous linear functional on $C_c(\mathbb{R}_{>0})$:
    $$
        f \mapsto \int_{\mathbb{P}\Omega(\underline k)}\hat{f}^{sc}(X)d\mu_{Thu},
    $$
    denoted by $\mu^{sc}_{\underline{k}}$.
    
    Similarly, the \textbf{Siegel--Veech measure for regular closed geodesics} is defined as the unique Radon measure on $\mathbb{R}_{>0}$ corresponding to the continuous linear functional on $C_c(\mathbb{R}_{>0})$:
    $$
    f \mapsto \int_{\mathbb{P}\Omega(\underline k)}\hat{f}^{cg}(X)d\mu_{Thu},
    $$
    denoted by $\mu^{cg}_{\underline{k}}$.
\end{definition}

The continuity of the above linear functionals are shown in Theorem~\ref{thm:integrabilityandfinitedecomp}.

\begin{remark}
    The Siegel--Veech measures introduced here should not be confused with the \textbf{Siegel measures} discussed in Veech's article~\cite{Veech}.
\end{remark}

With these definitions in place, we can now state the main theorem of this paper.

\begin{theorem}\label{thm:main3}
    Let $\underline{k}\in(0,1)^n$ be a curvature vector with $n \ge 4$ and positive curvature gap $\delta(\underline{k})$. Then, the following statements hold:
    \begin{enumerate}
        \item \textbf{Absolute Continuity:}
        The Siegel–Veech measures $\mu^{sc}_{\underline{k}}$ and $\mu^{cg}_{\underline{k}}$ are absolutely continuous with respect to the Lebesgue measure $\mu_{Leb}$ on $\mathbb{R}_{>0}$. That is, there exist Lebesgue measurable functions $\rho^{sc}_{\underline{k}}$ and $\rho^{cg}_{\underline{k}}$ on $\mathbb{R}_{>0}$ such that
$$
            \int_{\mathbb{P}\Omega(\underline{k})}\hat{f}^{sc}(X)d\mu_{Thu} = \int_{\mathbb{R}_{>0}}f(t)\rho^{sc}_{\underline{k}}(t)dt,
$$
        and
$$
            \int_{\mathbb{P}\Omega(\underline{k})}\hat{f}^{cg}(X)d\mu_{Thu} = \int_{\mathbb{R}_{>0}}f(t)\rho^{cg}_{\underline{k}}(t)dt,
$$
        for any $f \in C_c(\mathbb{R}_{>0})$.
        
        \item \textbf{Analyticity:}  
        There exists an infinite partition $0 = t_0 < t_1 < \ldots < t_m < \ldots$ with $\lim\limits_{m \to \infty} t_m = \infty$ such that the functions
        $R \mapsto \mu^{sc}_{\underline{k}}(0, R)$ and  $R \mapsto \mu^{cg}_{\underline{k}}(0, R)$
        are real analytic in each $(t_i, t_{i+1})$.

        \item \textbf{Support:}
        The support of $\mu^{cg}_{\underline k}$ is contained in $\big[\sqrt{\pi\delta(\underline{k})},\infty\big)$.

        \item \textbf{Asymptotics:}  
        As $\varepsilon\to0$, we have
        \begin{equation}\label{equ:mainasym}
            \mu^{sc}_{\underline{k}}(0,\varepsilon)=\sum\limits_{\mathrm{codim}(B)=1} L(P)\cdot\mu_{Thu}(B)\cdot\varepsilon^2 + O(\varepsilon^4),
        \end{equation}
        where the sum is taken over all codimension-$1$ boundary strata $B$ of $\overline{\mathbb{P}\Omega}(\underline{k})$, with each $B$ corresponding to a $\underline{k}$-admissible partition $P$ of $\{1,\ldots,n\}$.
    \end{enumerate}
\end{theorem}

The leading coefficient of $\varepsilon^2$ in~\eqref{equ:mainasym} is explicit. The term $\mu_{Thu}(B)$ in the formula represents the complex hyperbolic volume of the moduli space of convex flat cone spheres associated with the boundary stratum $B$. As mentioned in Section~\ref{sec:introflatspheresandmodulispace}, the total volumes have been explicitly computed in~\cite{McMullen,KN,sauvaget2024volumesmodulispacesflat,Ngu24}.

The number $L(P)$ is defined as follows. A $\underline{k}$-admissible partition $P$ corresponding to a codimension-one boundary stratum is characterized by having a unique non-singleton part $p = \{i_1, i_2\}$ such that $k_{i_1} + k_{i_2} < 1$. The criteria for $\underline{k}$-admissibility and the computation of codimension are given in Section~\ref{sec:complexhyper}. Let $P$ be such a partition, and assume without loss of generality that $k_{i_1} \le k_{i_2}$. Then,
$$
L(P)= \pi(1-k_{i_1}-k_{i_2})\cdot\frac{\sin{(k_{i_1}\pi)}\sin{(k_{i_2}\pi)}}{\sin{(k_{i_1}+k_{i_2})\pi}}\cdot\left[ 1 + \sum_{1\le j \le \lceil 1/(2 - 2k_{i_2}) \rceil - 1}    \left( \frac{1}{2\sin(jk_{i_2}\pi)} \right)^2\right].
$$

The expression~\eqref{equ:mainasym} can be interpreted as describing the asymptotic volume (with multiplicity) of the subset of $\mathbb{P}\Omega(\underline{k})$ consisting of spheres that contain at least one saddle connection of normalized length less than~$\varepsilon$.

More generally, Theorem~\ref{thm:main1} provides asymptotic formulas for the volumes of subsets consisting of spheres with multiple short saddle connections. Our computation parallels the asymptotic volume computation for translation surfaces with at least one short saddle connection in~\cite{EMZ}.

We compare these results with known asymptotics for translation surfaces in Section~\ref{sec:comparisonsiegelveech}.

\begin{remark}
    When the number of singularities is three, we normalize the volume to be $1$ for boundary strata $B$ of dimension zero in the formula~\eqref{equ:mainasym}.
\end{remark}

\begin{remark}
    In the case that the number of singularities is three, the moduli space $\mathbb{P}\Omega(\underline k)$ consists of a single sphere $X$. Note that $X$ is the pillowcase of a Euclidean triangle. In this case, the Siegel--Veech measures $\mu^{sc}_{\underline{k}}$ and $\mu^{cg}_{\underline{k}}$ are atomic. More precisely, they can be expressed as sums of Dirac measures on $\mathbb{R}_{>0}$:
    $$
    \mu^{sc}_{\underline{k}} = \sum_{\gamma} \delta_{\ell_{\gamma}(X)} 
    \quad \text{and} \quad 
    \mu^{cg}_{\underline{k}} = \sum_{\gamma} \delta_{\ell_{\gamma}(X)},
    $$
    where the first sum is taken over all saddle connections on $X$, and the second sum is taken over all representatives of equivalence classes of regular closed geodesics on $X$. In particular, (i) and (ii) do not hold for $n=3$, but (iii) and (iv) still hold.
\end{remark}

\begin{remark}
    When $\underline{k}$ consists of rational numbers, Theorem~\ref{thm:main3} applies to the corresponding moduli space of $d$-differentials $\mathbb{P}\mathcal{H}^{d}(\underline{\mu}) = \mathbb{P}\Omega(\underline{k})$. We refer to~\cite{NguyenVolForm} for a comparison of the complex hyperbolic metric and the canonical volume form on $\mathbb{P}\mathcal{H}^{d}(\underline{\mu})$.
\end{remark}

\subsection{Comparison with the Classical Siegel--Veech Formula}\label{sec:comparisonsiegelveech}
We include this subsection for readers with background in translation surfaces and the classical Siegel--Veech formula.

We recall the Siegel--Veech formula for translation surfaces introduced in~\cite{Veech}. We denote a translation surface by $(M,\omega)$, where $M$ is a Riemann surface and $\omega$ is a holomorphic $1$-form. We denote by $\mathcal{H}^1_{1}(\underline{\mu})$ the locus of area-one translation surfaces in $\mathcal{H}^1(\underline{\mu})$. The moduli space $\mathcal{H}^1(\underline{\mu})$ admits a $GL^+(2,\mathbb{R})$-action, and the locus $\mathcal{H}^1_1(\underline{\mu})$ carries an $SL(2,\mathbb{R})$-action. Throughout the following, let $\mathcal{H}$ denote a connected component of $\mathcal{H}^1_1(\underline{\mu})$.

A key result from~\cite{EMir,EMirMoh,Filip2016} asserts that any ergodic $SL(2,\mathbb{R})$-invariant finite measure on $\mathcal{H}$ is supported on an algebraic subvariety locally defined by linear equations with real coefficients. A canonical example of such a measure is the \textbf{Masur--Veech measure} $\mu_{\mathrm{MV}}$, which is supported on the full stratum $\mathcal{H}$; see~\cite{Masur1982,Veech1982,Masur1991}.

In our setting, the moduli space $(\mathbb{P}\Omega(\underline{k}),\mu_{Thu})$ serves as an analogue of $(\mathcal{H},\mu_{MV})$ in the context of translation surfaces, with $\mu_{Thu}$ playing a role comparable to the Masur-Veech measure.

In Veech's paper~\cite{Veech}, the \textbf{Siegel--Veech transform} is defined for $f\in C_c(\mathbb{R}^2)$ as
$$
\hat{f}(M,\omega) := \sum_{\gamma} f(z_\gamma),
$$
where the sum is taken over all saddle connections on $(M, \omega)$, and $z_\gamma$ denotes the \textbf{period} of $\gamma$, that is, $z_\gamma := \int_\gamma \omega \in \mathbb{C}$.

\begin{remark}[Comparison of Siegel--Veech transforms]\label{rmk:comparetransforms}
    The Siegel--Veech transforms in Definition~\ref{def:introsiegelveechtransform} are defined for functions on $\mathbb{R}_{>0}$ rather than $\mathbb{R}^2$. This distinction arises because parallel transport is not trivial on a convex flat cone sphere, and there is no canonical notion of periods.
\end{remark}

\begin{remark}[Comparison of Integrability]
    Veech proved in~\cite{Veech} that the Siegel--Veech transform $\hat{f}$ belongs to $L^1(\mathcal{H},\mu_{MV})$. Later, Athreya, Cheung, and Masur~\cite{athreya2019siegel} established the stronger result that $\hat{f} \in L^2(\mathcal{H},\mu_{MV})$. However, it was also observed in~\cite{athreya2019siegel} that $\hat{f} \notin L^3(\mathcal{H},\mu_{MV})$.

    In our setting, the Siegel--Veech transforms belong to $L^\infty(\mathbb{P}\Omega(\underline{k}),\mu_{Thu})$ by Theorem~\ref{thm:integrabilityandfinitedecomp}. The proof relies crucially on prior results on uniform length estimate of geodesics in~\cite{fu2023boundssaddleconnectionsflat, fu2024uniformlengthestimatestrajectories}.
\end{remark}

Let $\nu$ be any ergodic $SL(2,\mathbb{R})$-invariant finite measure on $\mathcal{H}$. Consider the linear functional on $C_c(\mathbb{R}^2)$ defined by
$$
f\mapsto \int_{\mathcal{H}}\hat{f} \, d\nu.
$$
Let $\mu_{SV}$ denote the measure on $\mathbb{R}^2$ corresponding to the above linear functional.
The \textbf{Siegel--Veech formula} established in~\cite{Veech} says that
\begin{equation}\label{equ:siegelveechtranslationsurfaces}
    \mu_{SV} = c(\mathcal{H},\nu)\cdot\mu_{Leb}
\end{equation}
where $c(\mathcal{H},\nu)>0$ is the \textbf{Siegel--Veech constant}, and $\mu_{Leb}$ denotes the standard Lebesgue measure on $\mathbb{R}^2$.

In~\cite{Veech}, Veech used the $SL(2,\mathbb{R})$-action on $\mathcal{H}$ to deduce that $\mu_{SV}$ is $SL(2,\mathbb{R})$-invariant on the plane and further establish~\eqref{equ:siegelveechtranslationsurfaces}. However, in our setting, there is no such a group action available on the moduli space of convex flat cone spheres. Instead, we introduce new tools from o-minimal theory to establish our generalized Siegel--Veech formulas; see Section~\ref{bigsec:regularity}.

Since $\mu_{SV}(0,\varepsilon) = c(\mathcal{H},\nu) \cdot \varepsilon^2$, one approach to compute $c(\mathcal{H},\nu)$ is to analyze the asymptotic behavior of the measure $\mu_{SV}(0,\varepsilon)$ as $\varepsilon\to0$.
By studying this asymptotic behavior, Eskin, Masur, and Zorich established formulas for Siegel--Veech constants in~\cite{EMZ}. Furthermore, building on~\cite{EMZ}, Goujard computed in~\cite{Gou} the Siegel--Veech constants for strata of \textbf{half-translation surfaces}, which are Riemann surfaces equipped with holomorphic quadratic differentials.

In our setting, the formula~\eqref{equ:mainasym} describing the asymptotics of $\mu^{sc}_{\underline{k}}(0,\varepsilon)$ for convex flat cone spheres can be viewed as an analogue of the Siegel--Veech constant computation. Indeed, the leading coefficient in formula~\eqref{equ:mainasym} closely resembles the formula given in~\cite{EMZ}, which relates to the volume of the boundary stratum of the moduli space.

\subsection{Regularity of Thurston Measure}

We establish a more general result, Theorem~\ref{thm:main1}, which computes the asymptotic volumes of subsets of spheres with multiple short saddle connections.

This theorem parallels many known results about translation surfaces. Recall that $\mathcal{H}$ denotes a connected component of $\mathcal{H}^1_1(\underline{\mu})$.
For $K > 0$ and $\varepsilon > 0$, let $\mathcal{H}_K^\varepsilon$ denote the subset of translation surfaces in $\mathcal{H}$ that have at least two non-parallel cylinders, each with modulus greater than $K$ and circumference less than~$\varepsilon$. An ergodic $SL(2,\mathbb{R})$-invariant measure $\nu$ on $\mathcal{H}$ is called \textbf{regular} if there exists $K > 0$ such that  
$$
\nu\big(\mathcal{H}_K^\varepsilon\big) = o(\varepsilon^2) \quad \text{as } \varepsilon \to 0.
$$

It is shown in~\cite{EMZ} and~\cite{Masur1991} that the Masur--Veech measure is regular. 
This regularity is needed in the work of
Eskin, Kontsevich, and Zorich~\cite{EKZ}. They further conjectured in~\cite{EKZ} that all ergodic $SL(2,\mathbb{R})$-invariant finite measures $\nu$ on $\mathcal{H}$ are regular. Avila, Matheus, and Yoccoz~\cite{Avila2013} proved this conjecture by establishing a stronger result: letting $\mathcal{H}_{(2)}^\varepsilon$ denote the subset of surfaces in $\mathcal{H}$ with two non-parallel saddle connections of length less than~$\varepsilon$, they showed that
$$
\nu\big(\mathcal{H}_{(2)}^\varepsilon\big) = o(\varepsilon^2).
$$
Nguyen~\cite{Nguyen2012} provided stronger estimates for certain invariant measures~$\nu$. More recently, Dozier~\cite{Dozier2023} established a stronger form of regularity for arbitrary such~$\nu$, extending the result to subsets of surfaces with multiple short saddle connections.

Our result, Theorem~\ref{thm:main1}, can be viewed as a strengthened form of regularity for the Thurston measure on $\mathbb{P}\Omega(\underline{k})$. We explicitly compute the asymptotic volume of the subset consisting of spheres with multiple short saddle connections and provide an estimate on the error term.\newline

An interesting direction for future work is to study the asymptotic geometry of convex flat cone spheres as the number of singularities tends to infinity. The asymptotic formula~\eqref{equ:mainasym} might be useful for studying the local geometry of convex flat cone spheres with a large number of singularities. This is inspired by recent studies on large-genus translation surfaces; see~\cite{masur2024lengthssaddleconnectionsrandom} and~\cite{bowen2025benjaminischrammlimitshighgenus}.

\subsection{Proof Strategy}

We outline here the main ideas underlying the proofs of our results.

We first establish the integrability of the Siegel--Veech transforms in Theorem~\ref{thm:integrabilityandfinitedecomp}. The key input comes from earlier results~\cite{fu2023boundssaddleconnectionsflat,fu2024uniformlengthestimatestrajectories}, which provide uniform upper bounds on the number of saddle connections of bounded length.

Theorem~\ref{thm:integrabilityandfinitedecomp} also shows that the Siegel--Veech measure decomposes into a sum of Radon measures associated with \textbf{Delaunay regions}. The notion of Delaunay triangulations on flat cone spheres was introduced in~\cite{Delaunay, Masur1991, thu}. A Delaunay region is a subset of the moduli space consisting of flat cone spheres sharing the same Delaunay triangulation (see Section~\ref{sec:delaunay}).

To prove that the Siegel--Veech measures are piecewise real analytic and absolutely continuous, a crucial step is to show that these Delaunay regions are semialgebraic in local coordinate charts of the moduli space, as established in Theorem~\ref{thm:semialgdelaunayregion}. This semialgebraic structure allows us to apply tools from o-minimal geometry to deduce the desired regularity properties.

To analyze the asymptotic behavior of Siegel--Veech measures near zero, we estimate the volumes of neighborhoods of boundary strata in the metric completion of the moduli space; see Theorem~\ref{thm:main1}. A main difficulty is the construction of explicit coordinate charts near these boundary strata. We recall Thurston’s surgery from~\cite{thu} to parameterize neighborhoods of codimension-one boundary strata. Generalized Thurston surgeries, introduced in~\cite{fu2024uniformlengthestimatestrajectories}, are further used to construct coordinate charts near higher-codimension boundary strata.

Finally, we approximate the complex hyperbolic metric by a product metric (Proposition~\ref{prop:metricapproximation}) in the constructed coordinate charts. The proof of Theorem~\ref{thm:main1} is completed by induction on the codimension of the boundary strata.

\subsection{Organization}

We conclude this introduction with an outline of the paper.

In Section~\ref{bigsec:moduli}, we review the necessary background on moduli spaces of flat cone spheres.  

In Section~\ref{bigsec:siegelveechmeasure}, we introduce generalized Siegel--Veech transforms for convex flat cone spheres, prove their $L^\infty$ integrability, and define the associated Siegel--Veech measures.  

In Section~\ref{bigsec:regularity}, we prove that Siegel--Veech measures are absolutely continuous and piecewise real analytic.  

In Section~\ref{bigsec:convexinfinite}, we discuss infinite-area flat spheres, which arise naturally in the study of the boundary of the moduli space.  

In Section~\ref{bigsec:coor}, we construct coordinate charts near the boundary strata of the moduli space, which play a crucial role in the volume asymptotics.  

In Section~\ref{bigsec:measurebounds}, we prove Theorem~\ref{thm:main1}, which establishes the volume asymptotics for subsets of moduli space with multiple short saddle connections.  

\begin{acknowledgements}
I am grateful to my advisors Vincent Delecroix and Elise Goujard for introducing me to this problem and for many insightful discussions and invaluable guidance. I thank Yohan Brunebarbe for help with o-minimal tools, Duc-Manh Nguyen for many helpful discussions, and Guillaume Tahar for helpful comments on a previous version of this manuscript. I also thank the anonymous referee for useful comments and suggestions.
\end{acknowledgements}

\section{Notation and Background}\label{bigsec:moduli}

In this section, we present the necessary background, following~\cite{Veech, thu, fu2024uniformlengthestimatestrajectories}. 

We begin by defining some fundamental concepts related to curves on a surface $M$. A \textbf{curve} on $M$ is a continuous map from $[0,1]$ to $M$, while a \textbf{loop} is a continuous map from $\mathbb{S}^1$ to $M$. A curve is called \textbf{simple} if it is injective on the open interval $(0,1)$, and a loop is \textbf{simple} if it is injective on $\mathbb{S}^1$. Let $\underline{x} = (x_1, \ldots, x_n)$ denote a set of distinct labeled points on $M$. An \textbf{arc} on $(M, \underline{x})$ is defined as a curve that begins and ends at labeled points, with no labeled points in its interior, and a \textbf{loop} on $(M, \underline{x})$ is defined as a loop on $M$ with no labeled points in its interior.

\subsection{Flat Spheres}
\begin{definition}\label{def:flatspheres}
Let $M$ be a conformal sphere. Let $\underline{k}=(k_1, \ldots, k_n) \in \mathbb{R}^n$ be a real vector and let $\underline{x}=(x_1, \ldots, x_n)$ denote a set of $n$ distinct labeled points in $M$. A \textbf{flat metric} $g$ on $M$ with curvature $k_i$ at $x_i$ for $1\le i\le n$ is a conformal metric on $M\setminus\{x_1,\ldots,x_n\}$ that satisfies the following:
\begin{itemize}
    \item There is a conformal chart $z$ in a neighborhood of $x\in M\setminus\{x_1,\ldots,x_n\}$ such that $z(x)=0$ and $g$ is equal to $|dz|^2$. We refer to such a chart as a \textbf{flat chart at $x$}.
    \item For each $i$, there is a conformal chart $z$ in a neighborhood of $x_i$ such that $z(x_i) = 0$ and $g$ is equal to $|z|^{-2k_i}|dz|^2$.
\end{itemize}
We refer to the triple $(M, \underline{x}, g)$ as a \textbf{flat sphere}. The points $x_1, \ldots, x_n$ are called \textbf{singularities} of $X$ and $\underline{k}$ is called the \textbf{curvature vector} of $X$.
\end{definition}
A singularity is called \textbf{conical} if its curvature is less than $1$, and \textbf{polar} otherwise. For a conical singularity $x_i$, its neighborhood is isometric to a region around the apex of an infinite cone with cone angle $2\pi(1 - k_i)$. In contrast, if $x_i$ is a pole, its neighborhood is isometric to an open end of an infinite cone with cone angle $2\pi(k_i - 1)$. Note that the Gauss-Bonnet formula implies that $\sum_{i=1}^{n} k_i = 2$.

A \textbf{flat cone sphere} is a flat sphere where all singularities are conical. A flat sphere with at least one pole is called an \textbf{infinite flat sphere}. Note that the total area of a flat sphere is finite if and only if all singularities are conical. A flat sphere is called \textbf{convex} if all its curvatures are positive, and \textbf{non-negative} if all its curvatures are non-negative.

We generalize the notion of the curvature gap~\eqref{equ:curvaturegap} to a flat cone sphere $X = (M, \underline{x}, g)$ (possibly with poles) by defining
$$
    \delta(\underline{k}) = \min_{I \subset \{1, \ldots, n\}} \left| 1 - \sum_{i \in I, k_i < 1} k_i \right|.
$$

\begin{lemma}[{\cite[Lemma~2.2]{fu2023boundssaddleconnectionsflat}}]\label{lem:upperboundcurvaturegap}
    The curvature gap $\delta$ of any flat cone sphere satisfies $\delta \leq \frac{1}{3}$.
\end{lemma}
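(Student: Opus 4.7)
The plan is to exhibit, for any flat cone sphere, an explicit subset $I \subset \{1,\ldots,n\}$ satisfying $|1 - \sum_{i \in I} k_i| \leq 1/3$. Since all singularities of a flat cone sphere are conical (so $k_i < 1$ for every $i$), the restriction to indices with $k_i < 1$ in the definition of $\delta(\underline{k})$ is vacuous here, and it suffices to bound $\inf_I |1 - \sum_{i \in I} k_i|$ over arbitrary subsets of $\{1,\ldots,n\}$.

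First I would isolate the positive curvatures. After discarding the indices with $k_i \leq 0$, whose total contribution $K^-$ is non-positive, the Gauss--Bonnet identity $\sum_i k_i = 2$ forces the positive curvatures $p_1 \geq p_2 \geq \cdots \geq p_a$ in $(0,1)$ to sum to $K^+ = 2 - K^- \geq 2$. The argument then splits into two cases. If $p_1 \geq 2/3$, then since $p_1 < 1$ the singleton index set already yields $|1 - p_1| = 1 - p_1 \leq 1/3$. Otherwise every $p_j$ is strictly less than $2/3$, so the partial sums $S_j := p_1 + \cdots + p_j$ begin at $S_0 = 0$, end at $S_a \geq 2$, and increase by strictly less than $2/3$ per step. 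Letting $j^\ast$ be the smallest index with $S_{j^\ast} > 4/3$, I get $S_{j^\ast - 1} \leq 4/3$ and simultaneously $S_{j^\ast - 1} = S_{j^\ast} - p_{j^\ast} > 4/3 - 2/3 = 2/3$, so the subset of indices corresponding to $p_1,\ldots,p_{j^\ast - 1}$ witnesses $|1 - S_{j^\ast - 1}| < 1/3$.

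The argument is essentially a one-dimensional pigeonhole on subset sums together with the step-size control $p_j < 2/3$, so there is no serious obstacle. The bound is sharp, as shown by the equilateral triangular pillowcase, i.e., $n = 3$ with $k_1 = k_2 = k_3 = 2/3$: the only subset sums are $\{0, 2/3, 4/3, 2\}$, giving $\delta(\underline{k}) = 1/3$ exactly.
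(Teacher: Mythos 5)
This lemma is imported from \cite[Lemma~2.2]{fu2023boundssaddleconnectionsflat}; the present paper states it without proof, so there is no in-paper argument to compare against. That said, your proof is correct and is the natural pigeonhole argument one would expect the cited reference to use, and your extremal example (the equilateral triangular pillowcase with $\underline{k}=(2/3,2/3,2/3)$) correctly shows the bound is sharp.

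Two small points of care. First, the case split is handled properly: discarding $k_i\le 0$ is legitimate because the infimum runs over all subsets $I$, and a flat cone sphere has all $k_i<1$, so the remaining $p_j$ lie in $(0,1)$ and $K^+=2-K^-\ge 2$ guarantees that the partial sums $S_j$ eventually exceed $4/3$, so $j^\ast$ exists. Second, in the case $p_1<2/3$ your chain gives $2/3 < S_{j^\ast-1}\le 4/3$, which yields $|1-S_{j^\ast-1}|\le 1/3$ rather than the strict inequality you wrote (the upper bound $S_{j^\ast-1}\le 4/3$ need not be strict). This does not affect the conclusion $\delta\le 1/3$, but the strict claim as stated is slightly too strong.

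Incidentally, your Case 2 argument already covers Case 1: if $p_1\ge 2/3$ then $S_1=p_1>2/3$ while $S_1<1<4/3$, so taking $I=\{1\}$ is exactly the instance $j^\ast-1=1$ of the pigeonhole step (or one can absorb it by noting that any single step $p_j<1$ and the partial sums still cross the interval $(2/3,4/3]$). The split is harmless but not strictly necessary.
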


Note that the curvature gap of an infinite flat sphere can be greater than~$\frac{1}{3}$.

Given a flat surface $X$, a \textbf{developing map} associated with $X$ is an orientation-preserving and locally isometric map from $\widetilde{X}$ to the Euclidean plane, where $\widetilde{X}$ is the metric completion of the universal cover of $X \setminus \{x_1, \ldots, x_n\}$ with the pull-back metric of $X$. We usually denote a developing map by $\mathrm{Dev}: \widetilde{X} \to \mathbb{C}$. In addition, given a developing map $\mathrm{Dev}$, the \textbf{holonomy} associated with $X$ and $\mathrm{Dev}$ is the unique morphism $\rho$ from $\pi_1\big(X \setminus \{x_1, \ldots, x_n\}\big)$ to the orientation-preserving isometry group of the plane, such that $\mathrm{Dev}(\gamma \cdot p) = \rho(\gamma) \mathrm{Dev}(p)$, with $\gamma\in \pi_1\big(X \setminus \{x_1, \ldots, x_n\}\big)$ and $p\in \widetilde{X}$. For more information about developing maps and holonomy, consult Section~3.4 of~\cite{Thurston1997}.

Let $X_1$ and $X_2$ be two flat spheres. A homeomorphism $f: X_1 \to X_2$ is called a \textbf{homothety} if it is of the form $az + b$ in the flat charts in $X_1$ and $X_2$, where $a \in \mathbb{C}^*$ and $b \in \mathbb{C}$. Note that the absolute value $|a|$ is independent of the choice of flat charts. The map $f$ is an isometry if and only if $|a| = 1$.

\subsection{Moduli Spaces and Teichm\"uller Spaces}\label{sec:basicdef}

Given a vector $\underline{k} = (k_1, \ldots, k_n) \in \mathbb{R}^n$ such that $\sum_{i=1}^{n} k_i = 2$, we denote by $\mathbb{P}\Omega(\underline{k})$ the \textbf{moduli space} of flat surfaces $X= (M, \underline{x}, g)$ of fixed curvature vector $\underline{k}$, considered up to homothety, where each singularity $x_i$ has curvature $k_i$. Note that we do not require the singularity to be conical.

Assuming that all curvatures are less than one, Troyanov proved in~\cite{Tro1986} that there exists a unique conformal flat metric $g$ on a Riemann surface $(M, \underline{x})$, up to scaling, such that the curvature at $x_i$ is $k_i$. Later, Veech extended this result in~\cite{Veech1993} (Theorem~1.13), showing that the existence and uniqueness also hold for curvatures no less than one.

Therefore, the moduli space $\mathbb{P}\Omega(\underline{k})$ is canonically identified with the moduli space of Riemann spheres with $n$ distinct labeled points $\mathcal{M}_{0,n}$. The following map
$$
\left\{(z_1,\ldots,z_{n-3})\in \mathbb{C}^{n-3} \mid z_i\ne 0, 1,\ \text{and } z_i\ne z_j\ \text{for } i\ne j \right\} \overset{\sim}{\rightarrow} \mathcal{M}_{0,n}
$$
which sends $(z_1,\ldots,z_{n-3})$ to $\big(\mathbb{C}P^1,(0,1,\infty,z_1,\ldots,z_{n-3})\big)$, is a biholomorphism.
We endow $\mathbb{P}\Omega(\underline{k})$ with the complex structure of $\mathcal{M}_{0,n}$.

Let $S$ be a topological sphere and $\underline{s} = (s_0, \ldots, s_n)$ be $n$ distinct labeled points in $S$. We denote by $\mathbb{P}\mathcal{T}_{S, \underline{s}}(\underline{k})$ the \textbf{Teichm\"uller space} of $(S, \underline{s})$. The space consists of the equivalence classes of pairs $(X,f)$, where $X = (M, \underline{x}, g)$ is from the moduli space $\mathbb{P}\Omega(\underline{k})$, and the marking $f:(S,\underline{s}) \to (M, \underline{x})$ is a homeomorphism such that $f(s_i) = x_i$ for all $i$. Two pairs $(X,f)$ and $(X', f')$ are equivalent if and only if there is a homothety $h:X\to X'$ such that $h\circ f$ is isotopic to $f'$ relative to $\underline{s}$.

We denote by $\pi: \mathbb{P}\mathcal{T}_{S, \underline{s}}(\underline{k})\to \mathbb{P}\Omega(\underline{k})$ the canonical projection that forgets markings.\newline

\noindent\textit{\textbf{Convention.}} Throughout this paper, statements of lemmas, propositions, and theorems involving moduli spaces or Teichm\"uller spaces are made under the following convention, unless explicitly stated otherwise. We fix a curvature vector $\underline{k} = (k_1, \ldots, k_n) \in (0,1)^n$, and a topological sphere $(S, \underline{s})$ with $n$ distinct labeled points. Accordingly, we refer to the associated moduli space $\mathbb{P}\Omega(\underline{k})$ of convex flat cone spheres and the Teichm\"uller space $\mathbb{P}\mathcal{T}_{S, \underline{s}}(\underline{k})$.

\subsection{Spanning Tree Coordinate Charts}\label{sec:spanningtree}
We recall the coordinate charts on the moduli space and Teichm\"uller spaces constructed by Thurston in~\cite{thu}.

Let $(S, \underline{s})$ be a sphere with distinct labeled points $\underline{s}$. An \textbf{embedded graph} in $(S, \underline{s})$ is an ordered pair $(V,E)$ consisting of:
\begin{itemize}
    \item A subset $V$ of the singularities $\underline{s}$, whose elements are called \textbf{vertices}.
    \item A set $E$ of \textbf{edges}, where the edges are simple arcs joining vertices with disjoint interiors in $(S, \underline{s})$.
\end{itemize}
An embedded graph $F$ is called an \textbf{embedded forest} in $(S, \underline{s})$ if there is no cycle in $F$. We refer to an embedded forest simply as a forest for conciseness.

A forest $F$ is called a \textbf{tree} if it has exactly one connected component. A tree in $(S, \underline{s})$ is called \textbf{spanning} if it connects all the labeled points.

Two forests $F$ and $F'$ in the sphere $(S, \underline{s})$ are called \textbf{topologically equivalent} if there exists an orientation-preserving homeomorphism $f: (S, \underline{s}) \to (S, \underline{s})$ preserving the labels such that $f(F) = F'$. The \textbf{topological type} of a forest $F$ is defined as the set of all forests on $(S, \underline{s})$ that are topologically equivalent to $F$, denoted by $[F]$.

These notions of forests extend directly to flat spheres $X = (M, \underline{x}, g)$. Furthermore, a forest $F$ in $X$ is called \textbf{geometric} if all its edges are saddle connections.

\smallskip

\noindent\textit{\textbf{Convention}} We always assume that $F$ is oriented and labeled, meaning each edge of $F$ is oriented, and the edges $e_i$ of $F$ are labeled by $i = 0, 1,\ldots, m$.

\smallskip

Given a spanning tree $F$ on $(S, \underline{s})$, we define a subset $D_{S,\underline{s}}(\underline{k};F)$ of $\mathbb{P}\mathcal{T}_{S,\underline{s}}(\underline{k})$ as:
\begin{align*}
    D_{S,\underline{s}}(\underline{k};F) := &\left\{ (X,f)\in \mathbb{P}\mathcal{T}_{S,\underline{s}}(\underline{k}) \mid \text{the equivalence class of } (X,f) \text{ contains a representative} \right.\\
    & \left. \text{such that } f(F)\text{ is a geometric spanning tree.}  \right\}
\end{align*}
For simplicity, we will also use the notation $F$ to refer to the image $f(F)$ on $X$.

A local coordinate chart on $D_{S,\underline{s}}(\underline{k};F)$ is constructed as follows. For $(X,f) \in D_{S,\underline{s}}(\underline{k};F)$, note that $X \setminus F$ is simply connected. Thus, a developing map $Dev$ of $X$ can be induced on $X \setminus F$. Under this map $Dev$, $X \setminus F$ is mapped to a polygon (possibly immersed) in $\mathbb{C}$. Each edge of $f(F)$ corresponds to two vectors on the polygon’s boundary, differing by a rotation. Since $F$ is oriented, we select the vector such that $X \setminus F$ is mapped to the left side of the vector. Denote these vectors corresponding to edges $e_i$ by $z_i$ for $i = 0, \ldots, n-2$. Note that $z_{n-2}$ is a complex linear combination of $z_0, \ldots, z_{n-3}$. 

We call the vector $(z_0, \ldots, z_{n-3}) \in \mathbb{C}^{n-2}$ an \textbf{unprojectivized spanning tree parameter} of $X$ in $D_{S,\underline{s}}(\underline{k};F)$. 
Considering the projection to $\mathbb{CP}^{n-3}$, we define the map
\begin{equation}\label{equ:coordinatemap}
    \phi_F: D_{S,\underline{s}}(\underline{k};F) \to \mathbb{C}P^{n-3}, \quad (X, f) \mapsto [z_0, z_1, \ldots, z_{n-3}].
\end{equation}
Thurston~\cite{thu} demonstrated that this map is a local biholomorphism. Therefore, it defines a local analytic coordinate chart on $D_{S,\underline{s}}(\underline{k};F)$, called a \textbf{spanning tree coordinate chart} associated with the (oriented and labeled) spanning tree $F$. 

\smallskip

\noindent\textit{\textbf{Convention}} When using a spanning tree coordinate chart, we typically select a standard affine chart of $\mathbb{CP}^n$. In other words, we normalize $z_0$ (or possibly another entry) to $1$. Under this normalization, $D_{S,\underline{s}}(\underline{k};F)$ is locally parameterized by $(z_1, \ldots, z_{n-3}) \in \mathbb{C}^{n-3}$, and the coordinate map $\phi_F$ takes values in $\mathbb{C}^{n-3}$:
\begin{equation}\label{equ:coordinatemap2}
    \phi_F \colon D_{S,\underline{s}}(\underline{k};F) \to \mathbb{C}^{n-3}, \quad (X, f) \mapsto  (z_1, \ldots, z_{n-3}).
\end{equation}

Since the projection $\pi: \mathbb{P}\mathcal{T}_{S, \underline{s}}(\underline{k}) \to \mathbb{P}\Omega(\underline{k})$ is a local homeomorphism, the spanning tree coordinate chart extends to the moduli space $\mathbb{P}\Omega(\underline{k})$.

\subsection{Delaunay Regions}\label{sec:delaunay}
We recall the definition of Delaunay triangulations of flat cone spheres. The concept of Delaunay triangulation was introduced by Boris Delaunay in~\cite{Delaunay}, and was later generalized to translation surfaces~\cite{Masur1991} and flat cone surfaces~\cite{thu}.

A \textbf{cell decomposition} $T$ of a sphere $(S, \underline{s})$ with labeled points is a cell decomposition of $S$ such that vertices are the labeled points $\underline{s} = s_1, \ldots, s_n$, and the edges are simple arcs. A cell decomposition is called a \textbf{triangulation} if each face is a triangle.

Let $X=(M,\underline{x},g)$ be a flat cone sphere. A cell decomposition $T$ of $X$ is called \textbf{geometric} if it is a cell decomposition of $(M, \underline{x})$ and the edges are saddle connections.

A disk in a flat cone sphere $X$ is an immersion $d: B(0,r)\to X$ of an open disk of radius $r$ such that $d$ is a local isometry. In particular, no singularity of $X$ lies in the image of $d$. A disk is \textbf{maximal} if it is not properly contained in any other disk. We denote the continuous extension by $\overline{d}:\overline{B}(0,r)\to X$. Let $f$ be a polygonal face of a cell decomposition on $X$ whose edges are saddle connections. We say that $f$ is \textbf{inscribed in a disk} $\overline{d}:\overline{B}(0, r)\to X$ in $X$ if $f$ is the image under $\overline{d}$ of a polygon inscribed in $\overline{B}(0, r)$, and we call such a face $f$ \textbf{Delaunay}.

\begin{definition}\label{def:delaunay}
    A geometric triangulation $T$ of $X$ is a \textbf{Delaunay triangulation} of $X$ if every face of $T$ is Delaunay.

    A flat cone sphere $X$ is called \textbf{Delaunay-generic} if there exists a Delaunay triangulation $T$ of $X$ such that no two faces are inscribed in the same disk in $X$.
\end{definition}
According to~\cite{thu}, there exists at most finitely many Delaunay triangulations on a flat cone sphere. Note that a Delaunay-generic flat cone sphere has a unique Delaunay triangulation.

\begin{definition}
    Let $T$ be a cell decomposition of $(S, \underline{s})$. We define $D_{S,\underline{s}}(\underline{k};T)$ as the subset of $\mathbb{P}\mathcal{T}_{S, \underline{s}}(\underline{k})$ consisting of pairs $(X,f)$ such that the equivalence class of $(X,f)$ has a representative for which $f(T)$ is geometric.

    Assume that $T$ is a triangulation of $(S, \underline{s})$. We define the \textbf{Delaunay region} $D_{S,\underline{s}}^{Del}(\underline{k}; T)$ as the subset of $D_{S,\underline{s}}(\underline{k};T)$ consisting of $(X,f)$ such that $X$ is Delaunay-generic and the equivalence class of $(X,f)$ has a representative such that $f(T)$ is the Delaunay triangulation of $X$.
\end{definition}
By abuse of notation, for each $(X,f)\in D_{S,\underline{s}}^{\mathrm{Del}}(\underline{k};T)$, we also write $T$ for the Delaunay triangulation of $X$.

Let $F$ be a spanning tree consisting of edges of $T$. The Delaunay region $D_{S,\underline{s}}^{Del}(\underline{k}; T)$ is contained in $D_{S,\underline{s}}(\underline{k};F)$. Moreover, we prove later in Lemma~\ref{lem:delaunaychart} that the map~\eqref{equ:coordinatemap} restricted to $D_{S,\underline{s}}^{Del}(\underline{k};T)$ is a bijection. In particular, we have a global spanning tree coordinate chart associated to $F$ on $D_{S,\underline{s}}^{Del}(\underline{k};T)$.\newline

We explain the meaning of Delaunay-genericity in the following.

\begin{definition}
    A subset $A$ of a real analytic $n$-manifold $M$ is called a \textbf{real analytic subset} if, for every $p\in A$, there exists a real analytic chart $(U, \varphi)$ with $\varphi: U \to \mathbb{R}^n$, such that $\varphi(A \cap U)$ is the zero locus of a real analytic function $f:\varphi(U)\to \mathbb{R}$.
\end{definition}

\begin{lemma}\label{lem:genericdelaunay}
Let $\underline{k}\in (\infty,1)^n$ be a curvature vector. The restriction of $\pi: \mathbb{P}\mathcal{T}_{S, \underline{s}}(\underline{k}) \to \mathbb{P}\Omega(\underline{k})$ to any Delaunay region is injective. Moreover, there exist finitely many Delaunay regions 
$$
D_{S,\underline{s}}^{Del}(\underline{k};T_1), \ldots, D_{S,\underline{s}}^{Del}(\underline{k};T_m)
$$ 
in $\mathbb{P}\mathcal{T}_{S, \underline{s}}(\underline{k})$ such that
\begin{itemize}
    \item $\{\pi(D_{S,\underline{s}}^{Del}(\underline{k};T_i))\}_{i=1}^m$ are disjoint and their union is the subset of the Delaunay-generic flat cone spheres in $\mathbb{P}\Omega(\underline{k})$.
    \item The complement of these regions is a real analytic subset.
\end{itemize}
\end{lemma}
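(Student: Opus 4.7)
The plan is to prove the three assertions separately: injectivity on each Delaunay region, finiteness together with disjointness and covering, and real analyticity of the complement.

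For injectivity on $D^{Del}_{S,\underline{s}}(\underline{k};T)$, the key input is uniqueness of the Delaunay triangulation of a Delaunay-generic flat cone sphere. If $(X,f)$ and $(X',f')$ both lie in the Delaunay region and $\pi(X,f)=\pi(X',f')$, then a label-preserving homothety $h\colon X\to X'$ exists. Because homotheties send Delaunay triangulations to Delaunay triangulations and each of $X$, $X'$ admits a unique one, $h$ must send $f(T)$ to $f'(T)$, respecting the vertex labels inherited from the markings. Thus $f'^{-1}\circ h\circ f$ is a self-homeomorphism of $(S,\underline{s})$ preserving the labeled triangulation $T$ edge by edge, which is isotopic to the identity rel $\underline{s}$ by the standard rigidity of labeled triangulations on the sphere. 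This yields $(X,f)=(X',f')$ in $\mathbb{P}\mathcal{T}_{S,\underline{s}}(\underline{k})$.

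For finiteness, I would use the finiteness of combinatorial types of labeled triangulations of the $n$-pointed sphere, each with $2n-4$ triangles and $3n-6$ edges by Euler's formula. Let $T_1,\ldots,T_m$ be representatives. Every Delaunay-generic $X$ has its unique Delaunay triangulation equivalent as a labeled triangulation to exactly one $T_i$, producing a marking in $D^{Del}_{S,\underline{s}}(\underline{k};T_i)$; so the images cover the Delaunay-generic locus. Disjointness of $\pi(D^{Del}_{S,\underline{s}}(\underline{k};T_i))$ follows from the same uniqueness, since a common point with $i\neq j$ would yield two non-equivalent Delaunay triangulations of a single generic surface.

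The most substantive step is showing that the Delaunay-non-generic locus is a real analytic subset of $\mathbb{P}\Omega(\underline{k})$. I would cover the moduli space by the spanning-tree charts $\phi_F\colon D_{S,\underline{s}}(\underline{k};F)\to\mathbb{C}^{n-3}$ of Section~\ref{sec:spanningtree}, in which the developed positions of all singularities, computed as cumulative sums of rotated edge vectors along paths in $F$, are explicit real analytic functions of the chart coordinates. Refining within such a chart by the finitely many geometric triangulations $T$ that extend $F$, one obtains relatively open subregions on which each Delaunay edge-flip condition becomes an explicit real polynomial equation in the coordinates via the classical concyclicity criterion for four planar points (the vanishing of an appropriate $4\times 4$ determinant, or equivalently the reality of a cross-ratio). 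The non-generic locus in the chart is then a locally finite union of real analytic zero sets, and compatibility across the complex analytic transition maps globalizes the description to a real analytic subset of $\mathbb{P}\Omega(\underline{k})$.

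The principal difficulty is exactly this last step: the global condition ``two Delaunay faces share an empty disk'' is geometric, and translating it into local real analytic equations requires tracking which quadruple of singularities becomes cocircular near each non-generic surface. The triangulation-refinement argument circumvents this by replacing the single global condition with a locally finite family of explicit edge-flip equations in each chart, but verifying local finiteness, together with the claim that these families glue coherently across charts, is where the main technical care is needed.
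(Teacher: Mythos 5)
Your argument for injectivity and for finiteness, disjointness, and covering matches the paper's: both rest on uniqueness of the Delaunay triangulation of a Delaunay-generic sphere, rigidity of labeled triangulations of $(S,\underline{s})$, and the combinatorial finiteness of triangulation types. No issues there.

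The real analyticity of the non-generic locus is where your proposal has a genuine gap, which you yourself flag. You propose to refine each spanning-tree chart by the finitely many geometric triangulations extending $F$ and then cut out the non-generic locus with cocircularity determinants, but you never pin down \emph{which} determinant equations to impose near a given non-generic $X$, and the claimed ``locally finite family of edge-flip equations'' is left unspecified. This is not a minor bookkeeping detail: a non-generic $X$ admits several Delaunay triangulations simultaneously, so ``the'' edge-flip condition is ill-defined, and the region where a fixed $T$ is geometric has a boundary of its own that must also be handled. The paper's key move, which your sketch omits, is to pass from Delaunay triangulations (non-unique on the non-generic locus) to the canonical \emph{Delaunay cell decomposition} $T'$ of $X$: the cell decomposition whose faces are each inscribed in a disk, with distinct faces inscribed in distinct disks. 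This $T'$ is uniquely determined by the flat metric, and every Delaunay triangulation of $X$ is a subdivision of it. Choosing a spanning tree $F\subset T'$ and shrinking to a neighborhood $U$ of $X$ on which $T'$ remains geometric (i.e.\ a neighborhood contained in $D_{S,\underline{s}}(\underline{k};T')$ in the Teichm\"uller space), the paper defines $L$ to be the zero locus of the product of the cocircularity determinants $P_{A_0,A_1,A_2,B,f}$ over all non-triangular faces $f$ of $T'$ and all vertex quadruples in $f$, and then proves both inclusions $L\subseteq C\cap U$ and $C\cap U\subseteq L$ by subdividing $T'$ into a Delaunay triangulation of a nearby surface $X'$ and checking whether distinct faces share a circumscribed disk. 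This single canonical $T'$ is exactly the organizing device that tells you which equations matter near $X$; without it, the plan to refine by all geometric triangulations and glue the resulting families is hard to carry through, which is precisely the difficulty you acknowledge but do not resolve.
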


To prove the lemma, we utilize the following criterion from~\cite{GS} to determine if a triangular face is Delaunay:

\begin{lemma}[{\cite[Lemma~8.1]{GS}}]\label{lem:delaunaycondition}
    Let $f = (A_0A_1A_2)$ be a triangle embedded in the complex plane, where the vertices $A_0, A_1, A_2$ are arranged in counterclockwise order. Let $B$ be a point in the plane. Denote the complex vector from $A_0$ to $A_i$ by $\alpha_i$ for $i = 1, 2$, and let $\beta$ denote the complex vector from $A_0$ to $B$. Then, the point $B$ does not lie in the interior of the circumcircle of $A_0, A_1, A_2$ if and only if the following inequality holds:
    \begin{equation}\label{equ:delaunaycondition}
        \left|
        \begin{array}{ccc}
        \mathrm{Re}(\alpha_1)  & \mathrm{Im}(\alpha_1) & |\alpha_1|^2 \\
        \mathrm{Re}(\alpha_2)  & \mathrm{Im}(\alpha_2) & |\alpha_2|^2 \\
        \mathrm{Re}(\beta)  & \mathrm{Im}(\beta) & |\beta|^2
        \end{array}
        \right| \ge 0.
    \end{equation}
    Furthermore, the points $A_0,A_1,A_2$ and $B$ are cocircular if and only if the determinant above is zero.
\end{lemma}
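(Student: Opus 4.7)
The plan is to view the $3 \times 3$ determinant as a scalar-valued function $f(B)$ of the point $B = (x, y) \in \mathbb{R}^2$, identify its zero locus with the circumcircle of $A_0 A_1 A_2$, and determine the sign of $f(B)$ from the leading coefficient of an explicit expansion. This is essentially the classical ``lift to the paraboloid'' incircle test, rewritten in a translated frame where $A_0$ sits at the origin (which is already the case, since $\alpha_i = A_i - A_0$ and $\beta = B - A_0$).

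Concretely, I would expand along the third row of the displayed matrix to obtain
$$
f(B) \;=\; \mathrm{Re}(\beta)\cdot M_1 \;-\; \mathrm{Im}(\beta)\cdot M_2 \;+\; |\beta|^2 \cdot M_3,
$$
where $M_1, M_2, M_3$ are the $2\times 2$ minors built from the first two rows. The leading coefficient is
$$
M_3 \;=\; \mathrm{Re}(\alpha_1)\,\mathrm{Im}(\alpha_2) - \mathrm{Re}(\alpha_2)\,\mathrm{Im}(\alpha_1) \;=\; \mathrm{Im}\bigl(\overline{\alpha_1}\,\alpha_2\bigr),
$$
which is twice the signed area of $\triangle A_0 A_1 A_2$. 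The counterclockwise orientation hypothesis is precisely the statement that $M_3 > 0$; this sign-tracking step is the only delicate point in the argument.

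Next, I would check by inspection that $f$ vanishes at the three vertices: at $B = A_0$ the third row becomes zero, and at $B = A_1$ (resp.\ $B = A_2$) the third row coincides with the first (resp.\ the second). Thus $\{B : f(B) = 0\}$ is a degree-two algebraic curve in $\mathbb{R}^2$ with strictly positive coefficient $M_3$ on $|\beta|^2 = x^2 + y^2$, passing through three non-collinear points. Completing the square shows that, after dividing by $M_3$, its equation takes the form $(x - x_c)^2 + (y - y_c)^2 = r^2$ for some center $(x_c, y_c)$ and some $r^2 > 0$ (the positivity of $r^2$ follows from the existence of the three distinct non-collinear points on the curve). By the uniqueness of the circle through three non-collinear points, this is the circumcircle of $A_0 A_1 A_2$.

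Since $M_3 > 0$, dividing by $M_3$ preserves sign, so $f(B) \ge 0$ if and only if $(x - x_c)^2 + (y - y_c)^2 \ge r^2$, that is, if and only if $B$ lies on or outside the circumcircle; equality holds exactly on the circle, giving both assertions of the lemma simultaneously. The entire proof is thus a one-line cofactor expansion followed by a three-point verification, with the orientation-to-sign step being the only place where hypotheses genuinely enter.
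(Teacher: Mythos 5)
Your proof is correct. Note that the paper itself does not prove this lemma; it cites it as \cite[Lemma~8.1]{GS} (Gu--Saucan, or whichever reference $\mathrm{GS}$ is) and uses it as a black box. Your argument is a clean, self-contained derivation of the incircle predicate: cofactor expansion along the third row, identification of the leading coefficient $M_3 = \mathrm{Im}(\overline{\alpha_1}\,\alpha_2)$ with twice the signed area of $\triangle A_0A_1A_2$ (positive by the counterclockwise hypothesis), verification that $f$ vanishes at $A_0, A_1, A_2$, completion of the square, and uniqueness of the circle through three non-collinear points. All the sign-tracking is consistent: after dividing by $M_3 > 0$ one gets $f/M_3 = (x-x_c)^2 + (y-y_c)^2 - r^2$, so $f \ge 0$ exactly when $B$ is on or outside the circumcircle, with equality precisely on the circle, giving both the inequality and the cocircularity criterion. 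The one place you could tighten slightly: $r^2 > 0$ already follows from $A_1 \ne A_0$ lying on the zero set (so $f/M_3$ is not simply $x^2+y^2$); you do not need all three points for that step, though invoking non-collinearity does no harm.
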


\begin{proof}[Proof of Lemma~\ref{lem:genericdelaunay}]
The injectivity of $\pi$ restricted to a Delaunay region follows from the fact that any orientation-preserving homeomorphism of the sphere $(S, \underline{s})$ fixing a triangulation is isotopic to the identity. In particular, two Delaunay regions maps to the same image or disjoint images in  $\mathbb{P}\Omega(\underline{k})$.

Since there are finitely many triangulations on the sphere $(S, \underline{s})$ up to homeomorphisms, we select $T_1,\ldots,T_m$ to be a sequence of triangulations of $(S, \underline{s})$ such that $D_{S,\underline{s}}^{Del}(\underline{k};T_i)$ is non-empty and any Delaunay-generic convex flat cone sphere is contained in one of $\pi(D_{S,\underline{s}}^{Del}(\underline{k};T_i))$, $1\le i\le m$.

Denote by $C$ the complement of $\pi(D_{S,\underline{s}}^{Del}(\underline{k};T_1)), \ldots, \pi(D_{S,\underline{s}}^{Del}(\underline{k};T_m))$ in $\mathbb{P}\Omega(\underline{k})$. Let $X$ be a convex flat cone sphere in $C$. Although the Delaunay triangulations are not unique on $X$, we can consider a cell decomposition  $T'$ on $X$ satisfying that, each face of $T'$ is inscribed in a disk, and different faces are inscribed in different disks. Note that such a cell decomposition is uniquely determined by the flat metric,  and we call it \textbf{Delaunay cell decomposition} of $X$. Any Delaunay triangulation of $X$ is a subdivision of $T'$.

Let $F$ be a spanning tree contained in $T'$. Consider a spanning tree coordinate chart near $X$ associated to $F$. For a face $f$ of $T'$ which has more than three vertices, 
let $A_0,A_1,A_2$ be the vertices of $f$ arranged in counterclockwise order, and let $B$ be another vertex of $f$. We consider the locus in $U$ defined by
$$
P_{A_0,A_1,A_2,B,f}(\alpha_1,\alpha_2,\beta) = \left|
        \begin{array}{ccc}
        \mathrm{Im}(\alpha_1)  & \mathrm{Re}(\alpha_1) & |\alpha_1|^2 \\
        \mathrm{Im}(\alpha_2)  & \mathrm{Re}(\alpha_2) & |\alpha_2|^2 \\
        \mathrm{Im}(\beta)  & \mathrm{Re}(\beta) & |\beta|^2
        \end{array}
        \right| = 0,
$$
where $\alpha_1,\alpha_2,\beta$ are associated to $A_0,A_1,A_2, B$ as in Lemma~\ref{lem:delaunaycondition}. Note that $\alpha_1,\alpha_2,\beta$ are complex linear combination of the spanning tree coordinates in $U$. It follows that $P_{A_0,A_1,A_2,B,f}$ defines a real analytic function on $U$. 

We define $L\subset U$ to be the zero of the product of $P_{A_0,A_1,A_2,B,f}$, where the product is taken over all non-triangular faces $f$ of $T'$ and all vertices $A_0,A_1,A_2,B$ in $f$ defined as above. We show below that $L = C\cap U$ for sufficiently small $U$.

By shrinking $U$, we may assume that $T'$ induces a geometric cell decomposition on any $X'$ in $U$. In other words, $U$ is the projection of a neighborhood contained in $D_{S,\underline{s}}(\underline{k};T')$ in the Teichm\"uller space. 

For each $X' \in U$, subdivide every polygonal face of $T'$ by a Delaunay triangulation of that face, and denote the resulting triangulation by $T''$. Note that $T''$ depends on $X'$. We claim that, after shrinking $U$ if necessary, $T''$ is Delaunay for every $X' \in U$. As illustrated in Figure~\ref{fig:deformdelaunay}, let $f$ be a face of $T''$, and let $f_1$ be a face adjacent to $f$. Write $A_0,A_1,A_2$ for the vertices of $f$, listed in counterclockwise order, and let $B$ be the remaining vertex of $f_1$. 

If $f$ and $f_1$ are contained in the same polygonal face of $T'$ (see the right side of~Figure~\ref{fig:deformdelaunay}), then, by construction, they are triangular faces in a Delaunay triangulation of the polygonal face. Hence inequality~\eqref{equ:delaunaycondition} for $A_0,A_1,A_2,B$ holds on $X'$. Now assume that $f$ and $f_1$ are contained in two distinct faces of $T'$ (see the left side of~Figure~\ref{fig:deformdelaunay}). Note that the strict inequality~\eqref{equ:delaunaycondition} for $A_0,A_1,A_2,B$ holds on $X$. Since this inequality is strict, after shrinking $U$ if necessary, we may assume that it continues to hold for every $X' \in U$. Thus inequality~\eqref{equ:delaunaycondition} holds for every pair of adjacent faces of $T''$ on every $X' \in U$. Therefore, $T''$ is a Delaunay triangulation of $X'$.

\begin{figure}[!htbp]
	\centering
	\includegraphics[width=0.8\linewidth]{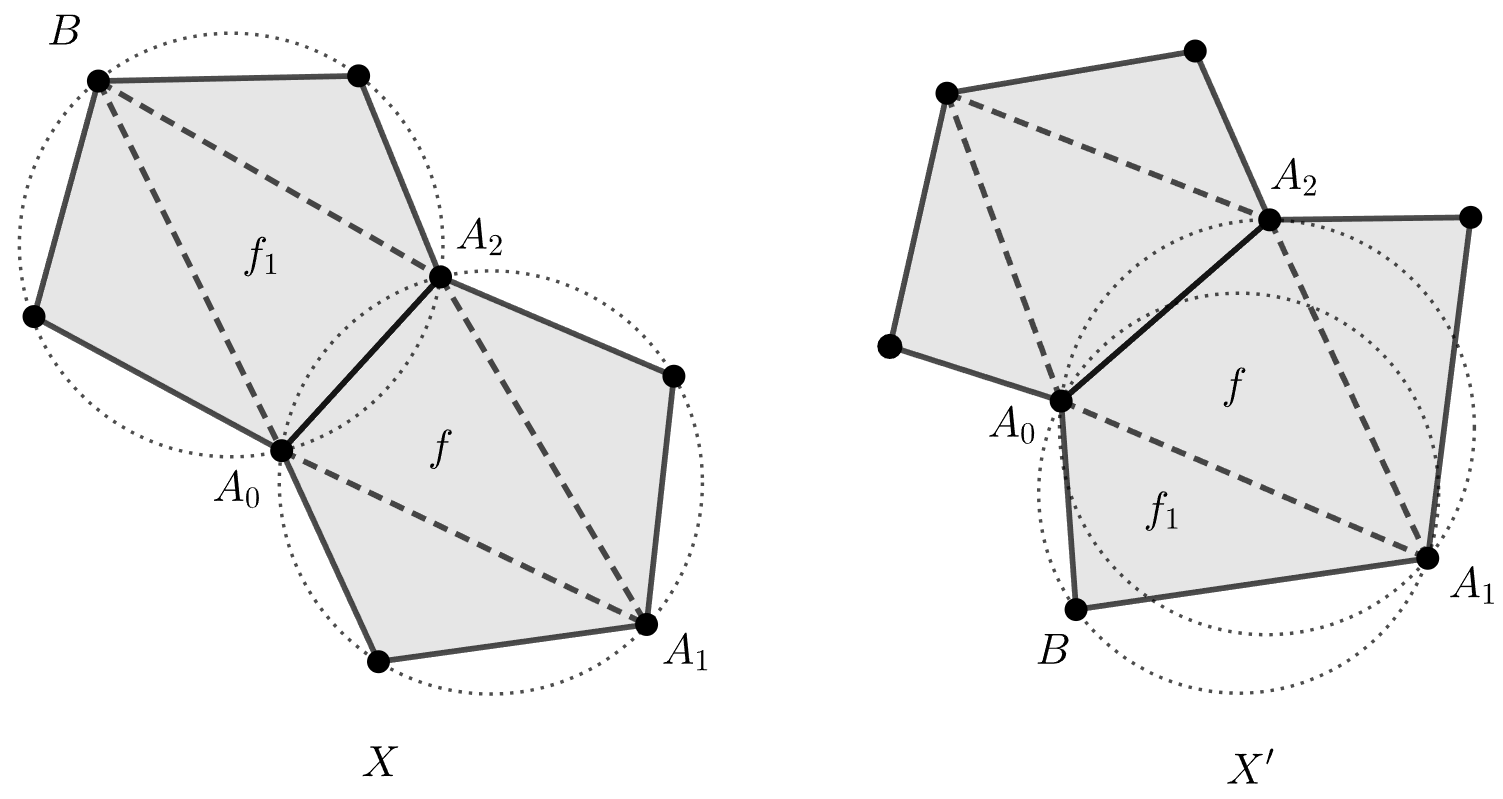}
    \caption{Left: two polygonal faces in the Delaunay cell decomposition of $X$. The two triangle faces $f$ and $f_1$, lying in different polygons, satisfy the strict Delaunay condition~\eqref{equ:delaunaycondition}. Right: the corresponding deformed polygonal faces in a nearby sphere $X'$, where the adjacent triangle faces $f$ and $f_1$ lie in a single polygon and arise from a Delaunay triangulation of the polygon, hence satisfy~\eqref{equ:delaunaycondition}.}\label{fig:deformdelaunay}
\end{figure}

Now, if $X' \in L$, then the equations of $L$ imply that Delaunay triangulation of some polygonal face $f$ of $T'$ is not unique. In particular, we get more than one Delaunay triangulation $T''$ on $X'$ by considering different subdivisions of $T'$. This implies that $X'\in C\cap U$.

On the other hand, for $X'\in C\cap U$, assume that $X'$ is outside $L$. Then each polygon face of $T'$ on $X'$ has a unique Delaunay triangulation. Let $T''$ be the subdivision of $T'$ by Delaunay triangulations of the polygon faces. Then, the strict inequality~\eqref{equ:delaunaycondition} holds for any two adjacent triangles of $T''$. In particular, each face of $T''$ is inscribed in a different immersed disk. Therefore, $X'$ is Delaunay-generic. However, this contradicts the fact that $X'$ is in $C\cap U$. This shows that $C \cap U \subseteq L$.  

Therefore, we prove that the complement $C$ is locally equal to $L$, which is defined as a the zero of a real analytic function. In particular, $C$ is a real analytic subset.
\end{proof}

According to Lemma~\ref{lem:genericdelaunay}, a Delaunay region $D_{S,\underline{s}}^{Del}(\underline{k};T)$ is embedded into the moduli space $\mathbb{P}\Omega(\underline{k})$. We use the same notation for its image in $\mathbb{P}\Omega(\underline{k})$ and call it a \textbf{Delaunay region in $\mathbb{P}\Omega(\underline{k})$}.

\subsection{Complex Hyperbolic Metrics and Metric Completion}\label{sec:complexhyper}
We recall the complex hyperbolic metrics on the moduli spaces of convex flat cone spheres constructed by Thurston in~\cite{thu}. 

We first recall complex hyperbolic spaces. Given a Hermitian form $H$ on $\mathbb{C}^{n+1}$ with signature $(1, n)$, we define
$$
\mathbb{H}_{\mathbb{C}}^n := \{v \in \mathbb{C}^{n+1} \mid H(v,v) > 0\}/\mathbb{C}^{\ast}.
$$
The Hermitian form $-H$ restricted to $\{v \in \mathbb{C}^{n+1} \mid H(v,v) = 1\}$ induces a Hermitian metric $h$ on $\mathbb{H}_{\mathbb{C}}^n$. We call $(\mathbb{H}_{\mathbb{C}}^n, h)$ the \textbf{complex hyperbolic space of dimension $n$}.

In the case of \textbf{convex} flat cone spheres, Thurston constructed in~\cite{thu} a complex hyperbolic metric on the moduli space of flat cone spheres $\mathbb{P}\Omega(\underline{k})$. Let $F$ be a spanning tree on the sphere $(S, \underline{s})$ and let $D_{S,\underline{s}}(\underline{k};F)$ be the corresponding spanning tree coordinate chart. Let $(z_0,\ldots,z_{n-3})\in \mathbb{C}^{n-2}$ be the vectors associated to  $n-2$ edges of $F$. Thurston showed that the area $\mathrm{Area}(X)$ for $(X, f)\in D_{S,\underline{s}}(\underline{k};F)$ is a quadratic form in $(z_0,\ldots,z_{n-3})$ with signature $(1, n-3)$. It follows that $\mathrm{Area}(\cdot)$ induces a complex hyperbolic metric on the spanning tree coordinate chart $D_{S,\underline{s}}(\underline{k};F)$. Thurston showed that this metric does not depend on the choice of spanning tree charts, hence it induces a complex hyperbolic metric on the Teichm\"uller space $\mathbb{P}\mathcal{T}_{S, \underline{s}}(\underline{k})$, denoted by $h_{Thu}$. We denote the induced measure by $\mu_{Thu}$.

Note that the construction of the complex hyperbolic metric does not depend on the marking in $(X, f)$. It follows that it induces a metric on the moduli space $\mathbb{P}\Omega(\underline{k})$. We use the same notation, $h_{Thu}$, for the induced metric, and denote the associated measure on $\mathbb{P}\Omega(\underline{k})$ by $\mu_{Thu}$.

We denote the associated Hermitian form of the quadratic form $\mathrm{Area}(\cdot)$ by $H(\cdot,\cdot)$. It is known that the complex hyperbolic metric $h_{Thu}$ can be expressed concretely by the Hermitian form $H$ as follows.

\begin{lemma}[\cite{Mos}, Section $19$]\label{lem:metricformula}
    Let $\underline{k}\in (0,1)^n$ be a curvature vector.
    In any spanning tree coordinate chart $D_{S,\underline{s}}(\underline{k};F)$, the complex hyperbolic metric $h_{Thu}$ is expressed as
    $$(h_{Thu})_{X}=\frac{1}{\mathrm{Area}(1,Z)^2}\left[|H((0,dZ),(1,z))|^2-\mathrm{Area}(0,dZ)\mathrm{Area}(1,Z)\right]$$
    where $Z = (z_1, \ldots,z_{n-3})$ is the coordinate of $X$ in $D_{S,\underline{s}}(\underline{k};F)$ and $dZ=(dz_1,\ldots,dz_{n-3})$.
\end{lemma}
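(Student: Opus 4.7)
The plan is to derive the explicit formula by specializing the standard Bergman-type expression for the complex hyperbolic metric on the projectivization of a Hermitian cone of signature $(1, n-3)$ to the affine chart $z_0 = 1$ determined by the spanning tree. This is essentially a coordinate-level reformulation of Thurston's construction, so the argument is linear-algebraic in nature.

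First, I would recall the general fact for $\mathbb{H}^m_{\mathbb{C}}$: given a Hermitian form $H$ of signature $(1, m)$ on $\mathbb{C}^{m+1}$, the induced complex hyperbolic metric on the projectivized positive cone $\{H > 0\}/\mathbb{C}^*$ pulls back, under any local holomorphic section $s$ of the tautological $\mathbb{C}^*$-bundle, to
$$\frac{|H(s, ds)|^2 - H(s,s)\, H(ds, ds)}{H(s, s)^2}.$$
This follows from the definition of the metric as the restriction of $-H$ to the level set $\{H(v,v) = 1\}$, quotiented by the $U(1)$-action on the fibers: one decomposes $ds$ into a component parallel to $s$ and a horizontal component $ds_\perp$ orthogonal to $s$ with respect to $H$, and observes that the Kähler form depends only on $-H(ds_\perp, ds_\perp)$. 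A direct computation gives $-H(ds_\perp, ds_\perp) = [\,|H(s, ds)|^2 - H(s,s)\, H(ds, ds)\,]/H(s,s)$, and dividing once more by $H(s,s)$ to project from $\{H = 1\}$ down to the projective quotient yields the displayed formula.

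Next, I would specialize to our setting. In the spanning tree chart with affine coordinate $Z = (z_1, \ldots, z_{n-3})$, the tautological section is $s(Z) = (1, Z)$ and $ds = (0, dZ)$. Substituting this into the general formula and invoking the identity $\mathrm{Area}(w) = H(w, w)$ yields exactly
$$\frac{|H((0, dZ), (1, Z))|^2 - \mathrm{Area}(0, dZ)\, \mathrm{Area}(1, Z)}{\mathrm{Area}(1, Z)^2},$$
which is the formula in the lemma (using that $|H((1, Z), (0, dZ))| = |H((0, dZ), (1, Z))|$ by the Hermitian property). Positivity of the numerator, required for the metric to be positive definite, is automatic from the reverse Cauchy--Schwarz inequality in signature $(1, n-3)$ applied to the positive vector $s$.

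No substantive geometric obstacle arises: the identification of $\mathrm{Area}(\cdot)$ with the defining Hermitian form of $h_{Thu}$ has already been made in the paragraph preceding the lemma, and what remains is essentially bookkeeping. The only point requiring care is the sign convention -- verifying that the passage from the ambient $-H$ to the quotient $\mathbb{H}_{\mathbb{C}}^{n-3}$ produces exactly this combination and not its negative -- which is precisely why the statement is attributed to Section~19 of Mostow's monograph, where this normalization is fixed in detail.
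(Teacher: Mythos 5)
Your derivation is correct. The paper does not prove this lemma itself but cites Mostow~\cite{Mos}, Section~19, and your argument reproduces precisely the standard Bergman-type computation that underlies that reference: restrict $-H$ to the unit level set, split $ds$ into components parallel and $H$-orthogonal to the section $s=(1,Z)$, compute $-H(s_\perp,s_\perp) = \bigl[\,|H(s,ds)|^2 - H(s,s)H(ds,ds)\,\bigr]/H(s,s)$, and pick up the second factor of $H(s,s)$ from normalizing the section to the level set $\{H=1\}$. The one minor imprecision is narrative rather than mathematical: the second division by $H(s,s)$ arises from the chain rule in pulling back along $s\mapsto s/\sqrt{H(s,s)}$, not from ``projecting down'' as such, but the computation itself is sound and the final substitution $s=(1,Z)$, $ds=(0,dZ)$, $\mathrm{Area}(\cdot)=H(\cdot,\cdot)$ gives exactly the stated formula.
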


\begin{remark}
According to Lemma~\ref{lem:genericdelaunay}, we know that almost every convex flat cone sphere in $\mathbb{P}\Omega(\underline{k})$ with respect to the measure $\mu_{Thu}$ is Delaunay generic.
\end{remark}

In general, the moduli space $(\mathbb{P}\Omega(\underline{k}),h_{Thu})$ is not metric complete. In~\cite{thu}, Thurston studied the metric completion of the moduli space of convex flat cone spheres. We denote by $\overline{\mathbb{P}\Omega}(\underline{k})$ the metric completion of $\mathbb{P}\Omega(\underline{k})$ with respect to the metric $h_{Thu}$.

To describe the structure of the metric completion, we introduce the following notions. Let $P$ be a partition of $\{1,\ldots,n\}$. We say that $P$ is \textbf{$\underline{k}$-admissible} if, for any set $p \in P$, we have that
$$
\sum_{i\in p}k_i<1.
$$

\noindent\textit{\textbf{Convention}} We always assume that a partition $P$ is ordered and is written as 
$$P = (p_1, \ldots, p_s, p_{s+1},\ldots, p_r)$$ 
where each elements $p_i$ is written as increasing integers, and $p_1, \ldots, p_s$ are the non-singleton elements in $P$. Moreover, the subsequences $(p_1, \ldots, p_s)$ and $(p_{s+1}, \ldots, p_r)$ are increasingly ordered by the minimal values of $p_i$.\newline

Let $P = (p_1, \ldots, p_r)$ be an ordered partition of $\{1,\ldots,n\}$. We define:
\begin{equation}\label{equ:curvaturenotations0}
    \underline{k}(P) := \Big(\sum_{i \in p_1} k_i, \ldots, \sum_{i \in p_r} k_i \Big).
\end{equation}
In addition, for an element $p_i=(i_1,\ldots,i_m)$ of $P$, we define:
\begin{equation}\label{equ:curvaturenotations1}
    \underline{k}(p_i) := \Big(k_{i_1}, \ldots, k_{i_m}, 2 - \sum_{j \in p_i} k_j \Big)
\end{equation}

\begin{lemma}[{\cite[Theorem~0.2]{thu}}]\label{lem:thurstoncompletion}
    Let $\underline{k}\in (0,1)^n$ be a curvature vector. Then, the metric completion $\overline{\mathbb{P}\Omega}(\underline{k})$ is an $(n-3)$-dimensional complex hyperbolic cone-manifold. Furthermore, the completion can be written as
    \begin{equation}\label{equ:stratification}
        \overline{\mathbb{P}\Omega}(\underline{k}) = \bigsqcup_{P}B(P)
    \end{equation}
    where the union is taken over all $\underline{k}$-admissible partitions of $\{1, \ldots, n\}$ and each $B(P)$ satisfies that
    \begin{itemize}
        \item $B(P)$ is a totally geodesic submanifold.
        \item $B(P)$ is isometric to the moduli space $\mathbb{P}\Omega(\underline{k}(P))$. In particular, the codimension of $B(P)$ in $\mathbb{P}\Omega(\underline{k})$ is $n-|P|$, where $|P|$ denotes the cardinality of the set $P$.
    \end{itemize}
    Furthermore, the ends of $\overline{\mathbb{P}\Omega}(\underline{k})$ correspond bijectively to the partitions of $(k_1, \ldots, k_n)$ into two subsets, each summing to $1$. In particular, the metric completion $\overline{\mathbb{P}\Omega}(\underline{k})$ is compact if and only if the curvature gap $\delta(\underline{k})$ is positive.
\end{lemma}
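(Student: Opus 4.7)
The plan is to follow Thurston's original strategy from \cite{thu}, working directly with the metric completion. First, I would characterize which Cauchy sequences in $\mathbb{P}\Omega(\underline{k})$ fail to converge. Using Troyanov's identification $\mathbb{P}\Omega(\underline{k})\cong\mathcal{M}_{0,n}$ together with the Delaunay triangulation machinery from Section~\ref{sec:delaunay} and the spanning tree coordinate charts of Section~\ref{sec:spanningtree}, every divergent Cauchy sequence $(X_m)$ admits, after extracting a subsequence, a combinatorial description: the edges of the Delaunay triangulations split into those whose normalized lengths stay bounded away from zero and those that collapse. Collapsing edges group the singularities into clusters and produce a partition $P=(p_1,\ldots,p_r)$ of $\{1,\ldots,n\}$, which I would arrange to be as coarse as possible.

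Next, I would analyze the associated two-scale limit. At the macroscopic scale, the flat cone sphere converges to a surface with cone points indexed by $P$, the $j$-th of which carries curvature $\sum_{i\in p_j}k_i$; for this limit to be a convex flat cone sphere one needs each such sum to be strictly less than one, which is exactly $\underline{k}$-admissibility. At the microscopic scale, rescaling near each non-singleton cluster $p_j$ yields a convex infinite flat sphere with curvature vector $\underline{k}(p_j)$ as in \eqref{equ:curvaturenotations1}: the original singularities of the cluster together with a polar singularity of curvature $2-\sum_{i\in p_j}k_i$ marking the direction along which it was glued in. The borderline case $\sum_{i\in p_j}k_i=1$ would produce a bubble with a cusp (cone angle zero) rather than a pole, i.e.\ a half-infinite cylinder, corresponding to a genuine end of $\overline{\mathbb{P}\Omega}(\underline{k})$. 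This identifies the ends with partitions of $(k_1,\ldots,k_n)$ into two subsets each summing to one, and gives the compactness criterion $\delta(\underline{k})>0$.

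I would then construct local charts around $B(P)$ via an inverse surgery: take a reference $Y\in\mathbb{P}\Omega(\underline{k}(P))$, choose for each non-singleton $p_j$ a bubble $Y_j\in\mathbb{P}\Omega(\underline{k}(p_j))$, and glue these bubbles into the corresponding singular points of $Y$ using a complex gluing parameter (size and rotation) that approaches zero on $B(P)$. A parameter count gives the expected complex codimension $n-|P|$ of $B(P)$, since each non-singleton part $p_j$ of size $m_j$ contributes $(m_j-2)+1=m_j-1$ transverse complex directions, summing to $n-|P|$. To identify the Thurston metric in these charts, I would express the area quadratic form in a spanning tree chart compatible with the surgery; the form splits, up to higher-order terms in the gluing parameters, into the area forms of $Y$ and of the bubbles. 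Restricting to the locus where all gluing parameters vanish recovers exactly the Thurston Hermitian form on $\mathbb{P}\Omega(\underline{k}(P))$, giving the isometric identification $B(P)\cong\mathbb{P}\Omega(\underline{k}(P))$ together with the totally geodesic property, while the projective quotient by the transverse parameters furnishes the complex hyperbolic cone-manifold structure.

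The main obstacle I expect is the precise accounting of the gluing surgery near points of $B(P)$ of higher codimension and the verification that the signature of the area form on the glued coordinates matches $(1,n-3)$ with the boundary stratum inheriting signature $(1,|P|-3)$. Concretely, one must show that the cross terms between $Y$ and the bubbles decay quickly enough in the size parameters for the Hermitian form to extend analytically to $B(P)$, and that the rescaled transverse directions acquire negative-definite Hermitian pieces whose projectivization carries a genuine cone angle (rather than $2\pi$), which is where the cone-manifold structure, as opposed to a smooth manifold structure, enters. Once this local model is in place, the stratification \eqref{equ:stratification} follows by covering $\overline{\mathbb{P}\Omega}(\underline{k})$ by such charts as $P$ ranges over $\underline{k}$-admissible partitions, together with the open stratum corresponding to the trivial partition into singletons.
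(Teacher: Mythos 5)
This lemma is cited from Thurston's paper (Theorem~0.2 of~\cite{thu}) and is not proved in the present paper, so there is no in-paper argument to compare against. That said, your sketch is a faithful outline of Thurston's degeneration analysis and is compatible with the machinery the paper later builds on top of this result: the Thurston surgeries of Section~\ref{sec:thurstonsurgeries}, the two-scale decomposition into a top sphere and infinitesimal bubbles, and the fibered coordinate charts of Section~\ref{bigsec:coor}, where the area form splits as $\mathrm{Area}(w_0,V,W)=\mathrm{Area}(w_0,0,W)-D_B(V)$, which is precisely the ``area form splits up to higher-order terms'' you invoke. Your codimension count $\sum_j(m_j-1)=n-|P|$ is correct, as is the identification of ends with partitions into two subsets each summing to one, and you are right that $\sum_{i\in p_j}k_i=1$ produces a cusp (cone angle zero) rather than a pole, corresponding to an end. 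The gaps you flag at the close — extending the Hermitian form analytically across $B(P)$ with the correct signature, verifying that the transverse directions carry a genuine cone angle so that the completion is a cone-manifold and not a manifold, and proving exhaustiveness of the stratification — are indeed the hard technical content of Thurston's proof, and a complete write-up would need to supply them; as it stands, this is a roadmap rather than a proof, which is why the paper simply imports the result.
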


The notion of cone-manifold is given as follows. Let $X$ be a complete Riemannian manifold and let $G$ be its isometry group (or a subgroup of its isometry group). For any $p\in X$, one denote by $X_p$ the set of geodesic rays emanating from it and $G_p=\mathrm{Stab}_G(p)$ stands for its stabiliser. Then a $(X, G)$-cone-manifold is a space built inductively as follows:
\begin{itemize}
    \item if $X$ is 1-dimensional, a $(X,G)$-cone-manifold is just a $(X,G)$-manifold;
    \item otherwise, a $(X,G)$-cone-manifold is a metric space such that any point in it has a neighborhood isometric to a cone over a $(X_p, G_p)$-cone-manifold.
\end{itemize}
In this paper, we do not rely on the notion of cone-manifold. For a more detailed explanation of this concept, we refer the reader to~\cite{McMullen}.

We call the decomposition~\eqref{equ:stratification} the \textbf{stratification} of $\overline{\mathbb{P}\Omega}(\underline{k})$, and each $B(P)$ is called a \textbf{stratum}. Note that $\mathbb{P}\Omega(\underline{k})$ itself is a stratum corresponding to the \textbf{trivial partition} of $\{1,\ldots,n\}$ into singletons. A \textbf{boundary stratum} of $\overline{\mathbb{P}\Omega}(\underline{k})$ is defined as a stratum corresponding to a non-trivial $\underline{k}$-admissible partition $P$.

\begin{definition}\label{def:singularityassociatedtononsingletonelement}
    Let $B(P)$ be a boundary stratum corresponding to $P$. Let $p_1, \ldots, p_s$ be the non-singleton elements in $P$. For $X$ in the boundary stratum $B(P)$, we denote by $x_{p_i}$ the singularity with curvature $\sum_{j \in p_i} k_j\in \underline{k}(P)$ induced by $p_i$. We say that $x_{p_i}$ is \textbf{associated with the (non-singleton) element $p_i$}.
\end{definition} 

Since we need to consider the Teichmüller space of $B(P)$ in this paper, we introduce the notation $(S,\underline{s}(P))$ to denote the base sphere for the associated Teichmüller space $\mathbb{P}\mathcal{T}(\underline{k}(P))$. Here, $\underline{s}(P)$ represents an ordered tuple of $|P|$ distinct labeled points on~$S$:
\begin{equation}\label{equ:boundarybasesphere}
    \underline{s}(P) := (s_1,\ldots,s_{|P|}).
\end{equation}

\subsection{Thurston Surgeries}\label{sec:thurstonsurgeries}
We recall the Thurston surgery defined in~\cite{thu} and the generalized version developed in~\cite{fu2024uniformlengthestimatestrajectories}.

Let $X$ be a convex flat sphere. Let $\gamma$ be a simple saddle connection joining two distinct conical singularities $x_i$ and $x_j$ with $k_i + k_j < 1$. Thurston defined in~\cite{thu} a surgery to collapse $x_i$ and $x_j$ into a new singularity. More precisely, let $(ABC)$ be a triangle in the plane with angles $\pi(1 - k_i - k_j)$, $\pi k_i$, and $\pi k_j$ at $A$, $B$, and $C$, respectively, and let $|BC| = |\gamma|$. Let $(A'B'C')$ be the mirror image of $(ABC)$ along a line parallel to $AC$. We glue the edge $AC$ to $A'C'$ and glue $AB$ to $A'B'$ by Euclidean isometry, respecting the endpoints. This results in a bounded cone, with the edges $BC$ and $B'C'$ forming the boundary of the cone. By direct computation, the curvature at the apex is $k_i + k_j$. We denote this cone by $C_{\gamma}$.

The \textbf{Thurston surgery of $X$ along $\gamma$} is the operation that slits $\gamma$ open and glues $C_{\gamma}$ in such a way that $x_i$ (resp. $x_j$) is identified with $B$ (resp. $C$). Denote the new flat sphere by $X^{(0)}$. Note that:
\begin{itemize}
    \item $x_i$ and $x_j$ become regular points in $X^{(0)}$.
    \item The apex of $C_{\gamma}$ becomes a new singularity in $X^{(0)}$ with curvature $k_i + k_j$.
\end{itemize}

\begin{figure}[!htbp]
	\centering
	\includegraphics[width=0.9\linewidth]{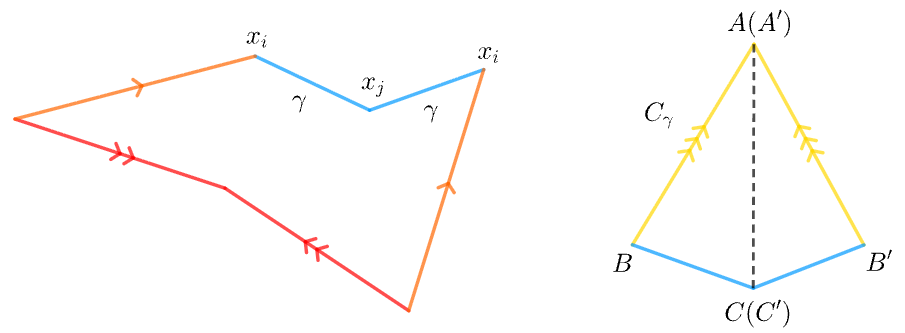}
    \caption{Left: a flat cone sphere with a slit saddle connection $\gamma$; right: a cone $C_\gamma$. The sides are glued by rotations according to the markings on the edges. Thurston surgery glues $C_\gamma$ to the flat cone sphere along $\gamma$.}\label{fig:}
\end{figure}

In~\cite{fu2024uniformlengthestimatestrajectories}, we generalize this surgery to the collision of multiple clusters of conical singularities. 

To describe the surgeries, we first introduce the related notions and results. The \textbf{homotopies} considered on $(S, \underline{s})$ are relative to the set of labeled points $\{s_1, \ldots, s_n\} \subset S$. For an arc or loop $a$ on $(S, \underline{s})$, its homotopy class is denoted by $\alpha$. We extend the class $\alpha$ to include curves on $X$ that are limits of sequences of curves in the original class. In particular, this extension allows representatives of $\alpha$ to pass through singularities. This generalized homotopy class enables us to discuss the existence of shortest representatives in $\alpha$.

\begin{lemma}[{\cite[Lemma~4.5]{fu2024uniformlengthestimatestrajectories}}]
    Let $X$ be a convex flat sphere. Let $\alpha$ be the homotopy class of a simple loop $a$ on $X$. Denote the the singularities in one connected component of $X\setminus a$ by $x_{i_1},\ldots,x_{i_m}$. If a shortest representative of $\alpha$ passes only through the singularities ${x_{i_1},\ldots,x_{i_m}}$, then such a shortest representative is unique.
\end{lemma}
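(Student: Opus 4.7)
The plan is to argue by contradiction, supposing $\gamma_1 \neq \gamma_2$ are two distinct shortest representatives of $\alpha$, each of length $L$, each passing through only singularities in $\{x_{i_1},\ldots,x_{i_m}\}$. First, I would note that each $\gamma_i$ is piecewise geodesic, with angular corners exactly at the cone points it traverses. Being shortest in the extended class $\alpha$ (whose approximating smooth loops keep $\{x_{i_1},\ldots,x_{i_m}\}$ on one fixed side), a local shortening perturbation of $\gamma_i$ at a corner $x_{i_j}$ into the side from which the approximating loops come must not decrease length, which forces the angle on that side at $x_{i_j}$ to be at least $\pi$; convexity $k_i\in(0,1)$ makes the cone angle $2\pi(1-k_{i_j})$ strictly less than $2\pi$, so the angle on the opposite side is strictly less than $\pi$.

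I would then place $\gamma_1$ and $\gamma_2$ in minimal position and split into two cases. In the disjoint case, $\gamma_1$ and $\gamma_2$ cobound a topological annulus $A\subset X$ whose interior contains no cone points, since both loops have the same set of singularities on one and the same side. Applying the piecewise-smooth Gauss--Bonnet theorem to $A$ with its piecewise-geodesic boundary reduces, after using $\chi(A)=0$ and the flatness of $A^{\circ}$, to $\sum_{j}\bigl(\pi - \theta_{A,j}\bigr) = 0$, where $\theta_{A,j}$ is the interior angle of $A$ at the $j$-th cone-point corner. Comparing this with the corresponding Gauss--Bonnet identities on the two disks bounded by $\gamma_1$ and by $\gamma_2$, and inserting the angle inequalities from the shortest-in-class condition together with the strict positivity of each $k_{i_j}$, would force one of the terms $\pi-\theta_{A,j}$ to violate its required sign, a contradiction.

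In the intersecting case, I would place $\gamma_1$ and $\gamma_2$ in transverse position (perturbing slightly if they share subarcs or cone-point corners, and then passing to the limit) and perform a cut-and-paste surgery at the intersections. This yields two new loops $\gamma_3,\gamma_4$ in the extended class $\alpha$ with combined length $|\gamma_3|+|\gamma_4|=2L$, so minimality of $L$ forces $|\gamma_3|=|\gamma_4|=L$. But each surgery point is a regular (non-cone) point of $X$, and at such a point the new loops have a genuine non-geodesic corner that can be locally rounded off within the free homotopy class to produce strictly shorter loops, contradicting the minimality of $L$.

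The principal obstacle is the degenerate situation in which $\gamma_1$ and $\gamma_2$ share a positive-length subarc or pass through the same cone point with coinciding tangent data on one side. In that case the annulus or bigon decomposition degenerates, and the argument has to be run on the connected components of the closure of the symmetric difference $\gamma_1\triangle\gamma_2$; one must check that the local shortening inequalities and the Gauss--Bonnet identities pass correctly to the boundary of each such component, and that the cut-and-paste surgery still produces curves in the extended class. The convexity hypothesis $k_i<1$ is what makes the angle inequalities strict at every cone point and ultimately drives the contradiction.
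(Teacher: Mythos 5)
Your derivation of the angle condition $\theta_{out}\ge\pi$ at every cone point of a shortest representative is correct and is the right starting point, and the overall contradiction scheme (disjoint vs.\ intersecting) is sound in outline. Note that the disjoint case is in fact trivial and needs no Gauss--Bonnet: two disjoint simple closed curves in the same isotopy class on a sphere are nested, so the inner one lies in the open disk bounded by the outer one; since that open disk already contains every $x_{i_j}$, the outer curve passes through none of them, contradicting the standing hypothesis that both representatives pass through singularities. Your Gauss--Bonnet comparison would also work, but as stated ("would force one of the terms $\pi-\theta_{A,j}$ to violate its required sign") it is not quite accurate: the identity on $A$ alone has terms of both signs that can cancel; it is only after combining it with Gauss--Bonnet on the two disks that the constraint $\sum_{c\in\gamma_2}k_c=0$ falls out, and at that point the angle inequalities are no longer what drives the contradiction.

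The genuine gap is in the intersecting case. The sentence "each surgery point is a regular (non-cone) point of $X$" is unjustified and in general false: $\gamma_1$ and $\gamma_2$ may cross transversally at a singularity $c\in\{x_{i_1},\dots,x_{i_m}\}$, and at such a point the rounding-off step breaks down. Writing the four wedge angles at $c$ as $\alpha$ (inside $D_1\cap D_2$), $\beta_1,\beta_2$ (inside $D_1\triangle D_2$), $\gamma$ (outside $D_1\cup D_2$), the two minimality inequalities give $\gamma+\beta_1\ge\pi$ and $\gamma+\beta_2\ge\pi$, so the exterior angle of $\partial(D_1\cap D_2)$ at $c$ is $\beta_1+\gamma+\beta_2\ge\pi$; one therefore cannot shorten that loop at $c$ without pushing $c$ out of the enclosed region, and the analogous inequality for $\partial(D_1\cup D_2)$ need not hold either. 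The fallback "perturb slightly and pass to the limit" does not repair this: a perturbation destroys the geodesic structure, so the perturbed curves are no longer length-$L$ minimizers in the extended class, the cut-and-paste no longer yields curves of length $\le L$, and nothing useful survives the limit. A direct argument on the components of the symmetric difference $D_1\triangle D_2$ (flat, no interior singularities, boundary angles controlled by the minimality inequalities at \emph{both} kinds of corners) is what is needed here; you gesture at this in your final paragraph but do not carry it out, and that is exactly where the content of the lemma lies.
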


The notion of the core of a meromorphic translation surface was introduced in~\cite{HKK}, and has since been used in several works, including~\cite{tahar2018counting}. We recall here its generalization to flat spheres as developed in~\cite{fu2024uniformlengthestimatestrajectories}.

\begin{definition}[Convex hulls]\label{def:convexhull}
Let $X$ be a convex flat sphere. Let $\alpha$ be the homotopy class of a simple loop $a$. Denote the the singularities in one connected component of $X\setminus a$ by $x_{i_1},\ldots,x_{i_m}$. Assume that $\gamma$ is the unique shortest representative of $\alpha$ which passes only through the singularities ${x_{i_1},\ldots,x_{i_m}}$.

We define the \textbf{convex hull of $x_{i_1}, \ldots, x_{i_m}$ along $\alpha$} to be the closed region enclosed by $\gamma$ that contains the singularities $x_{i_1}, \ldots, x_{i_m}$, and we denote it by $D_{\alpha}$. The convex hull $D_{\alpha}$ is said to be \textbf{degenerate} if $\partial D_{\alpha} = D_{\alpha}$.

Furthermore, if $X$ is a convex flat sphere with one pole, let $\alpha$ be the homotopy class of a loop $a$ around the pole. We define the \textbf{core of $X$} to be the convex hull $D_{\alpha}$ of the conical singularities along $\alpha$.
\end{definition}

Note that the existence of $\gamma$ in Definition~\ref{def:convexhull} implies that the sum of curvatures of $x_{i_1},\ldots,x_{i_m}$ is bounded above by $1$.
 
\begin{lemma}[Lemma~4.10,~\cite{fu2024uniformlengthestimatestrajectories}]\label{lem:uniquereplacementcone}
    Let $X$ be a convex flat sphere and let $D$ be a convex hull of singularities $x_{i_1},\ldots,x_{i_m}$ on $X$. Then there is a unique bounded Euclidean cone $C_D$ with piecewise geodesic boundary such that:
\begin{enumerate}
    \item The curvature of the apex of $C_D$ is $\sum_j k_{i_j}$, where $k_{i_j}$ is the curvature of $x_{i_j}$.

    \item Let $C$ be the infinite cone containing $C_D$. A tubular neighborhood of the boundary of $C\setminus C_D$ is isometric to a tubular neighborhood of $\partial D$ in $X\setminus D$. 
\end{enumerate}     
\end{lemma}

Assume that $D_1, \ldots, D_s$ are disjoint convex hulls on a non-negative flat surface $X$. As in Lemma~\ref{lem:uniquereplacementcone}, let $C_{D_i}\subset C_i$ be the bounded cone in the infinite cone associated to each $D_i$. We define the \textbf{generalized Thurston surgery} of $X$ along $D_1, \ldots, D_s$ as follows:
\begin{itemize}
    \item Cut out each $D_i$ from $X$ and glue $C_{D_i}$ to the surface along $\partial D_i$ for each $i$. Denote the resulting convex flat sphere by $X^{(0)}$.
    \item For each $D_i$, glue the complement $C\setminus C_{D_i}$ to $D_i$ along $\partial D_i$. Denote the resulting infinite flat sphere by $X_i$.
\end{itemize}
We call $X^{(0)}$ the \textbf{top flat sphere} and $X_1, \ldots, X_s$ the \textbf{infinitesimal flat spheres} obtained by the generalized Thurston surgery. We also call $C_{D_1}, \ldots, C_{D_s}$ the \textbf{replacement cones} during the surgery. Note that the conclusion (2) of Lemma~\ref{lem:uniquereplacementcone} implies that $\partial D_i$ is away from singularities on $X^{(0)}$.

\begin{figure}[!htbp]
	\centering
	\includegraphics[width=0.9\linewidth]{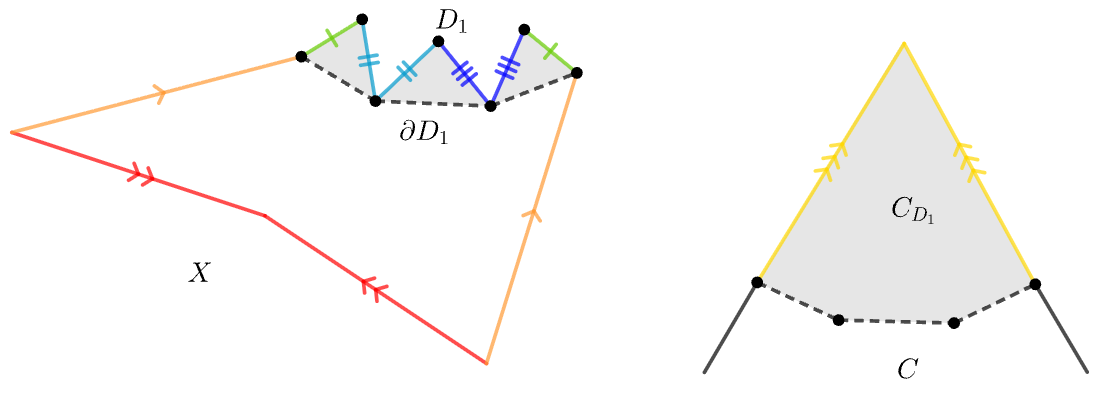}
    \caption{The gray regions on the left form a convex hull $D_1$ in $X$; the generalized Thurston surgery cuts out $D_1$ and glues $C_{D_1}$, shown on the right, to $X\setminus D_1$ along the dotted line $\partial D_1$. The edges in the picture are glued by rotations according to the markings.}\label{fig:1}
\end{figure}

According to the surgery, the core of each infinitesimal flat sphere $X_i$ is metrically isometric to the convex hull $D_i$ in $X$.

\begin{remark}
    Thurston surgery is applied along a saddle connection with distinct endpoints, whereas generalized Thurston surgery is applied along convex hulls. Note that a saddle connection with distinct endpoints forms the convex hull of its endpoints if and only if each endpoint has curvature at most $\frac{1}{2}$.
    
    In this paper, we refer to both the Thurston surgeries and the generalized Thurston surgeries as \textbf{Thurston surgeries}. Whenever a surgery is applied, we will specify whether it is performed along a saddle connection or along convex hulls.
\end{remark}

Let $P$ be a $\underline{k}$-admissible partition of $\{1,\ldots,n\}$, and let $B(P)$ denote the corresponding boundary stratum of $\overline{\mathbb{P}\Omega}(\underline{k})$. Denote the non-singleton elements in the partition $P$ by $p_1, \ldots, p_s$. We say that disjoint convex hulls $D_1, \ldots, D_s$ in $X \in \mathbb{P}\Omega(\underline{k})$ are \textbf{associated with the boundary stratum} $B(P)$ if the labels of the singularities in each $D_i$ are precisely those labels in $p_i$. 

Note that for disjoint convex hulls $D_1, \ldots, D_s$ associated with $B(P)$, the top flat cone sphere $X^{(0)}$ obtained via Thurston surgery is contained in $B(P)$. Furthermore, each infinitesimal flat sphere $X_i$ is contained in the moduli space $\mathbb{P}\Omega(\underline{k}(p_i))$. The curvature vectors $\underline{k}(P)$ and $\underline{k}(p_i)$ are defined in~\eqref{equ:curvaturenotations0} and~\eqref{equ:curvaturenotations1}. 

\begin{remark}\label{rmk:canonicallabels}
To clarify the meaning of the inclusion $X^{(0)} \in B(P)$, we must specify how the singularities of $X^{(0)}$ are labeled. Then the singularity in $X^{(0)}$ corresponding to the apex of $D_i$ is the singularity $x_{p_i}$ associated with a non-singleton element $p_i$. For the remaining singularities, note that $X \setminus \bigsqcup_i D_i$ is embedded into $X^{(0)}$. Then we assign the same labels to these singularities as the corresponding singularities in $X \setminus \bigsqcup_i D_i$. 

Similarly, for infinitesimal flat sphere $X_i$, we assume by default that the conical singularities of $X_i$ are labeled as the corresponding singularities of $D_i$ in $X$. This clarifies the inclusion of $X_i$ in $\mathbb{P}\Omega(\underline{k}(p_i))$.
\end{remark}

\section{Siegel--Veech Measures}\label{bigsec:siegelveechmeasure}

In this section, we focus on the introduction of Siegel--Veech measures.

\subsection{Siegel--Veech Transforms}\label{sec:integrablity}

Let $X$ be a flat cone sphere and let $\gamma$ be a saddle connection or a regular closed geodesic on $X$. Recall that the normalized length of $\gamma$ is defined as $\ell_{\gamma}(X):=\frac{|\gamma|}{\sqrt{\mathrm{Area}(X)}}$, where $|\gamma|$ denotes the metric length of $\gamma$ on $X$.

\begin{definition}[Siegel--Veech transforms]
Let $\underline{k}\in(0,1)^n$ be a curvature vector. Given a compactly supported continuous function $f \in C_c(\mathbb{R}_{>0})$, the \textbf{Siegel--Veech transform for saddle connections} is defined by
\begin{equation}\label{equ:scsvt}
    \hat{f}^{sc}(X) := \sum_{\gamma} f(\ell_{\gamma}(X))
\end{equation}
where the sum is taken over all saddle connections $\gamma$ on $X \in \mathbb{P}\Omega(\underline{k})$.

Similarly, the \textbf{Siegel--Veech transform for regular closed geodesics} is defined by
\begin{equation}\label{equ:cgsvt}
    \hat{f}^{cg}(X) := \sum_{\gamma} f(\ell_{\gamma}(X)),
\end{equation}
where the sum is over all representatives of equivalence classes of regular closed geodesics on $X \in \mathbb{P}\Omega(\underline{k})$.
\end{definition}

Consider the following continuous linear functionals on $C_c(\mathbb{R}_{>0})$:
\begin{equation}\label{equ:lfsc}
    L^{sc}: f \mapsto \int_{\mathbb{P}\Omega(\underline k)}\hat{f}^{sc}(X)d\mu_{Thu}
\end{equation}
and 
\begin{equation}\label{equ:lfcg}
    L^{cg}: f \mapsto \int_{\mathbb{P}\Omega(\underline k)}\hat{f}^{cg}(X)d\mu_{Thu}.
\end{equation}
We defer the proof of the integrability of these Siegel--Veech transforms $\hat{f}^{sc}$ and $\hat{f}^{cg}$, and the continuity of the linear functionals to Theorem~\ref{thm:integrabilityandfinitedecomp}, under the assumption of positive curvature gap.

We define Siegel--Veech measures, based on the Riesz–Markov–Kakutani representation theorem.

\begin{definition}[Siegel--Veech measures]\label{def:svmeasures}
    Let $\underline{k}\in (0,1)^n$ be a curvature vector with positive curvature gap. The \textbf{Siegel--Veech measure for saddle connections} associated to $\mathbb{P}\Omega(\underline k)$ is defined as the unique Radon measure on $\mathbb{R}_{>0}$ corresponding to the linear functional $L^{sc}$, denoted by $\mu^{sc}_{\underline{k}}$.
    
    Similarly, the \textbf{Siegel--Veech measure for regular closed geodesics} associated to $\mathbb{P}\Omega(\underline k)$ is defined as the unique Radon measure on $\mathbb{R}_{>0}$ corresponding to the linear functional $L^{cg}$, denoted by $\mu^{cg}_{\underline k}$.
\end{definition}

Note that it follows directly from the definition that 
\begin{equation}
    \int_{\mathbb{P}\Omega(\underline{k})}\hat{f}^{sc}(X)d\mu_{Thu} = \int_{\mathbb{R}_{>0}} f d\mu^{sc}_{\underline{k}}
\end{equation}
and
\begin{equation}
    \int_{\mathbb{P}\Omega(\underline{k})}\hat{f}^{cg}(X)d\mu_{Thu} = \int_{\mathbb{R}_{>0}} f d\mu^{cg}_{\underline{k}}.
\end{equation}

In the remainder of this section, we prove the integrability of the Siegel--Veech transforms and introduce decompositions of the Siegel--Veech measures, which will be utilized in subsequent discussions.

\subsection{Domains of Geodesics}\label{sec:domain}
Let $\gamma$ be an arc in the sphere $(S, \underline{s})$. Define the following subset of $\mathbb{P}\mathcal{T}_{S, \underline{s}}(\underline{k})$:
\begin{align}\label{equ:domainsc}
    D_{S,\underline{s}}(\underline{k};\gamma) := &\left\{(X, f) \in \mathbb{P}\mathcal{T}_{S, \underline{s}}(\underline{k}) \mid 
    \text{the equivalence class of } (X,f) \text{ contains a representative} \nonumber \right.\\
    &\left. \text{such that } f(\gamma) \text{ is a saddle connection in } X \right\}.
\end{align}
We refer to $D_{S,\underline{s}}(\underline{k};\gamma)$ as \textbf{the domain of} $\gamma$. We define the domain $D_{S,\underline{s}}(\underline{k};\gamma)$ similarly if $\gamma$ is a loop in $(S, \underline{s})$, except that $f(\gamma)$ is realized as a regular closed geodesic in $X$ for $(X, f) \in D_{S,\underline{s}}(\underline{k};\gamma)$.

Let $T$ be a triangulation on $(S,\underline{s})$. Define 
$$
D_{S,\underline{s}}^{Del}(\underline{k};T, \gamma) := D_{S,\underline{s}}^{Del}(\underline{k};T) \cap D_{S,\underline{s}}(\underline{k};\gamma).
$$

By abuse of notation, we also use $\gamma$ to denote the saddle connection or regular closed geodesic $f(\gamma)$ on $X$. 

Note the the normalized length $\ell_{\gamma}(X)=\frac{|\gamma|}{\sqrt{\mathrm{Area}(X)}}$ can be extended to the function on the domain of $\gamma$ as follows:
\begin{equation}\label{equ:normalizedlengthondomainofgeodesics}
    \ell_{\gamma}: D_{S,\underline{s}}(\underline{k};\gamma)\to \mathbb{R}_{>0},\quad (X,f)\mapsto \ell_{f(\gamma)}(X).
\end{equation}

We study in the following the intersections of $D_{S,\underline{s}}^{Del}(\underline{k};T, \gamma)$ in a fixed $D_{S,\underline{s}}^{Del}(\underline{k};T)$. For that, we recall the following results from~\cite{fu2023boundssaddleconnectionsflat, fu2024uniformlengthestimatestrajectories}.

\begin{definition}
    Let $\gamma$ be a trajectory on a flat cone sphere $X$. For a point $p$ on $\gamma$, we say that two tangent vectors to $\gamma$ at $p$ form a \textbf{transverse pair} if they span the tangent space at $p$. The \textbf{self-intersection number} of $\gamma$ is defined as the sum of transverse pairs over all points in the interior of $\gamma$, and we denote it by $\iota(\gamma, \gamma)$.
\end{definition}

\begin{lemma}[{\cite[Proposition 5.8]{fu2023boundssaddleconnectionsflat}}]\label{lem:boundcombinatoriallength}
    Let $X$ be a flat cone sphere with $n$ conical singularities and positive curvature gap $\delta$, and let $T$ be a Delaunay triangulation of $X$. Then, for any saddle connection or a regular closed geodesic $\gamma$ on $X$, the number of triangular faces of $T$ traversed by $\gamma$, counted with multiplicity, is at most $a_1(n,\delta)\sqrt{\iota(\gamma, \gamma)}+a_2(n,\delta)$, where $a_i(n,\delta)$ is an explicit positive number.
\end{lemma}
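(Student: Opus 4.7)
The plan is to combine geometric control from the Delaunay property and the positive curvature gap with a combinatorial pigeonhole argument that converts combinatorial length into transverse self-intersections.

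First, I would establish a uniform lower bound $\theta_0 = \theta_0(n,\delta) > 0$ on every interior angle of every Delaunay triangle of $T$. The empty-circumdisk property of Delaunay triangles, combined with positivity of the curvature gap, rules out thin triangles: a vanishing angle would force the corresponding circumdisk either to enclose another singularity (contradicting the Delaunay property) or to have boundary nearly bisecting the singularities into two groups of total curvature close to $1$, contradicting $\delta > 0$. This angle bound yields a uniform comparability of inradius and circumradius for each triangle of $T$.

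Next, given a saddle connection or regular closed geodesic $\gamma$ of combinatorial length $L$, I would develop the sequence $T_1,\ldots,T_L$ of traversed triangles into the plane so that $\gamma$ becomes a straight segment $\tilde\gamma$ meeting developed copies $\tilde T_i$. The angle bound guarantees that, apart from at most $O(n)$ corner-grazing crossings, each $\tilde T_i$ contributes a length to $\tilde\gamma$ proportional to its inradius; thus $L$ is controlled by $|\tilde\gamma|$ up to a bounded additive correction.

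The main obstacle — and the crux of the argument — is converting the bound on $L$ into a lower bound for $\iota(\gamma,\gamma)$. Since $T$ has only $2(n-2)$ faces, some triangle $T^\ast$ is visited by $\gamma$ at least $k := \lceil L/(2(n-2)) \rceil$ times. Two visits need not produce an intersection inside $T^\ast$ itself (the chord-pieces could be disjoint), so I would instead track the direction of each chord-piece inside $T^\ast$, where the flat structure is canonical, and argue that continuations of chord-pieces through neighboring triangles force transverse self-crossings on $X$ whenever the pairwise angular separation between two chord directions exceeds a threshold depending on $\theta_0$. An angular-sector pigeonhole on the at most $O_{\theta_0}(1)$ admissible direction sectors then produces at least $\Omega(k^2)$ forced transverse self-intersections, yielding $\iota(\gamma,\gamma) \geq c(n,\delta)\,k^2$. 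Rearranging and absorbing the additive corner correction gives $L \leq a_1(n,\delta)\sqrt{\iota(\gamma,\gamma)} + a_2(n,\delta)$, with the constants made explicit by tracking $\theta_0(n,\delta)$, the face count $2(n-2)$, and the $O(n)$ corner term.
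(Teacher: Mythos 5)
Your very first step already fails: Delaunay triangulations of flat cone spheres with fixed curvature vector (hence fixed $n$ and $\delta$) do \emph{not} admit a uniform lower bound $\theta_0(n,\delta)$ on their interior angles. The curvature gap is a constraint on the curvature vector $\underline{k}$, not on the metric, and a sphere $X \in \mathbb{P}\Omega(\underline{k})$ can have two singularities at arbitrarily small normalized distance $\epsilon$ (this is exactly what happens as $X$ approaches a codimension-one boundary stratum). The short Delaunay edge between them then sits in a triangle whose other two sides have length of order $1$, producing an interior angle of order $\epsilon$. Such a triangle can perfectly well have an empty circumdisk: its circumradius is of order $1$, not large, so neither of your dichotomy branches (circumdisk enclosing a singularity, or boundary bisecting the curvatures) is triggered. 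What the curvature gap \emph{does} give, and what the paper actually records as Lemma~\ref{lem:uniformboundonDelaunayedge}, is a uniform \emph{upper} bound $c(n,\delta)$ on the normalized length of Delaunay edges; it gives no lower bound on widths, inradii, or angles. Since the rest of your argument — comparability of inradius and circumradius, the ``each traversed triangle contributes length proportional to its inradius'' estimate, and the ``$O_{\theta_0}(1)$ direction sectors'' pigeonhole — is built on $\theta_0 > 0$, the whole chain collapses.

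Even setting the angle bound aside, the step where nearby chord directions in a single face $T^\ast$ are claimed to ``force transverse self-crossings on $X$'' is asserted rather than argued: two chords of the same triangle pointing in close directions produce, after developing forward, a pair of nearly-parallel geodesic segments that bound a thin strip, and on a general flat cone sphere such a strip can persist for a long combinatorial time without either segment crossing the other. The mechanism that rules this out here is the absence of flat cylinders (a consequence of the positive curvature gap: a simple regular closed geodesic would split the curvatures into two groups summing to $1$), and that must be invoked explicitly and made quantitative — the thin strip must be shown to terminate within a controlled number of faces, with constants depending only on $n$ and $\delta$. In other words, the genuine content of the proposition is a quantitative no-spiraling estimate that replaces your (false) angle bound; your sketch gestures at the right pigeonhole target $k \ge L/(2(n-2))$ but does not supply the geometric input that actually converts repeated visits to a face into lower bounds on $\iota(\gamma,\gamma)$.
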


\begin{lemma}[{\cite[Theorem~1.5]{fu2023boundssaddleconnectionsflat} and~\cite[Theorem~1.6]{fu2024uniformlengthestimatestrajectories}}]\label{lem:lengthestimate}
    Let $n\ge 3$ and $\delta$ be a positive real number. There exists positive constants $c_1,c_2$ such that for any unit area convex flat cone sphere $X$ with $n$ singularities and curvature gap $\delta$, and any trajectory $\gamma$ on $X$, we have that
    \begin{equation}\label{equ:compare}
           \frac{1}{c_1}\sqrt{\iota(\gamma,\gamma)} - c_2 \le |\gamma|\le c_1\sqrt{\iota(\gamma,\gamma)} + c_2.
    \end{equation}
\end{lemma}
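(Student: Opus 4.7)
The plan is to compare the metric length $|\gamma|$ with the combinatorial complexity $\sqrt{\iota(\gamma,\gamma)}$ by passing through a Delaunay triangulation $T$ of $X$. Let $N$ denote the number of faces of $T$ crossed by $\gamma$, counted with multiplicity. Lemma~\ref{lem:boundcombinatoriallength} already provides the key bridge $N \le a_1(n,\delta)\sqrt{\iota(\gamma,\gamma)} + a_2(n,\delta)$; the task is to turn this combinatorial estimate into a two-sided metric inequality by controlling the geometry of individual Delaunay faces.

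For the upper bound, I would note that each traversal of a Delaunay face contributes length at most $d_{\max}(X)$, the largest diameter of any Delaunay face of $X$. Combined with Lemma~\ref{lem:boundcombinatoriallength},
\[
|\gamma| \le N \cdot d_{\max}(X) \le a_1 \, d_{\max}(X)\sqrt{\iota(\gamma,\gamma)} + a_2 \, d_{\max}(X).
\]
It then suffices to establish a uniform upper bound $d_{\max}(X) \le D(n,\delta)$ on the entire unit-area locus of convex flat cone spheres with $n$ singularities and curvature gap at least $\delta$. Since every Delaunay face is inscribed in an immersed empty disk and the area of an embedded disk on a unit-area sphere is at most $1$, one needs only to rule out immersed empty disks of arbitrarily large radius. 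This is exactly what the positive curvature gap forbids: a long immersed disk would force a cluster of singularities with total curvature accumulating toward $1$, contradicting the hypothesis $\delta(\underline{k}) \ge \delta > 0$.

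For the lower bound, I would aim to show $\iota(\gamma,\gamma) \le c_1^2(|\gamma|+c_2)^2$. A direct triangle-by-triangle count gives $\iota(\gamma,\gamma) \le \sum_T \binom{k_T}{2} \le \binom{N}{2}$, where $k_T$ is the number of times $\gamma$ crosses face $T$, so it remains to control $N$ by $|\gamma|$. A naive bound $|\gamma| \ge N \ell_{\min}$ fails, since a narrow Delaunay face can be crossed many times by very short segments. The remedy I would pursue is an amortized argument: for each face $T$, most of the $k_T$ crossings are forced to have length comparable to the diameter of $T$, and the few short crossings can be paired with neighboring crossings in adjacent faces using the fact that $\gamma$ enters a narrow triangle only finitely often before exiting in a controlled direction. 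The positive curvature gap again enters via uniform control on how narrow Delaunay faces can be relative to the global geometry.

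The main obstacle in both directions is converting the qualitative compactness afforded by $\delta > 0$ (which makes $\overline{\mathbb{P}\Omega}(\underline{k})$ compact by Lemma~\ref{lem:thurstoncompletion}) into explicit quantitative bounds on Delaunay-face geometry that are independent of $X \in \mathbb{P}\Omega(\underline{k})$. This quantitative compactness analysis is precisely the technical core of the cited works~\cite{fu2023boundssaddleconnectionsflat, fu2024uniformlengthestimatestrajectories}, and once the uniform diameter bound is in hand, both inequalities of~\eqref{equ:compare} follow by the length–combinatorics comparison sketched above.
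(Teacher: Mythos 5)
This lemma is cited verbatim from \cite{fu2023boundssaddleconnectionsflat} and \cite{fu2024uniformlengthestimatestrajectories}; the paper itself records no proof, so there is no internal argument to compare against. On its own merits, your upper bound sketch is sound and uses machinery the paper already records: each geodesic segment inside a convex Delaunay face has length at most the face's diameter, for a Euclidean triangle the diameter is the longest side, and Lemma~\ref{lem:uniformboundonDelaunayedge} bounds that uniformly by $c(n,\delta)$ on the unit-area locus. Composed with Lemma~\ref{lem:boundcombinatoriallength} this gives $|\gamma|\le N\cdot c(n,\delta)\le c_1\sqrt{\iota(\gamma,\gamma)}+c_2$.

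The lower bound, which is the substantive content of the cited theorems, is left as a gap. You correctly diagnose that the naive estimate $|\gamma|\ge N\,\ell_{\min}$ fails, but your proposed remedy---an ``amortized argument'' asserting that most of the $k_T$ crossings have length comparable to the face diameter, and that the short ones can be paired off because $\gamma$ ``exits in a controlled direction''---is a claim, not a proof. It is not obvious the claim is true: near a nearly degenerate Delaunay triangle a geodesic can accumulate many consecutive short crossings, and the statement that the curvature gap forces a controlled exit needs quantitative content that you do not supply. Your final sentence then contradicts your own diagnosis by asserting that ``once the uniform diameter bound is in hand, both inequalities of~\eqref{equ:compare} follow''; the diameter bound only controls $|\gamma|$ from above by $N$, not $N$ from above by $|\gamma|$, and the latter is exactly what the lower bound requires. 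There is also a scope mismatch you do not address: Lemma~\ref{lem:boundcombinatoriallength}, as stated in this paper, applies only to saddle connections and regular closed geodesics, while Lemma~\ref{lem:lengthestimate} concerns arbitrary trajectories, so even the upper bound argument needs an extra reduction step in the general case.
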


Using the above results, we deduce the following finiteness property.
\begin{proposition}\label{prop:finiteboundedlengthgeodesics}
    Let $\underline{k}\in (0, 1)^n$ be a curvature vector with positive curvature gap. Let $T$ be a triangulation on $(S,\underline{s})$. Then, for any given real number $R > 0$, there exist finitely many arcs (resp. loops) $\gamma_1, \ldots, \gamma_m$, up to isotopy relative to $\underline{s}$, on the sphere $(S, \underline{s})$ such that if $\gamma$ is a saddle connection (resp. regular closed geodesic) with normalized length less than $R$ on $X$ for $(X, f) \in D_{S,\underline{s}}^{Del}(\underline{k};T)$, then $(X, f)$ is contained in $D_{S,\underline{s}}^{Del}(\underline{k};T, \gamma_i)$ for some $i$, and $f(\gamma_i)$ is realized as the saddle connection $\gamma$.
\end{proposition}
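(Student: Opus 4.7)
The plan is to pull $\gamma$ back to $(S,\underline{s})$ via the marking $f$ and argue that its isotopy class has a bounded combinatorial length with respect to $T$. First, by the scale-invariance of normalized length, I can rescale $(X,f)$ to have unit area, so that $\ell_\gamma(X)<R$ becomes $|\gamma|<R$. Since $\delta(\underline{k})>0$, the uniform length estimate from Lemma~\ref{lem:lengthestimate} applies to give
\[
\frac{1}{c_1}\sqrt{\iota(\gamma,\gamma)}-c_2 \;\le\; |\gamma| \;<\; R,
\]
so $\iota(\gamma,\gamma)\le (c_1(R+c_2))^2$ is bounded by a constant $N_1=N_1(n,\delta,R)$ depending only on $n,\delta(\underline{k}),R$.

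Next, because $(X,f)\in D^{Del}_{S,\underline{s}}(\underline{k};T)$, the image $f(T)$ is a Delaunay triangulation of $X$. Lemma~\ref{lem:boundcombinatoriallength} then bounds the number of triangular faces of $f(T)$ traversed by $\gamma$ (counted with multiplicity) by $a_1\sqrt{\iota(\gamma,\gamma)}+a_2 \le a_1\sqrt{N_1}+a_2 =: N_2$. Pulling back through $f^{-1}$, the arc (resp. loop) $f^{-1}(\gamma)$ is transverse to $T$ and crosses at most $N_2$ faces of $T$, hence at most $3N_2$ edges.

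Now I invoke a standard combinatorial enumeration: on the triangulated sphere $(S,\underline{s},T)$, an arc between labeled vertices (resp. an essential loop avoiding labeled vertices) that crosses at most $3N_2$ edges of $T$ transversally is determined, up to isotopy relative to $\underline{s}$, by the finite sequence (resp. cyclic sequence) of edges it crosses together with the combinatorial data of how it enters and exits each face. Since $T$ has finitely many edges and faces, there are only finitely many such sequences, hence only finitely many isotopy classes $\gamma_1,\dots,\gamma_m$ of arcs (resp. loops) on $(S,\underline{s})$ that can arise. For each such $\gamma_i$ we have $(X,f)\in D_{S,\underline{s}}(\underline{k};\gamma_i)$, since its image under $f$ is realized as the honest geodesic $\gamma$.

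The only subtle point, which I expect to be the main obstacle to articulate cleanly rather than to prove, is the last combinatorial enumeration step: one must check that bounding the number of face-crossings really bounds the number of isotopy classes. This is ensured by the fact that $T$ has a fixed finite combinatorial structure, and that within a single triangular face an essential arc connecting two chosen points on its boundary is unique up to isotopy rel boundary, so the only data is the cyclic/linear edge-crossing sequence together with which corners of each traversed triangle are on each side. The rest of the argument is a direct combination of Lemma~\ref{lem:lengthestimate} and Lemma~\ref{lem:boundcombinatoriallength} together with the scaling invariance of $\ell_\gamma$.
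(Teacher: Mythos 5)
Your proof is correct and follows essentially the same route as the paper: rescale to unit area, use Lemma~\ref{lem:lengthestimate} to bound $\iota(\gamma,\gamma)$, then Lemma~\ref{lem:boundcombinatoriallength} to bound the number of traversed faces, and conclude by the finiteness of isotopy classes of arcs/loops crossing a bounded number of faces of $T$. The paper states the final enumeration step as an unargued observation, whereas you take more care to justify it; otherwise the two arguments are identical.
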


\begin{proof}
Since $(X, f)$ is contained in $D_{S,\underline{s}}^{Del}(\underline{k};T)$, we know that $T$ is the Delaunay triangulation of $X$. According to Lemma~\ref{lem:lengthestimate}, the self-intersection number of $\gamma$ is bounded above by a constant $c = c(n, \delta, R)$, which depends only on the number of singularities $n$, the curvature gap $\delta > 0$, and the given length bound $R$. Furthermore, by Lemma~\ref{lem:boundcombinatoriallength}, the number of triangular faces traversed by $\gamma$, counted with multiplicity, is also bounded above by a constant $c' =c'(n, \delta, R)$.

Note that there are only finitely many arcs (resp. loops) on $(S, \underline{s})$ with a bounded number of triangular faces of $T$ traversed. Denote these arcs (resp. loops) on $(S, \underline{s})$ by $\gamma_1, \ldots, \gamma_m$. It follows that $\gamma$ coincides with one of $f(\gamma_i)$.
\end{proof}

\subsection{Decomposition of Siegel--Veech Measures}\label{sec:decompofSV}

In this sub-section, we prove the integrability of Siegel--Veech transforms and introduce a decomposition of the Siegel--Veech measures.

We first introduce a family of new measures as follows.
Let $T$ be a triangulation on $(S,\underline{s})$, and let $\gamma$ be an arc (resp. a loop) on $(S,\underline{s})$. 
Define a linear functional on $C_c(\mathbb{R}_{>0})$ as
\begin{equation}\label{equ:lfpath}
    g \mapsto \int_{D_{S,\underline{s}}^{Del}(\underline{k};T, \gamma)}g\circ\ell _{\gamma}(X,f)d\mu_{Thu},
\end{equation}
where $\ell_{\gamma}$ is a function of normalized length~\eqref{equ:normalizedlengthondomainofgeodesics} on $D_{S,\underline{s}}^{Del}(\underline{k};T, \gamma)$. Note that for $(X,f)\in D_{S,\underline{s}}^{Del}(\underline{k};T,\gamma)$, the image $f(\gamma)$ is realized as a saddle connection on $X$. By Lemma~\ref{lem:lengthestimate},
$$
\ell_\gamma(X,f)=\ell_{f(\gamma)}(X)\le c_1\sqrt{\iota\left(f(\gamma),f(\gamma)\right)}+c_2 = c_1\sqrt{\iota(\gamma,\gamma)}+c_2.
$$
It follows that $\ell_\gamma$ is uniformly bounded on $D_{S,\underline{s}}^{Del}(\underline{k};T,\gamma)$.

By Riesz–Markov–Kakutani representation theorem, we denote by $\mu_{T,\gamma}$ the unique Radon measure on $\mathbb{R}_{>0}$ corresponding to the linear functional~\eqref{equ:lfpath}.

Note that the integral in~\eqref{equ:lfpath} is well-defined. This follows from the fact that $D_{S,\underline{s}}^{Del}(\underline{k};T)$ is embedded in the moduli space $\mathbb{P}\Omega(\underline{k})$, and thus the volume of $D_{S,\underline{s}}^{Del}(\underline{k};T)$ is finite.

We now prove the main result of this section.

\begin{theorem}\label{thm:integrabilityandfinitedecomp}
Let $\underline{k}\in (0,1)^n$ be a curvature vector with positive curvature gap. Then, for any compactly supported bounded measurable function $g$, we have $\hat{g}^{sc}, \hat{g}^{cg} \in L^{\infty}(\mathbb{P}\Omega(\underline{k}), \mu_{Thu})$. In particular, the linear functionals $L^{sc}, L^{cg}$ are continuous.

Furthermore, there exist finitely many Delaunay regions $D_{S,\underline{s}}^{Del}(\underline{k};T_1), \ldots, D_{S,\underline{s}}^{Del}(T_a)$ such that, for any given $R > 0$, there exist finitely many arcs $\gamma_1, \ldots, \gamma_m$ on $(S, \underline{s})$ such that
\begin{equation}
    \mu^{sc}_{\underline{k}}\Big|_{(0, R)}= \sum_{i=1}^{a} \sum_{j=1}^{m} \mu_{T_i, \gamma_j}\Big|_{(0, R)}.
\end{equation}
A similar decomposition holds for $\mu^{sc}_{\underline{k}}\Big|_{(0, R)}$ if we replace $\gamma_1, \ldots, \gamma_m$ with loops.
\end{theorem}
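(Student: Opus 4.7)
The plan is to first obtain a uniform bound on the counting functions and then assemble the decomposition from the finite partition of the moduli space into Delaunay regions. For the $L^\infty$ integrability, fix a bounded $g$ with $\|g\|_\infty \le M$ and $\mathrm{supp}(g)\subset(0,R)$; then $|\hat g^{sc}(X)| \le M\cdot N^{sc}(X,R)$, so it suffices to bound $N^{sc}(X,R)$ uniformly. By Lemma~\ref{lem:genericdelaunay} the Delaunay-generic surfaces have full Thurston measure, so it is enough to treat such $X$. For these, every saddle connection $\gamma$ with $\ell_\gamma(X)<R$ has self-intersection $\iota(\gamma,\gamma)\le C_1(n,\delta(\underline k),R)$ by Lemma~\ref{lem:lengthestimate}, and then Lemma~\ref{lem:boundcombinatoriallength} bounds the number of triangles of the Delaunay triangulation traversed by $\gamma$ by some $C_2(n,\delta(\underline k),R)$. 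Since on the topological sphere $(S,\underline s)$ only finitely many arc classes cross a bounded number of triangles of any fixed triangulation, this yields the desired uniform bound on $N^{sc}(X,R)$; the same argument, with loops replacing arcs, works for $N^{cg}(X,R)$. Continuity of $L^{sc}$ and $L^{cg}$ on $C_c(\mathbb{R}_{>0})$ follows immediately from $|L^{sc}(g)|\le M\cdot\|N^{sc}(\cdot,R)\|_\infty\cdot\mu_{Thu}(\mathbb{P}\Omega(\underline k))$, using that the total Thurston volume is finite under the positive curvature gap hypothesis (Lemma~\ref{lem:thurstoncompletion}).

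For the decomposition, I first apply Lemma~\ref{lem:genericdelaunay} to select a finite list of triangulations $T_1,\ldots,T_a$ whose Delaunay regions project disjointly and injectively to $\mathbb{P}\Omega(\underline k)$ and cover the Delaunay-generic locus (a full-measure subset). For the given $R$, Proposition~\ref{prop:finiteboundedlengthgeodesics} applied to each $T_i$ produces finitely many arc classes $\gamma^{(i)}_1,\ldots,\gamma^{(i)}_{m_i}$ that capture every saddle connection of normalized length less than $R$ on any $X$ with $(X,f)\in D^{Del}_{S,\underline s}(\underline k;T_i)$; re-indexing the union yields a single finite list $\gamma_1,\ldots,\gamma_m$. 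For $f\in C_c((0,R))$, the injectivity of $\pi$ on each Delaunay region identifies each such saddle connection on $X$ with a unique arc class, so I can compute
\begin{equation*}
\int_{\mathbb{P}\Omega(\underline k)} \hat f^{sc}\,d\mu_{Thu} = \sum_{i=1}^{a}\int_{D^{Del}_{S,\underline s}(\underline k;T_i)} \sum_{j=1}^{m} f(\ell_{\gamma_j}(X,f))\,d\mu_{Thu} = \sum_{i,j}\int_{\mathbb{R}_{>0}} f\,d\mu_{T_i,\gamma_j},
\end{equation*}
where the first equality uses that the complement of $\bigsqcup_i \pi(D^{Del}_{S,\underline s}(\underline k;T_i))$ has $\mu_{Thu}$-measure zero, and the second uses the defining property of $\mu_{T_i,\gamma_j}$. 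By the uniqueness clause of the Riesz–Markov–Kakutani theorem, this identifies $\mu^{sc}_{\underline k}|_{(0,R)}$ with $\sum_{i,j}\mu_{T_i,\gamma_j}|_{(0,R)}$. The case of closed geodesics is identical with loops in place of arcs.

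The main obstacle, such as it is, is conceptual rather than analytic: one must check that after restricting to a single Delaunay region the arc-level and the geometric-level descriptions of saddle connections agree without double counting, which is exactly what the injectivity of $\pi$ on $D^{Del}_{S,\underline s}(\underline k;T_i)$ from Lemma~\ref{lem:genericdelaunay} provides. All the real analytic difficulty has been absorbed into the uniform length estimates of \cite{fu2023boundssaddleconnectionsflat,fu2024uniformlengthestimatestrajectories}; once these are invoked, the rest of the theorem is a combinatorial reorganization of the Siegel–Veech integral together with finiteness of the Thurston volume.
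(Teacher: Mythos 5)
Your proposal is correct and follows essentially the same route as the paper: restrict to Delaunay regions (Lemma~\ref{lem:genericdelaunay}), obtain a uniform bound on $N^{sc}(X,R)$, use finite Thurston volume for continuity, and then assemble the measure decomposition from Proposition~\ref{prop:finiteboundedlengthgeodesics} together with disjointness of Delaunay regions. The only minor deviation is that you re-derive the uniform counting bound directly from Lemmas~\ref{lem:lengthestimate} and~\ref{lem:boundcombinatoriallength} (restricting to the Delaunay-generic locus, which is all that $L^\infty$ requires), whereas the paper simply cites the packaged Lemma~\ref{lem:uniformupperbound}, which gives the bound $N^{sc}(X,R)\le ae^{bR}$ on every $X$ and is itself proved by the same strategy; the paper also records explicitly that $\hat g^{sc}$ is measurable via the indicator-function identity~\eqref{equ:measurable}, a point you leave implicit but which is immediate from your arc-class decomposition.
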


To prove the theorem, we recall the following uniform upper bound on the counting from~\cite{fu2024uniformlengthestimatestrajectories}.
\begin{lemma}[{\cite[Corollary~1.10]{fu2024uniformlengthestimatestrajectories}}]\label{lem:uniformupperbound}
    Let $X$ be an area-one convex flat cone sphere with $n$ singularities and positive curvature gap $\delta$. Then, $N^{sc}(X,R)$ and $N^{cg}(X,R)$ are bounded above by $ae^{bR}$, where $a$ and $b$ depend only on $n$ and $\delta$.
\end{lemma}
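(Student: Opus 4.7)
The plan is to fix a Delaunay triangulation of $X$ and bound the number of short saddle connections and regular closed geodesics by encoding each trajectory as a combinatorial itinerary of bounded length in this triangulation, then counting these itineraries exponentially.

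First, I would fix any Delaunay triangulation $T$ of $X$; by the Euler formula for the sphere, $T$ has exactly $2n-4$ triangular faces and $3n-6$ edges. Since $X$ has area one, normalized and metric lengths coincide, so a trajectory $\gamma$ with $\ell_\gamma(X)\le R$ also satisfies $|\gamma|\le R$. Applying Lemma~\ref{lem:lengthestimate} gives $\sqrt{\iota(\gamma,\gamma)} \le c_1(R+c_2)$, and then Lemma~\ref{lem:boundcombinatoriallength} bounds the number of faces of $T$ crossed by $\gamma$, counted with multiplicity, by
$$
K(R) := a_1(n,\delta)\,c_1\,(R+c_2) + a_2(n,\delta),
$$
which is linear in $R$ with constants depending only on $n$ and $\delta$.

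Next, I would associate to each $\gamma$ its edge itinerary: the ordered (resp.\ cyclic, for regular closed geodesics) sequence of edges of $T$ traversed by $\gamma$. Since $\gamma$ is straight within every triangle it passes, once it enters a triangle through one edge it must exit through one of the two other edges, giving at most two choices at each of at most $K(R)$ steps. For saddle connections, the initial data is a choice of starting singularity (at most $n$ options) together with an outgoing angular sector bounded by adjacent edges of $T$ at that singularity (at most $O(n)$ options), and the developing map then determines the saddle connection uniquely from its itinerary. For regular closed geodesics, the itinerary is a closed walk in the dual graph of $T$ of length at most $K(R)$; a maximal cylinder of parallel closed geodesics produces a single cyclic itinerary, and this is precisely the equivalence class counted by $N^{cg}(X,R)$. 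Thus both counts are bounded by an expression of the form $C(n)\cdot 2^{K(R)}$, which is at most $a\,e^{bR}$ for constants $a,b$ depending only on $n$ and $\delta$.

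The main obstacle is justifying the finite-to-one correspondence between combinatorial itineraries and actual trajectories, particularly for regular closed geodesics: one must verify that every equivalence class produces a well-defined cyclic itinerary (all parallel geodesics in the cylinder cross the same edges in the same cyclic order) and that two inequivalent classes cannot share an itinerary. Both points reduce to uniqueness of straight-line lifts in the universal cover of $X\setminus\{x_1,\dots,x_n\}$ via the developing map: a straight segment in the plane joining two preimages of singularities (or two preimages in the same $\pi_1$-orbit for the closed case) is determined by its endpoints, hence by the combinatorial data of the itinerary up to the free parameter along the cylinder, which is exactly what the equivalence relation quotients out.
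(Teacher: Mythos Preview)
The paper does not prove this lemma; it is quoted verbatim as Corollary~1.10 of \cite{fu2024uniformlengthestimatestrajectories} and used as a black box. So there is no ``paper's proof'' to compare against. That said, your argument is the natural one and is essentially how the paper itself proves the closely related finiteness statement Proposition~\ref{prop:finiteboundedlengthgeodesics}: bound $\iota(\gamma,\gamma)$ via Lemma~\ref{lem:lengthestimate}, feed that into Lemma~\ref{lem:boundcombinatoriallength} to get a linear-in-$R$ bound on the combinatorial length, and then count itineraries. Your version simply makes the last step quantitative.

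Two small corrections. First, for saddle connections you also need to record the terminal vertex (one of the three vertices of the last triangle in the itinerary); the itinerary alone does not pin down which singularity you hit at the end. This only costs a constant factor. Second, your claim that ``a maximal cylinder of parallel closed geodesics produces a single cyclic itinerary'' is false in general: as you slide across the cylinder you may cross edges of $T$ (you cannot cross vertices, since those are singularities), so the itinerary can change within a single equivalence class. Fortunately the inequality you actually need goes the other way: a fixed cyclic itinerary determines the holonomy, which must be a pure translation for any closed geodesic to exist, and then all closed geodesics with that itinerary lie in a single corridor and hence a single equivalence class. So each cyclic itinerary contributes at most one class to $N^{cg}(X,R)$, and your exponential bound stands.
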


\begin{proof}[Proof of Theorem~\ref{thm:integrabilityandfinitedecomp}]
    According to Lemma~\ref{lem:genericdelaunay}, we denote by $D_{S,\underline{s}}^{Del}(\underline{k};T_1),\ldots,D_{S,\underline{s}}^{Del}(T_a)$ the Delaunay regions that cover the moduli space $\mathbb{P}\Omega(\underline{k})$ up to measure zero under $\mu_{Thu}$. We also view these Delaunay regions as being contained in the Teichm\"uller space. For any compactly supported bounded measurable function $g$, assume that $\overline{\mathrm{supp}(g)}\subseteq (0, R)$ for some $R>0$. Let $\gamma_1,\ldots,\gamma_m$ be the finitely many arcs constructed in Proposition~\ref{prop:finiteboundedlengthgeodesics}. Then, by a direct computation, for every $X\in D_{S,\underline{s}}^{Del}(\underline{k};T_i)$, we have that
    \begin{equation}\label{equ:measurable}
         \hat{g}^{sc}(X) = \sum_{j=1}^{m}\mathbbm{1}_{D_{S,\underline{s}}^{Del}(T_i,\gamma_j)}(X, f)g\big(\ell_{\gamma}(X,f)\big),
    \end{equation} 
    where $\mathbbm{1}_{D_{S,\underline{s}}^{Del}(T_i,\gamma_j)}$ is the indicator function of $D_{S,\underline{s}}^{Del}(T_i,\gamma_j)$ in $\mathbb{P}\mathcal{T}_{S, \underline{s}}(\underline{k})$, and $(X,f)\in D_{S,\underline{s}}^{Del}(\underline{k};T_i)$. Hence, 
    this implies that $\hat{g}^{sc}$ is measurable. Furthermore, by Lemma~\ref{lem:uniformupperbound}, we have that 
    $$|\hat{g}^{sc}(X)|\le \sum\limits_{\gamma} |g(\ell_{\gamma}(X)|\le||g||_{\infty}N^{sc}(X,R)\le ||g||_{\infty}ae^{bR}$$
    where $N^{sc}(X,\cdot)$ counts the number of saddle connections on $X$ with a given upper bound on the normalized length. It follows that $\hat{g}^{sc}$ is in $L^{\infty}(\mathbb{P}\Omega(\underline k),\mu_{Thu})$. In addition, we have that 
    $$
    |L^{sc}(g)|\le \mu_{Thu}\big(\mathbb{P}\Omega(\underline{k})\big)ae^{bR}||g||_{\infty}
    $$
    for $g\in C_c(\mathbb{R}_{>0})$. Hence, the linear functional $L^{sc}$ is continuous.

    By the equation~\eqref{equ:measurable}, we have that  
    \begin{align*}
        \int_{\mathbb{P}\Omega(\underline{k})}\hat{g}^{sc}(X)d\mu_{Thu}
        & = \sum_{i=1}^a\int_{D_{S,\underline{s}}^{Del}(\underline{k};T_i)}\hat{g}^{sc}\big(X,f)d\mu_{Thu}\\
        & = \sum_{i=1}^a\sum\limits_{i=1}^{m}
        \int_{D_{S,\underline{s}}^{Del}(\underline{k};T_i)}\mathbbm{1}_{D_{S,\underline{s}}^{Del}(T_i,\gamma_j)}(X, f)g\big(\ell_{\gamma}(X,f)\big)d\mu_{Thu}\\
        & = \sum_{i=1}^a\sum\limits_{i=1}^{m}
        \int_{D(T,\gamma_j)}
         g\big(\ell_{\gamma}(X,f)\big)d\mu_{Thu}
    \end{align*}
    It follows that 
    $$
    \mu^{sc}_{\underline{k}}\Big|_{(0, R)} = \sum_{i=1}^{a} \sum_{j=1}^{m} \mu_{T_i, \gamma_j}\Big|_{(0, R)}.  
    $$
    The proof for $\mu^{cg}_{\underline{k}}$ is analogous and is omitted here.
\end{proof}

\section{Absolute Continuity and Piecewise Analyticity}\label{bigsec:regularity}

In this section, we prove that the Siegel–Veech measures are absolutely continuous and locally analytic.

As a key step in the argument, we establish the following semialgebraicity result for certain regions in Teichm\"uller space.

\begin{theorem}\label{thm:semialgdelaunayregion}
Let $\underline{k} \in (0,1)^n$ be a curvature vector. Let $T$ be a triangulation on $(S,\underline{s})$, and let $\gamma$ be an arc or a loop on $(S,\underline{s})$. Then, for any spanning tree $F$ in $T$, the coordinate map $\phi_F$, defined in~\eqref{equ:coordinatemap2}, maps the Delaunay region $D_{S,\underline{s}}^{\mathrm{Del}}(\underline{k};T)$ and the region $D_{S,\underline{s}}^{\mathrm{Del}}(\underline{k};T,\gamma)$ to semialgebraic subsets of $\mathbb{R}^{2(n-3)}$.
\end{theorem}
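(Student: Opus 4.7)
The plan is to work directly in the spanning tree coordinate chart $\phi_F$ of~\eqref{equ:coordinatemap2} and show that every defining condition of the two regions translates into a finite Boolean combination of polynomial (in)equalities in the real variables $(\mathrm{Re}(z_1), \mathrm{Im}(z_1), \ldots, \mathrm{Re}(z_{n-3}), \mathrm{Im}(z_{n-3})) \in \mathbb{R}^{2(n-3)}$. The first step is to express the developed positions of all vertices of $T$ as $\mathbb{C}$-linear combinations with \emph{fixed} complex coefficients of $(z_0=1, z_1, \ldots, z_{n-3})$. I would root $F$ at a chosen vertex placed at the origin and compute the positions of the other vertices by following the boundary of the developed polygon cut along $F$; the boundary edges are either the $z_i$ themselves or their rotations by the fixed holonomies at singularities, so every vertex position is a $\mathbb{C}$-linear combination with fixed coefficients of the $z_i$. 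Consequently, every edge vector of $T$ has real and imaginary parts that are fixed $\mathbb{R}$-affine functions of the $2(n-3)$ real coordinates.

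Next, I would describe $\phi_F\bigl(D^{\mathrm{Del}}_{S,\underline{s}}(\underline{k};T)\bigr)$ by two families of strict polynomial inequalities: (a) positive orientation of each triangular face $(A_0A_1A_2)$ of $T$, expressed as $\mathrm{Im}(\overline{\alpha}_1\alpha_2) > 0$, which is a polynomial of degree $2$; and (b) for each face of $T$ and each vertex $B$ across an adjacent edge, the strict Delaunay inequality of Lemma~\ref{lem:delaunaycondition}, which is a degree $4$ polynomial determinant, strict by Delaunay-genericity. By Lemma~\ref{lem:genericdelaunay}, these conditions exactly cut out the Delaunay region, yielding an open semialgebraic set.

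For $D^{\mathrm{Del}}_{S,\underline{s}}(\underline{k};T,\gamma)$, I would add the condition that the arc or loop $\gamma$ is realized as a straight segment in the developed polygon. Since $\gamma$ has a fixed homotopy class and $T$ is fixed, the combinatorial trace of $\gamma$ through $T$---the ordered sequence of edges it crosses and the prescribed lifts of its endpoints---is determined. Each required crossing of an edge $e_j$ at parameter $t_j \in (0,1)$ reduces, after eliminating $t_j$, to strict polynomial inequalities in the coordinates expressing that the straight line hits the relative interior of $e_j$. The loop case is handled analogously, using that the developed vector of $\gamma$ is likewise a fixed $\mathbb{C}$-linear combination of the $z_i$, and imposing in addition that the segment closes up with matching endpoints. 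The intersection of these finitely many semialgebraic conditions with the Delaunay semialgebraic set is semialgebraic.

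The main technical obstacle is the combinatorial bookkeeping: in the developed polygon each singularity appears multiple times, and one must unambiguously specify the instance used at each endpoint of every non-$F$ edge of $T$ and of $\gamma$. This is resolved by fixing, once and for all, these assignments using the combinatorial structure of $T$ containing $F$ and the homotopy class of $\gamma$. Once fixed, each condition becomes a transparent polynomial (in)equality in the coordinates, and the conclusion follows.
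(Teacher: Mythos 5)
Your overall strategy coincides with the paper's: express the developed edge vectors of $T$ as fixed $\mathbb{C}$-linear combinations of the spanning tree coordinates, and cut out the regions by polynomial orientation, Delaunay, and straightness conditions. However, there are two places where the argument as written is incomplete.

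First, for the Delaunay region you assert that "by Lemma~\ref{lem:genericdelaunay}, these conditions exactly cut out the Delaunay region." That lemma says nothing of the sort: it is about the decomposition of the moduli space into Delaunay regions and the analyticity of the complement. What you actually need is the converse inclusion: that every $(z_1,\ldots,z_{n-3})$ satisfying the orientation and Delaunay inequalities lies in $\phi_F\bigl(D_{S,\underline{s}}^{\mathrm{Del}}(\underline{k};T)\bigr)$. Since the image of a set under a real-analytic map need not be semialgebraic, the forward inclusion alone does not prove the claim. The nontrivial point in the converse is that the glued surface has curvature exactly $k_i$ at $s_i$ and not $k_i - n_i$ for some positive integer $n_i$; the paper settles this by a Gauss--Bonnet count ($\sum k_i = 2$ forces $n_i=0$). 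Without this argument the proof has a genuine gap.

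Second, for the loop case the paper does not impose that "the segment closes up with matching endpoints." A regular closed geodesic in the class of $\gamma$ is a one-parameter family of parallel segments sweeping the corridor of $P(\gamma,X,T,e_0)$; the membership condition is that this corridor has nonempty interior. The paper encodes this quantifier-free as a family of pairwise inequalities $\operatorname{Im}\bigl(w_v(v')/w(e_0)\bigr)>0$ running over vertices $v\in V'_r$, $v'\in V'_l$. Your approach of tracking crossing parameters $t_j$ works but introduces an existential quantifier over the free transversal position of the segment; you would then have to invoke Tarski--Seidenberg to eliminate it. That is not wrong, but it should be stated, and the explicit corridor description is the cleaner route. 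For the arc (saddle connection) case your crossing conditions are equivalent to the paper's left/right vertex conditions and no quantifier elimination is needed, so that part is fine.
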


\subsection{Tools from O-minimal Theory}\label{sec:ominimal}

In this subsection, we introduce the o-minimal tools utilized in this paper. For more details on o-minimal theory, we refer the reader to~\cite{om}.

\begin{definition}[Semialgebraicity]\label{def:semialge}
    Let $E$ be a subset of $\mathbb{R}^m$. We say that $E$ is \textbf{algebraic} if it is a finite union or intersection of subsets of the form $\{x \in \mathbb{R}^m \mid g(x) = 0\}$, where $g$ is a polynomial. 

    Similarly, $E$ is called \textbf{semialgebraic} if it is a finite union or intersection of subsets of the form $\{x \in \mathbb{R}^m \mid g(x) > 0\}$ or $\{x \in \mathbb{R}^m \mid g(x) = 0\},$
    where $g$ is a polynomial in $m$ variables.
\end{definition}

\begin{definition}[Semianalyticity]\label{def:semianalytic}
    A subset $E \subset \mathbb{R}^m$ is called \textbf{(real) semianalytic} if, for any point $x \in \mathbb{R}^m$, there exists a neighborhood $U$ of $x$ such that $E \cap U$ is a finite union or intersection of subsets of the form 
    $\{x \in \mathbb{R}^m \mid g(x) > 0\}$ or $\{x \in \mathbb{R}^m \mid g(x) = 0\},$
    where $g$ is an analytic function on $U$.    
\end{definition}

\begin{definition}[Global semianalyticity]
    A subset $E \subset \mathbb{R}^m$ is called \textbf{globally semianalytic} if the image of $E$ under the map
    $$
    (x_1, \ldots, x_m) \mapsto \bigg(\frac{x_1}{\sqrt{1+x_1^2}}, \ldots, \frac{x_m}{\sqrt{1+x_m^2}}\bigg)
    $$
    is a semianalytic subset of $\mathbb{R}^m$.
\end{definition}

A map $f: E \subset \mathbb{R}^m \to \mathbb{R}^n$ is called \textbf{semialgebraic} (or \textbf{semianalytic}, or \textbf{globally semianalytic}) if its graph,
$
\{(x, f(x)) \mid x \in \mathbb{R}^m\},
$
is a semialgebraic (or semianalytic, or globally semianalytic) subset of $\mathbb{R}^{m+n}$.

In the following, we shall refer to a globally semianalytic subset or map as \textbf{subanalytic} for conciseness. This terminology is adopted in accordance with the conventions used in the work of R. Cluckers and D. J. Miller~\cite{intsta}.

\begin{lemma}[\cite{om}]\label{lem:semalgeimplysubanaly}
    A semialgebraic subset of $\mathbb{R}^m$ is subanalytic.
\end{lemma}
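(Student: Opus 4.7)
The plan is to unwind the definition of subanalyticity: for a semialgebraic $E\subseteq\mathbb{R}^m$, I must show that its image $\psi(E)$ under $\psi(x)=\bigl(x_1/\sqrt{1+x_1^2},\ldots,x_m/\sqrt{1+x_m^2}\bigr)$ is a semianalytic subset of $\mathbb{R}^m$. The essential idea is that, although $\psi$ involves a square root, squaring converts the defining relation to polynomial data, so $\psi$ behaves algebraically with respect to Tarski--Seidenberg.

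The first step is to record that the graph
\[
\Gamma_{\psi} \;=\; \bigl\{(x,y)\in\mathbb{R}^{2m}\;\big|\; y_i^2(1+x_i^2)=x_i^2 \text{ and } x_i y_i \geq 0 \text{ for all } i\bigr\}
\]
is semialgebraic, since it is cut out by a finite boolean combination of polynomial (in)equalities. Consequently the intersection $\Gamma_{\psi}\cap(E\times\mathbb{R}^m)$ is semialgebraic whenever $E$ is. I would then invoke Tarski--Seidenberg to conclude that its projection onto the last $m$ coordinates, which is precisely $\psi(E)$, is again semialgebraic in $\mathbb{R}^m$.

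To finish, I would observe that every polynomial on $\mathbb{R}^m$ restricts to a real-analytic function on any open $U\subseteq\mathbb{R}^m$. Therefore a global finite boolean combination of sets of the form $\{g>0\}$ or $\{g=0\}$ with $g$ polynomial provides, at every point $x\in\mathbb{R}^m$, a local description of exactly the form required by Definition~\ref{def:semianalytic} (with the same polynomials viewed as analytic on a neighborhood $U$ of $x$). Hence every semialgebraic subset of $\mathbb{R}^m$ is semianalytic; applying this to $\psi(E)$ yields that $\psi(E)$ is semianalytic, i.e.\ that $E$ is subanalytic.

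There is no real obstacle: the only nontrivial ingredient is the classical Tarski--Seidenberg theorem on projection of semialgebraic sets, which I would quote from \cite{om} rather than reprove. The verification that the sign-corrected squared relation recovers $\psi$ bijectively onto $(-1,1)^m$ is a direct check, so the entire argument amounts to combining that one observation with one standard theorem.
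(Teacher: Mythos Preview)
The paper does not actually prove this lemma: it is stated with a citation to~\cite{om} and no argument is given in the text. Your proposal supplies a complete and correct proof where the paper simply quotes the result. The route you take---showing the graph of $\psi$ is semialgebraic, applying Tarski--Seidenberg to obtain that $\psi(E)$ is semialgebraic, and then observing that semialgebraic $\Rightarrow$ semianalytic---is the standard elementary argument and works exactly as you describe. There is nothing to compare against on the paper's side beyond the bare citation.
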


We introduce a result from the paper~\cite{intsta} in the following. For a subanalytic set $D\subset\mathbb R^m$ and a Lebesgue measurable function $f:D\times\mathbb{R}^m\to\mathbb{R}$, we define $I_D(f):D\to\mathbb{R}$ as
\begin{equation}\label{equ:defininte}
    x \mapsto
    \begin{cases}    
        \displaystyle\int_{\mathbb{R}^m} f(x, y) \, dy & \text{if $f(x, \cdot)$ is integrable for each $x \in D$,} \\[8pt]
        0 & \text{otherwise.}
    \end{cases}
\end{equation}

\begin{definition}[{\cite[Definition~1.2]{intsta}}]
    For a subanalytic subset $D\subseteq \mathbb{R}^m$, let $\mathcal{C}(D)$ be the $\mathbb{R}$-algebra of real valued functions on $D$ generated by all subanalytic functions $D$ and all functions $x\mapsto \log f(x)$ where $f:D\to(0,+\infty)$ is subanalytic. We call $f\in \mathcal{C}(D)$ a \textbf{constructible function} on $D$.    
\end{definition}

A main result in~\cite{intsta} states as follows:
\begin{lemma}[\cite{intsta}, Theorem 1.3]\label{lem:constructible}
    Let $f$ be in $\mathcal{C}(D\times\mathbb{R}^m)$ for some subanalytic set $D$. Then $I_D(f)$ is in $\mathcal{C}(D)$.
\end{lemma}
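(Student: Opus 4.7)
The plan is to follow the strategy pioneered in o-minimal integration theory: reduce the integrand to a standard ``prepared'' form on a finite subanalytic cell decomposition, integrate explicitly on each cell, and verify that the antiderivatives together with the boundary evaluations stay inside the class $\mathcal{C}$. The induction is on the fiber dimension $m$, so it suffices to prove closure under integration of one fiber variable at a time and then iterate.

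First, I would invoke a parametric preparation theorem for subanalytic functions (in the spirit of Lion--Rolin): after refining $D \times \mathbb{R}^m$ into finitely many subanalytic cells (cylinders over subanalytic subsets of $D$, each fibered by intervals with subanalytic endpoints $\alpha(x) < \beta(x)$ in the last variable $y_m$), every generator of $\mathcal{C}$ can be written on each cell as a finite sum of terms of the form
\begin{equation*}
    a(x,y') \cdot \bigl(y_m - c(x,y')\bigr)^{r} \cdot \Bigl(\log\bigl|y_m - c(x,y')\bigr|\Bigr)^{s} \cdot u(x,y),
\end{equation*}
where $y' = (y_1,\ldots,y_{m-1})$, $a \in \mathcal{C}$, $c$ is subanalytic, $r \in \mathbb{Q}$, $s \in \mathbb{Z}_{\ge 0}$, and $u$ is a subanalytic unit that is close to $1$ on the cell. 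Up to a subanalytic change of variables in $y_m$, one may normalize $c \equiv 0$ and $u \equiv 1$, reducing the integrand on each cell to a finite sum of terms $a(x,y')\, y_m^{r} (\log y_m)^{s}$.

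Next, I would carry out the $y_m$-integration term by term on each cell with endpoints $\alpha(x,y')$ and $\beta(x,y')$. The antiderivative of $y_m^r (\log y_m)^s$ is elementary: if $r \neq -1$, integration by parts reduces $s$ and yields a polynomial in $\log y_m$ with rational coefficients times $y_m^{r+1}$; if $r = -1$, one obtains $\tfrac{1}{s+1}(\log y_m)^{s+1}$. Evaluating at the subanalytic endpoints substitutes subanalytic functions of $(x,y')$ into polynomials in $\log$, producing a function in $\mathcal{C}(D \times \mathbb{R}^{m-1})$. Summing over cells and over the finitely many prepared terms gives the partial integral $\int f(x,y',y_m)\,dy_m$ as an element of $\mathcal{C}(D \times \mathbb{R}^{m-1})$, which by induction reduces the problem to closure at fiber dimension $m-1$.

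The main obstacle, and the technical heart of the argument, is handling the integrability condition in the definition of $I_D(f)$. The formula~\eqref{equ:defininte} declares $I_D(f)$ to vanish on the non-integrable locus, so one must show that this locus is itself subanalytic (so that multiplying by its indicator preserves the class) and that on the integrable locus, the formal antiderivative indeed gives the integral. Both issues are resolved by the preparation theorem: integrability of $y_m^r(\log y_m)^s$ near endpoints $0$ or $\infty$ is governed by explicit inequalities on $r$ together with the signs of the boundary evaluations, each of which defines a subanalytic subset of the parameter space. A compatibility step is also required to prevent cancellation of divergent terms across the finitely many prepared summands from masking non-integrability; this is arranged by refining the cell decomposition so that each summand is individually integrable on the integrable locus. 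Once these points are controlled, iterating the one-variable argument $m$ times and taking finite $\mathbb{R}$-linear combinations yields $I_D(f) \in \mathcal{C}(D)$, as desired.
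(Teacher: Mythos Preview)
The paper does not prove this lemma at all; it is quoted verbatim as Theorem~1.3 of Cluckers--Miller and used as a black box. There is therefore no ``paper's own proof'' to compare against.

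As a standalone sketch of the Cluckers--Miller argument, your outline is broadly faithful: Lion--Rolin-type preparation on a subanalytic cell decomposition, explicit one-variable antiderivatives of the prepared monomials $y^{r}(\log y)^{s}$, subanalyticity of the integrability locus, and induction on the fiber dimension are indeed the pillars of that proof. One technical point is overstated: you cannot in general normalize the subanalytic unit $u$ to be identically $1$ by a subanalytic change of the fiber variable. The actual argument either retains the unit (which is analytic in a fractional power of $y_m - c$ and hence can be expanded and integrated term by term within the class) or uses a finer prepared form in which the unit's contribution is tracked explicitly. This does not derail the strategy, but the step ``normalize $u\equiv 1$'' as written would not go through.
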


In addition, we note the following basic property of constructible functions:
\begin{lemma}[{\cite[Section 4]{DMM}}]\label{lem:piecewiseanaly}
    If $f$ is a constructible function on $\mathbb{R}$, then there exists a finite partition $0 = t_1 < \ldots < t_{m-1} < t_m = \infty$ such that $f|_{(t_{i-1}, t_i)}$ is analytic for $i = 1,\ldots,m$.
\end{lemma}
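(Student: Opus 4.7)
The plan is to reduce the statement to the analogous piecewise-analyticity statement for subanalytic functions on $\mathbb{R}$, and then propagate this through the algebraic generators of $\mathcal{C}(\mathbb{R})$. Concretely, I would work in four steps.

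First, I would establish that any subanalytic function $g:\mathbb{R}\to\mathbb{R}$ is piecewise analytic on finitely many intervals. The graph of $g$ is by definition a subanalytic subset of $\mathbb{R}^2$, so it lives in the o-minimal structure $\mathbb{R}_{\mathrm{an}}$. Invoking the analytic cell decomposition theorem for $\mathbb{R}_{\mathrm{an}}$ (which strengthens the usual $C^k$ cell decomposition to $C^\omega$ in the subanalytic setting), the graph decomposes into finitely many analytic cells. Projecting onto the first coordinate partitions $\mathbb{R}$ into finitely many points and finitely many open intervals, and on each open interval $g$ coincides with an analytic function coming from the description of the corresponding cell.

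Second, I would handle logarithms: if $f:\mathbb{R}\to(0,\infty)$ is subanalytic, then by Step~1 there is a finite partition on which $f$ is analytic and strictly positive, and since $\log$ is real analytic on $(0,\infty)$, the composition $\log f$ is analytic on each open interval of that partition. Third, I would show that the collection of functions on $\mathbb{R}$ which are piecewise analytic on some finite partition forms an $\mathbb{R}$-algebra: given two such functions with partitions $P_1$ and $P_2$, the common refinement $P_1\cup P_2$ is still finite, and on each open interval of the refinement both functions are analytic, so their sum and product are analytic there. By associativity this extends to any finite number of generators.

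Fourth, combining the previous steps with the definition of $\mathcal{C}(\mathbb{R})$, any $f\in\mathcal{C}(\mathbb{R})$ is a finite $\mathbb{R}$-linear combination of finite products of subanalytic functions and of logarithms of positive subanalytic functions. By Steps~1 and~2, each individual generator appearing in the expression for $f$ is piecewise analytic on some finite partition of $\mathbb{R}$. Taking the common refinement of these finitely many partitions, Step~3 shows $f$ is analytic on each open interval of the refined partition, yielding the required $0=t_1<\cdots<t_m=\infty$ (after inserting $0$ and $\infty$ as endpoints and discarding the negative part, which is irrelevant for the intended application).

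The main obstacle I anticipate is the analytic (rather than merely $C^k$) version of cell decomposition used in Step~1; the general o-minimal cell decomposition theorem only yields $C^k$-smoothness for prescribed finite $k$. However, in the specific o-minimal structure $\mathbb{R}_{\mathrm{an}}$ the stronger analytic cell decomposition is available as a consequence of the classical theory of subanalytic sets (resolution of singularities for semianalytic sets, together with the rectilinearisation theorem of Bierstone–Milman and Parusiński), and I would invoke this as a black box rather than reprove it. Given that, the remaining algebraic closure arguments in Steps~2--4 are routine refinements of partitions.
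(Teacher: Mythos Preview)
The paper does not prove this lemma; it is stated with a citation to \cite[Section 4]{DMM} and used as a black box. Your proposal is a correct and standard proof sketch of the underlying fact: the analytic cell decomposition for $\mathbb{R}_{\mathrm{an}}$ gives piecewise analyticity of subanalytic functions, and the rest is routine closure under the algebra operations and under composition with $\log$. Since there is no proof in the paper to compare against, your write-up is more than what is required here; in the paper's spirit you could simply cite the result.
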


\subsection{Semialgebraicity of Delaunay Regions}

\begin{lemma}\label{lem:delaunaychart}
Let $T$ be a triangulation on $(S,\underline{s})$.
Then, for any spanning tree $F$ in $T$, the coordinate map $\phi_F$ maps the Delaunay region $D_{S,\underline{s}}^{Del}(\underline{k};T)$ to a semialgebraic subset of $\mathbb{R}^{2(n-3)}$. 
\end{lemma}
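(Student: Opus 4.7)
The plan is to use the chart $\phi_F$ to identify the Delaunay region with a concrete subset of $\mathbb{R}^{2(n-3)}$, and then to exhibit that subset as the locus defined by a finite list of polynomial strict inequalities in the coordinates $(\mathrm{Re}(z_j),\mathrm{Im}(z_j))$, $j=1,\ldots,n-3$. Fix the normalization $z_0=1$, so that $\phi_F$ takes values in $\mathbb{C}^{n-3}\cong\mathbb{R}^{2(n-3)}$; recall from Section~\ref{sec:spanningtree} that $z_{n-2}$ is a fixed $\mathbb{C}$-linear combination of $z_0,\ldots,z_{n-3}$ coming from the closing-up of the polygon $X\setminus F$, with coefficients depending only on $\underline{k}$.

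The key preliminary step is to show that for every edge $e$ of $T$, the developed vector $w_e$ is a $\mathbb{C}$-affine function of $(z_1,\ldots,z_{n-3})$ whose coefficients depend only on $\underline{k}$ and on the combinatorial type of $(T,F)$. For tree edges this is tautological. For a non-tree edge $e$, the region $X\setminus F$ is simply connected and contains $e$ as an interior chord; the developed position of each endpoint of $e$ can be written as a sum of terms of the form $R\cdot z_i$, where $R$ is a rotation by an angle of the form $2\pi\sum_j k_j$ picked up by traversing the boundary of the polygon around certain cone points. Because these rotation factors depend only on $\underline{k}$ and on the combinatorics, $w_e$ is a fixed $\mathbb{C}$-affine function of the $z_j$'s. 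I expect this linearity statement to be the main technical hurdle.

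Given this, the geometric conditions defining the Delaunay region translate into polynomial inequalities. For each triangle of $T$ with consecutively oriented boundary vectors $u_1,u_2$, positive orientation and non-degeneracy are encoded by the polynomial strict inequality $\mathrm{Re}(u_1)\mathrm{Im}(u_2)-\mathrm{Im}(u_1)\mathrm{Re}(u_2)>0$, and the intersection over all triangles cuts out $\phi_F\big(D_{S,\underline{s}}(\underline{k};T)\big)$. For each interior edge of $T$ shared by two triangles, Delaunay-genericity is the strict form of the $3\times 3$ determinantal inequality of Lemma~\ref{lem:delaunaycondition}, whose entries involve $\mathrm{Re}$, $\mathrm{Im}$, and squared modulus $|\cdot|^2$ of $\mathbb{C}$-affine expressions, and is therefore a polynomial inequality in $(\mathrm{Re}(z_j),\mathrm{Im}(z_j))$. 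Taking the intersection of this finite family of polynomial strict inequalities yields $\phi_F\big(D_{S,\underline{s}}^{Del}(\underline{k};T)\big)$, which is semialgebraic in the sense of Definition~\ref{def:semialge}.
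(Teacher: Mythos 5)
Your approach is essentially the one the paper uses: write down the triangle-orientation inequalities and the strict $3\times 3$ Delaunay determinantal inequalities, all of which are polynomial in $(\mathrm{Re}(z_j),\mathrm{Im}(z_j))$ because every developed edge vector of $T$ is a fixed $\mathbb{C}$-linear combination of $(1,z_1,\ldots,z_{n-3})$ with coefficients built from the rotations $e^{2\pi i k_j}$ determined by the combinatorics of $(T,F)$ and $\underline{k}$. The forward inclusion (image contained in the cut-out set $D$) is indeed straightforward and your reasoning covers it.

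However, there is a genuine gap in the step where you assert that the intersection of these inequalities \emph{equals} $\phi_F\big(D_{S,\underline{s}}^{Del}(\underline{k};T)\big)$. You flag the affine-linearity of the edge vectors as the main technical hurdle, but in fact that part is routine once one fixes the chart; the nontrivial point is the reverse inclusion. Given a tuple $(z_1,\ldots,z_{n-3})$ satisfying the orientation and Delaunay constraints, one can glue the resulting Euclidean triangles along $T$ to obtain a flat cone metric on $(S,\underline{s})$, but a priori the curvature at $s_i$ is only congruent to $k_i$ modulo $\mathbb{Z}$: the positivity of the cone angles forces $k_i' = k_i - n_i$ for some integer $n_i\ge 0$, not $k_i' = k_i$ outright. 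The paper closes this gap by invoking Gauss--Bonnet: $\sum k_i' = 2 = \sum k_i$ forces $\sum n_i = 0$, hence each $n_i = 0$, so the glued surface really lies in $\mathbb{P}\Omega(\underline{k})$ (and then the Delaunay inequalities force it into the Delaunay region). Without this argument, you have only shown the image is \emph{contained} in a semialgebraic set, which is not enough to conclude that the image itself is semialgebraic. You should supply this curvature-normalization step, or some equivalent argument, to make the equality of sets rigorous.
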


\begin{proof}
    To prove the lemma, our strategy is to define a semialgebraic subset in the spanning tree coordinate chart $D_{S,\underline{s}}(\underline{k};F)$, and prove that it corresponds to the Delaunay region $D_{S,\underline{s}}^{Del}(\underline{k};T)$.

    We denote the edges of $F$ by $e_0,\ldots,e_{n-2}$. Recall that in the construction of spanning tree coordinate chart, we immerse the complement $X\setminus F$ into the plane by a developing map $Dev$, and associate the vector $(z_0,\ldots,z_{n-2})$ to the edge of $F$. We normalize the number $z_0$ to be 1, so the vector $(z_1,\ldots,z_{n-3})$ forms a coordinate on $D_{S,\underline{s}}(\underline{k};F)$. For $X\in D_{S,\underline{s}}^{Del}(\underline{k};T)$, we have the induced triangulation on $X\setminus T$, denoted by $T$ also. In particular, each face of $T$ maps to a Euclidean triangle on the plane. Under the developing of $X\setminus F$, each oriented edge $e$ of $T$ is associated with a complex number, denoted by $z(e)$. We write the oriented edge with different orientation of $e$ as $-e$, so $z(-e) = -z(e)$.  Note that each $z(e)$ is a complex linear combination of $(1, z_1,\ldots,z_{n-3})$.

    Consider a subset $D \subset \mathbb{C}^{n-3}$ defined by the following constraints:
    \begin{enumerate} 
        \item For each triangular face of $T$ on $X\setminus F$, let $e, e'$ be two edges in counterclockwise order at a vertex $v$ of the triangle. We orient these edges so that their starting endpoint is $v$.
        We require that 
        $$
        \operatorname{Im}\left(\overline{z(e)}z(e')\right) > 0.
        $$
        \item For any quadrilateral of $T$ on $X$, denote the two triangular faces in the quadrilateral by $f_1$ and $f_2$, and let $e, e', e''$ be the three edges in the quadrilateral at the same vertex $v$ in counterclockwise order. We also orient them so that their starting endpoint is $v$.
    
        We develop $f_1$ and $f_2$ to a quadrilateral on the plane via the developing map $Dev$, such that $f_1$ maps to the triangle $Dev|_{X \setminus F}(f_1)$ in $Dev(X \setminus F)$. Note that $f_2$ maps to the triangle $Dev|_{X \setminus F}(f_2)$ if $e'$ is not in $F$; otherwise, $f_2$ maps to a triangle obtained by rotating $Dev|_{X \setminus F}(f_2)$, where the rotation is determined by holonomy. 
    
        Let $w(e), w(e'), w(e'')$ be the associated complex vectors under this developing. In either case, these vectors are complex linear combinations of  $(1, z_1, \ldots, z_{n-3})$. We require that they satisfy the Delaunay condition~\eqref{equ:delaunaycondition} in Lemma~\ref{lem:delaunaycondition}, with $\alpha_1 = w(e)$, $\alpha_2 = w(e')$, and $\beta = w(e'')$.
    \end{enumerate}
    Note that the above constraints define a semialgebraic subset $D$ in $\mathbb{C}^{n-3}$.

    We prove that $\phi_F(D_{S,\underline{s}}^{Del}(\underline{k};T)) = D$ as follows.
    
    Since $T$ is the Delaunay triangulation on $X$ for $(X, f) \in D_{S,\underline{s}}^{Del}(\underline{k};T)$ and $X$ is Delaunay-generic, we have $\phi_F(X) \subset D$. Conversely, for any vector $(z_1, \ldots, z_{n-3}) \in D$, constraints~(1) ensures that we can construct a Euclidean triangle for each face of $T$ using the given vector. By gluing these Euclidean triangles according to the triangulation $T$ on $(S, \underline{s})$, we obtain a flat cone sphere $X$ with $T$ as its geometric triangulation. Denote the singularity at the vertex $s_i$ by $x_i$, and let its curvature be $k'_i$. Since the gluing follows the structure of $T$, the curvature $k'_i$ satisfies $k'_i = k_i - n_i$ for some non-negative integer $n_i$. 
    
    Given that $\sum k_i = 2$, and by the Gauss-Bonnet formula, $\sum k'_i = 2$, it follows that $n_i = 0$ for each $i$. Hence, $X$ has the curvature vector $\underline{k}$. Finally, constraint~(3) ensures that $T$ is the Delaunay triangulation of $X$. 
    
    Since $D_{S,\underline{s}}^{Del}(\underline{k};T)$ is embedded in the moduli space, there is a unique way to associate $X$ with a marking $f$ such that $(X, f) \in D_{S,\underline{s}}^{Del}(\underline{k};T)$. Therefore, $\phi_F(X, f) = (z_1, \ldots, z_{n-3})$, and we conclude that $\phi_F(D_{S,\underline{s}}^{Del}(\underline{k};T)) = D$.
\end{proof}

\subsection{Semialgebraicity of Regions of Geodesics}
We begin by introducing the following terminologies, which are adopted from~\cite{Tabachnikov1995, RSch06, RSch08, Tabachnikov2005,RSch22}.

\begin{definition}
    Let $X$ be a flat cone sphere, and let $T$ be a geometric triangulation on $X$. Suppose $\gamma$ is a saddle connection on $X$ that is not an edge of $T$. Define the \textbf{unfolding $P(\gamma, X, T)$ of $\gamma$} as the triangulated polygon immersed in the plane obtained by developing the triangular faces of $T$ that $\gamma$ passes through.
    
    Similarly, let $\gamma$ be a regular closed geodesic on $X$, and let $e_0$ be an edge of $T$ that intersects $\gamma$. Define the {unfolding $P(\gamma, X, T, e_0)$ of $\gamma$} as the triangulated polygon immersed in the plane obtained by developing the triangular faces of $T$ that $\gamma$ passes through, starting from the edge $e_0$ and returning to the edge $e_0$.
\end{definition}

Figures~\ref{fig:unfoldsaddleconnection} and~\ref{fig:unfoldclosedgeodesic} illustrate the unfoldings of a saddle connection and a regular closed geodesic, respectively.

Note that a saddle connection (or a regular closed geodesic) $\gamma$ unfolds into a straight segment in the polygon $P(\gamma, X, T)$ (or $P(\gamma, X, T, e_0)$). For simplicity, we also denote this segment by $\gamma$. In the case of a regular closed geodesic, the edge $e_0$ is unfolded into two parallel edges in the polygon $P(\gamma, X, T, e_0)$, denoted by $e_0'$ and $e_0''$; see Figure~\ref{fig:unfoldclosedgeodesic}.

\begin{definition}
    Let $X$ be a flat cone sphere, and let $T$ be a geometric triangulation on $X$. For a regular closed geodesic $\gamma$ on $X$, we define the maximal strip in $P(\gamma, X, T, e_0)$, foliated by segments that start at $e_0'$ and end at $e_0''$, and are parallel to $\gamma$, as the \textbf{corridor} of the regular closed geodesic $\gamma$.
\end{definition}

Figure~\ref{fig:unfoldclosedgeodesic} illustrates an example of the corridor associated with a regular closed geodesic.

We prove the semialgebraicity of domains of saddle connections and regular closed geodesics in the following.

\begin{lemma}\label{lem:semialgdomain}
    Let $T$ be a traingulation on $(S,\underline{s})$, and let $\gamma$ be an arc (or a loop) in $(S,\underline{s})$. Then, for any spanning tree $F$ in $T$, the coordinate map $\phi_F$ maps $D_{S,\underline{s}}^{Del}(\underline{k};T,\gamma)$ to a semialgebraic subset of $\mathbb{R}^{2(n-3)}$.
\end{lemma}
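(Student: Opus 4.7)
The plan is to mirror the strategy of Lemma~\ref{lem:delaunaychart}: write down an explicit semialgebraic description of $\phi_F\bigl(D_{S,\underline{s}}^{\mathrm{Del}}(\underline{k};T,\gamma)\bigr)$ in the spanning tree coordinate chart, then match it with the region. Since $D_{S,\underline{s}}^{\mathrm{Del}}(\underline{k};T,\gamma) = D_{S,\underline{s}}^{\mathrm{Del}}(\underline{k};T) \cap D_{S,\underline{s}}(\underline{k};\gamma)$ and the first factor is already semialgebraic by Lemma~\ref{lem:delaunaychart}, it suffices to cut out the additional condition ``$f(\gamma)$ is realized as a saddle connection (respectively, a regular closed geodesic) in $X$'' by polynomial (in)equalities in $(z_1,\ldots,z_{n-3})$. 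As in the proof of Lemma~\ref{lem:delaunaychart}, each oriented edge $e$ of $T$ develops to a complex vector $z(e)$ which is a fixed $\mathbb{C}$-linear combination of $(1,z_1,\ldots,z_{n-3})$ depending only on the combinatorial data of $T$ and $F$.

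For the saddle connection case, the homotopy class of $\gamma$ together with the isotopy class of $T$ determines, up to finitely many combinatorial possibilities, the ordered sequence of triangular faces of $T$ traversed by the geodesic representative of $\gamma$. For each admissible combinatorial sequence $c$, unfold the corresponding triangles using the spanning tree coordinates; this produces an immersed polygon $P_c$ whose vertices are $\mathbb{C}$-linear combinations of $(1,z_1,\ldots,z_{n-3})$. The realization of $\gamma$ as a saddle connection with unfolding $P_c$ is then equivalent to requiring that the straight segment from the starting vertex to the terminal vertex of $P_c$ stay strictly inside $P_c$ and avoid every other vertex of $P_c$. Each such condition has the form $\operatorname{Im}(\overline{u}v)>0$ with $u,v$ linear in $(1,z_1,\ldots,z_{n-3})$, exactly as in constraint~(1) in Lemma~\ref{lem:delaunaychart}, and is therefore semialgebraic. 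Taking the finite union over admissible $c$ yields a semialgebraic set in $\mathbb{R}^{2(n-3)}$.

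For the regular closed geodesic case, fix an edge $e_0$ of $T$ crossed by $\gamma$ and unfold $\gamma$ through the triangulation from $e_0$ to a parallel copy $e_0''$, obtaining $P(\gamma,X,T,e_0)$. Since the rotational part of the holonomy along $\gamma$ depends only on the homotopy class, the parallelism of $e_0$ and $e_0''$ is either automatic or never satisfied, and in the former case, the condition that $\gamma$ is a regular closed geodesic becomes: the corridor of width $|e_0|$ parallel to $\gamma$ fits inside the unfolded polygon and meets no vertex. This is again a conjunction of strict polynomial inequalities $\operatorname{Im}(\overline{u}v)>0$. Intersecting the resulting semialgebraic condition with $\phi_F\bigl(D_{S,\underline{s}}^{\mathrm{Del}}(\underline{k};T)\bigr)$ finishes the argument.

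The main obstacle will be justifying that only finitely many combinatorial unfolding types need be considered. This amounts to observing that $D_{S,\underline{s}}^{\mathrm{Del}}(\underline{k};T,\gamma)$ decomposes into the loci where the geodesic representative of $\gamma$ has a prescribed crossing pattern with $T$, and that two such loci can only abut along the closed subset where the geodesic passes through a singularity, contradicting the saddle-connection condition. Combined with Lemma~\ref{lem:boundcombinatoriallength} applied along any compact subregion, this should bound the crossing pattern combinatorially from the topology of $\gamma$ relative to $T$, yielding finitely many admissible unfoldings and hence a finite union of semialgebraic pieces.
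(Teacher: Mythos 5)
Your proposal follows essentially the same strategy as the paper: start from the semialgebraic description of $D_{S,\underline{s}}^{Del}(\underline{k};T)$ from Lemma~\ref{lem:delaunaychart}, unfold $\gamma$ through the triangulation, and cut out the locus where the unfolded diagonal (resp.\ corridor) lies strictly inside the unfolded polygon using inequalities of the form $\operatorname{Im}(\overline{u}v)>0$ with $u,v$ complex-linear in $(1,z_1,\dots,z_{n-3})$. The observation on holonomy for the loop case is correct and worth making explicit. There are, however, two places where your argument is more involved than it needs to be and where the justification as written is not quite right. First, you take a finite union over ``admissible combinatorial sequences'' $c$. The paper works with a single unfolding polygon $P(\gamma,X,T)$: on $D_{S,\underline{s}}^{Del}(\underline{k};T,\gamma)$ the geodesic $f(\gamma)$ varies continuously with $X$ and, being a saddle connection, never meets a vertex of $T$ in its interior, so the sequence of triangles it crosses is locally constant; moreover it is the minimal-position crossing pattern of the isotopy class of $\gamma$ relative to $T$, hence determined by topology. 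This gives a single combinatorial type, not a finite list, and is what the paper uses implicitly. Second, your invocation of Lemma~\ref{lem:boundcombinatoriallength} to bound the number of admissible $c$ does not apply here: that lemma requires a positive curvature gap, while the present lemma is stated for arbitrary $\underline{k}\in(0,1)^n$ (and the quantity it bounds, the number of crossed triangles, is here a topological constant of $\gamma$ anyway). Your finite-union conclusion would still deliver semialgebraicity even if several types were possible, so this is a flaw in the justification rather than in the outcome, but the cleaner route is the one the paper takes.
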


\begin{proof}
    Let $\phi:D_{S,\underline{s}}(\underline{k};F)\to \mathbb{C}^{n-3}$ be the coordinate map~\eqref{equ:coordinatemap2}. Let $D=\phi(D_{S,\underline{s}}^{Del}(\underline{k};T))$ be the semialgebraic subset constructed in the proof of Lemma~\ref{lem:delaunaychart}. We show that $\phi(D_{S,\underline{s}}^{Del}(\underline{k};T,\gamma))$ is obtained by intersecting $D$ with additional polynomial constraints.

    If $\gamma$ coincides with an edge of $T$, then $D_{S,\underline{s}}^{\mathrm{Del}}(\underline{k};T, \gamma) = D_{S,\underline{s}}^{\mathrm{Del}}(\underline{k};T)$, and hence the region is semialgebraic.

    If $\gamma$ is an arc different from the edges of $T$, for any $(X,f)\in D_{S,\underline{s}}^{Del}(\underline{k};T, \gamma)$, note that $\gamma$ is unfolded to a diagonal of the polygon $P(\gamma, X, T)$. We denote the starting and the ending vertices of the diagonal $\gamma$ by $v_1$ and $v_2$ respectively. Note that such a diagonal divides the rest of the vertices of $P(\gamma, X, T)$ into to two subsets, denoted by $V_l$ and $V_r$. The sets $V_l$ and $V_r$ consist of the vertices on the left and right hand side of the segment $\gamma$ respectively. 

    We place the polygon $P(\gamma, X, T)$ such that the first triangle that $\gamma$ passes through is developed by $Dev$ restricted to $X\setminus F$. 
    As in Lemma~\ref{lem:delaunaychart}, an oriented edge $e$ of the polygon $P(\gamma, X, T)$ is associated to a complex vector, denoted by $w(e)$. Note that $w(e)$ is a complex linear combination of $(1, z_1,\ldots,z_{n-3})$. For each vertex $v$ of $P(\gamma,X,T)$, let $\gamma_v$ be an oriented path on the boundary from $v_1$ to $v$. Denote by $w_{v_1}(v)$ the complex vector obtained by summing the $w(e)$, where the oriented edge $e$ is in contained in $\gamma_v$ with the same orientation. To ensure that the diagonal between $v_1$ and $v_2$ lies inside $P(\gamma, X, T)$, we impose the following constraint on $D$:
    \begin{enumerate}
        \setcounter{enumi}{2}
        \item We require that for $v\in V_l$,
        $$
        \operatorname{Im}\frac{w_{v_1}(v)}{w_{v_1}(v_2)}>0
        $$
        and for $v\in V_r$,
        $$        
        \operatorname{Im}\frac{w_{v_1}(v)}{w_{v_1}(v_2)}<0.
        $$
    \end{enumerate}
    Let $D_1$ be the subset of $D$ with the constraint~(3). Note that $D_1$ is also a semialgebraic subset.

    \begin{figure}[!htbp]
	\centering
	\includegraphics[width=0.8\linewidth]{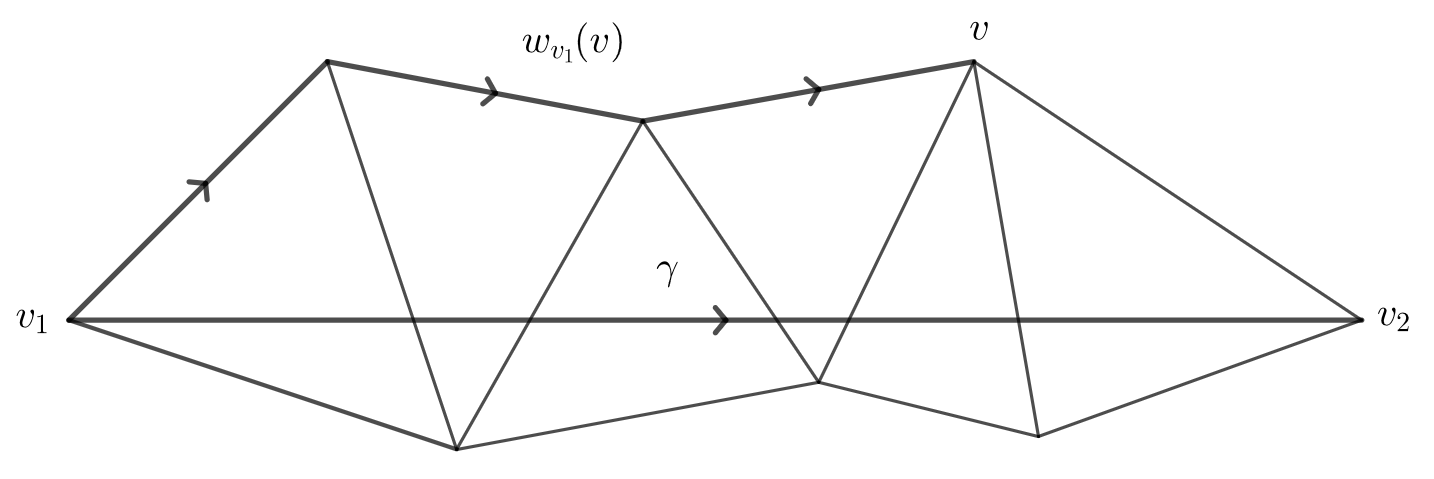}
	\footnotesize
    \caption{$P(\gamma, X,T)$ and the complex vector $w_{v_1}(v)$ associated to the vertices $v$ and $v_1$.}\label{fig:unfoldsaddleconnection}
    \end{figure}
    
    We show that $\phi(D_{S,\underline{s}}^{Del}(\underline{k};T, \gamma)) = D_1$ as follows. Note that if the coordinate $(z_1,\ldots,z_{n-3})$ of $(X,f)\in D_{S,\underline{s}}^{Del}(\underline{k};T)$ is contained in $D_1$, the constraint~(3) implies that the diagonal between $v_1$ and $v_2$ exists in the interior of $P(\gamma, X, T)$. Then this diagonal corresponds to a saddle connection on $X$ realizing $f(\gamma)$. Conversely, if $(X,f)$ is contained in $D_{S,\underline{s}}^{Del}(\underline{k};T, \gamma)$, then $\gamma$ is unfolded to the diagonal between $v_1$ and $v_2$. Thus the constraint~(3) holds. Therefore this proves that $\phi(D_{S,\underline{s}}^{Del}(\underline{k};T, \gamma)) = D_1$. 

    We prove similarly when $\gamma$ is a regular closed geodesic. Let $P(\gamma, X, T,e_0)$ be the polygon associated to $\gamma$ for $(X,f)\in D_{S,\underline{s}}^{Del}(\underline{k};T, \gamma)$. Denote by $L$ the corridor of $\gamma$ in $P(\gamma, X, T,e_0)$.  Similarly, the corridor divides the vertices of $P(\gamma, X, T,e_0)$ into two subsets $V'_l$ and $V'_r$ such that the vertices in $V'_l$ and $V'_r$ are on the left and right hand side of the segment $\gamma$ respectively.
    
    Define the complex number $w(e)$ associated to oriented edge $e$ of $P(\gamma, X, T,e_0)$ as in the case of ars. Again, for a vertex $v$ of $P(\gamma, X, T,e_0)$, define a complex vector $w_v(v')$ associated to an oriented path in the boundary of $P(\gamma, X, T,e_0)$ from $v$ to $v'$ as above. Let $w(e_0)$ be the vector that translates the edge $e_0'$ to $e_0''$, which corresponds to the vector representing the segment $\gamma$ in $P(\gamma, X, T,e_0)$. 

    We impose the following additional constraint on $D$:
    \begin{enumerate}
        \setcounter{enumi}{2}
        \renewcommand{\labelenumi}{(\arabic{enumi}')}
        \item For any vertex $v$ in $V'_r$ and any vertex $v'$ in $V'_l$, we require that
        $$
        \operatorname{Im}\frac{w_{v}(v')}{w(e_0)}>0.
        $$
    \end{enumerate}
    Let $D_2$ be the subset of $D$ with the constraint~(3'). Then $D_2$ is also a semialgebraic subset.

    \begin{figure}[!htbp]
	\centering
	\includegraphics[width=0.8\linewidth]{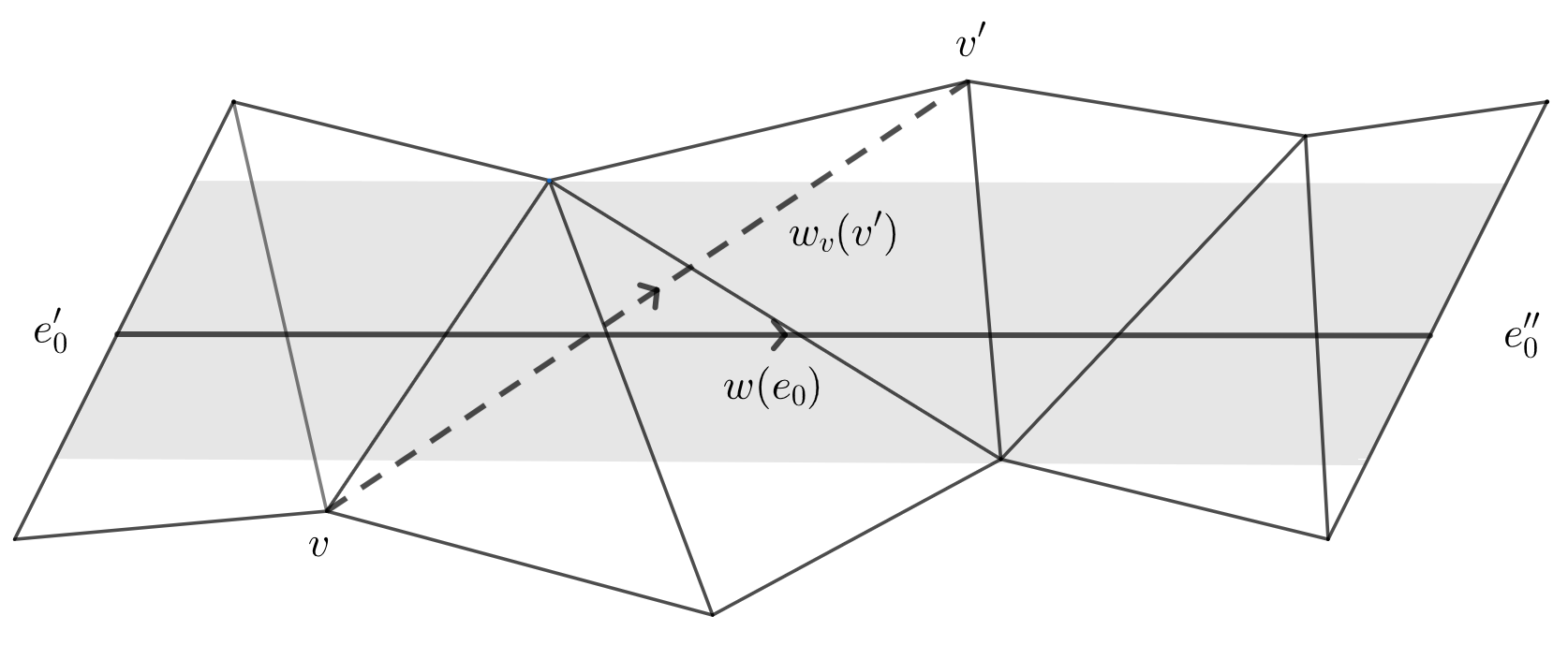}
    \caption{The unfolding $P(\gamma, X, T, e_0)$ and the complex vector $w_v(v')$ associated to the vertices $v$ and $v'$. The gray region indicates the corridor of the regular closed geodesic~$\gamma$.}\label{fig:unfoldclosedgeodesic}
    \end{figure}

    Note that the constraint~(3') is equivalent to the requirement that the corridor $L$ has non-empty interior in  $P(\gamma, X, T,e_0)$, and it is equivalent to require that $f(\gamma)$ is realized as a regular closed geodesic on $X$. Hence $\phi(D_{S,\underline{s}}^{Del}(\underline{k};T, \gamma)) = D_2$.
\end{proof}

Combining Lemma~\ref{lem:delaunaychart} with Lemma~\ref{lem:semialgdomain}, we conclude the proof of Theorem~\ref{thm:semialgdelaunayregion}.\newline

For later use, we prove the following property of normalized length functions.

\begin{lemma}\label{lem:semialgnormalizedlength}
    The normalized length function $\ell_{\gamma}(X)$ on $D_{S,\underline{s}}^{Del}(\underline{k};T, \gamma)$ is a non-constant semialgebraic function on the spanning tree coordinate chart in $D_{S,\underline{s}}(\underline{k};F)$ for any spanning tree $F$ in $T$.
\end{lemma}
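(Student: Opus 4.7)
The plan is to write $\ell_\gamma^2 = |\gamma|^2/\mathrm{Area}(X)$ as a ratio of two explicit real polynomials in the spanning tree coordinates $(x_1,y_1,\ldots,x_{n-3},y_{n-3})$ with $z_j = x_j + iy_j$, from which semialgebraicity of $\ell_\gamma$ will follow at once together with Theorem~\ref{thm:semialgdelaunayregion}.

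For the numerator I would recycle the unfolding construction from the proof of Lemma~\ref{lem:semialgdomain}: when $\gamma$ is an arc, it realizes on $(X,f) \in D_{S,\underline{s}}^{\mathrm{Del}}(\underline{k};T,\gamma)$ as the diagonal of $P(\gamma,X,T)$ carrying the complex vector $w_{v_1}(v_2)$, and when $\gamma$ is a loop, it realizes as the translation vector $w(e_0)$ between $e'_0$ and $e''_0$ inside $P(\gamma,X,T,e_0)$. In both cases the resulting complex vector $w(\gamma)$ is a fixed affine combination
\[
w(\gamma) \;=\; a_0 + a_1 z_1 + \cdots + a_{n-3} z_{n-3}, \qquad a_j \in \mathbb{C},
\]
whose coefficients depend only on how $\gamma$ traverses $T$ relative to $F$. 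Expanding $|w(\gamma)|^2$ in real coordinates yields a polynomial $P \in \mathbb{R}[x_1,y_1,\ldots,x_{n-3},y_{n-3}]$ of degree $2$ with $|\gamma|^2 = P$. For the denominator, Thurston's theorem recalled in Section~\ref{sec:complexhyper} gives $\mathrm{Area}(X) = H((1,Z),(1,Z))$ for a Hermitian form $H$ of signature $(1,n-3)$; in the affine chart $z_0 = 1$ this becomes an inhomogeneous quadratic polynomial $Q \in \mathbb{R}[x_1,y_1,\ldots,x_{n-3},y_{n-3}]$, strictly positive on $\phi_F(D_{S,\underline{s}}^{\mathrm{Del}}(\underline{k};T,\gamma))$.

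The graph of $\ell_\gamma$ is then the semialgebraic set
\[
\{(x,y,s) : (x,y) \in \phi_F(D_{S,\underline{s}}^{\mathrm{Del}}(\underline{k};T,\gamma)),\ s \geq 0,\ s^2 Q(x,y) = P(x,y)\},
\]
whose first conjunct is semialgebraic by Theorem~\ref{thm:semialgdelaunayregion}. For non-constancy, which is the main obstacle, I would argue by contradiction: if $\ell_\gamma \equiv c$ on the Delaunay region, which has nonempty interior in $\mathbb{R}^{2(n-3)}$, then $P = c^2 Q$ as polynomials on $\mathbb{R}^{2(n-3)}$. I would split into two cases according to whether the linear part $a_1 z_1 + \cdots + a_{n-3} z_{n-3}$ of $w(\gamma)$ is zero. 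If it is nonzero, I compare homogeneous quadratic parts: $P_2 = |a_1 z_1 + \cdots + a_{n-3} z_{n-3}|^2$ is positive semidefinite and nonzero, while $Q_2$ is the restriction of $H$ to $\{z_0 = 0\} \simeq \mathbb{C}^{n-3}$, which by Cauchy interlacing against the signature $(1,n-3)$ of $H$ has signature either indefinite $(1,n-4)$ or negative semidefinite; in either subcase $P_2 = c^2 Q_2$ fails. If the linear part vanishes, then $|\gamma|^2 \equiv |a_0|^2$ is constant while $\mathrm{Area}(X) = Q$ remains a nonconstant quadratic polynomial for $n \geq 4$, so $\ell_\gamma = |a_0|/\sqrt{Q}$ is directly nonconstant.
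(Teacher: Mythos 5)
Your semialgebraicity argument matches the paper's: $\ell_\gamma^2 = |w(\gamma)|^2/\mathrm{Area}(1,Z)$ with $w(\gamma)$ an affine-linear function of $Z$ coming from the unfolding, and the graph is cut out by a polynomial equation over the semialgebraic domain supplied by Theorem~\ref{thm:semialgdelaunayregion}.

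The non-constancy argument, however, has a gap. You work in the affine chart $z_0 = 1$, equate $P = c^2 Q$ as polynomials on $\mathbb{R}^{2(n-3)}$, and compare only the degree-two homogeneous parts $P_2$ and $Q_2$. Your claim that $Q_2$ --- the restriction of the Hermitian form to $\{z_0 = 0\}$ --- is necessarily ``indefinite $(1,n-4)$ or negative semidefinite'' fails at $n = 4$: a signature-$(1,1)$ form on $\mathbb{C}^2$ restricted to a complex line can perfectly well have signature $(1,0)$, i.e.\ be positive definite, and nothing about the Delaunay region forbids this. In that case $P_2$ and $Q_2$ are both positive multiples of $|z_1|^2$, so the top-degree comparison yields no contradiction and proportionality of $P$ and $Q$ is not ruled out by your argument alone (you would at minimum also have to compare the linear and constant coefficients, which you do not do).

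The paper avoids this by staying homogeneous. It lifts the putative identity to the unprojectivized chart $\mathbb{C}^{n-2}$, where both $|w(\gamma)|^2$ and $\mathrm{Area}(z_0,Z)$ are honest homogeneous quadratic forms: the numerator is a rank-$2$ positive semidefinite real form, while $\mathrm{Area}$ has real signature $(2,2(n-3))$, which has a strictly negative direction for every $n \ge 4$. The signatures differ, so the forms cannot be positive proportional, with no case split on $n$ and no need to examine lower-order terms. Your argument is repaired by exactly this move: homogenize $P = c^2 Q$ in $z_0$ before comparing signatures, rather than truncating to the quadratic part in the affine chart.
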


\begin{proof}
    Recall that the normalized length function $\ell_{\gamma}(Z)$ is defined by
    $$
    \ell_{\gamma}(Z) = \frac{|\gamma|}{\sqrt{\mathrm{Area}(1,Z)}}
    $$
    where $|\gamma|$ is the metric length of $\gamma$ on the flat cone sphere $X$ corresponding to the coordinate $Z$. To express $|\gamma|$ in coordinate, consider the unfolding polygon $P(\gamma, X, T)$ (resp. $P(\gamma, X, T, e_0)$) in the plane. According to the proof of Lemma~\ref{lem:semialgdomain}, $\gamma$ is unfolded into a segment in $P(\gamma, X, T)$ (resp. $P(\gamma, X, T, e_0)$) that is represented by a complex vector $w_{v_1}(v)$ (resp. $w(e_0)$), and it is a complex linear combination of $(1,Z)$. Since $|\gamma| = |w_{v_1}(v)|$ (resp. $|w(e_0)|$) and the area function $\mathrm{Area}(z_0,Z)$ is a quadratic form in $(z_0,Z)$, we know that $\ell_{\gamma}(Z)$ is semialgebraic.

    Furthermore, consider the vector $(z_0, Z) \in \mathbb{C}^{n-2}$, where $z_0$ is not normalized to $1$. We extend $w_{v_1}(v)$ to the complex linear combination of $(z_0, Z)$ with the same coefficients. Then the expression 
    $$
    \frac{|w_{v_1}(v)|^2}{\mathrm{Area}(z_0, Z)}
    $$ 
    is defined for $(z_0, Z)$ in a domain of $ \mathbb{C}^{n-2}$ and is equal to $\ell_{\gamma}^2(z_0^{-1} Z)$. Since the numerator and denominator are quadratic forms with different signatures, this ratio cannot be a constant on any open domain.
\end{proof}

\subsection{Piecewise Analyticity}\label{sec:analy}
In this subsection, we focus on proving the following theorem.
\begin{theorem}\label{thm:analytic}
    Let $\underline{k}\in (0,1)^n$ be a curvature vector with a positive curvature gap. Then, there exists an infinite partition $0 = t_0 < t_1 < \ldots < t_m < \ldots$ with $\lim\limits_{m \to \infty} t_m = \infty$ such that the functions
    $$
    R \mapsto \mu^{sc}_{\underline{k}}(0, R) \quad \text{and} \quad R \mapsto \mu^{cg}_{\underline{k}}(0, R)
    $$
    are real analytic in $(t_i, t_{i+1})$ for each $i$.
\end{theorem}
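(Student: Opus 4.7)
The plan is to reduce to the Teichm\"uller-space side via Theorem~\ref{thm:integrabilityandfinitedecomp}, rewrite each cumulative distribution function as a parametric integral of a constructible function in spanning tree coordinates, and then invoke the o-minimal machinery of Section~\ref{sec:ominimal}. I will focus on $\mu^{sc}_{\underline{k}}$; the argument for $\mu^{cg}_{\underline{k}}$ is identical with arcs replaced by loops.

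First, fix an integer $N\ge 1$ and consider $R\in(0,N)$. Theorem~\ref{thm:integrabilityandfinitedecomp} yields finitely many Delaunay regions $D_{S,\underline{s}}^{Del}(\underline{k};T_1),\ldots,D_{S,\underline{s}}^{Del}(\underline{k};T_a)$ and finitely many arcs $\gamma_1,\ldots,\gamma_m$ (depending on $N$) such that
\[
\mu^{sc}_{\underline{k}}\bigl(0,R\bigr)=\sum_{i=1}^{a}\sum_{j=1}^{m}\mu_{T_i,\gamma_j}\bigl(0,R\bigr)\qquad\text{for all } R\in(0,N).
\]
It therefore suffices to show that each $R\mapsto\mu_{T,\gamma}(0,R)$ is piecewise real analytic on $(0,N)$, with only finitely many break points.

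Second, for a fixed pair $(T,\gamma)$, choose a spanning tree $F\subset T$ and use the coordinate map $\phi_F$ of~\eqref{equ:coordinatemap2} to transport the computation to $\mathbb{R}^{2(n-3)}$. By Theorem~\ref{thm:semialgdelaunayregion}, the image $A:=\phi_F\bigl(D_{S,\underline{s}}^{Del}(\underline{k};T,\gamma)\bigr)$ is semialgebraic. By Lemma~\ref{lem:semialgnormalizedlength}, the normalized length $\ell_\gamma$ pulls back to a semialgebraic function $\widetilde{\ell}_\gamma$ on $A$, so the set $\{(x,R)\in A\times(0,N):\widetilde{\ell}_\gamma(x)<R\}$ is semialgebraic in $\mathbb{R}^{2(n-3)+1}$. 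Moreover, from Lemma~\ref{lem:metricformula} the Thurston volume form is $\mathrm{Area}(1,Z)^{-(n-2)}$ times a polynomial in the real and imaginary parts of $Z$ (namely the $(n-3)$-th exterior power of the Hermitian form $h_{Thu}$ expressed through the quadratic form $\mathrm{Area}$ and the Hermitian form $H$), so the density $\rho$ of $\mu_{Thu}$ with respect to Lebesgue measure on the chart is a rational, hence semialgebraic, function on $A$.

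Third, by Lemma~\ref{lem:semalgeimplysubanaly} all these semialgebraic objects are subanalytic, so the integrand
\[
\Phi(x,R):=\mathbbm{1}_A(x)\cdot\mathbbm{1}_{\{\widetilde{\ell}_\gamma(x)<R\}}(x,R)\cdot\rho(x)
\]
is a constructible function on $\mathbb{R}^{2(n-3)}\times(0,N)$ in the sense of~\cite{intsta}. Applying the stability theorem, Lemma~\ref{lem:constructible}, to the parametrized integral
\[
R\;\longmapsto\;\mu_{T,\gamma}(0,R)=\int_{\mathbb{R}^{2(n-3)}}\Phi(x,R)\,dx
\]
shows that this is a constructible function of $R\in(0,N)$. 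Lemma~\ref{lem:piecewiseanaly} then produces a finite partition of $(0,N)$ on each interval of which $R\mapsto\mu_{T,\gamma}(0,R)$ is real analytic. Summing over the finitely many pairs $(T_i,\gamma_j)$ and taking the common refinement of their partitions gives a finite partition of $(0,N)$ on which $R\mapsto\mu^{sc}_{\underline{k}}(0,R)$ is real analytic.

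Finally, letting $N$ range over $\mathbb{N}$ and taking the union of the partition points over all $N$ yields a discrete subset $\{t_m\}_{m\ge 0}$ of $[0,\infty)$ with $t_m\to\infty$ (discreteness follows because each $(0,N)$ receives only finitely many points). On each open interval $(t_m,t_{m+1})$, the cumulative function is real analytic, which is the desired conclusion. The main technical point is verifying the semialgebraicity of all three ingredients entering $\Phi$ (the Delaunay region, the length function, and the Thurston density); the hard work for the first two is packaged in Theorem~\ref{thm:semialgdelaunayregion} and Lemma~\ref{lem:semialgnormalizedlength}, while the third is a direct consequence of the explicit rational expression for $h_{Thu}$ in Lemma~\ref{lem:metricformula}. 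Once these are in place, the o-minimal integration theorem delivers the result essentially automatically.
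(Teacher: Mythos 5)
Your proposal is correct and follows essentially the same route as the paper: decompose $\mu^{sc}_{\underline{k}}$ into the measures $\mu_{T,\gamma}$ via Theorem~\ref{thm:integrabilityandfinitedecomp}, show the corresponding parametric integrand is semialgebraic (using Theorem~\ref{thm:semialgdelaunayregion}, Lemma~\ref{lem:semialgnormalizedlength}, and the rationality of the Thurston density from Lemma~\ref{lem:metricformula}), invoke Lemma~\ref{lem:constructible} and Lemma~\ref{lem:piecewiseanaly} to get piecewise analyticity of each $\mu_{T,\gamma}(0,\cdot)$, and stitch together over increasing $N$. Your bundled integrand $\Phi(x,R)$ is exactly the paper's $G(R,Z)$, so the two arguments coincide modulo the exact power of $\mathrm{Area}(1,Z)$ in the density (which you state more precisely than needed; only rationality is used).
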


According to the decomposition given by Theorem~\ref{thm:integrabilityandfinitedecomp}, the proof of the above theorem is reduced to analyzing the following function:
$$
R \mapsto \mu_{T, \gamma}(0,R),
$$
where $T$ is a triangulation of $(S, \underline{s})$ corresponding to a Delaunay region $D_{S,\underline{s}}^{Del}(\underline{k};T)$ in the Teichm\"uller space, and $\gamma$ is an arc or a loop on $(S, \underline{s})$.  Recall that $\mu_{T,\gamma}$ is defined as a Radon measure corresponding to the linear functional~\eqref{equ:lfpath}
$$g \mapsto \int_{D_{S,\underline{s}}^{Del}(\underline{k};T, \gamma)}g\circ\ell _{\gamma}(X,f)d\mu_{Thu}$$
on $C_c(\mathbb{R}_{>0})$, where $\ell _{\gamma}$ is the normalized length function defined on $D_{S,\underline{s}}^{Del}(\underline{k};T, \gamma)$.

The strategy is to employ the o-minimal tools introduced in Section~\ref{sec:ominimal}. Let $F$ be a spanning tree of $T$. The Delaunay region $D_{S,\underline{s}}^{Del}(\underline{k};T)$ is contained in the spanning tree coordinate chart $D_{S,\underline{s}}(\underline{k};F)$. Denote by $Z=(z_1,\ldots,z_{n-3})$ the coordinate in $D_{S,\underline{s}}(\underline{k};F)$. In the following analysis, we do not distinguish between the subset $D_{S,\underline{s}}^{Del}(\underline{k};T)$ and its corresponding domain in $\mathbb{C}^{n-3}$ under the spanning tree coordinate chart;  see~\eqref{equ:coordinatemap2}.

In the spanning tree coordinate chart, the volume form $d\mu_{Thu}$ is written as
\begin{equation}
    d\mu_{Thu} = \left(\frac{i}{2}\right)^{n-3}h(Z)dZd\overline{Z}
\end{equation}
where $dZd\overline{Z}:= dz_1d\overline{z}_1\ldots dz_{n-3}d\overline{z}_{n-3}$, and 
\begin{equation}\label{equ:expressionforh}
    h(Z) = \operatorname{det}\left(h_{Thu}\left(\frac{\partial}{\partial z_i},\frac{\partial}{\partial z_j}\right)\right)_{1\le i,j\le n-3}
\end{equation}
is a real-valued smooth function.

Define $D_{s,\underline{s}}^{Del}(T,\gamma,R)$ as the preimage $\ell_\gamma^{-1}(0,R)$ in $D_{S,\underline{s}}^{Del}(\underline{k};T, \gamma)$.
Define $G:\mathbb{R}_{>0}\times\mathbb{C}^{n-3}\to \mathbb{R}$ as
\begin{equation}
    G(R, Z)=\mathbbm{1}_{D_{s,\underline{s}}^{Del}(T,\gamma,R)}(Z)h(Z),
\end{equation}
where $\mathbbm{1}_{D_{s,\underline{s}}^{Del}(T,\gamma,R)}$ is the indicator function of the subset $D_{s,\underline{s}}^{Del}(T,\gamma,R)$ in $\mathbb{C}^{n-3}$.

The following lemma is formulated using the notation introduced in~\eqref{equ:defininte}.

\begin{lemma}\label{lem:measureandintegral}
    The measure $\mu_{T,\gamma}$ satisfies that $\mu_{T,\gamma}(0,R) = I_{\mathbb{R}_{>0}}(G)(R)$ for any $R>0$. 
\end{lemma}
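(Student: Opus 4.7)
The plan is to identify both sides of the claimed equality with the Thurston volume of the sub-level set $D^{Del}_{s,\underline{s}}(T,\gamma,R) = \ell_\gamma^{-1}(0,R)$ inside the Delaunay region, and then rewrite that volume in local coordinates via the spanning tree chart. The definition of $\mu_{T,\gamma}$ as a Radon measure is stated only on $C_c(\mathbb{R}_{>0})$, so the first step is to extend the defining identity from continuous compactly supported functions to the bounded measurable function $\mathbbm{1}_{(0,R)}$. The plan is to pick an increasing sequence $g_n \in C_c(\mathbb{R}_{>0})$ with $0 \le g_n \le 1$, supported in $(0,R)$, and $g_n \nearrow \mathbbm{1}_{(0,R)}$ pointwise (a standard Urysohn construction). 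Applying monotone convergence to $\int g_n \, d\mu_{T,\gamma}$ gives $\mu_{T,\gamma}(0,R)$, while on the Teichmüller side the defining linear functional~\eqref{equ:lfpath} combined with monotone convergence for $g_n \circ \ell_\gamma$ on $D_{S,\underline{s}}^{Del}(\underline{k};T,\gamma)$ yields
\begin{equation*}
\mu_{T,\gamma}(0,R) \;=\; \int_{D_{S,\underline{s}}^{Del}(\underline{k};T,\gamma)} \mathbbm{1}_{(0,R)}\!\circ \ell_\gamma \, d\mu_{Thu} \;=\; \mu_{Thu}\bigl(D^{Del}_{s,\underline{s}}(T,\gamma,R)\bigr).
\end{equation*}

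Next I would pass to coordinates. Since $D_{S,\underline{s}}^{Del}(\underline{k};T)$ sits inside the spanning tree chart $D_{S,\underline{s}}(\underline{k};F)$ with coordinate $Z = (z_1,\ldots,z_{n-3})$, and since $D_{S,\underline{s}}^{Del}(\underline{k};T)$ embeds into the moduli space (Lemma~\ref{lem:genericdelaunay}), one may identify the Thurston integral on the Delaunay region with its coordinate expression. Using $d\mu_{Thu} = (\sqrt{-1}/2)^{n-3} h(Z) \, dZ\,d\overline{Z}$ together with the identity $(\sqrt{-1}/2)\,dz_j\,d\overline{z}_j = dx_j\,dy_j$, the Thurston measure agrees with $h(Z)$ times the Lebesgue measure on $\mathbb{R}^{2(n-3)}$ in these coordinates. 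Hence
\begin{equation*}
\mu_{Thu}\bigl(D^{Del}_{s,\underline{s}}(T,\gamma,R)\bigr) \;=\; \int_{\mathbb{R}^{2(n-3)}} \mathbbm{1}_{D^{Del}_{s,\underline{s}}(T,\gamma,R)}(Z)\, h(Z)\, dZ\, d\overline{Z} \;=\; \int_{\mathbb{R}^{2(n-3)}} G(R,Z) \, dZ\, d\overline{Z}.
\end{equation*}

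Finally, I would check that this last integral is exactly $I_{\mathbb{R}_{>0}}(G)(R)$ as defined in~\eqref{equ:defininte}. By that definition, one must verify that $G(R,\cdot)$ is integrable on $\mathbb{R}^{2(n-3)}$ for every $R>0$; otherwise $I_{\mathbb{R}_{>0}}(G)(R)$ is set to $0$ and the identity would fail. Integrability is immediate: $G(R,\cdot)$ is dominated by $|h|$ restricted to the Delaunay region, whose total integral equals $\mu_{Thu}(D_{S,\underline{s}}^{Del}(\underline{k};T)) \le \mu_{Thu}(\mathbb{P}\Omega(\underline{k})) < \infty$ by Thurston's finiteness of the total volume. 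With integrability in hand the two expressions coincide and the lemma follows.

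The only mildly delicate point is the passage from $C_c(\mathbb{R}_{>0})$ to the indicator function $\mathbbm{1}_{(0,R)}$, since the Radon measure $\mu_{T,\gamma}$ is defined through Riesz representation on continuous test functions; everything else is a direct coordinate rewrite. This extension is a routine monotone-convergence argument, so no serious obstacle is expected, and the proof is essentially an unwinding of definitions.
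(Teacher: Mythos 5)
Your proof is correct and follows the same route as the paper: rewrite $\mu_{T,\gamma}(0,R)$ as the Thurston volume of the sub-level set, then express it in the spanning tree coordinate chart and identify the result with $I_{\mathbb{R}_{>0}}(G)(R)$. The only difference is that you explicitly justify the passage from continuous test functions to $\mathbbm{1}_{(0,R)}$ via monotone convergence, a step the paper states directly ``by definition''; this fills in a small but real gap in the published argument without changing its substance.
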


\begin{proof}
    By definition, we compute that 
    \begin{align*}
        \mu_{T,\gamma}(0,R) 
        =&\int_{D_{S,\underline{s}}^{Del}(\underline{k};T, \gamma)}\mathbbm{1}_{(0,R)}\circ \ell_{\alpha}(X,f)d\mu_{Thu}\\
        =& \int_{D_{S,\underline{s}}^{Del}(\underline{k};T, \gamma)}\mathbbm{1}_{(0,R)}(\ell_{\alpha}(Z))\cdot h(Z) \cdot\left(\frac{i}{2}\right)^{n-3}dZd\overline{Z}\\
        =& \int_{\mathbb C^{n-3}}\mathbbm{1}_{D(T,\gamma,R)}(Z)h(Z)\cdot\left(\frac{i}{2}\right)^{n-3}dZd\overline{Z}\\
        =& \int_{\mathbb C^{n-3}} G(R, Z)\cdot\left(\frac{i}{2}\right)^{n-3}dZd\overline{Z}.
    \end{align*}
    Since $D_{S,\underline{s}}^{Del}(\underline{k};T)$ is of finite measure under $\mu_{Thu}$, we know that $G(R,Z)$ is integrable for each $R>0$. Note that $\left(\frac{i}{2}\right)^{n-3}dZd\overline{Z}$ is the Lebesgue measure on $\mathbb{C}^{n-3}$. It follows that 
    $$\mu_{T,\gamma}(0,R) = I_{\mathbb{R}_{>0}}(G)(R).$$  
\end{proof}

\begin{lemma}\label{lem:gisconstructible}
    The function $G: \mathbb{R}_{>0} \times \mathbb{C}^{n-3} \to \mathbb{R}$ is semialgebraic and, therefore, constructible on $\mathbb{R} \times \mathbb{C}^{n-3}$. Furthermore, the function $R\mapsto \mu_{T,\gamma}(0,R)$ is constructible on $\mathbb{R}_{>0}$.  
\end{lemma}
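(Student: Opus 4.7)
The plan is to verify that $G$ is semialgebraic by checking each of its two factors separately, then invoke Lemma~\ref{lem:semalgeimplysubanaly} to promote semialgebraicity to subanalyticity, which yields membership in the algebra of constructible functions. Once $G$ is constructible on $\mathbb{R}_{>0}\times\mathbb{C}^{n-3}$, the statement that $R\mapsto \mu_{T,\gamma}(0,R)$ is constructible follows immediately from Lemma~\ref{lem:measureandintegral} together with the integration-stability result Lemma~\ref{lem:constructible}, applied with $D=\mathbb{R}_{>0}$ and the integration variables being $(\mathrm{Re}\,z_1,\mathrm{Im}\,z_1,\ldots,\mathrm{Re}\,z_{n-3},\mathrm{Im}\,z_{n-3})$.

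First I would verify that $h(Z)$ is semialgebraic. By the explicit expression in Lemma~\ref{lem:metricformula}, each matrix entry $h_{Thu}(\partial/\partial z_i,\partial/\partial z_j)$ is a rational function in $(Z,\overline{Z})$: the numerator is a polynomial in $(Z,\overline{Z})$ arising from the Hermitian form $H$ and the area quadratic form, and the denominator is the nowhere-vanishing positive real-analytic function $\mathrm{Area}(1,Z)^2$. Taking the determinant in~\eqref{equ:expressionforh} preserves rationality, so $h(Z)$ is a rational function of $(\mathrm{Re}\,z_i,\mathrm{Im}\,z_i)$; in particular its graph is semialgebraic.

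Next I would handle the indicator factor. The set on which $\mathbbm{1}_{D_{s,\underline{s}}^{Del}(T,\gamma,R)}(Z)=1$ is
\begin{equation*}
\bigl\{(R,Z)\in\mathbb{R}_{>0}\times\mathbb{C}^{n-3}\mid Z\in D_{S,\underline{s}}^{Del}(\underline{k};T,\gamma),\ \ell_{\gamma}(Z)<R\bigr\}.
\end{equation*}
Theorem~\ref{thm:semialgdelaunayregion} gives semialgebraicity of $D_{S,\underline{s}}^{Del}(\underline{k};T,\gamma)$ in the spanning tree coordinates. Lemma~\ref{lem:semialgnormalizedlength} gives semialgebraicity of $\ell_{\gamma}$; since both $R$ and $\ell_{\gamma}$ are positive on this locus, the constraint $\ell_{\gamma}(Z)<R$ can be squared to the polynomial-type inequality $\ell_{\gamma}^2(Z)<R^2$ (with $\ell_{\gamma}^2$ a rational function), yielding a semialgebraic condition. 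The intersection of these semialgebraic sets is semialgebraic, so its indicator function is semialgebraic. Multiplying by the semialgebraic function $h(Z)$ keeps $G$ semialgebraic.

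Finally, by Lemma~\ref{lem:semalgeimplysubanaly} the graph of $G$ is subanalytic, so $G$ is a subanalytic, hence constructible, function on $\mathbb{R}_{>0}\times\mathbb{C}^{n-3}$ (extending by zero to $\mathbb{R}\times\mathbb{C}^{n-3}$ if needed, which preserves semialgebraicity). Lemma~\ref{lem:measureandintegral} identifies $R\mapsto \mu_{T,\gamma}(0,R)$ with $I_{\mathbb{R}_{>0}}(G)$, and Lemma~\ref{lem:constructible} then yields that this function lies in $\mathcal{C}(\mathbb{R}_{>0})$. The only step requiring any care is the verification that all semialgebraic pieces combine correctly (in particular, that the square-root in $\ell_{\gamma}$ is harmless because squaring preserves semialgebraicity); the rest is a direct application of the o-minimal toolkit recalled in Section~\ref{sec:ominimal}.
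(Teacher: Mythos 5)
Your proposal is correct and follows essentially the same route as the paper's own proof: factor $G$ into the indicator and the density $h$, use Lemma~\ref{lem:metricformula} for semialgebraicity of $h$, Lemma~\ref{lem:semialgdomain} (subsumed in Theorem~\ref{thm:semialgdelaunayregion}) and Lemma~\ref{lem:semialgnormalizedlength} for the indicator, promote via Lemma~\ref{lem:semalgeimplysubanaly}, and conclude with Lemma~\ref{lem:measureandintegral} and Lemma~\ref{lem:constructible}. Your extra squaring trick to avoid the square root in $\ell_{\gamma}$ is unnecessary (semialgebraicity of $\ell_{\gamma}$ already handles this, since $\{(R,Z):\ell_{\gamma}(Z)<R\}$ is semialgebraic directly from the semialgebraic graph of $\ell_{\gamma}$ and Tarski--Seidenberg), but it does no harm.
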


\begin{proof}
    Note that $G(R, Z)=\mathbbm{1}_{D_{s,\underline{s}}^{Del}(T,\gamma,R)}(Z)h(Z)$, where $h$ has the expression~\eqref{equ:expressionforh}. According to the formula of $h_{Thu}$ in the spanning tree coordinate chart in Lemma~\ref{lem:metricformula}, we know that $h$ is a rational function about $Z$, and, in particular, semialgebraic. 
    
    The rest of the proof is to show that the indicator function of $\mathbbm{1}_{D_{s,\underline{s}}^{Del}(T,\gamma,R)}(Z)$ on $\mathbb{C}^{n-3}$ is semialgebraic. Denote its graph in $\mathbb{R}_{>0}\times\mathbb{C}^{n-3}\times\mathbb{R}$ by $\Gamma$. Consider a subset $A$ of $\mathbb{R}_{>0}\times\mathbb{C}^{n-3}$ defined by
    $$
    A=\{(R,Z)\in \mathbb{R}_{>0}\times D_{S,\underline{s}}^{Del}(\underline{k};T, \gamma)\mid \ell_{\gamma}(Z)<R\}.$$
    Note that $(R,Z)\in \mathbb{R}_{>0}\times\mathbb{C}^{n-3}$ is contained in $A$ if and only if the indicator function $$\mathbbm{1}_{D_{s,\underline{s}}^{Del}(T,\gamma,R)}(Z) = 1.$$ 
    It follows that the graph $\Gamma$ is written as
    $$\Gamma=A\times\{1\} \sqcup \big(\mathbb{R}_{>0}\times\mathbb{C}^{n-3}\setminus A\big)\times \{0\}.$$
    Therefore, to demonstrate that the graph $\Gamma$ is semialgebraic, it suffices to show that the subset $A$ is semialgebraic.

    According to Lemma~\ref{lem:semialgdomain} and Lemma~\ref{lem:semialgnormalizedlength}, we know that $D_{S,\underline{s}}^{Del}(\underline{k};T, \gamma)$ is a semialgebraic subset in $\mathbb{C}^{n-3}$ and $\ell_{\gamma}(Z)$ is a semialgebraic function. It follows that $A$ is semialgebraic. Hence the graph $\Gamma$ is semialgebraic. By definition, the indicator function $\mathbbm{1}_{D(T,\gamma, R')}(Z)$ is semialgebraic.

    Since $G$ is a product of two semialgebraic functions, itself is also semialgebraic. By Lemma~\ref{lem:semalgeimplysubanaly}, the function $G$ is subanalytic, and, therefore, constructible.

    Furthermore, by Lemma~\ref{lem:measureandintegral} and Lemma~\ref{lem:constructible} imply that the function $\mu_{T,\gamma}(0,\cdot) = I_{\mathbb{R}_{>0}}(G)(\cdot)$ is constructible on $\mathbb{R}_{>0}$.
\end{proof}

We are now finally ready to complete the proof of Theorem~\ref{thm:analytic}.

\begin{proof}[Proof of Theorem~\ref{thm:analytic}]
    By Theorem~\ref{thm:integrabilityandfinitedecomp}, there are finitely many Delaunay regions 
    $$
    D_{S,\underline{s}}^{Del}(\underline{k};T_1), \ldots, D_{S,\underline{s}}^{Del}(T_a)
    $$ 
    such that, for any given $R > 0$, there exist finitely many arcs $\gamma_1, \ldots, \gamma_m$ on $(S, \underline{s})$ such that
    $$
        \mu^{sc}_{\underline{k}}\Big|_{(0, R)} = \sum_{i=1}^{a} \sum_{j=1}^{m} \mu_{T_i, \gamma_j}\Big|_{(0, R)}.
    $$
    Lemma~\ref{lem:gisconstructible} implies that each function $\mu_{T_i, \gamma_j}(0,\cdot)$ is constructible. By Lemma~\ref{lem:piecewiseanaly}, we know that $\mu_{T_i, \gamma_j}(0,\cdot)$ is piecewise analytic. It follows that the function $R'\mapsto \mu^{sc}_{\underline{k}}(0, R')$ is piecewise analytic on $(0,R)$. Since $R$ is an arbitrary positive number, we conclude that there is an infinite partition $0 = t_0 < t_1 < \ldots < t_m < \ldots$ with $\lim\limits_{m \to \infty} t_m = \infty$ such that the functions
    $$
    R \mapsto \mu^{sc}_{\underline{k}}(0, R) \quad \text{and} \quad R \mapsto \mu^{cg}_{\underline{k}}(0, R)
    $$
    are real analytic in $(t_i, t_{i+1})$ for each $i$.

    The proof for $\mu^{cg}_{\underline{k}}$ is analogous and is omitted here.
\end{proof}

\begin{corollary}\label{cor:abc}
    Let $\underline{k}\in(0,1)^n$ be a curvature vector with positive curvature gap. If $n\ge 4$, then the Siegel--Veech measures $\mu^{sc}_{\underline{k}}$ and $\mu^{cg}_{\underline{k}}$ are absolutely continuous with respect to the Lebesgue measure $\mu_{Leb}$ on $\mathbb{R}_{>0}$. 
\end{corollary}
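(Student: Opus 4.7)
The plan is to combine the piecewise real analyticity from Theorem~\ref{thm:analytic} with the absence of atoms. Write $F^{sc}(R) := \mu^{sc}_{\underline{k}}(0, R)$ and let $0 = t_0 < t_1 < \ldots$ be the partition supplied by Theorem~\ref{thm:analytic}, so that $F^{sc}$ is real analytic (hence continuous) on each $(t_i, t_{i+1})$. Being the distribution function of a Radon measure, $F^{sc}$ is non-decreasing and its only possible discontinuities occur at atoms of $\mu^{sc}_{\underline{k}}$. If I can show that $\mu^{sc}_{\underline{k}}$ carries no atoms, then $F^{sc}$ is globally continuous and real analytic on each $(t_i,t_{i+1})$, hence locally absolutely continuous on $\mathbb{R}_{>0}$, which implies $\mu^{sc}_{\underline{k}} \ll \mu_{Leb}$. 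The same reasoning will apply verbatim to $\mu^{cg}_{\underline{k}}$.

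To rule out atoms, fix $t > 0$ and choose any $R > t$. By Theorem~\ref{thm:integrabilityandfinitedecomp}, there is a finite decomposition
\[
\mu^{sc}_{\underline{k}}\big|_{(0,R)} = \sum_{i=1}^{a}\sum_{j=1}^{m} \mu_{T_i,\gamma_j}\big|_{(0,R)},
\]
so it suffices to show that each $\mu_{T_i,\gamma_j}(\{t\}) = 0$. Unravelling the definition,
\[
\mu_{T_i,\gamma_j}(\{t\}) = \mu_{Thu}\bigl(\{(X,f) \in D^{Del}_{S,\underline{s}}(\underline{k}; T_i, \gamma_j) : \ell_{\gamma_j}(X,f) = t\}\bigr).
\]
Fix a spanning tree $F \subset T_i$ and pass to the corresponding spanning tree coordinate chart $\phi_F$. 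By Theorem~\ref{thm:semialgdelaunayregion}, the image $\phi_F\bigl(D^{Del}_{S,\underline{s}}(\underline{k}; T_i, \gamma_j)\bigr)$ is a semialgebraic open subset of $\mathbb{R}^{2(n-3)}$, and by Lemma~\ref{lem:semialgnormalizedlength} the pullback of $\ell_{\gamma_j}$ is a \emph{non-constant} semialgebraic function on this subset.

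The hypothesis $n \ge 4$ enters here: the ambient real manifold has dimension $2(n-3) \ge 2$, and a proper semialgebraic subset of such a manifold (the fiber of a non-constant semialgebraic map being one such) has strictly smaller dimension, hence Lebesgue measure zero. Since $\mu_{Thu}$ has a smooth density $h(Z)$ in the spanning tree coordinates (Lemma~\ref{lem:metricformula}), the preimage of $\{t\}$ in the Delaunay region carries $\mu_{Thu}$-measure zero. Summing over the finite family of pairs $(T_i, \gamma_j)$ gives $\mu^{sc}_{\underline{k}}(\{t\}) = 0$, and replacing arcs by loops throughout handles $\mu^{cg}_{\underline{k}}$.

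The only subtle point is the non-constancy of $\ell_{\gamma_j}$, which is precisely the content of Lemma~\ref{lem:semialgnormalizedlength}; this is also where $n \ge 4$ is essential, for if $n = 3$ the Delaunay region is a single point, $\ell_{\gamma_j}$ is trivially constant, and the measures genuinely become atomic, as recorded in the remark following Theorem~\ref{thm:main3}.
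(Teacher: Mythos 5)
Your proposal is correct and follows essentially the same route as the paper: reduce to non-atomicity via Theorem~\ref{thm:analytic}, decompose via Theorem~\ref{thm:integrabilityandfinitedecomp}, and kill each atom of $\mu_{T_i,\gamma_j}$ by observing that the level set of the non-constant semialgebraic function $\ell_{\gamma_j}$ from Lemma~\ref{lem:semialgnormalizedlength} has $\mu_{Thu}$-measure zero in the semialgebraic coordinate chart. You spell out a couple of steps the paper leaves implicit (the passage from piecewise analyticity plus no atoms to absolute continuity, and the role of $n\ge 4$ ensuring $2(n-3)\ge 2$), but the argument and the lemmas invoked are the same.
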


\begin{proof}
    Theorem~\ref{thm:analytic} implies that $\mu^{sc}_{\underline{k}}(0,\cdot)$ and $\mu^{cg}_{\underline{k}}(0,\cdot)$ are piecewise real analytic. To conclude the absolute continuity, it suffices to show that the Siegel--Veech measures are non-atomic, that is, $\mu^{sc}_{\underline k}(\{R\}) = \mu^{cg}_{\underline k}(\{R\}) = 0$ for any $R\in \mathbb R_{>0}$. By the decomposition in Theorem~\ref{thm:integrabilityandfinitedecomp}, it suffices to prove that the measure $\mu_{T,\gamma}$ is non-atomic for any Delaunay region $D_{S,\underline{s}}^{Del}(\underline{k};T)$ and any arc or loop $\gamma$. By definition,
    \begin{align*}
        \mu_{T,\gamma}(\{R\}) &= \int_{D_{S,\underline{s}}^{Del}(\underline{k};T, \gamma)}
         \mathbbm{1}_{\{R\}}\big(\ell_{\gamma}(X,f)\big)d\mu_{Thu}\\
         &= \mu_{Thu}\left(\left\{(X, f)\in D_{S,\underline{s}}^{Del}(\underline{k};T, \gamma) \mid \ell_{\gamma}(X,f) = R \right\}\right).
    \end{align*}
    Note that Lemma~\ref{lem:semialgnormalizedlength} implies that the locus of the preimage $\ell^{-1}_{\gamma}(R)$ is a algebraic proper subset in the spanning tree coordinate chart. It follows that the measure under $\mu_{Thu}$ is zero, and $\mu_{T,\gamma}$ is non-atomic.
\end{proof}

\section{Moduli Space of Convex Infinite Flat Spheres}\label{bigsec:convexinfinite}

In this section, we introduce tools for studying the moduli space $\mathbb{P}\Omega(\underline{k})$ of the convex infinite flat spheres with fixed curvature vector $\underline{k}$. These tools include Delaunay triangulations and shortest spanning trees, which will later be used in the construction of coordinate charts and the computation of volume asymptotics.

\subsection{Delaunay Triangulations}
Recall that a non-negative infinite flat sphere $X$ has the curvatures of the conical singularities lie in the interval $[0,1)$, while the curvature of the poles is at least one. Since the total sum of the curvatures is equal to two, it follows that $X$ either has exactly one pole of curvature larger than $1$, or has two simple poles.

Let $\underline{k} = (k_1,\ldots,k_n)$ be a curvature vector with $k_i \ge 0$ for $1 \le i \le n-1$ and $k_n > 1$. We say that such a curvature vector is of \textbf{planar type}. A flat sphere is called \textbf{planar} if its curvature vector is of planar type. Note that a convex infinite flat sphere is always planar.

An example of a planar flat sphere is the Euclidean plane $\mathbb{C}$ equipped with labeled points $x_1, \ldots, x_n \in \mathbb{C}$. Such surfaces belong to the moduli space $\mathbb{P}\Omega(0,\ldots,0,2)$.\newline

\noindent\textit{{\textbf{Convention.}}} Given a curvature vector $\underline{k}$ of planar type, we assume that the the curvature $k_n$ is larger than one, so the singularity $x_n$ is the unique pole on $X\in \mathbb{P}\Omega(\underline{k})$.\newline

As in Section~\ref{sec:delaunay}, we define a \textbf{maximal} immersion of a disk $d: B(0,r) \to X$ and denote its continuous extension by $\overline{d}: \overline{B}(0,r) \to X$. 

Let $f$ be a geodesic triangle on $X$ whose edges are saddle connections. We say that $f$ is \textbf{inscribed in a disk} $\overline{d}: \overline{B}(0, r) \to X$ if $f$ is the image under $\overline{d}$ of a triangle inscribed in $\overline{B}(0, r)$. Such a triangular face $f$ is called \textbf{Delaunay}.

\begin{definition}
    Let $X$ be a planar flat sphere, and denote its core by $D$ (see Definition~\ref{def:convexhull}). A triangulation of $D$ is called \textbf{geometric} if each edge is a saddle connection.  A \textbf{geometric triangulation of $X$} is a geometric triangulation of $D$.

    A geometric triangulation $T$ of $X$ is called \textbf{Delaunay} if each face of $T$ is Delaunay.

    A planar flat sphere $X$ is called \textbf{Delaunay-generic} if there exists a Delaunay triangulation $T$ of $X$ such that no two triangular faces are inscribed in the same disk in $X$.
\end{definition}

\begin{remark}
The notion of a Delaunay triangulation on a planar flat sphere differs from the usual notion of triangulation on a surface. However, it coincides with the original definition given by Delaunay in~\cite{Delaunay}, which applies to a finite set of points in the Euclidean plane. In particular, this is the case $X \in \mathbb{P}\Omega(0,\ldots,0,2)$.
\end{remark}

For later use, we define Delaunay triangulations  for convex hulls.
\begin{definition}\label{def:delaunayforconvexhull}
    Let $X$ be a convex flat sphere, and let $D$ be a convex hull in $X$.
    Let $X_1$ be the infinitesimal sphere obtained by performing Thurston surgery along $D$ in $X$.
    A triangulation $T$ of $D$ is called \textbf{Delaunay} if $T$ is a Delaunay triangulation of $X_1$.

    We say that the convex hull $D$ is \textbf{Delaunay-generic} if the corresponding infinitesimal sphere is Delaunay-generic.
\end{definition}

Next, we define Delaunay regions in Teichm\"uller space.
\begin{definition}
Let $(S, \underline{s})$ be a sphere with labeled points $\underline{s} = (s_1, \ldots, s_n)$. A \textbf{triangulation $T$ of $(S, \underline{s})$ with respect to $\{s_1, \ldots, s_{n-1}\}$} is a cell decomposition of $S$ satisfying the following conditions:
\begin{itemize}
    \item The points $s_1, \ldots, s_{n-1}$ are the vertices of the cell decomposition, while the point $s_n$ lies in the interior of a face.
    \item The faces that do not contain $s_n$ are triangles.
\end{itemize}
We call the boundary of the face containing $s_n$ the \textbf{boundary of the triangulation $T$}.
\end{definition}

\begin{definition}
    Let $T$ be a triangulation of $(S, \underline{s})$ with respect to $\{s_1, \ldots, s_{n-1}\}$. We define the \textbf{Delaunay region} $D_{S,\underline{s}}^{Del}(\underline{k};T)$ as the subset of $\mathbb{P}\mathcal{T}_{S, \underline{s}}(\underline{k})$ consisting of $(X,f)$ such that $X$ is Delaunay-generic and $f(T)$ is the Delaunay triangulation of $X$.
\end{definition} 
By abuse of notation, we use $T$ to denote the Delaunay triangulation of $X$ for $(X, f) \in D_{S,\underline{s}}^{Del}(\underline{k};T)$.

Similarly to Lemma~\ref{lem:genericdelaunay}, the projection $\pi: \mathbb{P}\mathcal{T}_{S, \underline{s}}(\underline{k}) \to \mathbb{P}\Omega(\underline{k})$ restricts to a homeomorphism from $D_{S,\underline{s}}^{Del}(\underline{k};T)$ onto its image in $\mathbb{P}\Omega(\underline{k})$. We continue to use the same notation for this image and refer to it as a Delaunay region in $\mathbb{P}\Omega(\underline{k})$.

\subsection{Spanning Tree Coordinate Charts}\label{sec:spanningtreechartinfinitecase}

We introduce the following notions of spanning trees.
\begin{definition}\label{def:spanningtree}

We say that an embedded tree $F$ on $(S,\underline{s})$ is a \textbf{spanning tree with respect to $\{s_1, \ldots, s_{n-1}\}$} if the vertices of $F$ are $\{s_1, \ldots, s_{n-1}\}$.

If $X = (M,\underline{x},h)$ is a planar flat sphere, a geometric tree $F$ in $X$ is a \textbf{spanning tree of $X$} if it is a spanning tree of $(M,\underline{x})$ with respect to $\{x_1,\ldots,x_n\}$.

Moreover, let $X$ be a flat sphere and $D$ be a convex hull on a flat sphere $X$. We say that a geometric tree $F$ on $X$ is a \textbf{spanning tree of $D$} if $F$ is contained in $D$ and includes all the singularities in $D$. A geometric forest $F$ is called \textbf{a spanning forest of disjoint convex hulls $D_1, \ldots, D_s$} if $F$ has $s$ connected components, and each connected component of $F$ is a spanning tree of some $D_i$.
\end{definition}

Let $T$ be a triangulation of $(S, \underline{s})$ with respect to $\{s_1, \ldots, s_{n-1}\}$, and let $F$ be a spanning tree of $(S, \underline{s})$ with respect to $\{s_1, \ldots, s_{n-1}\}$ consisting of edges of $T$. Then, for $(X, f)\in D_{S,\underline{s}}^{Del}(\underline{k};T)$, $f(F)$ is a spanning tree of $X$. For simplicity, we will also use the notation $F$ to refer to the image $f(F)$ in $X$.

A coordinate chart on $D_{S,\underline{s}}^{Del}(\underline{k};T)$ associated with the spanning tree $F$ can be constructed as follows. 

For any $(X, f) \in D_{S,\underline{s}}^{Del}(\underline{k};T)$, let $D$ denote the core of $X$. By definition, $T$ is the Delaunay triangulation of $X$, and $F$ is a spanning tree of $X$. We fix a vertex on the boundary of $T$, denoted by $x$. Then $f(x)$ is a singularity in $\partial D$, which we also denote by $x$ for simplicity.

The components of $D \setminus F$ in $X$ are simply connected domains. Let $A_1, \ldots, A_m$ denote these components. Since $F$ is a spanning tree, each $A_i$ has exactly one edge on its boundary that is contained in $\partial D$ but not in $F$. We denote this edge by $e(A_i)$.

By choosing a developing map $Dev$, we proceed as follows:
\begin{enumerate}
    \item We orient the boundary $\partial D$ consistently with the orientation of $D$. Then, starting from the selected singularity $x$, we develop $\partial D$ into the plane.
    \item We further develop each component $A_i$ into the plane such that the edge $e(A_i)$ aligns with the corresponding edge in the development of $\partial D$ from step~(1).
\end{enumerate}

Note that in step~(2), an edge of $F$ may be developed into two vectors in the plane. To resolve this ambiguity, we choose the vector so that a developed component is always adjacent to the vector on its left.

From the above procedure, we denote the complex vectors associated with the edges of $F$ by
$$
(z_0, \ldots, z_{n-3}) \in \mathbb{C}^{n-3}.
$$
We refer to this as the \textbf{unprojectivized spanning tree parameter} of $F$ in $X$.

Note that the case where $D$ is degenerate requires a slight modification; see Figure~\ref{fig:degeneratedcore}. A degenerate core in a planar flat sphere consists of a single geodesic joining two singularities, possibly with singularities of curvature zero lying in its interior. In this case, each component $A_i$ is empty, and the spanning tree $F$ lies entirely in the boundary $\partial D$. Step~(1) then develops $F$ into the Euclidean plane, and we select a vector $z_i$ for each edge of $F$, for $0 \le i \le n - 3$.

    \begin{figure}[H]
	\centering
	\includegraphics[width=\linewidth]{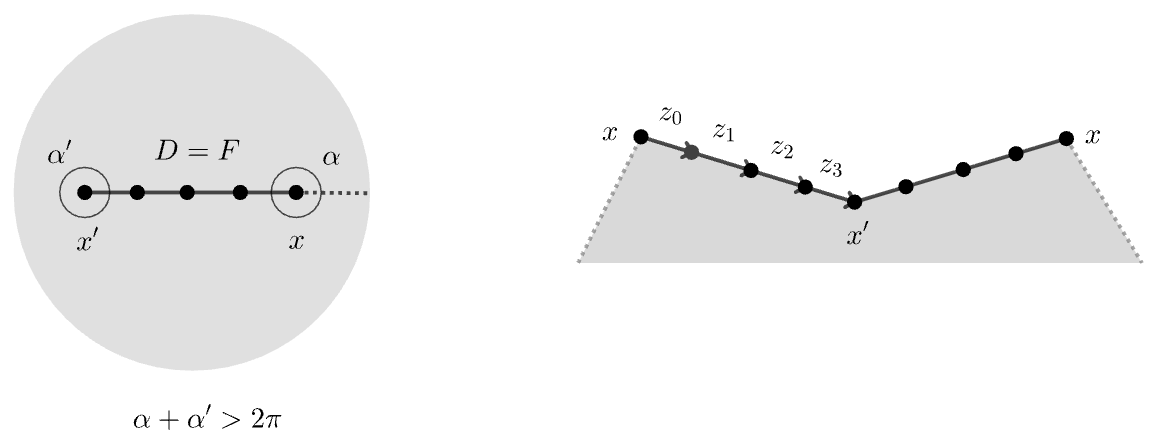}
    \caption{Left: a local picture of a neighborhood of a degenerate core. The core $D$ is a single geodesic, and the spanning tree $F$ coincides with $D$. The singularities in the interior have cone angle $2\pi$, while the sum of the cone angles at the endpoints is greater than $2\pi$. Right: a developing of the neighborhood by cutting along $\gamma$ and the dotted line. For each edge of $F$, we choose a developed image and denote by $z_i$ the corresponding vector.}\label{fig:degeneratedcore}
    \end{figure}

\begin{remark}
An unprojectivized spanning tree parameter depends on the choice of the point $x$ and the developing image of $\partial D$ in step~(1).
\end{remark}

\begin{lemma}\label{lem:spanningtreeinfinitecase}
    Let $\underline{k}$ be a curvature vector of planar type with a positive curvature gap. Let $T$ be a triangulation of $(S,\underline{s})$ with respect to $\{s_1,\ldots,s_{n-1}\}$ and let $F$ be a spanning tree contained in $T$. Then, the map
    $$
    \phi_F: D_{S,\underline{s}}^{Del}(\underline{k};T) \to \mathbb{C}P^{n-3}, \quad (X, f) \mapsto [z_0, \ldots, z_{n-3}]
    $$
    is injective, where $(z_0, \ldots, z_{n-3})$ is the unprojectivized spanning tree parameter of $F$.
\end{lemma}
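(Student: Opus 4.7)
The plan is to reconstruct $(X,f)$ from the projective class $[z_0:\cdots:z_{n-3}]$, thereby proving injectivity of $\phi_F$. Suppose $(X_1,f_1)$ and $(X_2,f_2)$ both lie in $D_{S,\underline{s}}^{Del}(\underline{k};T)$ and share the same image under $\phi_F$. Since Teichm\"uller equivalence identifies surfaces up to homothety and the developing map is defined only up to post-composition with a rotation, we may rescale $X_1$ and adjust its developing map so that its unprojectivized parameters coincide exactly with those of $X_2$. It then suffices to produce an isometry $X_1 \to X_2$ isotopic to $f_2 \circ f_1^{-1}$ relative to $\underline{s}$.

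The reconstruction has two stages. First I would recover the core $D$. Starting from the base vertex $x$ on the boundary of $T$, the position in the plane of every vertex of $F$ is obtained by summing the vectors $z_i$ along the unique path in $F$ from $x$. Each component $A_j$ of $D \setminus F$ is a simply connected polygon whose cyclic boundary sequence is prescribed combinatorially by $T$ and $F$: it consists of developed edges of $F$ (possibly traversed twice) together with the unique boundary arc $e(A_j) \subset \partial D$, whose developed image is then forced by the polygon closing condition. Gluing the $A_j$ back together along edges of $F$ according to the pattern inherited from $T$ produces the core as an isometric invariant of the parameters, so the cores of $X_1$ and $X_2$ are canonically identified as flat disks with matching boundary singularities.

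Second, I would attach the infinite cone end at the pole $x_n$. Since $k_n > 1$, a neighborhood of $x_n$ is isometric to an open end of an infinite cone with cone angle $2\pi(k_n - 1)$; by Gauss-Bonnet applied to $D$, the total exterior turning of $\partial D$ realizes exactly this complementary angle, so there is a unique such infinite end whose boundary matches $\partial D$ isometrically with the prescribed vertices and side lengths. Attaching it completes both $X_1$ and $X_2$ to the same flat sphere, yielding an isometry that preserves $F$ and the triangulation $T$ and is therefore isotopic to $f_2 \circ f_1^{-1}$ relative to $\underline{s}$.

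The main obstacle is verifying that the polygonal pieces $A_j$ develop to embedded (rather than merely immersed) polygons, so that their gluing yields an embedded topological disk. This is where Delaunay-genericity enters: the Delaunay inequalities ensure that each triangle of $T$ develops as a positively oriented Euclidean triangle, preventing local folding, and the positive curvature gap forbids $\partial D$ from winding onto itself. A secondary subtlety, the uniqueness of the infinite cone attachment, reduces to matching the Gauss-Bonnet angle defect of $\partial D$ with the cone angle $2\pi(k_n-1)$ at the pole.
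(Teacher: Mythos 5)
Your two-stage outline (recover the core from the unprojectivized parameters, then show the infinite cone end is uniquely attached) is the same structure as the paper's proof: the paper also observes that matching parameters force isometric cores and then extends the isometry over the complement. Stage one of your argument is correct. Your worry about the developed pieces $A_j$ being embedded rather than merely immersed is unnecessary: gluing the developed $A_j$ along the edges of $F$ in the pattern prescribed by $T$ yields a well-defined flat disk whether or not those pieces embed in the plane, so no appeal to Delaunay-genericity is needed to pin down the core up to isometry.

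The genuine gap is in stage two. You assert that ``there is a unique such infinite end whose boundary matches $\partial D$'' and reduce the uniqueness to matching the Gauss--Bonnet angle defect. That matching is only a necessary condition: it ensures the developed boundary curve closes up in a cone with the right holonomy, but it does not by itself show that the flat annulus bounded by $\partial D$ with a pole of curvature $k_n$ is unique up to isometry. The paper fills exactly this hole by applying Thurston surgery along $D$ and $D'$ to obtain, in both cases, the same infinite cone $X^{(0)}$ of apex curvature $2-k_n$, and then invoking Lemma~\ref{lem:uniquereplacementcone}: the isometric cores force isometric replacement cones $C_D$, $C_{D'}$ inside $X^{(0)}$, so their complements --- which are precisely the two infinite ends --- are isometric. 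You should cite Lemma~\ref{lem:uniquereplacementcone} at this point, or carry out the developing-map argument it encapsulates, rather than treating uniqueness of the end as an immediate consequence of the angle matching.
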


\begin{proof}
For $(X,f)$ and $(X',f') \in D_{S,\underline{s}}^{Del}(\underline{k};T)$, denote their cores by $D$ and $D'$, respectively. Then $f(T)$ and $f'(T)$ are Delaunay triangulations of $D$ and $D'$, respectively. Assume that
$\phi_F(X,f) = \phi_F(X',f')$. Then the spanning tree coordinates of $F$ on $D$ and $D'$ are the same. In particular, there is an isometry from each component of $D\setminus F$ to the corresponding component of $D'\setminus F$, and these isometries glue to an isometry $h:D\to D'$. Moreover, $h\circ f|_T$ is isotopic to $f'|_T$ relative to the vertices of $T$.
 
We now show that $h:D\to D'$ extends to an isometry
$X\setminus D \to X'\setminus D'$. By applying Thurston surgeries along $D$ and $D'$, we obtain top surfaces $X^{(0)}$ and $(X')^{(0)}$. These surfaces are isometric, since they are both infinite cones with curvature $2-k_n$ at the apex. Note that $X\setminus D$ and $X'\setminus D'$ are identified with subsets of $X^{(0)}$ and $(X')^{(0)}$, respectively. The restriction
$h|_{\partial D}:\partial D\to \partial D'$
is an isometry between the boundaries of $X\setminus D$ and $X'\setminus D'$. Hence $h|_{\partial D}$ extends, by radial extension from the apex, to an isometry
$X^{(0)}\to (X')^{(0)}$, and in particular to an isometry
$X\setminus D \to X'\setminus D'$.
Combining this with the previous paragraph, we conclude that $h\circ f$ is isotopic to $f'$ relative to $\underline{s}$. Therefore, $(X,f)$ and $(X',f')$ define the same point.
\end{proof}

\subsection{Shortest Spanning Tree}
For later use, we study a special spanning tree in planar flat spheres.

Let $X$ be a flat sphere. We have previously considered only embedded trees in $X$. We define an \textbf{immersed tree} $F$ in $X$ an ordered pair $(V,E)$ satisfying that:
\begin{itemize}
    \item A subset $V$ of the conical singularities, whose elements are called \textbf{vertices}.
    \item A set $E$ of \textbf{edges}, where the edges are simple arcs joining vertices in $(S, \underline{s})$.
    \item $F$ is connected and has no cycle.
\end{itemize}
Note that the edges of $F$ may intersect in their interiors.

Furthermore, let $D$ be a convex hull in $X$. As in Definition~\ref{def:spanningtree}, an immersed tree $F$ is a \textbf{spanning tree} of $D$ if $F$ is contained in $D$ and the vertices are all the singularities in $D$.

We define $||F||_1$ as the sum of the metric lengths of the edges of $F$ in $X$, and $||F||_{\infty}$ as the maximal metric length of the edges of $F$. Let $X$ be a flat sphere, and let $D$ be a convex hull in $X$. A \textbf{shortest spanning tree} $F$ of $D$ is a spanning tree that minimizes $||F||_1$ among all spanning immersed trees of $D$.

\begin{lemma}
    Let $X$ be a convex flat sphere, and let $D$ be a convex hull in $X$. Then any shortest spanning tree $F$ of $D$ is a geometric embedded tree.
\end{lemma}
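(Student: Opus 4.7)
The plan is to establish two conclusions: every edge of $F$ is a saddle connection (so $F$ is geometric), and no two edges of $F$ meet in their interiors (so $F$ is embedded). In both cases the strategy is to produce a length-reducing surgery whenever the conclusion fails, contradicting the minimality of $\|F\|_1$.

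For geometricity, suppose an edge $e$ joining singularities $x_i, x_j \in D$ is not a saddle connection; then $|e|$ strictly exceeds the infimum length of paths in $D$ from $x_i$ to $x_j$. Let $\gamma$ be a realizing path, written as a concatenation of saddle connections through a (possibly empty) sequence of intermediate singularities. Removing $e$ from $F$ splits it into subtrees $F_1 \ni x_i$ and $F_2 \ni x_j$. Since $\gamma$ starts in $F_1$ and ends in $F_2$, some pair of consecutive singularities $Y, Y'$ on $\gamma$ satisfies $Y \in F_1$ and $Y' \in F_2$; the saddle connection between them is a sub-arc of $\gamma$ and so has length strictly less than $|e|$. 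Adding it to $F \setminus \{e\}$ yields a spanning tree of $D$ with strictly smaller total length, a contradiction.

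For embeddedness, after the first part every edge is a saddle connection, so any interior intersection of two edges is a transverse crossing at a regular point. Suppose $e_1 = AB$ and $e_2 = CD$ cross at such a point $P$. Removing both edges from $F$ produces exactly three subtrees $T_1, T_2, T_3$. A short combinatorial check rules out the configurations in which $\{A, C\}$ lies in one subtree and $\{B, D\}$ in another (and the symmetric configuration with $\{A, D\}$ and $\{B, C\}$), since each would force a cycle in $F$ traversing $e_1$ from $A$ to $B$, a subtree arc to $D$, $e_2$ back to $C$, and another subtree arc to $A$. In every remaining configuration, exactly one of the pairings $\{AD, BC\}$ or $\{AC, BD\}$ reconnects the three subtrees into a spanning tree without creating a cycle.

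Use the bent paths through $P$ corresponding to the valid pairing, say $A \to P \to D$ and $B \to P \to C$, as provisional new edges; their total length equals $|e_1| + |e_2|$. Straightening each bent path within its homotopy class rel.\ endpoints strictly decreases its length: in a flat chart around $P$, transversality of $e_1, e_2$ forces $A, P, D$ to be non-collinear, giving $|AD| < |AP| + |PD|$, and similarly for $B, P, C$. Any singularity encountered in the interior of the straightened edge is absorbed by the argument of the previous paragraph. The result is a spanning tree of $D$ with total length strictly less than $\|F\|_1$, contradicting minimality. The main technical obstacle is ensuring that straightening does not push paths outside $D$; this holds because each bent path lies in $D$ by construction and a shortest representative in a fixed homotopy class rel.\ endpoints cannot cross the geodesic boundary $\partial D$.
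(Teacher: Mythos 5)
Your proof follows essentially the same strategy as the paper --- a length-reducing surgery whenever embeddedness or geometricity fails --- but you execute it in the opposite order (geometricity first, then embeddedness) and with different technical choices. The geometricity step is actually spelled out more carefully than in the paper, which only remarks that edges ``must be geodesics that do not pass through singularities''; your observation that the shortest competing path may pass through intermediate singularities, and that one can therefore select the sub-arc of it that crosses from $F_1$ to $F_2$, cleanly closes that gap. Your combinatorial classification of the three subtrees after removing $e_1, e_2$, and the verification that exactly one of the two crossing resolutions reconnects them into a tree, is also more explicit than the paper's two-case figure.

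There is, however, a real soft spot in your embeddedness step, and it stems from a choice the paper avoids. You globally straighten each bent path $A\to P\to D$ to the geodesic in its homotopy class rel.\ endpoints, and then assert that this geodesic cannot leave $D$ because ``a shortest representative in a fixed homotopy class rel.\ endpoints cannot cross the geodesic boundary $\partial D$.'' This is not automatic: $\partial D$ is only piecewise geodesic, two geodesics in a flat cone metric can perfectly well cross (indeed bound a bigon enclosing positive curvature), and nothing in the lemma's hypotheses hands you geodesic convexity of $D$ for free. To make it rigorous you would need to either invoke an explicit convexity property of convex hulls established elsewhere, or --- better --- replace the bent path with the \emph{shortest path in $D$} from $A$ to $D$ rather than the global geodesic; that path stays in $D$ by fiat and is strictly shorter than the bent path because you can locally shortcut the corner at the regular point $P$ (which lies in the interior of $D$, since two saddle connections inside $D$ cannot cross transversally on $\partial D$). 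The paper sidesteps the issue entirely by performing only a \emph{local} modification of $e, e'$ in a small neighborhood of the crossing point $y$: rerouting and then smoothing the corner in a chart around $y$ already produces a strictly shorter immersed tree, and the shortcut stays in $D$ because $y$ is interior. If you adopt either of these fixes, your proof is complete; as written, the convexity assertion is a genuine unproved claim.
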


\begin{proof}
    Assume that two edges $e$ and $e'$ of $F$ intersect, and denote their intersection point by $y$. Let $A$ and $B$ be the endpoints of $e$, and let $C$ and $D$ be the endpoints of $e'$. We prove below that in this case, we can find a shorter spanning tree of $D$. There are two possible cases when removing $e$ and $e'$ from $F$.  
    
    If $B$ and $D$ belong to the same connected component of $F \setminus (e \cup e')$, we modify the tree in a neighborhood of the point $y$ as shown on the left of Figure~\ref{fig:resolve}.  
    If $A$ and $D$ belong to the same connected component of $F \setminus (e \cup e')$, we modify the tree in a neighborhood of the point $y$ as shown on the right of Figure~\ref{fig:resolve}.  
    
    Note that in either case, the resulting structure is an immersed tree with a shorter total length.

    \begin{figure}[!htbp]
	\centering
	\includegraphics[width=0.8\linewidth]{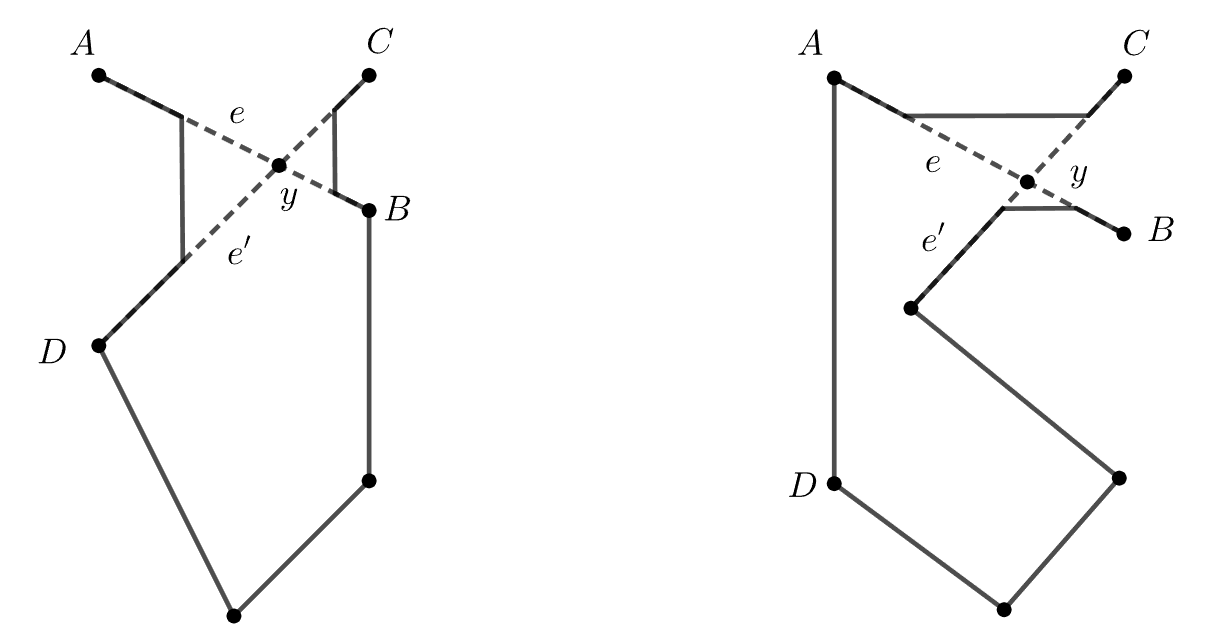}
    \caption{Shortening the sum of the lengths of $e'$ and $e_1$.}\label{fig:resolve}
    \end{figure}

    Furthermore, the edges of the shortest spanning tree $F$ must be geodesics that do not pass through any singularities. Hence, the edges are saddle connections. 
\end{proof}

The following result is known for a given set of points in the plane; see Section~9 of~\cite{deBerg2008} for a proof. The same method can be directly generalized to flat spheres, so we omit the proof here.

\begin{lemma}
    Let $X$ be a planar flat sphere. Then the edges of a Delaunay triangulation of $X$ contain a shortest spanning tree of $X$.
\end{lemma}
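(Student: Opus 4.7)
The plan is to reproduce the classical exchange argument from Euclidean computational geometry in the setting of a planar flat sphere. Let $T$ be a Delaunay triangulation of $X$ and $F$ a shortest spanning tree. Suppose for contradiction that some edge $e = AB$ of $F$ is not an edge of $T$. I will construct a strictly shorter spanning tree, contradicting the minimality of $F$.

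First I will establish the geometric ingredient: if a saddle connection $AB$ of length $L$ is not a Delaunay edge of $X$, then the immersed open disk having $AB$ as diameter must contain some conical singularity $C \neq A,B$ in its interior. This uses the empty-disk characterization: $AB$ is a Delaunay edge if and only if there exists an immersed open disk with $AB$ as a chord and no singularities in its interior, and the disk with $AB$ as diameter is the smallest such candidate. Working in a developing map on a simply connected neighborhood of this immersed disk, the classical Euclidean fact that a point inside a disk of diameter $AB$ sees $AB$ under an obtuse angle gives the strict inequalities $|AC| < |AB|$ and $|BC| < |AB|$ (where the lengths are computed as the shortest geodesic arcs within the developed region, hence are upper bounds for the true distances on $X$).

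Next I carry out the exchange. Removing $e$ from $F$ disconnects it into two components $F_A \ni A$ and $F_B \ni B$. The singularity $C$ lies in exactly one of them, say $F_B$ without loss of generality. Adding a shortest arc from $A$ to $C$ then reconnects the two components, producing an immersed spanning tree $F'$ of $X$ with
\[
\|F'\|_1 \le \|F\|_1 - |AB| + |AC| < \|F\|_1,
\]
contradicting the minimality of $F$. Hence every edge of $F$ is an edge of $T$.

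The main obstacle is the first step: formulating the empty-disk property on the flat sphere and checking that the developed picture faithfully reflects the geometry near $AB$. The subtlety is that the immersed disk with diameter $AB$ may wrap around the cone point $x_n$ or overlap itself, so one must argue that, even in that case, the developing map still produces bona fide arcs from $A$ and $B$ to the lift of $C$, and the resulting arcs descend to arcs on $X$ of length strictly less than $|AB|$. Once this is justified, the remainder of the proof is a direct translation of the Euclidean argument for minimum spanning trees (as in Section 9 of \cite{deBerg2008}) into the flat-sphere setting.
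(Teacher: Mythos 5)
Your proof is precisely the exchange argument from Section~9 of~\cite{deBerg2008}, which is exactly the reference the paper gives; the paper omits its own proof, so there is no alternative argument in the paper to compare against. Your plan is the right one, and you correctly flag the one genuine subtlety, but note that as written the key claim is inconsistent with the paper's conventions: by definition, an immersed disk on $X$ has no singularity in its image, so ``the immersed open disk having $AB$ as diameter'' cannot literally contain $C$ in its interior. If a singularity obstructs the diametral region, that disk simply fails to exist, so the developed Thales picture is not automatically available.

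The gap is closed more cleanly without Thales at all. Let $M$ be the midpoint of $AB$, set $L = |AB|$, and let $r$ be the radius of the maximal disk centered at $M$. If $r \ge L/2$, then $B(0,L/2)$ is an immersed disk with no singularity inside and with $A,B$ as antipodal boundary points, so $AB$ is a chord of an empty disk and hence a Delaunay edge, which is what the lemma wants. If $r < L/2$, maximality puts a conical singularity $C$ at $|MC| \le r < L/2$; since $|MA| = |MB| = L/2 > r$ we get $C \ne A,B$, and the triangle inequality gives $|AC| \le |AM| + |MC| < L$ and likewise $|BC| < L$, which is exactly what the exchange step requires, with no developing of the diametral region needed. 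The one remaining detail is that a shortest path from $A$ (or $B$) to $C$ may pass through another singularity; truncating at the first such singularity only shortens the reconnecting edge further, and the result is still an admissible immersed spanning tree --- the class over which $||\cdot||_1$ is minimized, as you note.
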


\subsection{Geometric Types}
For later use, we define the following notion of geometric type of shortest spanning forests. 
\begin{definition}
For $X, X'$ in the moduli space $\mathbb{P}\Omega(\underline{k})$ of planar flat spheres, let $D$ and $D'$ denote the cores of $X$ and $X'$, respectively. Let $T$ and $T'$ be Delaunay triangulations of $D$ and $D'$, and let $F$ and $F'$ be geometric spanning trees of $D$ and $D'$ with edges contained in the edges of $T$ and $T'$, respectively. 

We say that the pairs $(X, F)$ and $(X', F')$ are \textbf{geometrically equivalent} if there exists an orientation-preserving homeomorphism $f: X \to X'$ such that $f$ maps $T$ to $T'$ and $F$ to $F'$.

The \textbf{geometric type} of $F$ on $X$ is defined as the set of all pairs $(X', F')$ that are geometrically equivalent to $(X, F)$, denoted by $[X, F]$.
\end{definition}

\begin{definition}\label{def:domainofgeometrictype}
    Let $[X',F']$ be a geometric type of spanning trees on the spheres in $\mathbb{P}\Omega(\underline{k})$. We define a subset $D(\underline{k};[X',F'])\subseteq\mathbb{P}\Omega(\underline{k})$ satisfying that for $X\in D(\underline{k};[X',F'])$
\begin{enumerate}
    \item $X$ is Delaunay generic.
    \item The shortest spanning tree $F$ in $X$ is unique.
    \item The geometric type of $(X,F)$ is $[X',F']$.
\end{enumerate}
\end{definition}

\begin{remark}\label{rmk:chartonregionofgeometrictype}
By the definition of geometric types, there exists a Delaunay region $D_{S,\underline{s}}^{Del}(\underline{k};T)$ such that $D(\underline{k};[X',F']) \subseteq D_{S,\underline{s}}^{Del}(\underline{k};T)$, and the shortest spanning tree is a fixed subtree of $T$, which we also denote by $F'$. Therefore, the spanning tree coordinate chart associated with $F'$ is defined on $D(\underline{k};[X',F'])$.
\end{remark}

Since there are only finitely many geometric types of spanning trees for $X\in \mathbb{P}\Omega(\underline{k})$, we have the following result.

\begin{lemma}\label{lem:decompinfinitecase}
Let $\underline{k}$ be a curvature vector of planar type. Then there exist finitely many geometric types $[X^b, F^b]$ for $b = 1, \dots, q$ such that the complement of the union
$$
\bigsqcup_{b=1}^q D(\underline{k};[X^b, F^b])
$$
consists of spheres that are either not Delaunay-generic or have multiple shortest spanning trees.
\end{lemma}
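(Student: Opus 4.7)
\medskip

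The plan is to reduce the statement to a purely combinatorial finiteness argument followed by a routine covering/disjointness verification. First, I would observe that for any $X \in \mathbb{P}\Omega(\underline{k})$, the core $D$ of $X$ contains only the $n-1$ conical singularities $x_1,\ldots,x_{n-1}$ (since $x_n$ is the unique pole and lies outside $D$). Consequently, any Delaunay triangulation $T$ of $X$ is a triangulation of a topological closed disk whose vertex set is a labeled subset of $\{x_1,\ldots,x_{n-1}\}$, with the remaining singularities lying in the interior. By Euler's formula the number of edges and faces of $T$ is bounded in terms of $n$, and there are only finitely many combinatorial types of such labeled triangulations up to orientation-preserving homeomorphism.

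Next, for each combinatorial type of Delaunay triangulation $T$, any spanning tree $F$ of the core whose edges are edges of $T$ is a subtree of the $1$-skeleton of $T$, so there are only finitely many such $F$ up to the combinatorial equivalence used in the definition of geometric type. Combining these two finiteness statements, the set of geometric types $[X,F]$ arising from any pair (sphere, shortest spanning tree) is finite; enumerate representatives as $[X^b,F^b]$ for $b=1,\ldots,q$.

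For the covering claim, let $X \in \mathbb{P}\Omega(\underline{k})$ be Delaunay-generic and admit a unique shortest spanning tree $F$. The preceding lemmas ensure $F$ is a geometric embedded tree whose edges lie in the edges of the (unique) Delaunay triangulation of $X$, so the geometric type $[X,F]$ is well-defined and coincides with some $[X^b,F^b]$; hence $X \in D(\underline{k};[X^b,F^b])$. Conversely, every sphere in each $D(\underline{k};[X^b,F^b])$ satisfies conditions (1) and (2) of Definition~\ref{def:domainofgeometrictype}. This shows that the complement of $\bigsqcup_b D(\underline{k};[X^b,F^b])$ is precisely the locus of spheres that fail to be Delaunay-generic or admit more than one shortest spanning tree. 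Disjointness of the union is immediate: if $X$ lies in two such regions, its unique pair $(X,F)$ realizes two equal geometric types, so the indices coincide.

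The substantive step is the finiteness of combinatorial types; once that is in hand the rest is bookkeeping. The potential subtlety is to make sure the combinatorial datum carried by a geometric type really is bounded — in particular that the boundary circle $\partial D$ of the core, which may contain several singularities and whose cyclic ordering carries information, still produces only finitely many labeled combinatorial patterns. This is handled by the observation that $\partial D$ is a boundary component of a triangulated disk whose vertex count is at most $n-1$, so the cyclic orderings of its vertices and the attached interior triangulations fall into finitely many classes. No new geometric input beyond Euler's formula and the definitions of Section~\ref{bigsec:convexinfinite} is required.
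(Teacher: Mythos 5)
Your proof is correct and matches the paper's approach: the paper gives no formal proof, only the one-line remark immediately preceding the lemma that ``there are only finitely many geometric types of spanning trees for $X\in \mathbb{P}\Omega(\underline{k})$,'' and your combinatorial elaboration (bounding labeled triangulations of the core via Euler's formula, then enumerating subtrees of the $1$-skeleton, then verifying covering and disjointness from Definition~\ref{def:domainofgeometrictype}) is precisely what that remark leaves implicit. Two small wording caveats, neither of which affects the count: every conical singularity $x_1,\ldots,x_{n-1}$ is a vertex of the Delaunay triangulation (not merely a labeled subset of them), and when the core is degenerate the ``triangulation'' is a chain of segments rather than a triangulated disk.
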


\section{Fibered Coordinate Charts Near Boundary Strata}\label{bigsec:coor}

In this section, we study the structure near the boundary strata of the metric completion $\overline{\mathbb{P}\Omega}(\underline{k})$ of the moduli space of convex flat cone spheres.

\subsection{Thick and Thin Parts}
In this subsection, we recall the study of thick and thin parts as presented in~\cite{fu2024uniformlengthestimatestrajectories}, Section~5.

\begin{definition}[Thick parts]
Let $\underline{k}\in(0,1)^n$ be a curvature vector. Given a positive number $\lambda$, we define the \textbf{thick part} $\mathbb{P}\Omega(\underline{k})_{\lambda}$ as the subset of $\mathbb{P}\Omega(\underline{k})$ consisting of flat cone spheres where the normalized lengths of all saddle connections are bounded below by $\lambda$.
\end{definition}

\begin{remark}
    For a boundary stratum $B=B(P)$ of $\overline{\mathbb{P}\Omega}(\underline{k})$, Lemma~\ref{lem:thurstoncompletion} implies that $B$ is isometric to the moduli space $\mathbb{P}\Omega(\underline{k}(P))$. We denote by $B_{\lambda}$ the thick part of the corresponding moduli space.
\end{remark}

\begin{definition}
    Let $\underline{k}\in(0,1)^n$ be a curvature vector. Given an integer $1 \leq d \leq n-3$ and a positive real number $\varepsilon$, we define a subset $U_{d,\varepsilon}$ of $\mathbb{P}\Omega(\underline{k})$ as follows:
\begin{equation}\label{equ:ngbh}
    \begin{aligned}
    U_{d,\varepsilon} := &\left\{X \in \mathbb{P}\Omega(\underline{k}) :
    \mbox{there is a geometric forest on } X \mbox{ with } d \mbox{ edges} \right.\\
        &\left. \mbox{ such that } \ell_{e}(X) < \varepsilon \mbox{ for each edge } e \right\},
    \end{aligned}
\end{equation}
where $\ell_{e}(X)$ is the normalized length of the saddle connection $e$. We call a geometric forest in $X$ that satisfies the condition of $U_{d,\varepsilon}$ an \textbf{$\varepsilon$-geometric forest}.
\end{definition}

We will compute the volume of $U_{d,\varepsilon}$ in Section~\ref{bigsec:measurebounds}. To study this subset, we use the notion of thin parts following~\cite{fu2024uniformlengthestimatestrajectories}.

Let $B(P)$ be a boundary stratum corresponding to a $\underline{k}$-admissible partition $P$ of curvatures, and let $p_1, \ldots, p_s$ be the non-singleton elements in $P$. A geometric forest $F$ is \textbf{associated with the boundary stratum} $B(P)$ of $\overline{\mathbb{P}\Omega}(\underline{k})$ if $F$ consists of $s$ connected components $F_1, \ldots, F_s$, where the singularities in $F_i$ are those with curvatures in $p_i$ for each $i$. 

Note that if $F$ is a spanning forest of disjoint convex hulls $D_1, \ldots, D_s$, then $F$ is associated with $B(P)$ if and only if these convex hulls are associated with $B(P)$. Furthermore, recall that if $D_1,\ldots, D_s$ are associated with $B(P)$, then applying Thurston surgeries to $X$ along $D_1, \ldots, D_s$ results in a top flat cone sphere $X^{(0)}$ contained in $B(P)$.

\begin{definition}[Thin parts]\label{def:thinparts}
    Let $\underline{k}\in(0,1)^n$ be a curvature vector. Let $U_{d,\varepsilon}$ be the subset defined in~\eqref{equ:ngbh}. Let $B$ be a $d$-codimensional boundary stratum of $\overline{\mathbb{P}\Omega}(\underline{k})$. When $1\leq d < n-3$, we define the \textbf{thin part $U_{d,\varepsilon}(B_{\lambda})$ associated with $(B,\varepsilon,\lambda)$} as the subset of $U_{d,\varepsilon}$ consisting of flat spheres $X$ such that there is an $\varepsilon$-geometric forest $F$ in $X$ satisfying:
    \begin{itemize}
        \item The forest $F$ is associated with $B$.
        \item There exist convex hulls $D_1,\ldots,D_s$ in $X$ such that $F$ is a spanning forest of them.
        \item By applying Thurston surgeries along all $D_i$, the resulting top surface $X^{(0)}$ is contained in the thick part $B_{\lambda}$.
    \end{itemize}
    When $d = n-3$, we define the \textbf{thin part $U_{n-3,\varepsilon}(B)$ associated with $(B,\varepsilon)$} similarly, except that $B_{\lambda}$ is replaced by $B$, or equivalently, let $\lambda = 0$.
\end{definition}

\begin{lemma}[{\cite[Proposition~5.6]{fu2024uniformlengthestimatestrajectories}}]\label{lem:epsilonconvexhulls}
Let $\underline{k}\in(0,1)^n$ be a curvature vector with positive curvature gap $\delta(\underline{k})$, and let $B$ be a boundary stratum of $\overline{\mathbb{P}\Omega}(\underline{k})$. Assume that $\varepsilon$ and $\lambda$ are positive numbers satisfying the following conditions:
\begin{equation}\label{equ:restriction}
    0 < \varepsilon \leq \frac{\delta(\underline{k})^2}{4n^2} 
    \quad \text{and} \quad 
    \lambda = \bigg(1 + \frac{n}{\delta(\underline{k})}\bigg)\varepsilon.
\end{equation}
Then, for any $X \in U_{d,\varepsilon}(B_{\lambda})$, the convex hulls $D_1, \ldots, D_s$ satisfying Definition~\ref{def:thinparts} on $X$ are precisely the convex hulls in $X$ that contain a saddle connection with a normalized length less than $\varepsilon$.
\end{lemma}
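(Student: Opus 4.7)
\textbf{The plan} is to prove the two inclusions separately. The forward inclusion is immediate: by Definition~\ref{def:thinparts}, the $\varepsilon$-geometric forest $F$ has each edge of normalized length less than $\varepsilon$, and each component $F_i$, being a spanning tree of $D_i$, supplies such a short saddle connection inside $D_i$.

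The substantive part is the reverse: given any convex hull $D$ in $X$ containing a saddle connection $\gamma$ of normalized length less than $\varepsilon$, I must show that $D$ equals one of $D_1,\ldots,D_s$. The main tool is a quantitative metric comparison between $X$ and the top surface $X^{(0)}$. I would first show that applying the Thurston surgeries along $D_1,\ldots,D_s$ changes the area by a controlled amount: each replacement cone $C_{D_i}$ has area bounded in terms of the total length of $F_i$ and the curvature at the apex $x_{p_i}$, and the positive curvature gap $\delta(\underline{k})$ keeps the apex angle bounded away from $2\pi$. Under the prescribed bounds $\varepsilon\le \delta(\underline{k})^2/(4n^2)$ and $\lambda=(1+n/\delta(\underline{k}))\varepsilon$, this gives an area-distortion estimate $\mathrm{Area}(X)/\mathrm{Area}(X^{(0)})\in [1-C\varepsilon,1+C\varepsilon]$ for an explicit constant $C$, so any saddle connection of $X$ that lies in the thick complement of $\bigsqcup_i D_i$ corresponds to a saddle connection of $X^{(0)}$ of comparable normalized length, which by $X^{(0)}\in B_\lambda$ must have normalized length at least $\lambda$.

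With this estimate in hand, I would localize $\gamma$ by ruling out three failure modes. If $\gamma$ has both endpoints outside $\bigsqcup_i D_i$, then $\gamma$ transports to a saddle connection of $X^{(0)}$ of normalized length less than $\lambda$, contradicting thickness. If $\gamma$ has exactly one endpoint in some $D_i$, or its two endpoints lie in distinct $D_i,D_{i'}$, then concatenating $\gamma$ with an appropriate path in $F_i\cup F_{i'}$ from the relevant endpoints to the apices $x_{p_i},x_{p_{i'}}$ produces a path on $X^{(0)}$ between distinct singularities whose total length is at most $\varepsilon + (n-1)\varepsilon$; straightening it to a saddle connection and applying the area-distortion estimate again yields normalized length strictly less than $\lambda=(1+n/\delta(\underline{k}))\varepsilon$, where the factor $n/\delta(\underline{k})$ is exactly what absorbs the concatenation cost. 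This forces both endpoints of $\gamma$, and hence all singularities defining $D$, to lie in the same $D_i$.

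The final step is to upgrade the containment $D\subseteq D_i$ into equality. Here I would use the admissibility constraints of Definition~\ref{def:convexhull}: if the singularity set of $D$ were a proper subset of $p_i$, the shortest representative of the loop defining $D$ would have to avoid passing through the remaining singularities of $p_i$, but the thick-part bound on $X^{(0)}$ forces those remaining singularities to sit strictly inside any such enclosing loop in $X$, contradicting the admissibility of the candidate intermediate convex hull. Matching the homotopy classes is then automatic, since once the singularity sets coincide, both $D$ and $D_i$ are the component of the complement of the unique shortest enclosing loop containing those singularities. \textbf{The main obstacle} is calibrating the constants so that the concatenation cost $(n-1)\varepsilon$, compounded with the area-distortion factor, stays strictly below $\lambda$ in every failure mode; the choices $\varepsilon \le \delta(\underline{k})^2/(4n^2)$ and $\lambda = (1+n/\delta(\underline{k}))\varepsilon$ in the hypothesis are dictated precisely by this balancing requirement, and the curvature-gap positivity is essential both for the area comparison and for the uniqueness of the enclosing loop in the final step.
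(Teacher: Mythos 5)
This lemma is not proved in the present paper — it is imported directly from the author's earlier work, cited as Proposition~5.6 of~\cite{fu2024uniformlengthestimatestrajectories}, and appears here with no argument. There is therefore no in-text proof for me to compare your attempt against. Evaluating your outline on its own merits: the forward inclusion is correct as stated, and the general scheme you propose for the converse — comparing $\mathrm{Area}(X)$ with $\mathrm{Area}(X^{(0)})$, then running a case analysis on where the endpoints of the short saddle connection $\gamma$ land, with the constants $\varepsilon\le\delta(\underline{k})^2/(4n^2)$ and $\lambda=(1+n/\delta(\underline{k}))\varepsilon$ absorbing the concatenation cost — is plausible and in the right spirit.

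Two steps are not closed, however. First, in the "both endpoints outside $\bigsqcup_i D_i$" case you assert that $\gamma$ "transports to a saddle connection of $X^{(0)}$," but a saddle connection whose \emph{endpoints} avoid the $D_i$'s may nevertheless cross some $D_i$ as a chord. Transporting it requires replacing each such chord by the corresponding geodesic inside the replacement cone $C_{D_i}$ and then straightening to a genuine saddle connection of $X^{(0)}$; this needs a short comparison using the defining isometry property of $C_{D_i}$ from Lemma~\ref{lem:uniquereplacementcone}, and cannot be done by bare "transport." Second, and more seriously, the final upgrade from "both endpoints of $\gamma$ lie in the cluster $p_i$" to $D = D_i$ is left hanging. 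The assertion that "the thick-part bound on $X^{(0)}$ forces those remaining singularities to sit strictly inside any such enclosing loop in $X$" does not follow from anything you have established: the thick-part hypothesis lives on $X^{(0)}$, where $p_i$ has already been collapsed to a single singularity $x_{p_i}$, and the omitted singularities of $p_i$ all lie within distance $O(n\varepsilon)$ of the endpoints of $\gamma$ in $X$, so a priori they could sit on either side of $\partial D$. Ruling out a valid intermediate convex hull in the sense of Definition~\ref{def:convexhull} whose singularity set is a proper nonempty subset of some $p_i$ (and matching the homotopy class of $\partial D$ to that of $\partial D_i$) is exactly the substantive content of the proposition, and it needs its own argument — typically a convexity contradiction showing that the shortest enclosing loop of a proper subcluster must encounter the omitted singularities, precluding admissibility — rather than the hand-wave given.
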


\noindent\textit{\textbf{Convention}} Throughout the rest of the paper, we assume that the parameters $\varepsilon$ and $\lambda$ for the thin part $U_{d,\varepsilon}(B_{\lambda})$ satisfy the condition~\eqref{equ:restriction}.\newline

From the uniqueness established above, we introduce the following definition:
\begin{definition}[$(d,\varepsilon)$-convex hull]
Let $\underline{k}\in(0,1)^n$ be a curvature vector. Let $U_{d,\varepsilon}(B_{\lambda})$ denote a thin part of $\mathbb{P}\Omega(\underline{k})$, where $\varepsilon$ and $\lambda$ satisfy the condition~\eqref{equ:restriction}. The convex hulls $D_1, \ldots, D_s$ on $X \in U_{d,\varepsilon}(B_{\lambda})$ that satisfy Definition~\ref{def:thinparts} are referred to as the \textbf{$(d,\varepsilon)$-convex hulls} on $X$.
\end{definition}

In~\cite{fu2024uniformlengthestimatestrajectories}, it is proved that the thin parts have the following inductive relations:
\begin{lemma}[{\cite[Theorem~5.12]{fu2024uniformlengthestimatestrajectories}}]\label{lem:epsilondecomp}
    Let $\underline{k}\in(0,1)^n$ be a curvature vector with positive curvature gap $\delta(\underline k)$. Assume that $\varepsilon$ and $\lambda$ satisfy the condition~\eqref{equ:restriction}.  Then we have that: 
    \begin{itemize}
        \item When $1\le d < n-3$, the subset $U_{d,\varepsilon}$ decompose into a disjoint union of thin parts
        $$\underset{\mathrm{codim}(B)=d}{\bigsqcup} U_{d,\varepsilon}(B_\lambda)$$
        where the union is over the $d$-codimensional boundary strata of $\overline{\mathbb{P}\Omega}(\underline{k})$,
        and the complement which satisfies that
        $$U_{d,\varepsilon}\setminus \underset{\mathrm{codim}(B)=d}{\bigsqcup} U_{d,\varepsilon}(B_\lambda) 
        \subset
        U_{d + 1, \frac{6n}{\delta^2}\varepsilon}.$$
        \item When $d = n-3$, the subset $U_{n-3,\varepsilon}$ is a disjoint union
        $$U_{n-3,\varepsilon}=\underset{\dim B = 0}{\bigsqcup}U_{n-3,\varepsilon}(B)$$
        where the union is over the $0$-dimensional boundary strata of $\overline{\mathbb{P}\Omega}(\underline{k})$.
    \end{itemize}
\end{lemma}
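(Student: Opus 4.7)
The plan is to prove the two bullets separately, with the main work in the case $1 \le d < n-3$, where both disjointness and the complement inclusion must be established. The essential tool throughout is Lemma~\ref{lem:epsilonconvexhulls}, which identifies the $(d,\varepsilon)$-convex hulls intrinsically as those containing a saddle connection of normalized length below $\varepsilon$.

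\textbf{Disjointness.} For the disjointness of $\{U_{d,\varepsilon}(B_\lambda)\}_{\mathrm{codim}(B)=d}$, I would argue that if $X$ lies in both $U_{d,\varepsilon}(B_\lambda)$ and $U_{d,\varepsilon}(B'_\lambda)$, then the convex hulls witnessing the two memberships agree, since by Lemma~\ref{lem:epsilonconvexhulls} each of them equals the (intrinsic) family of convex hulls on $X$ that contain a short saddle connection. The partition of $\{1,\dots,n\}$ by labels of singularities inside these convex hulls then coincides, so $B = B'$.

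\textbf{Escape step.} For the complement inclusion, pick $X \in U_{d,\varepsilon}$ that lies in no $U_{d,\varepsilon}(B_\lambda)$, and let $F$ be an $\varepsilon$-geometric forest with $d$ edges. I would first promote $F$ to a system of convex hulls by enclosing the singularities of each connected component $F_i$ of $F$ by a shortest loop and taking the associated convex hull $D_i$; Lemma~\ref{lem:epsilonconvexhulls} identifies $D_1,\dots,D_s$ as the $(d,\varepsilon)$-convex hulls and determines a boundary stratum $B$ of codimension $d$. Thurston surgery along $D_1,\dots,D_s$ produces a top surface $X^{(0)} \in B$. Since $X \notin U_{d,\varepsilon}(B_\lambda)$, the top surface $X^{(0)}$ admits a saddle connection $\gamma$ with $\ell_\gamma(X^{(0)}) < \lambda$.

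\textbf{Lifting and the length bound.} The crux is to lift $\gamma$ to a saddle connection $\gamma'$ in $X$ disjoint from $F$ and to estimate its normalized length. Each replacement cone $C_{D_i}$ has diameter controlled by the metric lengths of the edges of $F_i$, so its area is of order $\varepsilon^2 \cdot \mathrm{Area}(X)$; summing over $s \le n$ components yields $\mathrm{Area}(X^{(0)}) = \mathrm{Area}(X)(1 - O(n\varepsilon^2))$. The complement $X^{(0)} \setminus \bigsqcup_i (C_{D_i})^{\mathrm{int}}$ is isometric to $X \setminus \bigsqcup_i D_i$, so $\gamma$, which sits outside all $C_{D_i}$ up to possibly bumping against their boundaries, lifts isometrically to a saddle connection $\gamma'$ on $X$ that is disjoint from the interiors of the $D_i$ and hence from $F$. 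Converting metric length to normalized length using the area ratio and the relation $\lambda = (1 + n/\delta)\varepsilon$ from~\eqref{equ:restriction}, together with the bound $\delta \le 1/3$ from Lemma~\ref{lem:upperboundcurvaturegap}, one obtains
\begin{equation*}
\ell_{\gamma'}(X) \;\le\; \frac{\lambda}{\sqrt{1 - O(n\varepsilon^2)}} \;\le\; \frac{6n}{\delta^2}\varepsilon,
\end{equation*}
under the standing assumption $\varepsilon \le \delta^2/(4n^2)$. Adjoining $\gamma'$ to $F$ produces a $(d+1)$-edge geometric forest witnessing $X \in U_{d+1,\frac{6n}{\delta^2}\varepsilon}$.

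\textbf{The case $d = n-3$.} When $d=n-3$, any $\underline{k}$-admissible partition $P$ realizing a codimension-$(n-3)$ stratum has $|P|=3$, so $B(P) \cong \mathbb{P}\Omega(\underline k(P))$ is a single point. The ``thick part'' $B_\lambda$ coincides with $B$ itself for every $\lambda$, and thus every $X \in U_{n-3,\varepsilon}$ automatically belongs to $U_{n-3,\varepsilon}(B)$ for the unique stratum $B$ determined by its $(n-3,\varepsilon)$-convex hulls. Combined with disjointness, this yields the claimed equality.

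The main obstacle is the lifting/length-bound step: one must make sure that $\gamma'$ is truly a saddle connection on $X$ (not a broken path meeting $\partial D_i$) and disjoint from $F$, and that the area-ratio and curvature-gap estimates combine cleanly to give the explicit constant $6n/\delta^2$. This relies on carefully exploiting the hypothesis~\eqref{equ:restriction} to control the geometry of the replacement cones uniformly.
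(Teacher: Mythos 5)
This statement is cited from \cite[Theorem~5.12]{fu2024uniformlengthestimatestrajectories} and the present paper gives no proof of it, so your proposal cannot be compared against the paper's own argument; I will evaluate it on its merits.

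Your disjointness argument and the $d=n-3$ case are sound, and the overall strategy of reducing the escape step to producing a new short saddle connection is the right idea. However, the escape step as written has two genuine gaps. First, it is incomplete as a case analysis. You promote the $\varepsilon$-geometric forest $F$ to a family of convex hulls and assert that these are the $(d,\varepsilon)$-convex hulls "by Lemma~\ref{lem:epsilonconvexhulls}", but that lemma presupposes $X \in U_{d,\varepsilon}(B_\lambda)$, which is exactly the hypothesis you lack. In fact there are several independent reasons $X$ can fail to belong to any $U_{d,\varepsilon}(B_\lambda)$: the singularities spanned by a component $F_i$ may admit no enclosing convex hull; a convex hull enclosing $F_i$ may contain additional singularities not in $F_i$, so that $F$ fails to be a \emph{spanning} forest of the hulls and the induced partition has the wrong codimension; the induced partition may not be $\underline{k}$-admissible; and finally the top surface $X^{(0)}$ may fall outside the thick part. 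Your argument only treats the last of these failures. The others each require a separate geometric estimate producing the $(d+1)$-st short edge, and the constant $\tfrac{6n}{\delta^2}$ has to absorb all of them.

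Second, the lifting argument is not careful about endpoints. A saddle connection $\gamma$ on $X^{(0)}$ with $\ell_\gamma(X^{(0)}) < \lambda$ may well have an endpoint at one of the new singularities $x_{p_i}$, i.e.\ at the apex of a replacement cone. In $X$, the corresponding point has been excised and replaced by the whole convex hull $D_i$; the "lift" of $\gamma$ then terminates on $\partial D_i$, which is a regular locus of $X$, not a singularity, and so is not a saddle connection. One must additionally argue that the short geodesic on $X^{(0)}$ entering the replacement cone gives rise to a nearby short saddle connection on $X$ whose endpoint is a genuine singularity inside $D_i$, using the fact that $\operatorname{diam}(D_i)$ is controlled by $\varepsilon$. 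Your phrase "up to possibly bumping against their boundaries" glosses over precisely this. Similarly, $\gamma$ may cross the interior of some replacement cone along the way, in which case it has no isometric lift to $X \setminus \bigsqcup_i D_i$ at all, and must be repaired by a comparison argument. Until these are spelled out (and the several failure modes above are all shown to land in $U_{d+1,\frac{6n}{\delta^2}\varepsilon}$), the proposal does not establish the lemma.
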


We point out that in the definition of the thick part $\mathbb{P}\Omega(\underline{k})_{\lambda}$, the number $\lambda$ is a lower bound for all saddle connections, including those with identical endpoints. Thus, the complement of this thick part is not $U_{1,\lambda}$. However, we have the following lemma:

\begin{lemma}[{\cite[Lemma~4.16]{fu2024uniformlengthestimatestrajectories}}]\label{lem:closedtosimple}
    Let $X$ be a convex flat cone sphere with $n$ singularities and positive curvature gap $\delta$. Assume that $\gamma_1$ is a simple saddle connection with the same endpoint whose normalized length is $L$. Then there exists a simple saddle connection $\gamma$ with distinct endpoints such that its normalized length is at most $\frac{1}{\delta}L$. 
\end{lemma}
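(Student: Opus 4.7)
The plan is to exploit the two topological disks that the simple geodesic loop $\gamma_1$ carves out of $X$, use the curvature gap to single out a ``small'' side, and then apply an explicit Euclidean development inside that disk. Write $\ell=|\gamma_1|$ for the unnormalized length so that $L=\ell/\sqrt{\mathrm{Area}(X)}$; I work with $\ell$ throughout and normalize only at the end.

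First, I would apply Gauss--Bonnet to each of the two disks $D_1, D_2$ cut out by $\gamma_1$. Writing $K_j=\sum_{x_\ell\in\mathrm{int}(D_j)}k_\ell$, the fact that $\gamma_1$ is a geodesic and $x_i$ is the unique boundary corner yields that the interior angle of $D_j$ at $x_i$ equals $\theta_j=2\pi K_j-\pi$; in particular $K_1,K_2>1/2$, and $K_1+K_2=2-k_i<2$. The positive curvature gap forces $|1-K_j|\ge\delta$, and since at most one $K_j$ can exceed $1$, after relabeling I may assume $K_1\in(1/2,\,1-\delta]$, equivalently $\theta_1\le\pi-2\pi\delta$.

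Next, I would reduce to the one-singularity case via the generalized Thurston surgery of Section~\ref{sec:thurstonsurgeries}. Let $D'$ be the convex hull in $D_1$ of the interior singularities (which exists since $K_1<1$), and replace it by its unique replacement cone $C_{D'}$ (Lemma~\ref{lem:uniquereplacementcone}). The resulting disk $\widehat{D_1}$ has boundary $\gamma_1$ of length $\ell$, corner $x_i$ of angle $\theta_1$, and a single interior cone point $y$ of curvature $K_1$. Cutting $\widehat{D_1}$ along the saddle connection $\widetilde\gamma$ from $x_i$ to $y$ and developing to the plane unfolds it to an isoceles Euclidean triangle with two sides of length $d:=|\widetilde\gamma|$, third side $\ell$, and apex angle $2\pi(1-K_1)$ at the image of $y$; the angles sum to $\theta_1+2\pi(1-K_1)=\pi$ as required, and the law of cosines gives
\[
\ell^2=4d^2\sin^2(\pi(1-K_1)),\quad\text{i.e.,}\quad d=\frac{\ell}{2\sin(\pi(1-K_1))}.
\]

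Finally, to transfer back to $D_1$, I would use that $\widehat{D_1}\setminus C_{D'}$ is isometric to $D_1\setminus D'$: the geodesic $\widetilde\gamma$ enters $\partial C_{D'}$ at $q$, corresponding isometrically to $q'\in\partial D'$ with $d_{D_1}(x_i,q')\le d$. By Gauss--Bonnet applied to $D'$, its boundary has total exterior angle $2\pi(1-K_1)>0$, forcing at least one interior singularity $x_{j_0}$ to lie on $\partial D'$; minimality of the convex-hull boundary gives $|\partial D'|\le|\gamma_1|=\ell$, so the arc-distance from $q'$ to $x_{j_0}$ along $\partial D'$ is at most $\ell/2$. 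Concatenating yields a path from $x_i$ to $x_{j_0}$ of length at most $d+\ell/2$, hence a saddle connection from $x_i$ to some other singularity of length at most $d+\ell/2$ (if the straight line from $x_i$ to $x_{j_0}$ passes through an intermediate singularity, replace by the sub-segment, which is only shorter). The main subtle point is exactly this transfer---in the multi-singularity regime Thurston surgery alters the interior geometry of $D'$, so one cannot naively compare distances inside $D'$ and $C_{D'}$ but must instead route via $\partial D'$ as above. Using $\sin(x)\ge(2/\pi)x$ on $[0,\pi/2]$ and $1-K_1\ge\delta$, I get $\sin(\pi(1-K_1))\ge 2\delta$, so $d\le\ell/(4\delta)$ and $d+\ell/2\le\ell/(4\delta)+\ell/2\le\ell/\delta$ since $\delta\le 1/3$ (Lemma~\ref{lem:upperboundcurvaturegap}). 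Dividing by $\sqrt{\mathrm{Area}(X)}$ yields the claimed normalized bound $\ell_\gamma(X)\le L/\delta$; the resulting $\gamma$ is simple by minimality of its length, with distinct endpoints by construction.
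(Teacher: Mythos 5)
The paper does not prove this statement itself; it imports it verbatim from the companion reference (Lemma 4.16 of \emph{fu2024uniformlengthestimatestrajectories}), so there is no in-paper proof to compare against. I therefore assess your argument on its own merits. The overall architecture is reasonable and in the spirit of the surrounding machinery: Gauss--Bonnet to split along $\gamma_1$, the curvature gap to single out the disk $D_1$ with $\theta_1=2\pi K_1-\pi$ and $K_1\le 1-\delta$, a generalized Thurston surgery collapsing the interior singularities of $D_1$ to a single cone point $y$, an explicit isosceles-triangle development giving $d=\ell/(2\sin\pi(1-K_1))$, and a transfer back through $\partial D'$. The numerical steps are all correct: the angle bookkeeping $\theta_1+2\pi(1-K_1)=\pi$, the estimate $\sin x\ge (2/\pi)x$ on $[0,\pi/2]$ giving $d\le\ell/(4\delta)$, and the final $d+\ell/2\le\ell/\delta$.

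There is, however, a real gap at your Step~2 which you do not acknowledge: you invoke the convex hull $D'$ of the interior singularities of $D_1$ (Definition~\ref{def:convexhull}) without verifying its hypotheses. That definition requires the shortest loop in the relevant extended homotopy class to exist \emph{and} to encounter only singularities among the enclosed ones. The evident danger is that this minimizer could pass through $x_i$, which lies on $\partial D_1$ but is not among the enclosed singularities; in that case $D'$ is simply undefined and the surgery has nothing to act on. Your transfer step also silently uses that $\partial D'$ is disjoint from $\gamma_1$ (so that $\gamma_1$ survives the surgery as the geodesic boundary of $\widehat{D_1}$) and that $\partial D'$ is an honest loop so that ``half the perimeter'' makes sense; the degenerate cases where $D_1$ contains a single singularity or $D'$ collapses to a geodesic tree need a word. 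None of these obstacles appears fatal: since $\gamma_1$ is geodesic and the interior angle at $x_i$ is $\theta_1<\pi$, the disk $D_1$ is geodesically convex, so the minimizer stays in $\overline{D_1}$, and a corner of a minimizer at $x_i$ sitting inside $D_1$ would subtend angle at most $\theta_1<\pi$ on the enclosed side and could be shortened, forcing the minimizer strictly into the interior of $D_1\setminus\{x_i\}$. But these convexity facts are doing essential work in your argument and have to be argued, not taken for granted; as written, the proof is incomplete at exactly the point where the multi-singularity case is reduced to the one-singularity case.
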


As a consequence of Lemma~\ref{lem:closedtosimple}, we obtain the following result on the complement of a thick part:

\begin{lemma}\label{lem:complementofthick}
    Let $\underline{k}\in(0,1)^n$ be a curvature vector with positive curvature gap $\delta(\underline{k})$. For any $\lambda$, the complement of the thick part $\mathbb{P}\Omega(\underline{k})_{\lambda}$ is contained in $U_{1,\lambda/\delta(\underline{k})}$.
\end{lemma}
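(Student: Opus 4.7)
The goal is to produce, for each $X \notin \mathbb{P}\Omega(\underline{k})_{\lambda}$, a one-edge geometric forest on $X$ whose edge has normalized length strictly less than $\lambda/\delta(\underline{k})$. By hypothesis $X$ carries at least one saddle connection of normalized length less than $\lambda$; since Lemma~\ref{lem:uniformupperbound} bounds the number of saddle connections of any bounded length on $X$, one may take $\gamma$ to be a shortest saddle connection on $X$, and $\ell_{\gamma}(X) < \lambda$.

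First I would verify that $\gamma$ is simple. A transverse self-intersection $\gamma(t_1) = \gamma(t_2)$ at a regular point yields the shortcut $\gamma|_{[0,t_1]} \cdot \gamma|_{[t_2,1]}$, a piecewise-geodesic path between the two endpoints of $\gamma$ of total length strictly less than $|\gamma|$; straightening it (and splitting at interior singularities if necessary) produces a saddle connection strictly shorter than $\gamma$, contradicting minimality. A tangential self-intersection would force $\gamma$ to contain a closed geodesic subarc, whose deletion again shortens $\gamma$.

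Once simplicity of $\gamma$ is established, I would split on its endpoints. If the two endpoints are distinct singularities, then the singleton $\{\gamma\}$ is itself a one-edge geometric forest, and since Lemma~\ref{lem:upperboundcurvaturegap} gives $\delta(\underline{k}) \le 1/3 < 1$, one has $\ell_{\gamma}(X) < \lambda < \lambda/\delta(\underline{k})$, hence $X \in U_{1, \lambda/\delta(\underline{k})}$. If instead the two endpoints coincide, then $\gamma$ is a simple saddle connection with identical endpoints, so Lemma~\ref{lem:closedtosimple} directly supplies a simple saddle connection $\gamma'$ with distinct endpoints satisfying $\ell_{\gamma'}(X) \le \ell_{\gamma}(X)/\delta(\underline{k}) < \lambda/\delta(\underline{k})$; the singleton $\{\gamma'\}$ is then the desired one-edge forest and again $X \in U_{1, \lambda/\delta(\underline{k})}$.

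The only delicate point is the simplicity step. While the crossing-exchange argument is standard, on a flat cone sphere one must observe that any geodesic between singularities decomposes into a concatenation of finitely many saddle connections whose total length cannot exceed that of any piecewise geodesic path in the same homotopy class, so at least one link of the concatenation is strictly shorter than $\gamma$, providing the desired contradiction.
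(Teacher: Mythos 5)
Your proof is correct and follows the paper's (implicit) argument exactly: the lemma is stated as an immediate corollary of Lemma~\ref{lem:closedtosimple}, and your case split on whether the shortest saddle connection has distinct or coincident endpoints is precisely the step that lemma was introduced to handle, with the distinct-endpoints case using only $\delta(\underline{k})<1$. The simplicity step you flag as delicate is standard, but to make the shortcut argument airtight you should observe that the shortcut $\gamma|_{[0,t_1]}\cdot\gamma|_{[t_2,1]}$ can never be null-homotopic: this could only happen when $\gamma$ is a loop at a single singularity $x$, and then the shortcut would bound a disk with geodesic sides, no interior singularities, and two corners of strictly positive interior angle at $x$ and at the crossing point $p$, which contradicts Gauss--Bonnet; hence straightening always yields a genuine shorter saddle connection. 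Tangential self-intersections need no separate treatment, since a geodesic on a flat surface cannot be tangent to itself at a regular interior point (it would then be periodic and never reach a singularity).
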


\subsection{Projection and Real Rays on Fibers}
We recall the projections into the boundary strata as described in Section~3 of~\cite{thu} and present a rigorous formulation of the structure of real rays on fibers.

Let $\underline{k}\in(0,1)^n$ be a curvature vector with positive curvature gap $\delta(\underline{k})$, and let $B$ be a boundary stratum of $\overline{\mathbb{P}\Omega}(\underline{k})$. Let $\varepsilon$ and $\lambda$ be two constants satisfying the condition~\eqref{equ:restriction}.

We define the projection
\begin{equation}\label{equ:projection}
    \phi_{B}: U_{d,\varepsilon}(B_{\lambda}) \to B_{\lambda}, \quad X \mapsto X^{(0)},
\end{equation}
where $X^{(0)}$ is obtained by applying Thurston surgeries along the $(d,\varepsilon)$-convex hulls in $X$.

We define the structure of ``real rays" in each fiber of $\phi_B$. For this purpose, we introduce the concept of inverse Thurston surgeries.

Let $P$ be the $\underline{k}$-admissible partition associated with the boundary stratum $B$. Denote by $p_1, \ldots, p_s$ the non-singleton elements in $P$. For $X_0 \in B$, let $x_{p_i}$ be the singularity associated with the element $p_i$ see Definition~\ref{def:singularityassociatedtononsingletonelement}. We now define the following:
\begin{itemize}
    \item (Reference convex hulls) For each $p_i$, we take an infinitesimal flat sphere $X_i$ in $\mathbb{P}\Omega(\underline{k}(p_i))$, and denote its core by $D_i$. Each $X_i$ is called the \textbf{reference infinitesimal sphere associated with $p_i$}, and each $D_i$ is called the \textbf{reference convex hull associated with $p_i$}.
    
    \item (Reference points) For each reference convex hull $D_i$, we select a singularity on the boundary $\partial D_i$, denoted by $x(D_i)$. This point is called the \textbf{reference point associated with $p_i$}.
    
    \item (Reference segments) A simple geodesic $\gamma_i: [a, b] \to X_0$ is called a \textbf{reference segment associated with $p_i$} if $\gamma_i(a) = x_{p_i}$ and $y_i := \gamma_i(b)$ is a regular point.
\end{itemize}
We call the sequence:
$$
\mathcal{R}:= (X_i, x(D_i), \gamma_i)_{i=1}^s
$$
a \textbf{reference data of $X_0$ associated with $B$}.

Let $C_{D_i}$ be the replacement cone for the Thurston surgery along $D_i$ in $X_i$. Note that the curvature of the apex of $C_{D_i}$ matches the curvature of the singularity $x_{p_i}$. It follows that if $\gamma_i$ is sufficiently short, there exists a unique scalar $a_i \in \mathbb{R}_{>0}$ such that an isometric embedding
\begin{equation}\label{equ:embeddingofboundedcone}
    h_i: a_i \cdot C_{D_i} \to X_0
\end{equation}
can be constructed, where $a \cdot C_{D_i}$ denotes a bounded cone obtained by rescaling the metric of the replacement cone $C_{D_i}$ by $a$. This embedding maps the apex of $a \cdot C_{D_i}$ to $x_{p_i}$ and the reference point $x(D_i)$ to $y_i$. Note that such an isometry is unique.

\begin{definition}[Inverse Thurston surgery]
For $X_0 \in B_\lambda$, given a reference data $\mathcal{R}$ of $X_0$ associated with $B$, the \textbf{inverse Thurston surgery with respect to} $\mathcal{R}$ is the operation on $X_0$ as follows:
\begin{itemize}
    \item For each $i$, consider the embedding~\eqref{equ:embeddingofboundedcone} $h_i: a_i \cdot C_{D_i} \to X_0$. Remove the image of the rescaled cone $a_i \cdot C_{D_i}$ from $X_0$.
    \item Let $a_i \cdot D_i$ denote the surface obtained by rescaling the metric of $D_i$ by the factor $a_i$. Glue each rescaled piece $a_i \cdot D_i$ back to the surface along the corresponding boundary $a_i \cdot \partial D_i$.
\end{itemize}
Denote the resulting flat cone sphere by $X_0(\mathcal{R})$.
\end{definition}

\begin{remark}
To clarify that $X_0(\mathcal{R})$ is an element of $\mathbb{P}\Omega(\underline{k})$, we need to specify how its singularities are labeled. Note that the singularities of $X_0$ are labeled as introduced in Remark~\ref{rmk:canonicallabels}. The singularities of $X_0(\mathcal{R})$ that are not among the singularities $x_{p_i}$ inherit their labels from $X_0$. For each convex hull $D_i$ of $X_0(\mathcal{R})$, the singularities in $D_i$ inherit their labels from the core of $X_i$.
\end{remark}

\begin{remark}\label{rmk:inversethurstonsurgery}
For any $X \in U_{d,\varepsilon}(B_{\lambda})$, let $D_1, \ldots, D_s$ be the $(d,\varepsilon)$-convex hulls. Set $X_0 = X^{(0)}$, and let $X_i$ be the infinitesimal sphere obtained by performing the Thurston surgery along $D_i$ for each $i$. 

Choose a reference point $x(D_i) \in \partial D_i$ for each $i$. Note that the complement $X \setminus \bigsqcup_i D_i$ embeds canonically into $X_0$, so each reference point $x(D_i)$ is embedded in $X_0$ as well.

Let $\gamma_i(D_i)$ be the geodesic in $X_0$ from the singularity $x_{p_i}$ to the point $x(D_i)$, lying entirely within the replacement cone for each $i$. Consider the reference data
$$
\mathcal{R} = (X_i, x(D_i), \gamma_i(D_i))_{i=1}^s,
$$
then $X$ is isometric to the reconstructed sphere $X_0(\mathcal{R})$.
\end{remark}

Finally, we define the notion of real rays on the fibers of $\phi_B$ as follows. 
\begin{definition}[Real rays]\label{def:realrays}
    For $X_0$ in the boundary stratum $B$, let $\mathcal{R} = (X_i, x(D_i), \gamma_i)_{i=1}^s$ be a reference data associated with $B$, with $\gamma_i: [0, b_i] \to X_0$ sufficiently short. 
    For $t_i \in (0, 1]$, we define $t_i \cdot \gamma_i$ as the geodesic $t_i \cdot \gamma_i: [0, t_ib_i] \to X_0$, and define reference data:
    $$
    \mathcal{R}_{(t_1,\ldots,t_s)}:= (X_i, x(D_i), t_i\cdot\gamma_i).
    $$
    Then we call the map
    $$
    \mathbf{r}_{X_0,\mathcal{R}}: (t_1, \ldots, t_s) \mapsto X_0(\mathcal{R}_{(t_1,\ldots,t_s)}),
    $$
    for $(t_1,\ldots,t_s) \in (0, 1]^s$ \textbf{a product of real rays} on the fiber $\phi_B^{-1}(X_0)$.

    Furthermore, if we take the reference data $\mathcal{R}$ as defined in Remark~\ref{rmk:inversethurstonsurgery}), the corresponding product of real rays $\mathbf{r}_{X_0, \mathcal{R}}$ satisfies $\mathbf{r}_{X_0, \mathcal{R}}(1, \ldots, 1) = X$. We refer to this as the \textbf{product of real rays of $X$}.
\end{definition}

\begin{remark}\label{rmk:exactlyrescalingfactor}
    Let $\mathbf{r}_{X_0, \mathcal{R}}$ be the product of real rays of $X$, where $\mathcal{R} = (X_i, x(D_i), \gamma_i(D_i))_{i=1}^s$ is defined as in Remark~\ref{rmk:inversethurstonsurgery}. Then the scaling factor $a_i$ in the inverse Thurston surgery with respect to $\mathbf{r}_{X_0, \mathcal{R}}$ is equal to $t_i$.
\end{remark}

\subsection{Shortest Spanning Forest}

Let $X$ be a convex flat sphere, and let $D_1, \ldots, D_s$ be disjoint convex hulls in $X$. Let $F$ be a spanning forest of these convex hulls. We say that $F$ is a \textbf{shortest spanning forest} if the component in $D_i$ is the shortest spanning tree of $D_i$ (see Definition~\ref{def:spanningtree}). 

\begin{lemma}\label{lem:byshortestone}
Let $F$ be a shortest spanning forest of the convex hulls $D_1, \ldots, D_s$ in a convex flat cone sphere $X$. Then there exists a geometric spanning forest $F'$ of $D_1, \ldots, D_s$ such that $||F'||_\infty < \varepsilon$ if and only if $F$ satisfies $||F||_\infty < \varepsilon$.
\end{lemma}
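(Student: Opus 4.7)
The plan is to reduce to the classical cut property of minimum spanning trees, transferred to the geometric setting: among all spanning trees on a given vertex set, a tree of minimum total edge weight also minimizes the maximum edge weight. This will give $\|F\|_\infty \le \|F'\|_\infty$ for any competing geometric spanning forest $F'$ of $D_1,\ldots,D_s$, from which the nontrivial direction follows. The converse direction is trivial: take $F' = F$.

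To execute this, I would let $e$ be a longest edge of $F$, so that $|e| = \|F\|_\infty$, and assume that $e$ lies in the component $F_i$ of $F$ spanning $D_i$. Removing $e$ disconnects $F_i$ into two subtrees $F_i^1, F_i^2$, whose vertex sets $V_1, V_2$ partition the set of singularities of $D_i$. Since the component $F'_i$ of $F'$ is a spanning tree of $D_i$ on this same vertex set, it must contain at least one saddle connection $e'$ joining a vertex of $V_1$ to a vertex of $V_2$; moreover $e' \subset D_i$ because $F'_i \subset D_i$. The configuration $\widetilde{F} := (F \setminus \{e\}) \cup \{e'\}$ is then a spanning immersed tree of $D_1, \ldots, D_s$ with $\|\widetilde F\|_1 = \|F\|_1 - |e| + |e'|$. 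Minimality of $\|F\|_1$ within the class of spanning immersed trees forces $|e| \le |e'|$, whence $\|F\|_\infty = |e| \le |e'| \le \|F'\|_\infty < \varepsilon$.

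The only point requiring care is the verification that the swapped configuration $\widetilde F$ is indeed an admissible competitor in the class over which $F$ is minimal: its $i$-th component $(F_i \setminus \{e\}) \cup \{e'\}$ must still span all singularities of $D_i$ by an embedded or immersed tree whose edges are saddle connections. This holds because $V_1 \sqcup V_2$ exhausts the vertex set of $F_i$, the saddle connection $e'$ reconnects the two subtrees into a connected subgraph with the correct number of edges (so no cycle is created), and the remaining edges are inherited from $F$ or $F'$, both of which consist of saddle connections. I do not anticipate a substantive obstacle beyond this bookkeeping.
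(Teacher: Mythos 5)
Your proof is correct and follows essentially the same swap (cut-property) argument as the paper: remove the longest edge $e$ of $F$, find an edge $e'$ of $F'$ crossing the resulting cut in $D_i$, and invoke minimality of $\|F\|_1$ over immersed spanning trees to conclude $|e|\le|e'|$. The only cosmetic difference is that you run the argument directly to get $\|F\|_\infty\le\|F'\|_\infty$, whereas the paper phrases it as a contradiction to the minimality of $\|F\|_1$ under the assumptions $\|F\|_\infty\ge\varepsilon$ and $\|F'\|_\infty<\varepsilon$.
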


\begin{proof}
The direction from $||F||_\infty < \varepsilon$ to the existence of $F'$ with $||F'||_\infty < \varepsilon$ is straightforward. For the reverse direction, it suffices to consider the case $s = 1$. Assume $||F||_\infty \geq \varepsilon$. Let $e$ be an edge of $F$ with metric length $|e| \geq \varepsilon$. Removing $e$ from $F$ disconnects the tree. Since $F'$ joins all  the singularities of $D_1$, there is an edge $e'$ of $F'$ such that $(F \setminus e)\cup e'$ is an immersed spanning tree. Since $|e'| < \varepsilon \leq |e|$, the length $||F_1||_1$ is strictly less than $||F||_1$. This contradicts the assumption that $F$ is a shortest spanning tree.
\end{proof}

\subsection{Fibered Coordinate Charts}\label{sec:fiberedcoordinatechart}
In this section, we define local coordinate charts on thick parts. 

Let $B$ be a boundary stratum of  $\overline{\mathbb{P}\Omega}(\underline{k})$. Let $P$ be the $\underline{k}$-admissible partition corresponding to $B$, and let $p_1, \ldots, p_s$ be the non-singleton elements. Recall that $B$ is isometric to the moduli space $\mathbb{P}\Omega(\underline{k}(P))$, where $\underline{k}(P)$ is defined in~\eqref{equ:curvaturenotations0}.

To introduce the domain on which the coordinate charts are built, we fix a choice of the following data:
\begin{itemize}
    \item A Delaunay region $D_{S,\underline{s}(P)}^{Del}(\underline{k}(P);T)$ in the boundary stratum~$B$, where $(S,\underline{s}(P))$ is the base sphere for the Teichmüller space $\mathbb{P}\mathcal{T}(\underline{k}(P))$, as defined in~\eqref{equ:boundarybasesphere}.
    \item A sequence $([X'_i, F'_i])_{i=1}^s$ of geometric types of spanning trees for the spheres in $\mathbb{P}\Omega(\underline{k}(p_i))$.
\end{itemize}

Next, we define the subset $N\big(T, ([X'_i, F'_i])_{i=1}^s\big) \subseteq U_{d,\underline{\varepsilon}}(B_{\lambda})$ satisfying that:
\begin{enumerate}
    \item $\phi_B\left(N\big(T, ([X'_i, F'_i])_{i=1}^s\big)\right) = B_{\lambda} \cap D_{S,\underline{s}(P)}^{Del}(\underline{k}(P);T)$.
    \item For $X \in N\big(T, ([X'_i, F'_i])_{i=1}^s\big)$, let $D_1, \ldots, D_s$ be the $(d,\varepsilon)$-convex hulls in $X$,  and $X_i$ be the infinitesimal flat sphere associated to $D_i$. Then for each $i$:
    \begin{enumerate}
        \item Each $D_i$ is Delaunay-generic.
        \item The shortest spanning tree of $D_i$ is unique, denoted by $F_i$.
        \item The pair $(X_i, F_i)$ has the geometric type $[X'_i, F'_i]$, or equivalently .
    \end{enumerate}
\end{enumerate}

\begin{remark}\label{rmk:projectiontoinfinitesimalspheres}
    According to the second condition of $N\big(T, ([X'_i, F'_i])_{i=1}^s\big)$, we know that for any $X\in N\big(T, ([X'_i, F'_i])_{i=1}^s\big)$,  the infinitesimal sphere $X_i$ satisfies that
    $$
    X_i\in D\big(\underline{k}(p_i); [X'_i,F'_i]\big)
    $$
    for each $i$, where $D\big(\underline{k}(p_i); [X'_i,F'_i]\big)$ is a subset of $\mathbb{P}\Omega(\underline{k}(p_i))$ defined in Definition~\ref{def:domainofgeometrictype}.
\end{remark}

For $X_0 \in B$, let $x_{p_i}$ be the singularity associated with the element $p_i$ for each $i$ (see Definition~\ref{def:singularityassociatedtononsingletonelement}). For the Delaunay region $D_{S,\underline{s}(P)}^{Del}(\underline{k}(P);T)$ in $B$, we select an edge of $T$ incident to $x_{p_i}$, denoted by $e'_i$. Note that $e'_i$ might be the same for different $x_{p_i}$. Consider the following subset $\mathcal{S} \subset U_{d, \varepsilon}(B_\lambda)$:
\begin{equation}\label{equ:seam}
    \begin{aligned}
        \mathcal{S} := \big\{ & X_0(\mathcal{R}) \in U_{d, \varepsilon}(B_\lambda) \mid X_0 \in D_{S,\underline{s}(P)}^{Del}(\underline{k}(P);T) \text{ and $\mathcal{R}$ is a reference data  associated with $B$}
        \\ & \text{ such that the reference segment } \gamma_i \text{ lies in the edge } e'_i \text{ for some } 1\le i \le s \big\}.
    \end{aligned}
\end{equation}
We call $\mathcal{S}$ a \textbf{cut locus in $U_{d, \varepsilon}(B_\lambda)$ associated with $D_{S,\underline{s}(P)}^{Del}(\underline{k}(P);T)$ and the edges $(e'_i)_{i=1}^s$}. Note that by definition, if $X$ is in $\mathcal{S}$, then the product of real rays of $X$ also lies in $\mathcal{S}$.\newline

\noindent\textbf{Goal}: We aim to construct a coordinate chart on $N\big(T, ([X'_i, F'_i])_{i=1}^s\big) \setminus \mathcal{S}$.\newline

Assume that $B$ is $d$-codimensional.
\begin{enumerate}
\item By selecting a spanning tree $F_0$, the region $D_{S,\underline{s}(P)}^{Del}(\underline{k}(P);T)$ is contained in the spanning tree coordinate chart on $D_{S,\underline{s}(P)}(F_0)$. For~$X \in N\big(T, ([X'_i, F'_i])_{i=1}^s\big) \setminus \mathcal{S}$, denote by
$$
(w_0,w_1,\ldots,w_{n-3-d}) \in \mathbb{C}^{n-2-d}
$$
an unprojectivized spanning tree parameter of $X$. By normalizing $w_0$ in $[w_0,w_1\ldots,w_{n-3}]$ to $1$, we obtain the spanning tree coordinate of $\phi_B(X)$:
$$
W := (w_1, \ldots, w_{n-3-d}) \in \mathbb{C}^{n-3-d}.
$$
\end{enumerate}

Next, we define the coordinates concerning the $(d,\varepsilon)$-convex hulls as follows. While the underlying idea is straightforward, the process of uniquely associating vectors to spanning trees of the convex hulls involves subtle details.

For any $X_0 \in D_{S,\underline{s}(P)}^{Del}(\underline{k}(P);T)$, we fix an unprojectivized parameter $(w_0,w_1,\ldots,w_{n-3})$ for $X_0$. Recall that $x_{p_i}$ is the singularity associated with the element $p_i$, and we select an edge $e'_i$ of $T$ incident to $x_{p_i}$. Consider a simple loop $l_i: S^1 \to X_0$ around the vertex $x_{p_i}$, oriented counterclockwise and transverse to the edges of $T$, and assume that $l_i(1)$ lies on $e'_i$. Starting from $l_i(1)$, we develop the triangles of $T$ traversed by $l_i$ until it completes a loop back to $l_i(1)$. This process produces a sector domain, denoted by $Sec(x_{p_i})$, whose two radial boundaries are copies of $e'_i$, labeled $r'_i$ and $r''_i$ in counterclockwise order. Note that the sector $Sec(x_{p_i})$ is independent of the specific choice of $l_i$ and is unique up to a Euclidean isometry of the plane. An example of the sector $Sec(x_{p_i})$ is shown in Figure~\ref{fig:developconvexboundary}.

The construction of the spanning tree coordinate $W$ determines a specific way to develop $X_0 \setminus F$ into the plane. We then fix the position of the sector $Sec(x_{p_i})$ in the plane so that the face in $Sec(x_{p_i})$ adjacent to the radial boundary $r'_i$ coincides with the corresponding face in the planar development of $X_0 \setminus F$.

For any $X \in N\big(T, ([X'_i, F'_i])_{i=1}^s\big)$, let $D_1, \ldots, D_s$ be the $(d, \varepsilon)$-convex hulls in $X$, and let $X_i$ be the infinitesimal sphere associated to $D_i$. Let $F$ be the unique shortest spanning forest of $D_1, \ldots, D_s$, and denote the component in $D_i$ by $F_i$. Importantly, we fix the same singularity in $\partial D_i$ as the reference point $x(D_i)$ for the spheres $X\in N\big(T, ([X'_i, F'_i])_{i=1}^s\big)$. 

\begin{enumerate}  
    \setcounter{enumi}{1}
    \item For $X \in N\big(T, ([X'_i, F'_i])_{i=1}^s\big) \setminus \mathcal{S}$, note that by the Thurston surgeries along $D_1,\ldots,D_s$, each reference point $x(D_i)$ is embedded into $X_0$. Then it corresponds to a unique point in the interior of the sector $Sec(x_{p_i})$ in the plane, which we also denote by $x(D_i)$.  We develop $\partial D_i$ into the plane starting from $x(D_i)$; see Figure~\ref{fig:developconvexboundary} for an example.

    \item For each $i$, we further develop the components $D_i\setminus F_i$ into the plane such that the boundary aligns with the developing of $\partial D_i$ in the step~(2), as in Section~\ref{sec:spanningtreechartinfinitecase}. Denote the unprojectivized spanning tree parameter associated to the edges of $F_i$ as $V^i := (v_{i1}, \ldots, v_{id_i})$. We define:
    \begin{equation}\label{equ:defofVparameter}
        V := (V^1, \ldots, V^s) \in \mathbb{C}^d,
    \end{equation}
    where $\sum_{i=1}^s d_i = d$.
\end{enumerate}

\begin{figure}[!htbp]
	\centering
	\includegraphics[width=\linewidth]{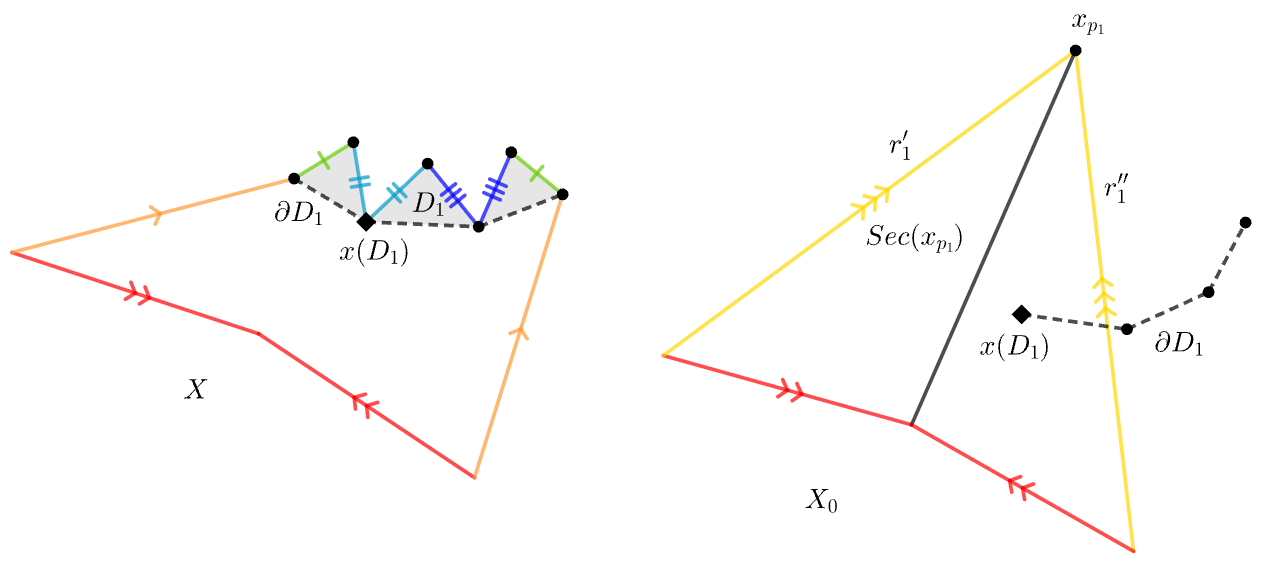}
\caption{Left: the flat cone sphere $X$ with a convex hull $D_1$, as in Figure~\ref{fig:1}. Let $X_0$ be the sphere obtained from $X$ by applying Thurston surgery along $D_1$. Right: the developing image $Sec(x_{p_1})$ of a neighborhood of $x_{p_1}$, together with the dotted boundary $\partial D_1$, developed from the reference point $x(D_1)$. }
\label{fig:developconvexboundary}
    \end{figure}

\begin{remark}
    In step~(2), we assert that there is a unique point in the interior of the sector $Sec(x_{p_i})$ corresponding to the reference point $x(D_i)$. This uniqueness is the reason why we need to remove the cut locus $\mathcal{S}$ from $N\big(T, ([X'_i, F'_i])_{i=1}^s\big)$. Without this restriction, if the reference point lies on the edge $e'_i$, it would correspond to two distinct points on the radial boundaries of the sector $Sec(x_{p_i})$.
\end{remark}

By the above, we call the vector
\begin{equation}\label{equ:unprojectivefiberedcoordinate}
    (w_0, V, w_1, \ldots, w_{n-3-d}) \in \mathbb{C}^{n-2}
\end{equation}
an \textbf{unprojectivized fibered parameter} of $X$. By normalizing the entry $w_0$ to $1$, the resulting affine coordinate is $(1, V, w_1, \ldots, w_{n-3-d})$.

We define the map:
\begin{equation}\label{equ:coordinatemapfibered}
    \Psi: N\big(T, ([X'_i, F'_i])_{i=1}^s\big) \setminus \mathcal{S} \to \mathbb{C}^{n-3}, \quad X \mapsto (V, W),
\end{equation}
where $(1, V, W)$ is the affine coordinate of $[w_0, V, w_1, \ldots, w_{n-3-d}] \in \mathbb{C}P^{n-3}$.

Recall that $P$ is the $\underline{k}$-admissible partition corresponding to $B$, and $p_1,\ldots,p_s$ are the non-singleton elements in $P$.

\begin{theorem}[Fibered coordinate chart]\label{thm:fiberedcoordinatechart}
    For any cut locus $\mathcal{S}$, the map $\Psi$ defined in~\eqref{equ:coordinatemapfibered} is an injective projective linear transformation in the local spanning tree coordinate charts. We refer to the coordinate chart on $N\big(T, ([X'_i, F'_i])_{i=1}^s\big) \setminus \mathcal{S}$ defined via the map $\Psi$ as a \textbf{fibered coordinate chart}.

    In this coordinate chart, the projection map $\phi_B$ from~\eqref{equ:projection} takes the form $\phi_B(V,W) = W$. In addition, the fiber of the projection $\phi_B$ at $W$ can be expressed as 
    \begin{equation}\label{equ:fiberexpression}
        \left\{V \in C_{([X'_i, F'_i])_{i=1}^s}(W) \mid \frac{|v_{ij}|}{\sqrt{\mathrm{Area}(1, V, W)}} < \varepsilon \text{ for each } (i, j)\right\}
    \end{equation}
    in $N\big(T, ([X'_i, F'_i])_{i=1}^s\big) \setminus \mathcal{S}$, where $C_{([X'_i, F'_i])_{i=1}^s}(W)$ is an infinite cone in $\mathbb{C}^d$ satisfying the following properties:
    \begin{enumerate}
    \item The infinite cone $C_{([X'_i, F'_i])_{i=1}^s}(W)$ is unique up to complex scaling on each component $V^i$ of $V$. More precisely, for any two choices of the parameter $W$, whether taken from different Delaunay regions $(\underline{k};T)$ or associated with different spanning trees, the resulting descriptions differ only by such a scaling. For simplicity, we also denote the cone by $C_{([X'_i, F'_i])_{i=1}^s}$.
    \item For each $i$, let $P_i$ be the partition of $\{1,\ldots,n\}$ in which $p_i$ is the only non-singleton element, and let $B_i$ be the boundary stratum corresponding to $P_i$. Let $C_{[X'_i, F'_i]}$ be the infinite cone in the expression of the fiber in fibered coordinate charts at $B_i$. Then, up to complex scaling on each $V^i$, the map
    $$V\mapsto (V^1,\ldots,V^s)$$
    is a Euclidean isometry from $C_{([X'_i, F'_i])_{i=1}^s}$ to the product $C_{[X'_1, F'_1]} \times \ldots \times C_{[X'_s, F'_s]}$.
    \end{enumerate}
\end{theorem}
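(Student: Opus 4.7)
My plan is to establish the theorem in four steps, corresponding to the projective linearity, the injectivity, the identification of $\phi_B$, and the cone structure of the fibers together with its product decomposition.

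\textbf{Step 1 (Projective linearity).} For each $X \in N\big(T, ([X'_i, F'_i])_{i=1}^s\big) \setminus \mathcal{S}$, I would construct a spanning tree $F_{\mathrm{total}}$ of $X$ by gluing together the geometric spanning forest $F = F_1 \cup \cdots \cup F_s$ of the $(d,\varepsilon)$-convex hulls (contributing $d$ edges) with a lift of the spanning tree $F_0$ of $X_0 = \phi_B(X)$ to $X$ passing through the reference points $x(D_i)$ (contributing the remaining $|P|-1$ edges). Both the unprojectivized spanning tree parameter $(z_0,\ldots,z_{n-3}) \in \mathbb{C}^{n-2}$ of $F_{\mathrm{total}}$ and the unprojectivized fibered parameter $(w_0,V,w_1,\ldots,w_{n-3-d}) \in \mathbb{C}^{n-2}$ are complex vectors obtained by integrating a single developing map (modulo the natural identifications induced by the Thurston surgery between $X$, $X_0$, and each $X_i$, which are locally Euclidean and hence $\mathbb{C}$-linear on tangent vectors). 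Each entry of one parameter is therefore a $\mathbb{C}$-linear combination of the entries of the other, so the transition is an invertible linear map on $\mathbb{C}^{n-2}$. Passing to the projectivization and to any pair of affine charts yields the projective linear map claimed. This step is the main obstacle, as it requires a careful bookkeeping of how the developing vectors on $X_0$ and on each $X_i$ recombine into developing vectors on $X$ across the Thurston surgery.

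\textbf{Step 2 (Injectivity and the projection formula).} Given $(V,W)$, the parameter $W$ recovers $X_0 \in D^{\mathrm{Del}}_{S,\underline{s}(P)}(\underline{k}(P);T)$ through the spanning tree coordinate chart on $B \cong \mathbb{P}\Omega(\underline{k}(P))$. Each block $V^i = (v_{i1},\ldots,v_{id_i})$ recovers, via Lemma~\ref{lem:spanningtreeinfinitecase} applied on $D\big(\underline{k}(p_i);[X'_i,F'_i]\big)$, both the infinitesimal sphere $X_i$ together with its unique shortest spanning tree $F_i$, and the embedded reference point $x(D_i)$ in the interior of the sector $Sec(x_{p_i})$ of $X_0$ (reading off from the development of $\partial D_i$). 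Together the data $\big(X_0, (X_i, x(D_i))_{i=1}^s\big)$ form a reference data $\mathcal{R}$ in the sense of Definition~\ref{def:realrays}, and the inverse Thurston surgery along $\mathcal{R}$ produces a convex flat cone sphere whose $(d,\varepsilon)$-convex hulls are unique by Lemma~\ref{lem:epsilonconvexhulls}. This inverse construction is two-sided by Remark~\ref{rmk:inversethurstonsurgery}, establishing injectivity. The identity $\phi_B(V,W)=W$ is immediate, because $\phi_B(X) = X^{(0)} = X_0$ has spanning tree parameter $W$ by construction.

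\textbf{Step 3 (Fiber structure).} Fixing $W$, a point lies in $\phi_B^{-1}(X_0) \cap (N \setminus \mathcal{S})$ iff $V$ satisfies: (a) each $V^i$ is a valid unprojectivized spanning tree parameter of a Delaunay-generic infinitesimal sphere in $D\big(\underline{k}(p_i);[X'_i,F'_i]\big)$; (b) the resulting reference point $x(D_i)$ lies in the interior of $Sec(x_{p_i})$; and (c) the normalized length bound $|v_{ij}|/\sqrt{\mathrm{Area}(1,V,W)} < \varepsilon$ holds. Conditions (a) and (b) are each invariant under positive real scaling of $V^i$, since scaling rescales the convex hull $D_i$ without changing its shape, its geometric type, or the angular position of $x(D_i)$ in the sector; hence their intersection defines an infinite cone $C_{([X'_i,F'_i])_{i=1}^s}(W) \subset \mathbb{C}^d$, and imposing (c) gives the expression~\eqref{equ:fiberexpression}.

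\textbf{Step 4 (Scaling dependence and product structure).} Changing $W$ (to a different Delaunay region, or associating to a different spanning tree $F_0$) amounts to replacing the developing map of $X_0$ by another, which differs from the original at each sector $Sec(x_{p_i})$ by a Euclidean rotation (and translation, which is irrelevant for vectors emanating from $x(D_i)$). This rotation acts on the associated block $V^i$ by multiplication by a complex scalar, establishing that $C_{([X'_i,F'_i])_{i=1}^s}(W)$ is canonical up to a complex scaling on each $V^i$, proving assertion~(1). For assertion~(2), conditions (a)–(b) decouple entirely across the index $i$: the constraints on $V^i$ involve only the local geometry of $D_i$ and the sector $Sec(x_{p_i})$, which are independent of $V^j$ for $j \neq i$. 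Hence the cone splits as $C_{([X'_i,F'_i])_{i=1}^s} = \prod_i C_{[X'_i,F'_i]}$ under the tautological direct-sum decomposition $V = (V^1,\ldots,V^s)$ in $\mathbb{C}^d$, and applying Step~3 to the codimension-one stratum $B_i$ identifies each factor with the cone arising there. Since the decomposition is a direct sum in $\mathbb{C}^d$ using the standard Euclidean metric, the identification is a Euclidean isometry, up to the aforementioned complex scaling on each factor.
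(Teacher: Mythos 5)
Your Steps~2, 3, and~4 essentially recover the paper's argument: injectivity via inverse Thurston surgery with the reference data reconstructed from $(V,W)$, the fiber as a bounded cone defined by scaling-invariant conditions intersected with the length bound, and the cone's scaling ambiguity and product structure coming from Euclidean rotations of the sector and from the decoupling of the $V^i$ blocks. These match the paper both in content and in the lemmas invoked.

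Step~1 has a genuine gap. You propose to build a geometric spanning tree $F_{\mathrm{total}}$ of $X$ by adjoining to $F_1\cup\cdots\cup F_s$ a ``lift of $F_0$ to $X$ passing through the reference points $x(D_i)$.'' But an edge $e$ of $F_0$ incident to $x_{p_i}$ enters the replacement cone $C_{D_i}$ radially and is cut when that cone is excised; on $X$ the path must be rerouted to terminate at the singularity $x(D_i)\in\partial D_i$, and the resulting curve is in general only piecewise geodesic at the boundary crossing, not a saddle connection. Your claim that the two unprojectivized parameters ``are complex vectors obtained by integrating a single developing map'' therefore does not apply as stated: there is no single geometric spanning tree carrying both sets of developing vectors. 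The paper sidesteps this entirely. It does \emph{not} exhibit a distinguished tree $F_{\mathrm{total}}$; instead it extends $\bigcup_i(\partial D_i\cup F_i)$ to an arbitrary geometric triangulation $T'$ of $X$, picks \emph{any} spanning tree of $T'$, and shows edge by edge that the coordinate $z$ of each edge is a $\mathbb{C}$-linear combination of $(w_0,V,W)$. The crucial move for an edge $e$ with an endpoint on some $\partial D_i$ is to concatenate $e$ with a radial segment $\gamma_x$ inside the replacement cone, obtaining a path $e\gamma_x$ that lives on $X_0$; the developed vector of this concatenation equals $v'+z$ with $v'$ linear in $V$, while on the other hand, as a path on $X_0$, it is linear in $(w_0,W)$, whence $z$ is linear in $(w_0,V,W)$. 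Your instinct that ``careful bookkeeping'' is required is right, but the bookkeeping that works is the concatenation $e\gamma_x$, not a lifted tree. Replacing your $F_{\mathrm{total}}$ construction with this per-edge concatenation argument would close the gap.
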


\begin{proof}
We prove these statements separately.\newline

\noindent\underline{Injectivity}:
Let $\Psi(X) = (V,W)$ and $\Psi(X')=(V',W')$. Assume that $\Psi(X) = \Psi(X')$. Since $W=W'$, it follows that $\phi_B(X)$ is the same as $\phi_B(X')$, denoted by $X_0$. 

Let $(D_i)_{i=1}^s$ and $(D_i)_{i=1}^s$ be the $(d,\varepsilon)$-convex hulls in $X$ and $X'$ respectively. Let $X_i$ and $X'_i$ be the infinitesimal sphere corresponding to $D_i$ and $D_i'$. Note that the vector $V^i$ is an unprojectivized spanning tree parameter for $X_i$. Since $V = V'$, Lemma~\ref{lem:spanningtreeinfinitecase} implies that the $(\varepsilon, d)$-convex hulls in $X$ and $X'$ are metrically isometric. 

Furthermore, since $V=V'$, the reference points in $D_i$ corresponds to the same point in $Sec(x_i')$ for $X$ and $X'$. Let $\gamma_i$ be the geodesic segment joining the singularity $x_{p_i}$ to the reference point $x(D_i)$ in the replacement cone $C_{D_i}$.

Consider the reference data $\mathcal{R} = (X_i, x(D_i), \gamma_i)_{i=1}^s$. It follows that both $X$ and $X'$ are isometric to $X_0(\mathcal{R})$. Therefore, the map $\Psi$ is injective.\newline

\noindent\underline{Projective linear transform}:
To show that $\Psi$ is an projective linear transformation under spanning tree coordinate charts, it suffices to verify it is induced by a linear map from unprojectivized fibered parameters to unprojectivized spanning tree parameters. 

For $X \in N\big(T, ([X'_i, F'_i])_{i=1}^s\big) \setminus \mathcal{S}$, we first construct one spanning tree chart. Select a triangulation $T$ of $X$ such that the boundary $\partial D_i$ and the shortest spanning tree $F_i$ are contained in the edges of $T$ for each $i$. Pick a spanning tree of $T$, and let $e$ be an edge of the spanning tree. Denote by $z \in \mathbb{C}$ the corresponding coordinate of $e$ in the spanning tree coordinate chart.

If $e$ is contained in $D_i$ for some $i$, then the coordinate $z$ of $e$ can be expressed as a complex linear combination of the entries of $V \in \mathbb{C}^d$. If the endpoints of $e$ do not lie in the boundary of any $\partial D_i$, then $z$ is a complex linear combination of $(w_0, w_1, \ldots, w_{n-3-d})$.

Next, assume that the edge $e$ has one endpoint $x$ lying on a boundary $\partial D_i$. Denote the projection $\phi_B(X)$ by $X_0$. Join $x$ to the apex $x_{p_i}$ of the replacement cone $C_{D_i}$ in $X_0$ by a radial segment, denoted by $\gamma_x$. Note that the concatenation $e\gamma_x$ forms a path in $X_0$. The segment $\gamma_x$ in $C_{D_i}$ can be developed into a vector $v'$, which is a linear combination of $V$. The piecewise geodesic $e\gamma_x$ can then be developed into the plane such that the vector $a$, joining one endpoint to the other, is expressed as $v' + z$. On the other hand, since the path $e\gamma_x$ can be homotoped into the edges of $T$, the vector $a$ can also be written as a complex linear combination of $(w_0, w_1, \ldots, w_{n-3-d})$. It follows that $z$ is a complex linear combination of $(w_0, V, W)$.

The case where both endpoints of $e$ lie on the boundaries of convex hulls is analogous and thus omitted.

Therefore, the map $\Psi$ is projective linear transformation under the selected spanning tree coordinate chart.\newline

\noindent\underline{Fiber expression}:
According to the construction of the fibered coordinate chart, it is clear that the projection map $\phi_B$ from~\eqref{equ:projection} takes the form $\phi_B(V,W) = W$.

To prove the expression~\eqref{equ:fiberexpression}, for $(V,W)\in N\big(T, ([X'_i, F'_i])_{i=1}^s\big)\setminus\mathcal{S}$, denote by $C_{([X'_i, F'_i])_{i=1}^s}(W)$ the infinite cone in $\mathbb{C}^d$ that consists of the real rays $\mathbb{R}_{>0}V$ for the parameter $V$ in the coordinates $(V,W)$.

Note that the normalized length of the edges of $F$ is 
$$
\frac{|v_{ij}|}{\sqrt{\mathrm{Area}(1,V,W)}}
$$
for each $(i,j)$. Combining this with Lemma~\ref{lem:byshortestone}, we conclude that the fiber $\phi_B^{-1}(X_0)$ is injectively mapped into expression~\eqref{equ:fiberexpression} in the coordinate chart.  Surjectivity can be verified directly using inverse Thurston surgeries.\newline

\noindent\underline{Properties of the infinite cones}:
To prove~(1), recall that the parameter $V^i$ is constructed with respect to the developing of the boundary $\partial D_i$ within the sector $Sec(x_{p_i})$ in the plane. For any sphere $X_0\in B$, the neighborhood of the singularity $x_{p_i}$ are isometric. Since $Sec(x_{p_i})$ is a developing of the neighborhood of $x_{p_i}$ into the plane, we know that the angle of the sector is the same for different $X_0$.

For different $W, W'$, denote the complex vectors representing the radial boundary $r'_1$ of the sector $Sec(x_{p_i})$ by $w(r_i')$ and $w'(r_i')$, respectively.
Then, the infinite cone $C_{([X'_i, F'_i])_{i=1}^s}(W)$ maps bijectively onto $C_{([X'_i, F'_i])_{i=1}^s}(W')$ by the scaling:
$$
(V^1,\ldots,V^s)\mapsto \left(\frac{w(r_1')}{w'(r_1')}V^1,\ldots,\frac{w(r_s')}{w'(r_s')}V^s\right).
$$

The proof of~(2) is similar to that of~(1). Note that for spheres $X_0\in B$ and $X'_0\in B_i$, the neighborhoods of the singularities associated to $p_i$ in $P$ and $p_i$ in $P_i$ are isometric. It follows that the map
$$
V\mapsto (V^1,\ldots, V^s)
$$
induces an isometry from $C_{([X'_i, F'_i])_{i=1}^s}$ to the product $C_{[X'_1, F'_1]} \times \ldots \times C_{[X'_s, F'_s]}$, where each cone $C_{[X'_i, F'_i]}$ is considered up to complex scaling.
\end{proof}

\begin{remark}\label{rmk:specificprojectiontoinfinitesimalspheres}
    According to Remark~\ref{rmk:projectiontoinfinitesimalspheres}, we have the projection
    $$
    proj: C_{[X'_i, F'_i]} \to D\big(\underline{k}(p_i); [X'_i,F'_i]\big),\quad V^i\to [V^i].
    $$
    Here, $D\big(\underline{k}(p_i); [X'_i,F'_i]\big)$ is a subset of $\mathbb{P}\Omega(\underline{k}(p_i))$ (see Definition~\ref{def:domainofgeometrictype}), and $[V^i]$ denote the homogeneous coordinate of a sphere in $D\Big(\underline{k}(p_i); [X'_i,F'_i]\Big)$ under the spanning tree coordinate associated to $F'_i$ (see Remark~\ref{rmk:chartonregionofgeometrictype}).
\end{remark}

\begin{remark}[Local fibered coordinate chart]\label{rmk:localfiberedchart}
For any $X \in U_{d,\varepsilon}(B_{\lambda})$, including those lying in a cut locus, one can still mimic the chart construction to define a \textbf{local fibered coordinate chart} in a neighborhood of $X$, as follows.

Let $X_0$ be the projection $\phi_B(X)$, and consider a spanning tree coordinate chart of $X_0$. Let $W := (w_1, \ldots, w_{n-3})$ be the coordinates in this chart.

Let $D_1, \ldots, D_s$ be the $(d,\varepsilon)$-convex hulls in $X$, and let $X_i$ be the infinitesimal sphere associated with $D_i$. Let $T_i$ be a Delaunay triangulation of $D_i$, and let $F_i$ be a shortest spanning tree of $D_i$. Consider a sufficiently small neighborhood $U$ of $X$. We may assume that for each nearby surface in $U$, the $(d,\varepsilon)$-convex hulls $D_i$ have a geometric spanning tree $F_i$ such that $(X_i, F_i)$ has the same geometric type as that of $X$. Note, however, that $F_i$ may not necessarily be the shortest spanning tree for the nearby surface.

We develop $D_i$ into the plane following steps~(2) and~(3). Note that if the reference point $x(D_i)$ lies on the edge $e'_i$, we take a single copy on the radial boundary of the sector $Sec(x_{p_i})$ and develop $\partial D_i$ starting from a small neighborhood of this copy.

We then define $V^i$ similarly, parameterizing the edges of $F_i$, and set $V := (V^1, \ldots, V^s)$.

By shrinking the neighborhood $U$ if necessary, we may assume that the map
\begin{equation}\label{equ:localfiberedcoordinatechart}
    X\mapsto (V,W)
\end{equation}
defines a local coordinate chart on $U$.
\end{remark}

\subsection{Decomposition of Thin Parts via Coordinate Charts}\label{sec:goodcoordinate}
In this subsection, we aim to decompose a thin part into finitely many fibered coordinate charts, up to a measure-zero set under $\mu_{Thu}$.

In this subsection, all results are stated under the following assumptions. Let $\underline{k} \in (0,1)^n$ be a curvature vector with positive curvature gap, and let $B$ be a boundary stratum of $\overline{\mathbb{P}\Omega}(\underline{k})$ of codimension $d$. Let $\varepsilon$ and $\lambda$ be positive real numbers satisfying the condition~\eqref{equ:restriction}.

\begin{lemma}\label{lem:genericdelaunayinfinitecase}
Denote by $D_1, \ldots, D_s$ the $(d,\varepsilon)$-convex hulls on $X \in U_{d,\varepsilon}(B_\lambda)$. Then, for $\mu_{Thu}$-almost every $X$ in $U_{d,\varepsilon}(B_{\lambda})$, each convex hull $D_i$ is Delaunay-generic.
\end{lemma}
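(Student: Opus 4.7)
The plan is to work in (local) fibered coordinate charts and show that the non-generic locus is contained in a proper real analytic subset, hence $\mu_{Thu}$-null. By Remark~\ref{rmk:localfiberedchart}, every point of $U_{d,\varepsilon}(B_\lambda)$ lies in a local fibered coordinate chart with coordinates $(V,W) = (V^1, \ldots, V^s, W)$, in which $V^i$ is, up to a complex scaling depending real analytically on $W$, an unprojectivized spanning tree parameter of the infinitesimal flat sphere $X_i$ associated with the $(d,\varepsilon)$-convex hull $D_i$. Since the thin part is second countable, I may cover it by countably many such charts; it therefore suffices to show that, in each chart, the locus where some $D_i$ is non-generic is Lebesgue-null.

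Fix a chart and an index $i$. Following the argument of Lemma~\ref{lem:genericdelaunay} applied to the core of $X_i$ rather than to the whole surface, one shows that, after possibly shrinking the chart, the locus where $D_i$ fails to be Delaunay-generic is the zero set of a real analytic function $P_i(V,W)$ on the chart. Explicitly, $P_i$ is the product of $3 \times 3$ determinants of the type appearing in Lemma~\ref{lem:delaunaycondition}, one for each relevant quadruple of vertices of a fixed refinement of the common combinatorial Delaunay cell decomposition of the cores of the $X_i$'s. The entries of these determinants are complex linear combinations of the components of $V^i$, so $P_i$ is real analytic in $(V,W)$.

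To conclude I would verify that $P_i \not\equiv 0$. This follows from the planar analogue of Lemma~\ref{lem:genericdelaunay}: Delaunay-genericity is a nonempty open condition on the moduli space $\mathbb{P}\Omega(\underline{k}(p_i))$ of planar flat spheres, and by Remark~\ref{rmk:specificprojectiontoinfinitesimalspheres} the projection $V^i \mapsto [V^i]$ sends the fiber coordinates of the chart onto an open subset of that moduli space. Hence some point of the chart gives a Delaunay-generic $X_i$, so $P_i$ is not identically zero. Consequently $\{P_i = 0\}$ is a proper real analytic subset of the chart and has Lebesgue measure zero; since Theorem~\ref{thm:fiberedcoordinatechart} and Lemma~\ref{lem:metricformula} together imply that $\mu_{Thu}$ has a smooth positive density with respect to Lebesgue measure in the fibered chart, the same set is $\mu_{Thu}$-null. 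Taking the union over $i = 1, \ldots, s$ and over the countably many charts proves the lemma. The main obstacle is transferring Lemma~\ref{lem:genericdelaunay} to the planar setting, where ``Delaunay triangulation of the surface'' must be replaced by ``Delaunay triangulation of the core''; once this is carried out, the rest is a routine application of the fact that proper real analytic subsets have measure zero with respect to any smooth measure.
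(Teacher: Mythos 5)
Your proof is correct in substance and proceeds along the same conceptual lines as the paper's: exhibit the non-generic locus as a proper real analytic subset in a local coordinate chart, then conclude it is $\mu_{Thu}$-null. The differences are in coordinate choice and in which details are made explicit.

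The paper's proof stays in a plain spanning tree coordinate chart for the whole sphere $X$: it picks a Delaunay triangulation $T_i$ of each core $D_i$, extends $\bigcup_i T_i$ to a geometric triangulation $T$ of $X$, and observes that the edges of $T_i$ are complex linear combinations of the spanning tree coordinates of $T$. Then the failure of Delaunay-genericity of $D_i$ imposes (via the cocircularity determinant of Lemma~\ref{lem:delaunaycondition}) algebraic relations in those coordinates, and the proof of Lemma~\ref{lem:genericdelaunay} is invoked directly. This sidesteps any need for a planar analogue of Lemma~\ref{lem:genericdelaunay}: the triangulated object is the compact sphere $X$, not the core, so the argument of Lemma~\ref{lem:genericdelaunay} applies verbatim. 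You instead pass to local fibered coordinates $(V,W)$ from Remark~\ref{rmk:localfiberedchart} and use the projection $V^i\mapsto [V^i]$ of Remark~\ref{rmk:specificprojectiontoinfinitesimalspheres} to push the question into $\mathbb{P}\Omega(\underline{k}(p_i))$; this does require the planar version of Lemma~\ref{lem:genericdelaunay} that you flag as the ``main obstacle.'' Since the fibered coordinates are a projective linear change of variables from spanning tree coordinates (Theorem~\ref{thm:fiberedcoordinatechart}), the two frameworks are equivalent, but the paper's choice avoids the extra transfer step you worry about. On the other hand, you spell out two points the paper leaves implicit: that the thin part can be covered by countably many charts, and that the defining analytic function $P_i$ is not identically zero (hence its zero set is genuinely proper). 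These are worth noting; the paper silently assumes properness of the algebraic locus. Overall this is a valid proof, slightly more circuitous than the paper's because it routes through the planar moduli space rather than working on the compact sphere directly.
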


\begin{proof}
    For $X \in U_{d,\varepsilon}(B_\lambda)$, select a Delaunay triangulation $T_i$ of $D_i$ (which may not be unique). Then extend $T_1, \ldots, T_s$ to a geometric triangulation $T$ of $X$. Let $F$ be a spanning tree of $T$. By choosing a sufficiently small neighborhood $U$ of $X$, we may assume that there is a spanning tree coordinate chart on $U$ associated with $F$. 
    
    Let $(z_1, \ldots, z_{n-3})$ be the coordinates on $U$. The edges of $T_i$ can be expressed as complex linear combinations of $(1, z_1, \ldots, z_{n-3})$ under this chart. If $D_i$ in $X$ is not Delaunay-generic, then by the proof of Lemma~\ref{lem:genericdelaunay}, there exist algebraic relations among the vectors associated with the edges of $T_i$. It follows that such $X$ are contained in a measure zero subset under $\mu_{Thu}$.
\end{proof}

By a similar argument as above, one can establish the following statement regarding the uniqueness of shortest spanning trees, and we omit the proof.

\begin{lemma}\label{lem:uniqueshortestspanningforest}
    Denote by $D_1, \ldots, D_s$ the $(d,\varepsilon)$-convex hulls for $X \in U_{d,\varepsilon}(B_\lambda)$. Then, for $\mu_{Thu}$-almost every $X$ in $U_{d,\varepsilon}(B_{\lambda})$, the shortest spanning forest of $D_1, \ldots, D_s$ is unique.
\end{lemma}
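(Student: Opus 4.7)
The plan is to mirror the argument in Lemma~\ref{lem:genericdelaunayinfinitecase}, expressing non-uniqueness of the shortest spanning forest as a real analytic condition in local spanning tree coordinates and verifying that this condition carves out a proper subset of codimension at least one. For a given $X \in U_{d,\varepsilon}(B_\lambda)$ with $(d,\varepsilon)$-convex hulls $D_1,\ldots,D_s$, I would fix Delaunay triangulations $T_i$ of each $D_i$, extend them to a geometric triangulation $T$ of the whole sphere $X$, pick a spanning tree $F$ of $T$, and work in the resulting spanning tree coordinate chart on a sufficiently small open neighborhood $U$ of $X$ in $\mathbb{P}\Omega(\underline{k})$. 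In these coordinates, every saddle connection $e$ arising in the local unfolding of any $D_i$ is represented by a complex vector $z(e)$ that is a complex linear combination of the coordinates, and its metric length on the corresponding flat sphere equals $|z(e)|$. After shrinking $U$ once more, the combinatorial types of candidate geometric spanning trees of each $D_i$ stabilize to a fixed finite list.

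The next step is to invoke the classical matroid fact that two distinct minimum weight bases of a weighted matroid may always be chosen to differ by a single swap $B_2 = (B_1 \setminus \{e\}) \cup \{e'\}$ with $w(e) = w(e')$. Applied to the matroid of spanning trees of the singularities of each $D_i$, with weights given by the lengths of the corresponding saddle connections, this means that non-uniqueness of the shortest spanning forest at a sphere $X' \in U$ forces the existence of some index $i$ and a pair of distinct saddle connections $e, e'$ in $D_i$ (drawn from a finite list of swap pairs) satisfying $|z(e)|(X') = |z(e')|(X')$. Hence the non-uniqueness locus in $U$ is contained in the finite union over all such pairs $(e,e')$ of the zero loci of $|z(e)|^2 - |z(e')|^2$, each of which is a real algebraic subset of $U$ because $|z|^2$ is a Hermitian form in the complex coordinates, i.e.\ a real quadratic polynomial in their real and imaginary parts.

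The main subtle step is showing that none of these equations is identically satisfied on $U$, equivalently that $z(e)$ and $z(e')$ are never proportional by a unit complex number as complex linear forms on the coordinates. This can be verified by exhibiting an explicit local deformation of the flat metric that alters $|z(e)|$ without altering $|z(e')|$; such a deformation exists because $e$ and $e'$ are geometrically distinct saddle connections in $D_i$ whose polygonal unfoldings with respect to $T$ involve different edges of $T$, so varying only one of those edges changes one length but not the other. With this in hand, each zero locus is a proper real algebraic hypersurface, hence of $\mu_{Thu}$-measure zero, and since $U_{d,\varepsilon}(B_\lambda)$ is covered by countably many such charts and the exceptional set in Lemma~\ref{lem:genericdelaunayinfinitecase} is already measure zero, the conclusion follows.
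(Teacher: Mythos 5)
Your overall plan matches the paper's intent exactly: the paper itself gives only the one-line remark that Lemma~\ref{lem:uniqueshortestspanningforest} follows ``by a similar argument'' to Lemma~\ref{lem:genericdelaunayinfinitecase}, namely by expressing the degeneracy locus as a real algebraic condition in spanning tree coordinates and invoking measure zero. Your addition of the matroid exchange property is a genuinely useful clarification: it reduces ``non-unique shortest spanning forest'' to an equality $|z(e)|^2 = |z(e')|^2$ of \emph{two} edge lengths rather than an equality of sums of lengths (which, involving sums of square roots, is a less obviously tractable semialgebraic condition). Since the edges of the shortest spanning forest live inside the Delaunay triangulations $T_i$ of the $D_i$ (a fact the paper establishes in the analogue of the classical MST-inside-Delaunay result), your finite list of swap pairs is legitimate, once you restrict to the Delaunay-generic locus handled by Lemma~\ref{lem:genericdelaunayinfinitecase}.

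The one step that is not fully justified is the ``main subtle step'': you need the polynomial $|z(e)|^2 - |z(e')|^2$ to be non-identically zero on the chart, i.e.\ that $z(e)$ and $z(e')$ are not related by a fixed unit-modulus complex scalar as linear forms. Your justification (``varying only one of those edges changes one length but not the other'') is not quite right as stated, because varying a single spanning tree coordinate $z_j$ in general changes the lengths of several edges of $T$ at once, since $z(e')$ for $e' \notin F$ is a full linear combination of all coordinates. The dangerous configuration is a pair $e, e'$ with common endpoints whose developing vectors are forced by holonomy to differ only by a unit rotation, which would make the two lengths identical on the whole chart; you should observe that such a pair bounds a digon with no interior singularities and therefore cannot both be edges of a triangulation, which removes the obstruction. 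With that observation in place, proportionality of $z(e)$ and $z(e')$ for distinct Delaunay edges is a nontrivial linear relation among the coordinates (e.g.\ both $e, e' \in F$ gives two distinct coordinate functions, and if $e \in F$ but $e' \notin F$ then $z(e')$ is a nontrivial linear combination in more than one coordinate), so each zero locus is a proper real algebraic hypersurface as you claim. Since the paper also glosses over this point in Lemma~\ref{lem:genericdelaunayinfinitecase}, this is a gap in both, not a misunderstanding on your part; the rest of your argument is sound.
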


Next, we show that the measure of a cut locus is always zero with respect to $\mu_{Thu}$.

\begin{lemma}\label{lem:seamisofmeasurezero}
 Any cut locus $\mathcal{S}$ in $U_{d,\varepsilon}(B_\lambda)$ has measure zero under $\mu_{Thu}$.
\end{lemma}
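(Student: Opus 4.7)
The plan is to express the cut locus $\mathcal{S}$ as a countable union of proper real-analytic subsets of $U_{d,\varepsilon}(B_\lambda)$, and then invoke the fact that $\mu_{Thu}$ is given in any local complex chart by a smooth density against Lebesgue measure (Lemma~\ref{lem:metricformula}), so that it assigns measure zero to every such subset. First I would decompose $\mathcal{S}=\bigcup_{i=1}^s \mathcal{S}_i$, where $\mathcal{S}_i$ is the subset of surfaces for which the reference segment $\gamma_i$ lies in the edge $e'_i$; it then suffices to prove $\mu_{Thu}(\mathcal{S}_i)=0$ for each $i$.

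Around any $X\in\mathcal{S}_i$ I would introduce the local fibered coordinate chart $(V,W)\in\mathbb{C}^d\times\mathbb{C}^{n-3-d}$ from Remark~\ref{rmk:localfiberedchart}. In this chart, the two radial boundaries of the sector $Sec(x_{p_i})$ are rays through the apex whose complex direction $u(W)$ depends $\mathbb{C}$-linearly on $W$. The displacement $L(V^i,W)$ from the apex $x_{p_i}$ to the reference point $x(D_i)$ is obtained by summing the developed boundary vectors of $\partial D_i$ (which are $\mathbb{C}$-linear in $V^i$) together with a reference vector that is $\mathbb{C}$-linear in $W$; hence $L$ is $\mathbb{C}$-affine in $(V^i,W)$. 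The geometric condition defining $\mathcal{S}_i$ is precisely that $x(D_i)$ lies on one of these radial boundaries, which translates into the real-analytic (in fact real-linear after rotation) equation
$$
\mathrm{Im}\bigl(\overline{u(W)}\,L(V^i,W)\bigr)=0.
$$

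Next I would verify that this equation is non-trivial on the chart: fixing $W$ and perturbing $V^i$ amounts to deforming the shape of $D_i$, and such a deformation can move $x(D_i)$ transversely off the ray $r'_i$ (equivalently, the coefficient vector of $V^i$ in $L$ is not proportional to $u(W)$). This shows $\mathcal{S}_i$ is locally contained in a proper real-analytic hypersurface of the chart. Since $\mathbb{P}\Omega(\underline{k})$ is second-countable, we can cover $\mathcal{S}_i$ by countably many such charts, and in each chart the set is Lebesgue null; combined with the smoothness of the density of $\mu_{Thu}$, this yields $\mu_{Thu}(\mathcal{S}_i)=0$, and summing over $i$ finishes the proof.

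The step I expect to be the main obstacle is the non-triviality verification for the equation above: one must carefully check that the vector $L(V^i,W)$ genuinely depends on $V^i$ modulo the direction $u(W)$, which requires understanding precisely how the developing of $\partial D_i$ inside the sector is determined by the fibered parameters. Degenerate convex hulls (where $D_i$ is a single saddle connection with curvature-zero singularities on it) deserve particular care, but even there one can vary the lengths of the boundary segments to produce a transverse deformation, so the argument should go through uniformly.
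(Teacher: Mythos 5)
Your proposal takes essentially the same approach as the paper: around each point of the cut locus, pass to a local fibered coordinate chart (Remark~\ref{rmk:localfiberedchart}), express the condition that the reference point lies on the radial boundary of $Sec(x_{p_i})$ as a real-analytic equation in $(V,W)$, and conclude measure zero since $\mu_{Thu}$ has a smooth density in local charts. The paper settles the non-triviality you flag as the main obstacle by observing that the displacement vector $v(\gamma_i)$ from the apex to the reference point is a complex linear combination of $V^i$ \emph{alone}, so the condition $\operatorname{Im}\bigl(v(\gamma_i)/w(e'_i)\bigr)=0$ is, for each fixed $W$, a nonzero real-linear equation in $V^i$, and hence a proper algebraic hypersurface.
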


\begin{proof}
Let $\mathcal{S}$ be a cut locus associated with a Delaunay region $D_{S,\underline{s}(P)}^{Del}(\underline{k}(P);T)$ in $B$ and the selected edges $(e'_i)_{i=1}^s$ of $T$. By Definition~\eqref{equ:seam}, a flat cone sphere $X$ in $\mathcal{S}$ is obtained by inverse Thurston surgery of some $X_0\in D_{S,\underline{s}(P)}^{Del}(\underline{k}(P);T)$ with respect to some reference data:
\begin{equation}
    \mathcal{R} = (X_i, x(D_i),\gamma_i)_{i=1}^s
\end{equation}
where $X_i \in \mathbb{P}\Omega(\underline{k}(p_i))$, $D_i$ is the core of $X_i$, and $\gamma_i$ lies in the edge $e'_i$ for some $i$. Note that $D_1, \ldots, D_s$ are the $(d, \varepsilon)$-convex hulls of $X$.

For $X\in\mathcal{S}$, consider a local fibered coordinate chart $(V,W)$ (defined in Remark~\ref{rmk:localfiberedchart}) on a neighborhood $U$ of $X$, where $W$ parametrizes the Delaunay region $D_{S,\underline{s}(P)}^{Del}(\underline{k}(P);T)$. For each $i$, let $w(e'_i)$ be the vector associated with the selected edge $e'_i$ in the chart. Note that $w(e'_i)$ is a complex linear combination of $(1,w_1,\ldots,w_{n-3})$.

The reference segment $\gamma_i$ is contained in a replacement cone. Thus the reference segment is developed to a vector at the vertex of the sector $Sec(x_{p_i})$, denoted by $v(\gamma_i)$. Note that $v(\gamma_i)$ is a complex linear combination of $V^i$.

If $X$ lies in a cut locus, then the segment $\gamma_i$ is contained in the edge $e'_i$ for some $i$. Hence, in the local coordinate chart $U$, the cut locus $\mathcal{S}$ is contained in the algebraic equation
$$
\operatorname{Im} \frac{v(\gamma_i)}{w(e'_i)} = 0
$$
for some $i$. It follows that $\mathcal{S}$ is a measure-zero subset under $\mu_{Thu}$.
\end{proof}

For the boundary stratum $B$ of $\overline{\mathbb{P}\Omega}(\underline{k})$, assume that the Delaunay regions
$$D_{S,\underline{s}(P)}^{Del}(\underline{k}(P);T_a), \quad a=1,\ldots,m,$$
cover $B$ up to a measure-zero set (see Lemma~\ref{lem:genericdelaunay}).

Let $P$ be the $\underline{k}$-admissible partition associated with $B$. Let $p_1, \ldots, p_s$ be the non-singleton elements of $P$. Let 
\begin{equation}\label{equ:geometrictypes}
    [X_i^b, F_i^b],\quad b = 1,\ldots, q_i
\end{equation}
be the geometric types of spanning trees as in Lemma~\ref{lem:decompinfinitecase}.
Recall that for any Delaunay-generic convex infinite flat sphere $X_i \in \mathbb{P}\Omega(\underline{k}(p_i))$, if $X_i$ has a unique shortest spanning tree $F_i$, then the geometric type of $(X_i, F_i)$ is one of the types listed above.

\begin{proposition}\label{prop:goodcoordinate}
Let $\underline{k}$ be a curvature vector with positive curvature gap. Then the disjoint union
$$
\bigsqcup_{a=1}^m \bigsqcup_{\substack{1\le b_i \le q_i,\\ \text{ for } 1\le i\le s}}  
N\left(T_a, \left([X_i^{b_i}, F_i^{b_i}]\right)_{i=1}^s \right) \setminus \mathcal{S}_a
$$
has full measure in $U_{d,\varepsilon}(B_\lambda)$, where $\mathcal{S}_a$ denotes a cut locus associated with the Delaunay region $D_{S,\underline{s}(P)}^{Del}(\underline{k}(P); T_a)$ and the selected edges $(e_i^a)_{i=1}^s$.
\end{proposition}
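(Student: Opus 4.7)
The plan is to show that the complement of the given disjoint union in $U_{d,\varepsilon}(B_\lambda)$ is contained in the union of finitely many $\mu_{Thu}$-null subsets. For $X \in U_{d,\varepsilon}(B_\lambda)$, let $D_1,\ldots,D_s$ denote the $(d,\varepsilon)$-convex hulls of $X$, and let $X_i$ be the infinitesimal sphere associated with $D_i$. The three conditions required for $X$ to belong to some $N\bigl(T_a, ([X_i^{b_i},F_i^{b_i}])_{i=1}^s\bigr)\setminus \mathcal S_a$ are: (i) the projection $\phi_B(X)$ lies in some Delaunay region $D_{S,\underline{s}(P)}^{Del}(\underline{k}(P);T_a)$; (ii) each $D_i$ is Delaunay-generic and admits a unique shortest spanning tree $F_i$, so that the geometric type $[X_i,F_i]$ coincides with one of the listed types $[X_i^{b_i},F_i^{b_i}]$; (iii) $X\notin \mathcal S_a$. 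I will verify that each of these conditions fails only on a $\mu_{Thu}$-null subset.

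For condition~(ii), Lemma~\ref{lem:genericdelaunayinfinitecase} asserts that the set of $X$ where some $D_i$ fails to be Delaunay-generic has measure zero, and Lemma~\ref{lem:uniqueshortestspanningforest} does the same for the uniqueness of the shortest spanning forest. Once both conditions hold, Lemma~\ref{lem:decompinfinitecase} applied to $\mathbb{P}\Omega(\underline{k}(p_i))$ guarantees that each pair $(X_i,F_i)$ has a geometric type among the finite list~\eqref{equ:geometrictypes}. For condition~(iii), Lemma~\ref{lem:seamisofmeasurezero} gives that each cut locus $\mathcal S_a$ is $\mu_{Thu}$-null, and since there are finitely many $(a,(b_i))$, their union is $\mu_{Thu}$-null as well.

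The remaining condition~(i) is the only one that requires a genuine argument using the geometry of the projection. By Lemma~\ref{lem:genericdelaunay} applied to the moduli space $\mathbb{P}\Omega(\underline{k}(P))\cong B$, the complement
$$
C := B \setminus \bigsqcup_{a=1}^m D_{S,\underline{s}(P)}^{Del}(\underline{k}(P);T_a)
$$
has measure zero in $B_\lambda$. I must show that $\phi_B^{-1}(C)\cap U_{d,\varepsilon}(B_\lambda)$ is $\mu_{Thu}$-null in the total space. This is the step that requires care: I will cover $U_{d,\varepsilon}(B_\lambda)$ by countably many local fibered coordinate charts $(V,W)$ as in Remark~\ref{rmk:localfiberedchart}, in which $\phi_B$ acts as the linear projection $(V,W)\mapsto W$. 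Since linear projections send products of null sets with Euclidean factors to null sets and, conversely, preimages of null sets under such projections are null (by Fubini applied in each chart together with the fact that the Thurston measure is absolutely continuous with respect to Lebesgue measure in these coordinates), $\phi_B^{-1}(C)$ has measure zero locally, and therefore globally by countable additivity.

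Combining these three observations, the complement of the disjoint union in $U_{d,\varepsilon}(B_\lambda)$ is contained in a finite union of $\mu_{Thu}$-null sets, so the given disjoint union has full measure. The disjointness of the union itself is automatic: different $T_a$ correspond to disjoint Delaunay regions in $B$, so the projections $\phi_B(X)$ distinguish the $a$ index; while for fixed $a$, different tuples $([X_i^{b_i},F_i^{b_i}])_{i=1}^s$ describe disjoint geometric types of the pairs $(X_i,F_i)$. The main obstacle I anticipate is the null-set argument for $\phi_B^{-1}(C)$; the key is to exploit the local fibered charts, where the fiber structure reduces the question to a plain Euclidean Fubini statement, rather than attempting any global argument about the projection $\phi_B$, which is only piecewise well-behaved.
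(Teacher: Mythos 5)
Your proposal is correct and follows the paper's proof structure almost exactly: the same three-way decomposition of the complement into null sets, the same appeals to Lemmas~\ref{lem:genericdelaunayinfinitecase}, \ref{lem:uniqueshortestspanningforest}, \ref{lem:seamisofmeasurezero}, \ref{lem:decompinfinitecase}, and the same use of local fibered charts (Remark~\ref{rmk:localfiberedchart}) in which $\phi_B$ becomes the linear projection $(V,W)\mapsto W$. The only cosmetic difference is that you justify the $\mu_{Thu}$-nullity of $\phi_B^{-1}(C)$ via Fubini, while the paper observes that the complement is a real analytic subset whose preimage under the linear projection remains analytic and hence null; both arguments are valid and rest on the same local coordinate structure.
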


\begin{proof}
For $X\in U_{d,\varepsilon}(B_{\lambda})$, consider a local fibered coordinate chart $(V,W)$, as defined in Remark~\ref{rmk:localfiberedchart}, on a neighborhood $U$ of $X$.

Note that under these local fibered charts, the projection $\phi_B$ is given by $(V,W) \mapsto W$. By the proof of Lemma~\ref{lem:genericdelaunay}, the complement of $\left(D_{S,\underline{s}(P)}^{Del}(\underline{k}(P);T_a)\right)_{a=1}^m$ is an analytic subset. The preimage of the complement under the projection $(V,W) \mapsto W$ remains analytic, so the preimage is of measure zero.

Therefore, up to a measure-zero set, we may assume that spheres in $U_{d,\varepsilon}(B_{\lambda})$ are mapped by $\phi_B$ into one of the Delaunay regions $\left(D_{S,\underline{s}(P)}^{Del}(\underline{k}(P);T_a)\right)_{a=1}^m$.

Let $D_1,\ldots,D_s$ be the $(d,\varepsilon)$-convex hulls in $X\in U_{d,\varepsilon}(B_\lambda)$. 
By Lemma~\ref{lem:genericdelaunayinfinitecase} and Lemma~\ref{lem:uniqueshortestspanningforest}, we conclude that by removing a subset of measure zero, the spheres $X$ in $U_{d,\varepsilon}(B_{\lambda})$ satisfy the following conditions:
\begin{itemize}
    \item Each $D_i$ is Delaunay-generic (in the sense of Definition~\ref{def:delaunayforconvexhull}).
    \item Each $D_i$ has a unique shortest spanning tree.
\end{itemize}
Combining this with Lemma~\ref{lem:seamisofmeasurezero}, we know that there exist geometric types $\Big([X_i^{b_i},F_i^{b_i}]\Big)_{i=1}^s$ for $1\le b_i \le q_i$ such that $X \in N\big(T_a, ([X_i^{b_i}, F_i^{b_i}])_{i=1}^s\big) \setminus \mathcal{S}_a.$
\end{proof}

\section{Asymptotic Volumes of Thin Parts of Moduli Spaces}\label{bigsec:measurebounds}

Recall from~\eqref{equ:ngbh} that the neighborhood $U_{d,\varepsilon} \subset \mathbb{P}\Omega(\underline{k})$ consists of flat spheres that admit a geometric forest with $d$ edges of normalized length less than $\varepsilon$.

The main goal of this section is to prove the following theorem:

\begin{theorem}\label{thm:main1}
Let $\underline{k}$ be a curvature vector with positive curvature gap. As $\varepsilon \to 0$, we have
\begin{equation}\label{equ:asymvolume}
    \mu_{Thu}(U_{d,\varepsilon}) =  \sum\limits_{\mathrm{codim}(B) = d} C(P)\cdot \mu_{Thu}(B)\cdot \varepsilon^{2d} + O(\varepsilon^{2d+2}),
\end{equation}
where the sum is taken over all boundary strata of codimension $d$. Here, $P$ is the $\underline{k}$-admissible partition associated with $B$, $C(P)$ is a positive constant depending only on $P$, and $\mu_{Thu}(B)$ denotes the total volume of the complex hyperbolic metric associated with $B$.
\end{theorem}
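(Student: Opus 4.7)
The plan is to proceed by induction on $d$, starting at the maximum codimension $d = n-3$ and going down to $d=1$. At each step, Lemma~\ref{lem:epsilondecomp} decomposes $U_{d,\varepsilon}$ as the disjoint union of the thin parts $U_{d,\varepsilon}(B_\lambda)$ over all codimension-$d$ boundary strata $B$, together (when $d<n-3$) with a remainder contained in $U_{d+1,\frac{6n}{\delta(\underline{k})^2}\varepsilon}$. By the inductive hypothesis this remainder has measure $O(\varepsilon^{2(d+1)}) = O(\varepsilon^{2d+2})$, which is absorbed into the error term of~\eqref{equ:asymvolume}. The base case $d=n-3$ has no remainder, so the entire volume comes from fibered neighborhoods over $0$-dimensional boundary strata.

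The core of the argument is computing $\mu_{Thu}(U_{d,\varepsilon}(B_\lambda))$ for a single boundary stratum $B$ of codimension $d$. By Proposition~\ref{prop:goodcoordinate}, this thin part decomposes, up to a $\mu_{Thu}$-null set, into finitely many subsets $N(T_a, ([X_i^{b_i}, F_i^{b_i}])_{i=1}^s)\setminus \mathcal{S}_a$, each admitting a fibered coordinate chart $(V, W)$ from Theorem~\ref{thm:fiberedcoordinatechart}. In such a chart, the projection to $B$ is $(V, W) \mapsto W$ and the fiber is
\[
\left\{V \in C_{([X_i^{b_i}, F_i^{b_i}])_{i=1}^s}(W)\ :\ |v_{ij}|/\sqrt{\mathrm{Area}(1,V,W)} < \varepsilon \text{ for each } (i,j)\right\}.
\]
Expanding $\mathrm{Area}(1, V, W) = \mathrm{Area}(1, 0, W) + O(|V|^2)$ shows that, up to relative error $O(\varepsilon^2)$, this fiber coincides with the intersection of the cone $C_{([X_i^{b_i}, F_i^{b_i}])_{i=1}^s}(W)$ with the scaled polydisk of radius $\varepsilon\sqrt{\mathrm{Area}(X_0)}$.

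Applying Fubini in the coordinates $(V, W)$, the next step is a quantitative comparison of the Thurston volume form with the product form (Thurston on $B$) times (Euclidean on the fiber), with error $O(|V|^2)$. This product-metric approximation is the main technical input from this section. Once it is in place, the fiber integral over $V$ splits into a product over $i=1,\ldots,s$ via the factorization of the cone (property~(2) of Theorem~\ref{thm:fiberedcoordinatechart}), and the scaling invariance (property~(1)) shows that the combined contribution from all $(T_a, b_1, \ldots, b_s)$ is a constant $C(P)$ times $\varepsilon^{2d}$ times the area-dependent weight that precisely cancels the area factor appearing in the Thurston volume form on $B$ (coming from Lemma~\ref{lem:metricformula}). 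Integrating over $W \in B_\lambda$ then yields $C(P)\,\mu_{Thu}(B_\lambda)\,\varepsilon^{2d}$. Finally, Lemma~\ref{lem:complementofthick} combined with the inductive hypothesis applied to the lower-dimensional moduli space $B$ gives $\mu_{Thu}(B) - \mu_{Thu}(B_\lambda) = O(\lambda^2) = O(\varepsilon^2)$, so replacing $B_\lambda$ by $B$ costs only $O(\varepsilon^{2d+2})$ and the induction closes.

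The main obstacle is establishing the quantitative product-metric approximation of $d\mu_{Thu}$ near a boundary stratum. The formula of Lemma~\ref{lem:metricformula} is highly non-linear in the fibered coordinates, and one must verify that the off-diagonal contributions $H((0,dV),(1,V,W))$ in the Hermitian form only affect the determinant at order $O(|V|^2)$, so that the Thurston volume form limits to a clean product with the precise area weight needed for the cancellation described above. With this approximation in hand, the remaining Euclidean integration over the cones $C_{[X'_i, F'_i]}$ is routine, and the inductive bookkeeping of the $O(\varepsilon^{2d+2})$ errors is straightforward.
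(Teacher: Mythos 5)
Your proposal follows essentially the same strategy as the paper: decompose $U_{d,\varepsilon}$ via Lemma~\ref{lem:epsilondecomp}, use the fibered coordinate charts and the product-metric approximation (Proposition~\ref{prop:metricapproximation}, Corollary~\ref{cor:volumeformapproximation}) together with Fubini to compute $\mu_{Thu}(U_{d,\varepsilon}(B_\lambda))$, and absorb errors by descending induction on $d$. You also correctly identify the product-metric approximation as the main technical obstacle, which matches the paper's emphasis.

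One point you should make explicit: you frame the argument as an induction on $d$ alone, but the step $\mu_{Thu}(B)-\mu_{Thu}(B_\lambda)=O(\varepsilon^2)$ cannot be deduced from the inductive hypothesis on $d$ within the fixed moduli space $\mathbb{P}\Omega(\underline{k})$; it requires the theorem to hold for the lower-dimensional moduli space $\mathbb{P}\Omega(\underline{k}(P))$ to which $B$ is isometric, combined with Lemma~\ref{lem:complementofthick} applied on that space. The paper therefore runs a double induction, outer on the number of singularities $n$ and inner (descending) on $d$, with the outer base case $n=3$ trivial since that moduli space is a point. Your phrase ``inductive hypothesis applied to the lower-dimensional moduli space $B$'' shows you recognize what is needed, but as written the $d$-induction alone does not close; the nested $n$-induction must be stated.
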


\noindent\textit{\textbf{Convention.}} In this section, unless otherwise stated, all results are considered under the following assumptions:
\begin{itemize}
    \item Let $\underline{k} \in (0,1)^n$ be a curvature vector with positive curvature gap.
    \item Let $B$ be a boundary stratum of codimension $d$ in $\overline{\mathbb{P}\Omega}(\underline{k})$.
    \item Let $\varepsilon$ and $\lambda$ be positive real numbers satisfying condition~\eqref{equ:restriction}.
    \item Let $P$ be the $\underline{k}$-admissible partition associated with $B$, and let $p_1,\ldots,p_s$ denote the non-singleton elements of $P$.
    \item Let $T$ be a triangulation of the base sphere $(S,\underline{s}(P))$ (see~\eqref{equ:boundarybasesphere}).
    \item Let $([X_i, F_i])_{i=1}^s$ be a sequence of geometric types of spanning trees on the spheres in $\mathbb{P}\Omega(\underline{k}(p_i))$.
    \item Let $\mathcal{S}$ be a cut locus in $U_{d,\varepsilon}(B_{\lambda})$.
\end{itemize}
All computations are carried out in the fibered coordinate chart
$$
N\big(T, ([X_i, F_i])_{i=1}^s\big) \setminus \mathcal{S}.
$$

\subsection{Area Function}
For $X \in U_{d,\varepsilon}(B_{\lambda})$, let $(D_i)_{i=1}^s$ be the $(d,\varepsilon)$-convex hulls, and let $(C_{D_i})_{i=1}^s$ be the associated replacement cones. Denote by $X_0$ the projection $\phi_B(X)$, and let $\mathrm{Area}(X)$ represent the area of $X$. By the definition of Thurston surgeries, we have
\begin{equation}\label{equ:areaformula}
    \mathrm{Area}(X) = \mathrm{Area}(X_0) - \sum_{i=1}^s \Big(\mathrm{Area}(C_{\partial D_i}) - \mathrm{Area}(D_i)\Big).
\end{equation}
For $X \in U_{d,\varepsilon}(B_{\lambda})$, we define
\begin{equation}\label{equ:defB}
    D(X) := \sum_{i=1}^s \Big(\mathrm{Area}(C_{\partial D_i}) - \mathrm{Area}(D_i)\Big).
\end{equation}

For the fibered coordinate chart $N\big(T_a, ([X_i, F_i])_{i=1}^s\big) \setminus \mathcal{S}$,
let $(w_0, V, W)$ be the unprojectivized fibered parameters associated with $X$ (see~\eqref{equ:unprojectivefiberedcoordinate}). Recall from Section~\ref{sec:complexhyper} that $\mathrm{Area}$ is a quadratic form about $(w_0, V, W)$. The equation~\eqref{equ:areaformula} can be rewritten in the fibered coordinate chart as
\begin{equation}\label{equ:defdv}
    \mathrm{Area}(w_0, V, W) = \mathrm{Area}(w_0, 0, W) - D_B(V).
\end{equation}

Denote by $H$ the associated Hermitian form of the quadratic form $\mathrm{Area}(w_0,V,W)$, and denote the hermitian form of $D_B(\cdot)$ by $D_B(\cdot,\cdot)$. The following lemma is obtained by a direct computation.

\begin{lemma}\label{lem:fibereareahermitian}
In the fibered coordinate chart $N\big(T_a, ([X_i, F_i])_{i=1}^s\big) \setminus \mathcal{S}$, the Hermitian form $H$ is given by
$$
H\big((w_0, V, W), (w_0', V', W')\big) = H\big((w_0, 0, W), (w_0', 0, W')\big) - D_B(V, V')
$$
for $(w_0, V, W), (w_0', V', W') \in \mathbb{C}^{n-3}$. In particular, $H$ satisfies the following properties:
    \begin{itemize}
        \item $H\big((0,V,0),(W_0',0,W')\big) = 0$.
        \item $H\big((W_0,V,W),(W_0',0,W')\big) = H\big((W_0,0,W),(W_0',0,W')\big)$.
        \item $H\big((W_0,V,W),(0,V',0)\big) = -D_B(V,V')$.
    \end{itemize}
\end{lemma}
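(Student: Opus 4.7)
The plan is to derive the Hermitian-form decomposition by polarizing the quadratic identity~\eqref{equ:defdv}, and then to read off the three bullet points as immediate specializations. No further geometric input is required beyond~\eqref{equ:defdv}.

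I would start from
$$
\mathrm{Area}(w_0, V, W) \;=\; \mathrm{Area}(w_0, 0, W) \;-\; D_B(V),
$$
which is~\eqref{equ:defdv}. Both sides are Hermitian quadratic forms on $\mathbb{C}^{n-2}$, hence uniquely determined by their associated Hermitian forms via the polarization identity
$$
H(u, u') \;=\; \tfrac{1}{4}\bigl(\mathrm{Area}(u+u') - \mathrm{Area}(u-u') + i\,\mathrm{Area}(u+iu') - i\,\mathrm{Area}(u-iu')\bigr),
$$
and analogously for $D_B(\cdot,\cdot)$. Since polarization is linear in the underlying quadratic form, applying it to~\eqref{equ:defdv} would yield directly
$$
H\!\bigl((w_0, V, W),\,(w_0', V', W')\bigr) \;=\; H\!\bigl((w_0, 0, W),\,(w_0', 0, W')\bigr) \;-\; D_B(V, V'),
$$
which is the main formula in the statement.

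The three bullet points would then drop out by substituting zero entries and invoking sesquilinearity of $H$ and $D_B(\cdot,\cdot)$: the first from $H((0,0,0),(w_0',0,W')) = 0$ together with $D_B(V,0) = 0$; the second from $D_B(V,0) = 0$; and the third from $H((w_0,0,W),(0,0,0)) = 0$ combined with $D_B(V,V')$ staying put.

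The only delicate point in the argument is justifying~\eqref{equ:defdv} itself, but this is the content of the paragraph immediately preceding the lemma: each $(d,\varepsilon)$-convex hull $D_i$ and its replacement cone $C_{D_i}$ are determined by the component $V^i$ of the fibered parameter (via Lemma~\ref{lem:uniquereplacementcone}), so the area deficit $D(X)$ of~\eqref{equ:defB} depends only on $V$ and is quadratic in it. Given this, no real obstacle remains; the lemma is a formal consequence of polarization, and I would expect the entire argument to occupy only a few lines.
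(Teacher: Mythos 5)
Your argument is correct and matches what the paper means by ``obtained by a direct computation'': polarize the quadratic identity~\eqref{equ:defdv} to obtain the Hermitian-form identity, noting that both $\mathrm{Area}(w_0,0,W)$ and $D_B(V)$ are quadratic forms on $\mathbb{C}^{n-2}$ factoring through coordinate projections, so their Hermitian forms are $H((w_0,0,W),(w_0',0,W'))$ and $D_B(V,V')$ respectively, and the three bullet points follow by sesquilinearity exactly as you say. The one point worth flagging is that the identity~\eqref{equ:defdv} a priori holds only on the (non-empty open) image of the fibered coordinate chart, but since both sides are polynomials of degree two, agreement on an open set forces agreement on all of $\mathbb{C}^{n-2}$, so the polarization step at points $u\pm u'$, $u\pm iu'$ is legitimate; it would be worth making this remark explicit if writing the proof out.
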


\begin{lemma}\label{lem:bv}
In the fibered coordinate chart $N\big(T_a, ([X_i, F_i])_{i=1}^s\big) \setminus \mathcal{S}$, $D_B(V)$ is a positive definite quadratic form. In particular, we have that $\mathrm{Area}(X)\le \mathrm{Area}(X_0)$
\end{lemma}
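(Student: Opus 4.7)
The plan is to deduce positive definiteness of $D_B$ from a Sylvester signature count, applied to the orthogonal block decomposition of the area Hermitian form supplied by Lemma~\ref{lem:fibereareahermitian}. Writing $H$ for the Hermitian form on $\mathbb{C}^{n-2}$ associated with $\mathrm{Area}(\cdot)$ on $\mathbb{P}\Omega(\underline{k})$ and setting $H_0 := H|_{V=0}$, the properties listed in Lemma~\ref{lem:fibereareahermitian} yield the block-diagonal decomposition
$$
H\big((w_0, V, W), (w_0', V', W')\big) = H_0\big((w_0, W), (w_0', W')\big) \;-\; D_B(V, V').
$$

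The next step is to identify the signatures of $H$ and $H_0$. By Thurston's construction of the complex hyperbolic metric recalled in Section~\ref{sec:complexhyper}, the form $H$ has signature $(1, n-3)$. To compute the signature of $H_0$, I would first identify $H_0$ with the area Hermitian form on the boundary stratum $B$: setting $V=0$ corresponds geometrically to collapsing each $(d,\varepsilon)$-convex hull to a point, and by step~(1) of the fibered chart construction in Section~\ref{sec:fiberedcoordinatechart}, the coordinates $(w_0, W) \in \mathbb{C}^{n-2-d}$ are precisely unprojectivized spanning tree parameters for $B$. This identification can be made rigorous by analytic continuation from~\eqref{equ:areaformula}: since $\mathrm{Area}(w_0, 0, W)$ and the pullback of the area form on $B$ are both quadratic in $(w_0, W)$ and agree on an open subset of the fibered chart, they agree as forms on all of $\mathbb{C}^{n-2-d}$. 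Since $B$ is isometric to $\mathbb{P}\Omega(\underline{k}(P))$, whose complex dimension is $|P| - 3 = n - d - 3$, applying Thurston's theorem to $B$ gives $H_0$ signature $(1, n-d-3)$.

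Sylvester's law of inertia applied to the block decomposition then forces
$$
(1, n-3) \;=\; (1, n-d-3) \;+\; \operatorname{sig}(-D_B),
$$
so $\operatorname{sig}(-D_B) = (0, d)$, meaning $-D_B$ is negative definite; equivalently, $D_B$ is a positive definite Hermitian form on $\mathbb{C}^d$ (hence a positive definite real quadratic form on $\mathbb{R}^{2d}$). The inequality $\mathrm{Area}(X) \le \mathrm{Area}(X_0)$ follows immediately from~\eqref{equ:defdv} together with $D_B(V) \ge 0$. The main obstacle is the geometric identification of $H_0$ with the area form on $B$; once that interpretation is in place, the rest of the argument is a pure signature count, avoiding any direct estimate of the individual differences $\mathrm{Area}(C_{\partial D_i}) - \mathrm{Area}(D_i)$.
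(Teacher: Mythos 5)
Your argument is correct and matches the paper's proof: both proceed by identifying $\mathrm{Area}(w_0,0,W)$ with the area Hermitian form on the boundary stratum $B$ (hence of signature $(1,n-3-d)$), then use the orthogonality from Lemma~\ref{lem:fibereareahermitian} together with Sylvester's law of inertia applied to the signature-$(1,n-3)$ form $H$ to force $D_B$ to be positive definite. The extra analytic-continuation remark you add to justify the identification is a reasonable elaboration, but the underlying approach is the same.
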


\begin{proof}
    As introduced in Section~\ref{sec:complexhyper}, the function $\mathrm{Area}(w_0, V, W)$ is a quadratic form of signature $(1, n-3)$. Lemma~\ref{lem:thurstoncompletion} implies that $B$ is isometric to a moduli space of convex flat cone spheres. 

    Note that $\mathrm{Area}(w_0, 0, W)$ represents the area function expressed in the Delaunay region $D_{S,\underline{s}(P)}^{Del}(\underline{k}(P);T)$ in $B(P)$. It follows that the signature of $\mathrm{Area}(w_0, 0, W)$ is $(1, n-3-d)$. Furthermore, from the identity
    $$
    H\big((0, V, 0), (w_0', 0, W')\big) = 0,
    $$
    we conclude that $(w_0, 0, W)$ and $(0, V, 0)$ are orthogonal with respect to $H$. Therefore, $D_B(V)$ is positive definite.
\end{proof}

By Lemma~\ref{lem:metricformula}, we know that the Hermitian metric $h_{Thu}$ is given by
\begin{equation}\label{equ:metricfibered}
    h_{Thu} = \frac{|H((0,dV,dW),(1,V,W))|^2 - \mathrm{Area}(0,dV,dW)\mathrm{Area}(1,V,W)}
    {\mathrm{Area}(1,V,W)^2}
\end{equation}
in the fibered coordinate chart $N\big(T, ([X_i, F_i])_{i=1}^s\big) \setminus \mathcal{S}$. It follows that for any tangent vectors $(\alpha_1,\beta_1), (\alpha_2,\beta_2) \in \mathbb{C}^d \times \mathbb{C}^{n-3-d}$ at $(V,W)$, we have
\begin{equation}\label{equ:metricfiberedhermitian}
    \begin{aligned}
        h_{Thu}\big((\alpha_1,\beta_1),(\alpha_2,\beta_2)\big) 
        &= \frac{H\big((0,\alpha_1, \beta_1),(1,V,W)\big) H\big((1,V,W),(0,\alpha_2, \beta_2)\big)}
        {\mathrm{Area}(1,V,W)^2} \\
        &\quad - \frac{H\big((0,\alpha_1,\beta_1),(0,\alpha_2,\beta_2)\big)\mathrm{Area}(1,V,W)}
        {\mathrm{Area}(1,V,W)^2}.
    \end{aligned}
\end{equation}

\smallskip

\noindent\textit{\textbf{Notation}} To simplify later computations, we introduce the following notation:
$$
1_{v_i} := (0,\ldots,1,\ldots,0) \in \mathbb{C}^d,
$$
where $1$ is at the $i$-th entry. Similarly, we define
$$
1_{w_j} := (0,\ldots,1,\ldots,0) \in \mathbb{C}^{n-3-d},
$$
where $1$ is at the $j$-th entry. In the following computations, we may write the parameter $V$ in the fibered coordinate chart as
$$
V = (v_1, \ldots, v_d),
$$
where each $v_i \in \mathbb{C}$.\newline

Compared to~\eqref{equ:defofVparameter}, the notation for the parameter $V$ above is used when the computation does not depend on which spanning tree the entry of $V$ belongs to. Note that the tangent vector $\frac{\partial}{\partial v_i}$ (resp. $\frac{\partial}{\partial w_j}$) at a point $X$ in the fibered coordinate chart corresponds to $1_{v_i}$ (resp. $1_{w_j}$) at the coordinate $(V,W)$ of $X$.

By combining Lemma~\ref{lem:fibereareahermitian} with equation~\eqref{equ:metricfiberedhermitian}, we obtain the following results.

\begin{lemma}\label{lem:someformularsforthurstonmetrics}
In the fibered coordinate chart $N\big(T_a, ([X_i, F_i])_{i=1}^s\big) \setminus \mathcal{S}$,
we have that:
\begin{enumerate}
    \item 
    \begin{flalign}
        h_{Thu}\left(\frac{\partial}{\partial v_i}, \frac{\partial}{\partial v_j}\right) 
        &= \frac{D(1_{v_i}, V)D_B(V,1_{v_j}) + D(1_{v_i}, 1_{v_j})\operatorname{Area}(1,V,W)}
        {\mathrm{Area}(1,V,W)^2}. \notag
    \end{flalign}

    \item 
    \begin{flalign}
        h_{Thu}\left(\frac{\partial}{\partial v_i}, \frac{\partial}{\partial w_j}\right) 
        &= \frac{-D(1_{v_i}, V)H((1,0,W),(0,0,1_{w_j}))}
        {\mathrm{Area}(1,V,W)^2}. \notag
    \end{flalign}

    \item 
    \begin{flalign}
        h_{Thu}\left(\frac{\partial}{\partial w_i}, \frac{\partial}{\partial w_j}\right) 
        &= \frac{H\big((0,0, 1_{w_i}),(1,0,W)\big) H\big((1,0,W),(0,0, 1_{w_j})\big)}{\mathrm{Area}(1,V,W)^2} \notag \\
        &\quad - \frac{H\big((0,0,1_{w_i}),(0,0,1_{w_j})\big)\mathrm{Area}(1,V,W)}
        {\mathrm{Area}(1,V,W)^2}.\notag
    \end{flalign}
\end{enumerate}

\end{lemma}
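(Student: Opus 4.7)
The proof is a direct computation: substitute the appropriate pairs of coordinate tangent vectors into the formula~\eqref{equ:metricfiberedhermitian} for $h_{Thu}$, and then simplify every $H$-expression that appears using the three properties of the Hermitian form listed in Lemma~\ref{lem:fibereareahermitian}. Since $\tfrac{\partial}{\partial v_i}$ corresponds to $(0,1_{v_i},0)$ and $\tfrac{\partial}{\partial w_j}$ corresponds to $(0,0,1_{w_j})$, each of the three items (1)--(3) just requires one such substitution.

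For item (1), the plan is to take $(\alpha_1,\beta_1)=(1_{v_i},0)$ and $(\alpha_2,\beta_2)=(1_{v_j},0)$. The third property of Lemma~\ref{lem:fibereareahermitian} gives
$H\bigl((1,V,W),(0,1_{v_j},0)\bigr) = -D_B(V,1_{v_j})$,
and by conjugate symmetry of $H$ together with the fact that $D_B$ is Hermitian, $H\bigl((0,1_{v_i},0),(1,V,W)\bigr)=-D_B(1_{v_i},V)$. The same third property (applied with $W_0=0$, $W=0$) yields $H\bigl((0,1_{v_i},0),(0,1_{v_j},0)\bigr)=-D_B(1_{v_i},1_{v_j})$. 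Plugging these three identities into~\eqref{equ:metricfiberedhermitian} produces exactly the expression claimed in (1) (with the understanding that the symbol $D$ in the statement denotes $D_B$).

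For item (2), the plan is to take $(\alpha_1,\beta_1)=(1_{v_i},0)$ and $(\alpha_2,\beta_2)=(0,1_{w_j})$. The first property of Lemma~\ref{lem:fibereareahermitian} immediately gives $H\bigl((0,1_{v_i},0),(0,0,1_{w_j})\bigr)=0$, which kills the second term in~\eqref{equ:metricfiberedhermitian}. The second property gives $H\bigl((1,V,W),(0,0,1_{w_j})\bigr)=H\bigl((1,0,W),(0,0,1_{w_j})\bigr)$, and the third property gives $H\bigl((0,1_{v_i},0),(1,V,W)\bigr)=-D_B(1_{v_i},V)$. Substituting yields the formula in (2).

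For item (3), take $(\alpha_1,\beta_1)=(0,1_{w_i})$ and $(\alpha_2,\beta_2)=(0,1_{w_j})$, and apply the second property of Lemma~\ref{lem:fibereareahermitian} twice (once directly, once via conjugate symmetry) to strip the $V$ from the middle arguments: $H\bigl((0,0,1_{w_i}),(1,V,W)\bigr)=H\bigl((0,0,1_{w_i}),(1,0,W)\bigr)$ and $H\bigl((1,V,W),(0,0,1_{w_j})\bigr)=H\bigl((1,0,W),(0,0,1_{w_j})\bigr)$. The term $H\bigl((0,0,1_{w_i}),(0,0,1_{w_j})\bigr)$ is already in the desired form. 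Substituting into~\eqref{equ:metricfiberedhermitian} gives (3). Since every step is a straightforward algebraic substitution, there is no substantive obstacle; the only thing to be careful about is the bookkeeping of conjugates when turning an identity for $H(\cdot,(1,V,W))$ into one for $H((1,V,W),\cdot)$, which is immediate from the Hermitian symmetry of $H$ together with the Hermitian symmetry of $D_B$.
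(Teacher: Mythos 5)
Your proposal is correct and is exactly the computation the paper intends: the paper offers no explicit proof but states that Lemma~\ref{lem:someformularsforthurstonmetrics} follows ``by combining Lemma~\ref{lem:fibereareahermitian} with equation~\eqref{equ:metricfiberedhermitian},'' and your three substitutions together with the conjugate-symmetry bookkeeping fill in precisely those details (correctly reading the $D$ in the statement as $D_B$).
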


\subsection{Uniform Approximation}

We introduce the notion of uniform approximation on thin parts.

\begin{definition}\label{def:usualbigOnotation}
    Let $\varepsilon, \lambda$ be positive numbers satisfying condition~\eqref{equ:restriction}. For each $\varepsilon$, let $f_\varepsilon(V,W)$ and $g_\varepsilon(V,W)$ be two functions defined on a fibered coordinate chart $N\big(T, ([X_i, F_i])_{i=1}^s\big) \setminus \mathcal{S}$. We say that the equation
    $$
    f_\varepsilon = O(g_\varepsilon)
    $$
    \textbf{holds uniformly} with respect to $(V,W)$ in the fibered coordinate chart if there exists a constant $L > 0$ such that the inequality
    $$
    f_\varepsilon(V,W) \leq L\cdot g_\varepsilon(V,W)
    $$
    holds for all $\varepsilon$ and all $(V,W)$ in the coordinate chart.
\end{definition}

We recall the following result from~\cite{fu2023boundssaddleconnectionsflat} for later use.

\begin{lemma}[{\cite[Lemma~4.3]{fu2023boundssaddleconnectionsflat}}]\label{lem:uniformboundonDelaunayedge}
In a flat sphere $X$ with $n$ conical singularities and a curvature gap $\delta > 0$, the normalized length $L$ of any edge in any Delaunay triangulation of $X$ satisfies 
$L < \sqrt{\frac{4}{\pi} + \frac{1}{2\pi\delta}}.$
\end{lemma}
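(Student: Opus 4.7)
The plan is to reduce the bound on the normalized length $L$ of a Delaunay edge $e$ to an area estimate for an immersed disk in $X$, and then to extract the $\delta$-dependence from a curvature-gap argument. Normalize so that $\operatorname{Area}(X) = 1$ (hence $L = |e|$), and let $T$ be a Delaunay triangle adjacent to $e$. By the definition of a Delaunay face, $T$ is inscribed in an immersed open disk $d \colon B(0, R) \to X$ which is a local isometry whose interior avoids conical singularities. Because $e$ is a chord of this disk, $L \leq 2R$, so it suffices to prove that $\pi R^2 < 1 + \frac{1}{8\delta}$.

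The key quantity is the covering multiplicity $m(p) := \#d^{-1}(p)$ on $X \setminus \{x_1, \dots, x_n\}$, which satisfies
$$
\pi R^2 \;=\; \int_X m\, dA \;\leq\; \operatorname{Area}(X) + \int_X (m-1)_+\, dA \;=\; 1 + \int_X (m-1)_+\, dA.
$$
If $d$ is an embedding then the last integral vanishes and one obtains the stronger bound $L^2 \leq 4/\pi$. In general, I would control the overlap integral as follows: each connected component of $\{m \geq 2\}$ arises from two distinct regions in $B(0, R)$ projecting to the same subset of $X$, and straight segments between corresponding pairs of preimages descend to closed geodesic loops in $X \setminus \{x_1, \dots, x_n\}$ with prescribed rotational holonomy. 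Such loops must enclose non-empty proper subsets $I \subsetneq \{1, \dots, n\}$ of singularities, and the curvature-gap inequality $|1 - \sum_{i \in I} k_i| \geq \delta$ then applies. A Gauss--Bonnet computation on each overlap region, summed across components, converts this into
$$
\int_X (m-1)_+\, dA \;\leq\; \frac{1}{8\delta},
$$
which combined with the previous inequality yields $\pi R^2 < 1 + \frac{1}{8\delta}$ and hence the desired $L^2 < \frac{4}{\pi} + \frac{1}{2\pi\delta}$.

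The main obstacle is this last Gauss--Bonnet/curvature-gap bookkeeping. Handling the simultaneous overlap of several sheets, keeping track of singularities that may lie on the boundary circle $\partial B(0, R)$, and extracting the sharp numerical constant $1/(8\delta)$ (rather than some unspecified multiple of $1/\delta$) all require a careful decomposition of $\{m \geq 2\}$ according to which curvature subset each component encloses, together with a telescoping argument exploiting the disjointness of admissible index sets forced by the positivity of the curvature gap. The remaining reduction $L \leq 2R$ and the area identity in Step 2 are essentially immediate, so the entire proof hinges on this single quantitative overlap estimate.
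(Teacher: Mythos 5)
Your reduction of the edge-length bound to an overlap estimate is sound as far as it goes: $L \leq 2R$ by the chord bound, the coarea identity $\pi R^2 = \int_X m\,dA$ holds because $d$ is a locally isometric immersion, and the split $\int m\,dA \leq 1 + \int(m-1)_+\,dA$ is elementary, so the lemma would indeed follow from $\int_X (m-1)_+\,dA < \tfrac{1}{8\delta}$ (your arithmetic relating this to $L^2 < \tfrac{4}{\pi}+\tfrac{1}{2\pi\delta}$ is correct). But that overlap estimate, which you correctly identify as the entire content of the lemma, is asserted rather than proved. The sketch has a sound germ: for a \emph{simple} geodesic loop with one corner arising from a pair of preimages, Gauss--Bonnet gives an exterior angle of $2\pi(1-\sum_{i\in I}k_i)$ at the corner, and since that angle must lie in $(-\pi,\pi)$ one is forced into the regime $\sum_{i\in I}k_i \in (1/2,3/2)$, which is exactly where the curvature-gap hypothesis $|1-\sum_{i\in I}k_i|\ge\delta$ has teeth. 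So the mechanism does bite.

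What is missing is everything that turns this local angle constraint into the area bound: how the lower bound $2\pi\delta$ on the corner angle translates into an area estimate for the overlap region; what to do when the loop $d(p_1p_2)$ is not simple (the segment in $B(0,R)$ can be long and the image can self-intersect, so the naive Gauss--Bonnet count of enclosed singularities breaks down); what to do on components where $m\ge 3$ (your description of ``two distinct regions projecting to the same subset'' simply does not apply there); and how the contributions from different components are combined to give exactly $1/(8\delta)$ rather than a constant times $1/\delta$. None of these are routine, and the ``telescoping argument exploiting disjointness of admissible index sets'' is named but not exhibited. As written, the proposal is a plan for a proof rather than a proof, with the single load-bearing step left open.
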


To simplify later computations, we introduce the notation:
\begin{equation}\label{equ:uniformboundonDelaunayedge}
    c(\delta) := \sqrt{\frac{4}{\pi} + \frac{1}{2\pi\delta}}.
\end{equation}

\begin{lemma}\label{lem:basicinequalityforVW}
In the fibered coordinate chart $N\big(T_a, ([X_i, F_i])_{i=1}^s\big) \setminus \mathcal{S}$, the parameters $V$ and $W$ satisfy the following inequalities:
    \begin{enumerate}
        \item For each $1 \leq i \leq d$,
        $
        \frac{|v_i|}{\sqrt{\mathrm{Area}(1,0,W)}} \leq\frac{|v_i|}{\sqrt{\mathrm{Area}(1,V,W)}}\leq \varepsilon.
        $
        
        \item For each $1 \leq i \leq n-3-d$,
        $
       \frac{1}{\sqrt{\mathrm{Area}(1,0,W)}} \leq c(\delta)$ \text{and} $\frac{|w_i|}{\sqrt{\mathrm{Area}(1,0,W)}} \leq c(\delta).
        $
    \end{enumerate}
\end{lemma}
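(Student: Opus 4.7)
The plan is to verify each of the four inequalities directly, using results already recorded in the excerpt together with the geometric interpretation of the fibered coordinates.

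\textbf{Part (1), first inequality.} This is immediate from Lemma~\ref{lem:bv}: the form $D_B$ is positive definite, so by~\eqref{equ:defdv},
\[
    \mathrm{Area}(1,V,W) \;=\; \mathrm{Area}(1,0,W) - D_B(V) \;\le\; \mathrm{Area}(1,0,W),
\]
and dividing $|v_i|$ by the square roots of the two sides gives the claim.

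\textbf{Part (1), second inequality.} This is the defining condition on points in the fiber of $\phi_B$, as given by~\eqref{equ:fiberexpression} in Theorem~\ref{thm:fiberedcoordinatechart}: every $(V,W)$ in the fibered chart lies in a fiber $\phi_B^{-1}(W)$ which consists of those $V \in C_{([X'_i,F'_i])_{i=1}^s}(W)$ with $\frac{|v_{ij}|}{\sqrt{\mathrm{Area}(1,V,W)}}<\varepsilon$ for each entry.

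\textbf{Part (2).} Setting $V=0$ corresponds to applying the projection $\phi_B$, so $\mathrm{Area}(1,0,W) = \mathrm{Area}(X_0)$ where $X_0 = \phi_B(X) \in B_\lambda$, and $(1, w_1, \ldots, w_{n-3-d})$ is the unprojectivized spanning tree parameter of $X_0$ with respect to the chosen spanning tree $F_0$ of the Delaunay triangulation $T$ of $X_0$. In particular, both the normalized $w_0=1$ and each $w_i$ represent the Euclidean vector of an edge of a Delaunay triangulation of $X_0$, so their normalized lengths with respect to $X_0$ coincide with $\frac{1}{\sqrt{\mathrm{Area}(1,0,W)}}$ and $\frac{|w_i|}{\sqrt{\mathrm{Area}(1,0,W)}}$. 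Applying Lemma~\ref{lem:uniformboundonDelaunayedge} to $X_0$ bounds each of these by $c(|P|,\delta(\underline{k}(P)))$. Finally, since any partition of $P$ refines to a partition of $\{1,\ldots,n\}$, one has $\delta(\underline{k}(P)) \ge \delta(\underline{k})$, and since $c(n,\delta) = \sqrt{4/\pi + 1/(2\pi\delta)}$ is monotone so that $c(|P|, \delta(\underline{k}(P))) \le c(n,\delta)$, both inequalities in~(2) follow.

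There is no substantive obstacle: the lemma is a bookkeeping statement recording the maximal sizes of the fibered coordinates in terms of $\varepsilon$ and the universal constant $c(n,\delta)$, which will be the key inputs for the volume estimates used in the proof of Theorem~\ref{thm:main1}.
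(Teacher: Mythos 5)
Your proof is correct and follows essentially the same route as the paper: Part~(1) from Lemma~\ref{lem:bv} together with the fiber description~\eqref{equ:fiberexpression}, and Part~(2) from the observation that $(1, w_1, \ldots, w_{n-3-d})$ parameterizes Delaunay edges on $X_0 = \phi_B(X) \in B(P)$, so Lemma~\ref{lem:uniformboundonDelaunayedge} applies. Your last step in Part~(2) — observing that $\delta(\underline{k}(P)) \ge \delta(\underline{k})$ because sums over subsets of $\underline{k}(P)$ are a subcollection of sums over subsets of $\underline{k}$, combined with the monotone dependence of $c(n,\delta)$ on $\delta$ — is a detail the paper leaves implicit when it applies Lemma~\ref{lem:uniformboundonDelaunayedge} to a sphere in the boundary stratum but still states the conclusion with the constant $c(n,\delta(\underline{k}))$. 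Spelling this out is a genuine improvement; note also that $c(n,\delta)$ as defined in~\eqref{equ:uniformboundonDelaunayedge} depends only on $\delta$ and not on $n$, so the $|P|$ versus $n$ comparison you mention is vacuous, but harmless.
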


\begin{proof}
    The first assertion follows directly from equation~\eqref{equ:fiberexpression} in Theorem~\ref{thm:fiberedcoordinatechart} and Lemma~\ref{lem:bv}.

    For the second assertion, since $(1,W)$ parameterizes the edges of a spanning tree of $T$ in the Delaunay region $D_{S,\underline{s}(P)}^{Del}(\underline{k}(P);T)$ in the boundary stratum $B(P)$, Lemma~\ref{lem:uniformboundonDelaunayedge} implies that 
    $
    \frac{1}{\sqrt{\mathrm{Area}(1,V,W)}} \leq c(\delta)$ and $ \frac{w_i}{\sqrt{\mathrm{Area}(1,0,W)}} \leq c(\delta).
    $
\end{proof}

\begin{lemma}\label{lem:someidentitiesofuniformapproximation}
In the fibered coordinate chart $N\big(T_a, ([X_i, F_i])_{i=1}^s\big) \setminus \mathcal{S}$, the following asymptotic identities hold uniformly with respect to $(V,W)$ in the fibered coordinate chart:
\begin{enumerate}
    \item $\frac{D_B(V)}{\operatorname{Area}(1,V,W)}=O(\varepsilon^2)$.
    \item $\frac{D(1_{v_i}, V)}{\mathrm{Area}(1,V,W)} = O(\varepsilon)$ and $\frac{D(1_{v_i},1_{v_j})}{\operatorname{Area}(1,0,W)}=O(1)$.
    \item $\frac{H((1,0,W),(0,0,1_{w_j}))}{\mathrm{Area}(1,V,W)} = O(1)$ and $\frac{H((0,0,1_{w_i}),(0,0,1_{w_j}))}{\mathrm{Area}(1,V,W)} = O(1)$.
\end{enumerate}
\end{lemma}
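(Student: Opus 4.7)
The plan is to reduce all three asymptotic bounds to two structural facts that hold uniformly on the fibered coordinate chart: $D_B(V)$ is a positive-definite Hermitian form on $\mathbb{C}^d$ that does not depend on $(w_0,W)$, and $\mathrm{Area}(1,V,W)$ is bounded below by a uniform positive constant. The first fact follows from Lemma~\ref{lem:fibereareahermitian} combined with~\eqref{equ:defdv}: the identity $H((0,V,0),(w_0',0,W'))=0$ together with $\mathrm{Area}(w_0,V,W)=\mathrm{Area}(w_0,0,W)-D_B(V)$ forces $D_B(V)=-H((0,V,0),(0,V,0))$, a Hermitian form in $V$ alone, and positive-definiteness was established in Lemma~\ref{lem:bv}. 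The second fact I will derive from part~(1) and Lemma~\ref{lem:basicinequalityforVW}(2): once $D_B(V)/\mathrm{Area}(1,V,W)=O(\varepsilon^2)$, the identity $\mathrm{Area}(1,V,W)+D_B(V)=\mathrm{Area}(1,0,W)$ yields $\mathrm{Area}(1,V,W)\ge \mathrm{Area}(1,0,W)/2\ge c(n,\delta)^{-2}/2$ for all sufficiently small $\varepsilon$.

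For (1), since $D_B$ is a fixed positive-definite Hermitian form in $d$ variables, I fix a constant $c_2>0$ with $D_B(V)\le c_2\sum_i|v_i|^2$. Dividing by $\mathrm{Area}(1,V,W)$ and applying Lemma~\ref{lem:basicinequalityforVW}(1) termwise yields $D_B(V)/\mathrm{Area}(1,V,W)\le c_2 d\,\varepsilon^2$. This also immediately gives the uniform lower bound on $\mathrm{Area}(1,V,W)$ used throughout the rest of the proof.

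For (2), the first estimate comes from the Cauchy--Schwarz inequality applied to the positive-definite form $D_B$, giving $|D_B(1_{v_i},V)|\le \sqrt{D_B(1_{v_i})}\,\sqrt{D_B(V)}$. Writing
$$\frac{|D_B(1_{v_i},V)|}{\mathrm{Area}(1,V,W)}\;\le\;\sqrt{D_B(1_{v_i})}\cdot\left(\frac{D_B(V)}{\mathrm{Area}(1,V,W)}\right)^{1/2}\cdot \frac{1}{\sqrt{\mathrm{Area}(1,V,W)}},$$
the three factors are respectively a constant, $O(\varepsilon)$ by~(1), and $O(1)$ by the uniform lower bound on $\mathrm{Area}(1,V,W)$. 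The second estimate is immediate: $D_B(1_{v_i},1_{v_j})$ is a constant, and $\mathrm{Area}(1,0,W)\ge c(n,\delta)^{-2}$ by Lemma~\ref{lem:basicinequalityforVW}(2).

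For (3), I will expand $H((1,0,W),(0,0,1_{w_j}))$ by sesquilinearity as an affine-linear function of the entries of $W$ with constant coefficients; using $|w_k|\le c(n,\delta)\sqrt{\mathrm{Area}(1,0,W)}$ from Lemma~\ref{lem:basicinequalityforVW}(2), the modulus is bounded by $C_1+C_2\sqrt{\mathrm{Area}(1,0,W)}$. Dividing by $\mathrm{Area}(1,V,W)$, which is comparable to $\mathrm{Area}(1,0,W)$ and uniformly bounded below, yields $O(1)$. The analogous bound for $H((0,0,1_{w_i}),(0,0,1_{w_j}))/\mathrm{Area}(1,V,W)$ is immediate since the numerator is a constant. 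I do not anticipate a serious obstacle: the argument is a combination of Cauchy--Schwarz and uniform positivity of $\mathrm{Area}(1,0,W)$; the only subtle point is checking that $D_B$ and the fixed entries of $H$ are genuine constants in a single fibered chart, which is built into the projective-linear structure of $\Psi$ from Theorem~\ref{thm:fiberedcoordinatechart}.
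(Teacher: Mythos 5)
Your proof is correct and follows essentially the same route as the paper's: expand the relevant Hermitian-form expressions in coordinates, using the identity $\mathrm{Area}(1,V,W)+D_B(V)=\mathrm{Area}(1,0,W)$ together with Lemma~\ref{lem:bv} and Lemma~\ref{lem:basicinequalityforVW}. Your choice of first extracting a uniform lower bound on $\mathrm{Area}(1,V,W)$ from part~(1), and then invoking Cauchy--Schwarz in part~(2), makes explicit a comparability fact (between $\mathrm{Area}(1,V,W)$ and $\mathrm{Area}(1,0,W)$) that the paper's proof of part~(2) uses tacitly---Lemma~\ref{lem:basicinequalityforVW}(2) only bounds $1/\sqrt{\mathrm{Area}(1,0,W)}$, not $1/\sqrt{\mathrm{Area}(1,V,W)}$---so your version is slightly cleaner at that step.
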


\begin{proof}
Define
\begin{align*}
c_1 &:= \max_{1\le i,j\le d} |D(1_{v_i},1_{v_j})|\\    
c_2 &:= \max_{1\le i,j\le n-3-d} \left\{|H\big((1,0,0),(0,0,1_{w_j})\big)|, |H\big((0,0,1_{w_i}),(0,0,1_{w_j})\big)|\right\}.
\end{align*}

For (1), by Lemma~\ref{lem:basicinequalityforVW} (1), we have
\begin{align*}
    \frac{D_B(V)}{\operatorname{Area}(1,V,W)}&=D_B\left(\frac{V}{\sqrt{\operatorname{Area}(1,V,W)}}\right)\\
    &\leq \sum_{1\leq i, j\leq d} c_1 \cdot \left|\frac{v_i}{\sqrt{\operatorname{Area}(1,V,W)}}\right| \left|\frac{v_j}{\sqrt{\operatorname{Area}(1,V,W)}}\right|\\
    &\leq d^2 c_1 \varepsilon^2.
\end{align*}

For (2), using Lemma~\ref{lem:basicinequalityforVW} (1) and (2), we compute
$$
\left|\frac{D(1_{v_i}, V)}{\mathrm{Area}(1,V,W)}\right| = 
\left|D_B\left(\frac{1_{v_i}}{\sqrt{\operatorname{Area}(1,V,W)}},\frac{V}{\sqrt{\operatorname{Area}(1,V,W)}}\right)\right|
\leq c \cdot d \cdot c(\delta) \cdot \varepsilon.
$$
Similarly, we obtain
$$
\left|\frac{D(1_{v_i},1_{v_j})}{\operatorname{Area}(1,0,W)}\right|\leq c \cdot c(\delta)^2.
$$

For (3), first note that
\begin{align*}
    \frac{|w_i|}{\sqrt{\operatorname{Area}(1,V,W)}} &= \frac{|w_i|}{\sqrt{\operatorname{Area}(1,0,W) - \operatorname{Area}(1,0,W)}} \\
    &= \frac{|w_i|}{\sqrt{\operatorname{Area}(1,0,W) - D_B(V)}}\\
    &= \frac{|w_i|/\sqrt{\operatorname{Area}(1,0,W)}}{\sqrt{1 - O(\varepsilon^2)}}\\
    &= \frac{O(1)}{1-O(\varepsilon^2)} = O(1)
\end{align*}
where the third equality uses (1) of Lemma~\ref{lem:someidentitiesofuniformapproximation} and the fourth equality uses (2) of Lemma~\ref{lem:basicinequalityforVW}.
Similarly, one can show that
$$
    \frac{|w_i|}{\sqrt{\operatorname{Area}(1,V,W)}}=O(1).
$$
It follows that
\begin{align*}
    &\left|\frac{H((1,0,W),(0,0,1_{w_j}))}{\mathrm{Area}(1,V,W)}\right| \\
    &= \left|H\left( \left(\frac{1}{\sqrt{\operatorname{Area}(1,V,W)}},0,\frac{W}{\sqrt{\operatorname{Area}(1,V,W)}}\right),\left(0,0,\frac{W}{\sqrt{\operatorname{Area}(1,V,W)}}\right)\right)\right|\\
    &\leq \sum_{1\leq j\leq n-3-d} c_2 \left|\frac{1}{\sqrt{\operatorname{Area}(1,V,W)}}\right| \left|\frac{w_j}{\sqrt{\operatorname{Area}(1,V,W)}}\right| \\
    &\quad + \sum_{1\leq i,j\leq n-3-d} c_2 \left|\frac{w_i}{\sqrt{\operatorname{Area}(1,V,W)}}\right| \left|\frac{w_j}{\sqrt{\operatorname{Area}(1,V,W)}}\right|\\
    &\leq (n-3-d)\cdot c_2\cdot c(\delta)^2 + (n-3-d)^2\cdot c_2\cdot c(\delta)^2.
\end{align*}
Thus, we conclude that 
$$
\frac{H((1,0,W),(0,0,1_{w_j}))}{\mathrm{Area}(1,V,W)} = O(1).
$$
Similarly, we obtain 
$$
\frac{H((0,0,1_{w_i}),(0,0,1_{w_j}))}{\mathrm{Area}(1,V,W)} = O(1).
$$
\end{proof}

\subsection{Approximation Using Product Metric}
We construct a product metric on the fibered coordinate chart as follows.

In the fibered coordinate chart $N\big(T, ([X_i, F_i])_{i=1}^s\big) \setminus \mathcal{S}$, we define the following symmetric $2$-tensors:
\begin{equation}\label{equ:hVmetric}
    h_V = \frac{D(dV)}{\operatorname{Area}(1,0,W)}
\end{equation}
and
\begin{equation}\label{equ:hBmetric}
    h_B =\frac{|H((0,0,dW),(1,0,W))|^2-\mathrm{Area}(0,0,dW)\mathrm{Area}(1,0,W)}{\mathrm{Area}(1,0,W)^2}.
\end{equation}
Note that $h_V$ is a metric tensor when restricted to the leaves where $W$ is fixed (by Lemma~\ref{lem:bv}), while $h_B$ is a metric tensor when restricted to the leaves where $V$ is fixed.

\begin{proposition}\label{prop:metricapproximation}
In the fibered coordinate chart $N\big(T, ([X_i, F_i])_{i=1}^s\big) \setminus \mathcal{S}$, the Thurston metric satisfies
    $$
    h_{Thu} = h_{V} \oplus h_B + b,
    $$
    where $b$ is a symmetric $2$-tensor on the fibered coordinate chart satisfying the uniform estimates:
    $$
    b\left(\frac{\partial}{\partial v_i}, \frac{\partial}{\partial v_j}\right) = O(\varepsilon^2),
    \quad
    b\left(\frac{\partial}{\partial v_i}, \frac{\partial}{\partial w_j}\right) = O(\varepsilon),
    \quad
    b\left(\frac{\partial}{\partial w_i}, \frac{\partial}{\partial w_j}\right) = O(\varepsilon^2).
    $$
\end{proposition}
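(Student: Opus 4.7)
The plan is to verify the decomposition componentwise, evaluating $h_{Thu}$, $h_V\oplus h_B$, and their difference on each pair of basis vectors $\partial/\partial v_i, \partial/\partial w_j$, then applying the uniform estimates of Lemma~\ref{lem:someidentitiesofuniformapproximation}. The formulas in Lemma~\ref{lem:someformularsforthurstonmetrics} provide closed-form expressions for $h_{Thu}$ on basis vectors in the fibered chart, and by~\eqref{equ:hVmetric} and~\eqref{equ:hBmetric} we can read off the components of $h_V\oplus h_B$ directly. The only algebraic identity I would exploit repeatedly is
\[
\mathrm{Area}(1,V,W)=\mathrm{Area}(1,0,W)-D_B(V),
\]
from~\eqref{equ:defdv}, together with the bound $D_B(V)/\mathrm{Area}(1,V,W)=O(\varepsilon^2)$ from Lemma~\ref{lem:someidentitiesofuniformapproximation}(1), which gives $\mathrm{Area}(1,V,W)/\mathrm{Area}(1,0,W)=1-O(\varepsilon^2)$, and therefore $1/\mathrm{Area}(1,V,W)^k-1/\mathrm{Area}(1,0,W)^k=O(\varepsilon^2)/\mathrm{Area}(1,0,W)^k$ for $k=1,2$.

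For the $(v_i,v_j)$ block, Lemma~\ref{lem:someformularsforthurstonmetrics}(1) gives two terms. The first, $D(1_{v_i},V)D_B(V,1_{v_j})/\mathrm{Area}(1,V,W)^2$, is directly $O(\varepsilon^2)$ by applying Lemma~\ref{lem:someidentitiesofuniformapproximation}(2) twice. The second is $D(1_{v_i},1_{v_j})/\mathrm{Area}(1,V,W)$, and subtracting the $h_V$ piece $D(1_{v_i},1_{v_j})/\mathrm{Area}(1,0,W)$ produces $D(1_{v_i},1_{v_j})D_B(V)/\bigl(\mathrm{Area}(1,V,W)\mathrm{Area}(1,0,W)\bigr)$, which is again $O(\varepsilon^2)$ by combining Lemma~\ref{lem:someidentitiesofuniformapproximation}(1) and (2). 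For the $(v_i,w_j)$ block, there is no competing term in $h_V\oplus h_B$, so $b(\partial/\partial v_i,\partial/\partial w_j)$ equals the expression in Lemma~\ref{lem:someformularsforthurstonmetrics}(2); here $D(1_{v_i},V)/\mathrm{Area}(1,V,W)=O(\varepsilon)$ from Lemma~\ref{lem:someidentitiesofuniformapproximation}(2), while $H((1,0,W),(0,0,1_{w_j}))/\mathrm{Area}(1,V,W)=O(1)$ from Lemma~\ref{lem:someidentitiesofuniformapproximation}(3), yielding the required $O(\varepsilon)$ bound.

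For the $(w_i,w_j)$ block, the computation is a little more involved but still routine. Lemma~\ref{lem:someformularsforthurstonmetrics}(3) and the definition of $h_B$ give the same two types of terms but with the denominator $\mathrm{Area}(1,V,W)$ in one case and $\mathrm{Area}(1,0,W)$ in the other. Writing each difference as
\[
\frac{1}{\mathrm{Area}(1,V,W)^k}-\frac{1}{\mathrm{Area}(1,0,W)^k}=\frac{O(\varepsilon^2)}{\mathrm{Area}(1,0,W)^k}
\]
and multiplying by the factors $H((1,0,W),(0,0,1_{w_i})), H((1,0,W),(0,0,1_{w_j}))$ and $H((0,0,1_{w_i}),(0,0,1_{w_j}))$, each of which is $O(\mathrm{Area}(1,0,W))$ by Lemma~\ref{lem:someidentitiesofuniformapproximation}(3) (using $\mathrm{Area}(1,V,W)\asymp\mathrm{Area}(1,0,W)$ up to a factor $1-O(\varepsilon^2)$), produces the desired $O(\varepsilon^2)$ estimate.

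There is no real obstacle here beyond careful bookkeeping; the whole proof is essentially a one-line algebraic identity combined with three applications of the uniform estimates already established. The closest thing to a subtle point is ensuring that all the constants implicit in $O(\cdot)$ are genuinely uniform with respect to $(V,W)$ in the chart, but this is exactly what Definition~\ref{def:usualbigOnotation} requires and what Lemma~\ref{lem:someidentitiesofuniformapproximation} provides. I would therefore present the argument as three short componentwise computations, flagging the key identity relating the two area functions at the outset and quoting Lemma~\ref{lem:someidentitiesofuniformapproximation} at each step where an estimate is used.
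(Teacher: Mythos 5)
Your proposal is correct and follows the same route as the paper's own proof: componentwise evaluation using Lemma~\ref{lem:someformularsforthurstonmetrics} for the exact Hermitian-form expressions, the identity $\mathrm{Area}(1,V,W)=\mathrm{Area}(1,0,W)-D_B(V)$ to control the difference of denominators, and the uniform bounds of Lemma~\ref{lem:someidentitiesofuniformapproximation} to conclude $O(\varepsilon^2)$, $O(\varepsilon)$, $O(\varepsilon^2)$ for the three blocks. Your identification of the subtle point --- uniformity of the implied constants in $(V,W)$ --- is exactly right, and it is precisely what Lemma~\ref{lem:someidentitiesofuniformapproximation} supplies; incidentally, the paper's displayed bound for the $(w_i,w_j)$ block reads $O(\varepsilon)$ rather than $O(\varepsilon^2)$, which is a typographical slip there, whereas your estimate for that block is the correct one.
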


\begin{proof}
According to (1) and (2) in Lemma~\ref{lem:someidentitiesofuniformapproximation}, we compute:
\begin{align*}
    b\left(\frac{\partial}{\partial v_i}, \frac{\partial}{\partial v_j}\right)
    &= h_{Thu}\left(\frac{\partial}{\partial v_i}, \frac{\partial}{\partial v_j}\right) - h_V\left(\frac{\partial}{\partial v_i}, \frac{\partial}{\partial v_j}\right)\\
    &= \frac{D(1_{v_i}, V)D_B(V,1_{v_j})}{\mathrm{Area}(1,V,W)^2} + 
       D(1_{v_i},1_{v_j})\left[\frac{1}{\operatorname{Area}(1,V,W)} - \frac{1}{\operatorname{Area}(1,0,W)}\right]\\
    &= \frac{D(1_{v_i}, V)D_B(V,1_{v_j})}{\mathrm{Area}(1,V,W)^2} + 
       \frac{D(1_{v_i},1_{v_j})}{\operatorname{Area}(1,0,W)}\frac{D_B(V)}{\operatorname{Area}(1,V,W)}\\
    &= O(\varepsilon^2).
\end{align*}

By (2) and (3) in Lemma~\ref{lem:someidentitiesofuniformapproximation}, we compute:
\begin{align*}
    b\left(\frac{\partial}{\partial v_i}, \frac{\partial}{\partial w_j}\right)
    &= h_{Thu}\left(\frac{\partial}{\partial v_i}, \frac{\partial}{\partial w_j}\right) \\
    &= \frac{-D(1_{v_i}, V)H((1,0,W),(0,0,1_{w_j}))}{\mathrm{Area}(1,V,W)^2} \\
    &= O(\varepsilon).
\end{align*}

Finally, applying (3) of Lemma~\ref{lem:someformularsforthurstonmetrics}, we compute:
\begin{align*}
    &b\left(\frac{\partial}{\partial w_i}, \frac{\partial}{\partial w_j}\right)\\ 
    &= h_{Thu}\left(\frac{\partial}{\partial w_i}, \frac{\partial}{\partial w_j}\right) - h_B\left(\frac{\partial}{\partial w_i}, \frac{\partial}{\partial w_j}\right)\\
    &= \frac{H\big((0,0, 1_{w_i}),(1,0,W)\big) H\big((1,0,W),(0,0, 1_{w_j})\big)}{\mathrm{Area}(1,V,W)^2} \frac{\mathrm{Area}(1,0,W) + \mathrm{Area}(1,V,W)}{\mathrm{Area}(1,0,W)}\frac{D_B(V)}{\mathrm{Area}(1,0,W)} \\ 
    &\quad - \frac{H\big((0,0,1_{w_i}),(0,0,1_{w_j})\big)}{\mathrm{Area}(1,V,W)}\frac{D_B(V)}{\mathrm{Area}(1,0,W)}.
\end{align*}
Then, applying (1) and (3) in Lemma~\ref{lem:someidentitiesofuniformapproximation}, we conclude that
$$
b\left(\frac{\partial}{\partial w_i}, \frac{\partial}{\partial w_j}\right) = O(\varepsilon)
$$
holds uniformly.
\end{proof}

We use the above metric approximation to deduce a uniform approximation of the associated volume form. Recall that $d\mu_{Thu}$ denotes the volume form of the metric $h_{Thu}$. 
\begin{itemize}
    \item Denote by $\mu_V$ the measure induced by $h_V$ on any leaf where $W$ is fixed, and denote by $\mu_B$ the measure induced by $h_B$ on any leaf where $V$ is fixed.
    \item Denote by $d\mu_V$ the $2d$-form associated with the tensor $h_V$, and by $d\mu_B$ the $2(n-3-d)$-form associated with the tensor $h_B$.
\end{itemize}
Note that $d\mu_B$ is also the volume form of the complex hyperbolic metric $h_{Thu}$ on the moduli space corresponding to the boundary stratum $B$.\newline

Set
$$
dV d\overline{V} := dv_1 d\overline{v}_1 \cdots dv_d d\overline{v}_d, 
\quad
dW d\overline{W} := dw_1 d\overline{w}_1 \cdots dw_{n-3-d} d\overline{w}_{n-3-d}.
$$

\begin{corollary}\label{cor:volumeformapproximation}
      In the fibered coordinate chart $N\big(T, ([X_i, F_i])_{i=1}^s\big) \setminus \mathcal{S}$, we have
     \begin{equation}\label{equ:volumeformsasymptotics}
         d\mu_{Thu} = d\mu_V \wedge d\mu_B + f dV d\overline{V} dW d\overline{W},
     \end{equation}
     where $f$ is a smooth function of $(V,W)$ satisfying
     \begin{equation}
         f = O(\varepsilon^2),
     \end{equation}
     which holds uniformly with respect to $(V,W)$ in the fibered coordinate chart.
\end{corollary}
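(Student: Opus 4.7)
The plan is to compute both $d\mu_{Thu}$ and $d\mu_V\wedge d\mu_B$ as determinants of Hermitian matrices in the fibered coordinate chart and compare them. Writing $H_{Thu}(V,W)$ for the $(n-3)\times(n-3)$ Hermitian matrix of $h_{Thu}$ in the basis $(\partial_{v_1},\ldots,\partial_{v_d},\partial_{w_1},\ldots,\partial_{w_{n-3-d}})$, we have
\[
d\mu_{Thu}=\det(H_{Thu})\cdot\left(\tfrac{\sqrt{-1}}{2}\right)^{n-3} dV\,d\overline V\,dW\,d\overline W.
\]
Because $h_V$ acts only on $V$-directions and $h_B$ only on $W$-directions, the matrix $\operatorname{diag}(h_V|_{VV},h_B|_{WW})$ is block-diagonal, and its determinant factors, giving
\[
d\mu_V\wedge d\mu_B=\det(h_V|_{VV})\det(h_B|_{WW})\cdot\left(\tfrac{\sqrt{-1}}{2}\right)^{n-3}dV\,d\overline V\,dW\,d\overline W.
\]
Thus the statement reduces to showing
\[
\det(H_{Thu})-\det(h_V|_{VV})\det(h_B|_{WW})=O(\varepsilon^2)
\]
uniformly in $(V,W)$.

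By Proposition~\ref{prop:metricapproximation}, write $H_{Thu}=\operatorname{diag}(h_V|_{VV},h_B|_{WW})+B$, where the block entries of $B$ satisfy $B_{VV}=O(\varepsilon^2)$, $B_{VW}=B_{WV}^{*}=O(\varepsilon)$, and $B_{WW}=O(\varepsilon^2)$. Expand $\det(H_{Thu})$ via the Leibniz formula as a sum over permutations $\sigma\in S_{n-3}$, and partition the index set as $\{1,\ldots,n-3\}=I_V\sqcup I_W$. If $\sigma$ sends $k\geq 1$ indices of $I_V$ into $I_W$, then by bijectivity it also sends exactly $k$ indices of $I_W$ into $I_V$, so the corresponding monomial contains at least $2k$ entries drawn from $B_{VW}$ or $B_{WV}$; each such factor is $O(\varepsilon)$, while the remaining entries are uniformly $O(1)$ by Lemma~\ref{lem:someidentitiesofuniformapproximation}. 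The resulting contribution is $O(\varepsilon^{2k})=O(\varepsilon^2)$, and there are only finitely many permutations, so all these ``cross-block'' terms sum to $O(\varepsilon^2)$.

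The remaining permutations preserve the block structure and contribute exactly $\det(h_V|_{VV}+B_{VV})\det(h_B|_{WW}+B_{WW})$. Expanding each determinant multilinearly in its rows and using the uniform $O(1)$ bounds on entries of $h_V|_{VV}$ and $h_B|_{WW}$, any term that selects at least one row from $B_{VV}$ or $B_{WW}$ is $O(\varepsilon^2)$, while the leading contribution is $\det(h_V|_{VV})\det(h_B|_{WW})$. Combining both parts of the expansion yields $\det(H_{Thu})=\det(h_V|_{VV})\det(h_B|_{WW})+O(\varepsilon^2)$, which, after multiplying by $(\sqrt{-1}/2)^{n-3}dV\,d\overline V\,dW\,d\overline W$, identifies the function $f$ in~\eqref{equ:volumeformsasymptotics}.

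The main subtlety will be ensuring that every $O(1)$ estimate on matrix entries, and the propagation of the $O(\varepsilon^2)$ bound through the finitely many terms of the determinant expansion, is \emph{uniform} over the entire fibered coordinate chart rather than merely pointwise. This follows from Lemma~\ref{lem:someidentitiesofuniformapproximation}, whose uniformity relies on two facts: Lemma~\ref{lem:basicinequalityforVW} provides the uniform bound $|w_j|/\sqrt{\operatorname{Area}(1,0,W)}\leq c(n,\delta)$ throughout the thick part $B_\lambda\cap D_{S,\underline{s}(P)}^{Del}(\underline{k}(P);T)$, and the fiber condition $|v_i|/\sqrt{\operatorname{Area}(1,V,W)}<\varepsilon$ from Theorem~\ref{thm:fiberedcoordinatechart} controls the $V$-directions. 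Together these make all constants absorbed into the $O(\varepsilon^2)$ notation genuinely uniform, giving the corollary.
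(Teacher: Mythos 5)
Your proposal is correct and takes essentially the same approach as the paper: both express $d\mu_{Thu}$ as a determinant in the fibered coordinate chart, apply Proposition~\ref{prop:metricapproximation} to write the Hermitian matrix as a block-diagonal piece plus error blocks of size $O(\varepsilon^2)$, $O(\varepsilon)$, $O(\varepsilon^2)$, and conclude that the determinant equals $\det(h_V|_{VV})\det(h_B|_{WW})+O(\varepsilon^2)$. You have merely made explicit (via the Leibniz expansion over permutations split by block structure) the determinant estimate that the paper asserts in a single line; the uniformity considerations you flag at the end are exactly those supplied by Lemmas~\ref{lem:basicinequalityforVW} and~\ref{lem:someidentitiesofuniformapproximation}.
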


\begin{proof}
In the fibered coordinate chart, the volume form $d\mu_{Thu}$ is expressed as:
$$
d\mu_{Thu} = \left(\frac{i}{2}\right)^{n-3} h(V,W) dV d\overline{V} dW d\overline{W},
$$
where 
$$
 h(V,W) = \operatorname{det}\left(
    \begin{array}{ll}
       \left(h_{Thu}\left(\frac{\partial}{\partial v_i},\frac{\partial}{\partial v_j}\right)\right)_{1\leq i,j\leq d}  & \left(h_{Thu}\left(\frac{\partial}{\partial v_i},\frac{\partial}{\partial w_j}\right)\right)_{1\leq i \leq d,1\leq j\leq n-3-d}  \\
       \left(h_{Thu}\left(\frac{\partial}{\partial w_i},\frac{\partial}{\partial v_j}\right)\right)_{1\leq i \leq n-3-d,1\leq j\leq d}  & \left(h_{Thu}\left(\frac{\partial}{\partial w_i},\frac{\partial}{\partial w_j}\right)\right)_{1\leq i,j\leq n-3-d}
    \end{array}
    \right).
$$
Applying Proposition~\ref{prop:metricapproximation}, we compute:
\begin{align*}
    h(V,W) 
    &= \operatorname{det}\left(
    \begin{array}{ll}
       \left(h_V\left(\frac{\partial}{\partial v_i},\frac{\partial}{\partial v_j}\right) + O(\varepsilon^2)\right)_{1\leq i,j\leq d}  & \left(O(\varepsilon)\right)_{1\leq i \leq d,1\leq j\leq n-3-d}  \\
       \left(O(\varepsilon)\right)_{1\leq i \leq n-3-d,1\leq j\leq d}  & \left(h_{B}\left(\frac{\partial}{\partial w_i},\frac{\partial}{\partial w_j}\right) + O(\varepsilon^2)\right)_{1\leq i,j\leq n-3-d}
    \end{array}
    \right)\\
    &= \operatorname{det}\left(\left(h_V\left(\frac{\partial}{\partial v_i},\frac{\partial}{\partial v_j}\right)\right)_{1\leq i,j\leq d}\right) 
    \operatorname{det}\left(\left(h_B\left(\frac{\partial}{\partial w_i},\frac{\partial}{\partial w_j}\right)\right)_{1\leq i,j\leq n-3-d}\right) +O(\varepsilon^2).
\end{align*}
Let $f$ be the difference between $h(V,W)$ and the first term in the last equation. Then $f$ is a smooth function of $(V,W)$ satisfying $f = O(\varepsilon^2)$. 

Furthermore, using the expressions of $d\mu_V$ and $d\mu_B$ in the fibered coordinate chart, we conclude that
$$
d\mu_{Thu} = d\mu_V\wedge d\mu_B + f dV d\overline{V} dW d\overline{W}.
$$
\end{proof}

For later use in estimating the error term in the asymptotic formula~\eqref{equ:asymvolume}, we need to control the integral of the term
$$
fdV d\overline{V} dW d\overline{W}
$$
on the fibered coordinate chart. However, the Delaunay region $D_{S,\underline{s}(P)}^{Del}(\underline{k}(P);T)$ in the boundary stratum $B(P)$ may map to an unbounded subset in the spanning tree coordinate chart. In this case, the integral of $dW d\overline{W}$ over $D_{S,\underline{s}(P)}^{Del}(\underline{k}(P);T)$ is infinite. To address this, we establish the following lemma:

\begin{lemma}\label{lem:compactdelaunay}
Let $\underline{k}\in(0,1)^n$ be a curvature vector. Let $T$ be a triangulation of $(S,\underline{s})$, and let $F$ be a spanning tree contained in $T$. Then, the Delaunay region $D_{S,\underline{s}}^{Del}(\underline{k};T)$ can be decomposed into finitely many disjoint subsets
\begin{equation}\label{equ:compactdelaunay}
    D_{S,\underline{s}}^{Del}(\underline{k};T) = A_1 \sqcup \ldots \sqcup A_l
\end{equation}
such that each $A_i$ corresponds to a bounded subset of $\mathbb{C}^{n-3}$ under a spanning tree coordinate chart associated to $F$ where a certain edge of $F$ is normalized to $1$.
\end{lemma}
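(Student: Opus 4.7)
The plan is to stratify $D_{S,\underline{s}}^{Del}(\underline{k};T)$ according to which edge of $F$ realizes the maximum metric length. Writing the $n-1$ edges of $F$ as $e_0,\ldots,e_{n-2}$ with associated developed vectors $z_0,\ldots,z_{n-2}\in\mathbb{C}$ (so that $|z_j|$ equals the metric length $|e_j|_X$ of $e_j$ on $X$), I define, for each $i\in\{0,\ldots,n-2\}$,
\[
A_i := \bigl\{X\in D_{S,\underline{s}}^{Del}(\underline{k};T)\ :\ |e_i|_X=\max_{0\le j\le n-2}|e_j|_X\ \text{ and }\ |e_i|_X>|e_j|_X\ \text{ for all }\ j<i\bigr\}.
\]
These $n-1$ sets are pairwise disjoint and cover the Delaunay region, since every $X$ admits at least one edge of $F$ realizing the maximum length, and the lexicographic tie-breaking rule picks out a unique index.

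For each nonempty $A_i$ I would use the affine chart of $\mathbb{C}P^{n-3}$ (into which the coordinate map $\phi_F$ of \eqref{equ:coordinatemap} takes values) obtained by normalizing the homogeneous coordinate $z_i$ to $1$; this is exactly the spanning tree coordinate chart associated to $F$ in which the edge $e_i$ plays the role of the normalized edge. Since edges of a Delaunay triangulation have strictly positive metric length, one has $z_i\neq 0$ on $A_i$, so this chart is well-defined. The remaining $n-3$ affine coordinates are ratios of the form $z_j/z_i$, and by the definition of $A_i$ each satisfies $|z_j/z_i|=|e_j|_X/|e_i|_X\le 1$. Hence $A_i$ maps into the closed unit polydisk of $\mathbb{C}^{n-3}$ and is therefore bounded.

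I do not foresee a substantive obstacle. The only point worth checking is that the paper's convention normalizes the first coordinate $z_0$, whereas here an arbitrary $z_i$ must be normalized; this is legitimate because $\phi_F$ takes values in a projective space and every nonzero homogeneous coordinate yields an admissible affine chart of $\mathbb{C}P^{n-3}$ (and hence a valid spanning tree chart associated to $F$, as the convention in Section~\ref{sec:spanningtree} explicitly allows). With $l=n-1$ the disjoint decomposition \eqref{equ:compactdelaunay} follows.
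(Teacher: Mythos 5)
Your proof is correct, and it takes a genuinely different route from the paper. The paper argues abstractly: it covers the compact space $\mathbb{CP}^{n-3}$ by finitely many subsets, each bounded in some standard affine chart, and then intersects this cover with $\phi_F\big(D_{S,\underline{s}}^{Del}(\underline{k};T)\big)$. Your argument instead stratifies the Delaunay region explicitly by which edge of $F$ attains the maximum metric length (with lexicographic tie-breaking), which yields a decomposition into at most $n-1$ pieces, each landing in the closed unit polydisk after normalizing the longest edge to length $1$. Your version is more constructive and gives an explicit bound on $l$, which the paper's compactness argument does not; the paper's version is shorter and avoids having to check that the length-comparison sets are well-defined and cover everything.

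One small imprecision worth flagging: for $i=n-2$ you call $z_{n-2}$ a ``homogeneous coordinate,'' but in the paper's setup the homogeneous coordinates of $\mathbb{CP}^{n-3}$ are $[z_0,\ldots,z_{n-3}]$, and $z_{n-2}$ is a fixed nonzero complex-linear combination of those (it is the dependent edge vector). Normalizing the linear form $z_{n-2}$ to $1$ is nevertheless a perfectly good affine chart of $\mathbb{CP}^{n-3}$ (the complement of the hyperplane $\{z_{n-2}=0\}$, which is a proper hyperplane since the linear form is nonzero), and on $A_{n-2}$ the induced affine coordinates are bounded by $1$ exactly as you argue, so the conclusion stands; just don't call $z_{n-2}$ a homogeneous coordinate. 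If one insists on using only the standard charts $\{z_j\neq 0\}$ with $j\le n-3$, the fix is to subdivide $A_{n-2}$ further by which of $|z_0|,\ldots,|z_{n-3}|$ is largest; the linear dependence gives a uniform constant $C>0$ with $\max_{j\le n-3}|z_j|\ge C\,|z_{n-2}|$, so each sub-piece is bounded by $1/C$ in that chart.
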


\begin{proof}
    Recall that the spanning tree coordinate chart associated with $F$ embeds $D_{S,\underline{s}}^{Del}(\underline{k};T)$ into $\mathbb{CP}^{n-3}$, with coordinates of the form
    $$
    [z_0, \ldots, z_{n-3}]
    $$
    where $z_0, \ldots, z_{n-3}$ parameterize the $(n-2)$ edges of $F$.

    We claim that there exists a finite partition of $\mathbb{CP}^{n-3}$,
    $$
    \mathbb{CP}^{n-3} = B_1 \sqcup \ldots \sqcup B_l,
    $$
    such that each $B_i$ corresponds to a bounded subset of $\mathbb{C}^{n-3}$ under a standard affine chart in $\mathbb{CP}^{n-3}$. This follows from the compactness of $\mathbb{CP}^{n-3}$. Indeed, every point in projective space is contained in some affine chart. We select a bounded neighborhood in the corresponding coordinates. Considering all points in $\mathbb{CP}^{n-3}$, we obtain an open cover. By the compactness, we find a finite cover:
    $$
    \mathbb{CP}^{n-3} = U_1 \cup \ldots \cup U_l,
    $$
    where each $U_i$ corresponds to a bounded subset under an affine chart. We then define
    $$
    B_1 = U_1, \quad B_i = U_i \setminus \bigcup_{j < i} U_j.
    $$
    This proves the claim.

    To complete the proof of the lemma, we define $A_i$ as the subset corresponding to $D_{S,\underline{s}}^{Del}(\underline{k};T) \cap B_i$ in $\mathbb{CP}^{n-3}$. Note that a standard affine chart in $\mathbb{CP}^{n-3}$ determines the edge of $F$ to be normalized. Therefore, each $A_i$ corresponds to a bounded subset under the spanning tree coordinate chart, with a certain edge of $F$ normalized to $1$.
\end{proof}

\subsection{Asymptotics Volumes of Fibers}

Recall that we define a $2$-tensor
$$
h_V = \frac{D_B(dV)}{\operatorname{Area}(1,0,W)}
$$
on a fibered coordinate chart; see~\eqref{equ:hVmetric}. Since $h_V$ defines a metric tensor when restricted to the leaves of the chart where $W$ is fixed, the associated $2d$-form $d\mu_V$ serves as a volume form on the fiber $\phi_B^{-1}(W)$ in $N\big(T, ([X_i, F_i])_{i=1}^s\big) \setminus \mathcal{S}$, where $\phi_B$ is the projection~\eqref{equ:projection}. We define the \textbf{volume of the fiber at $W$} as
$$
\mu_V\left(\phi_B^{-1}(W)\right) := \int_{\phi_B^{-1}(W)\cap N\big(T, ([X_i, F_i])_{i=1}^s\big) \setminus \mathcal{S}} d\mu_V.
$$

According to Theorem~\ref{thm:fiberedcoordinatechart}, the expression of the fiber $\phi_B^{-1}(W)$ in the fibered coordinate chart is given by
\begin{equation}\label{equ:volumefiberinfiberedcoordinatechart}
    A(V,W,\varepsilon) := \left\{V \in C_{([X_i, F_i])_{i=1}^s}(W) \mid \frac{|v_{i}|}{\sqrt{\operatorname{Area}(1, V, W)}} < \varepsilon \text{ for each } 1\le i\le d\right\}
\end{equation}
where $C_{([X_i, F_i])_{i=1}^s}(W)$ is an infinite cone in $\mathbb{C}^d$. Hence, the volume of the fiber is given by $\mu_V\left(A(V,W,\varepsilon)\right)$.

\begin{lemma}\label{lem:independentonWofthevolumeoffibers}
The volume of the fiber over $W$ is independent of the parameter $W \in D_{S,\underline{s}(P)}^{Del}(\underline{k}(P);T)$ in $B$, as well as of the choice of spanning tree coordinate charts on $D_{S,\underline{s}(P)}^{Del}(\underline{k}(P);T)$.    
\end{lemma}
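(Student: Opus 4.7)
The plan is to reduce the fiber integral to a manifestly $W$- and chart-independent canonical form via two changes of variables that exploit the homogeneity of $D_B$ and the scaling invariance of the fiber cone.

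First, I would substitute $V = \sqrt{\operatorname{Area}(1,0,W)}\,U$. By the quadratic homogeneity of $D_B$, the Jacobian arising from $dV\,d\overline{V}$ cancels the $\operatorname{Area}(1,0,W)^{-d}$ factor in $d\mu_V$; the constraint in $A(V,W,\varepsilon)$ becomes $|u^i_j|^2 < \varepsilon^2(1 - D_B(U))$; and cone membership is unchanged since $C_{([X_i,F_i])_{i=1}^s}(W)$ is closed under positive-real scaling. The volume would reduce to
\begin{equation*}
    \mu_V\!\left(\phi_B^{-1}(W)\right) = \det(D_{B,i\bar{j}}) \int_{A'(W)} \left(\tfrac{i}{2}\right)^d dU\, d\overline U,
\end{equation*}
where $A'(W) := \{U \in C_{([X_i,F_i])_{i=1}^s}(W) : |u^i_j|^2 < \varepsilon^2(1 - D_B(U))\}$, so the $W$-dependence now enters only through the cone.

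Next, I would apply Theorem~\ref{thm:fiberedcoordinatechart}(1): for any two choices of $W$ (within the same chart, or across distinct spanning tree coordinate charts on $B \cap D_{S,\underline{s}(P)}^{Del}(\underline{k}(P);T)$), the cones $C_{([X_i,F_i])_{i=1}^s}(W)$ differ only by a diagonal complex scaling $U^i \mapsto \lambda_i U^i$ on each component. Writing $\lambda_i = |\lambda_i|\mu_i$ with $|\mu_i| = 1$, the positive-real factor $|\lambda_i|$ leaves the infinite cone invariant, and the unit-modulus factor $\mu_i$ induces a unitary change of variables on each component, preserving $|u^i_j|$, $dU\,d\overline U$, and the Hermitian form $D_B$. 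Thus the integral would reduce to one over a canonical region in a fixed reference cone $C_0$ depending only on $([X_i,F_i])_{i=1}^s$, and the lemma would follow.

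The hard part will be verifying the preservation of $D_B$ under the unit-phase change of variables: this requires the orthogonal block decomposition $D_B(U) = \sum_{i=1}^s D_{B,i}(U^i)$ with no cross terms between distinct convex-hull components. This is precisely the Euclidean isometry statement in Theorem~\ref{thm:fiberedcoordinatechart}(2), and it reflects geometrically the fact that $\operatorname{Area}(C_{\partial D_i}) - \operatorname{Area}(D_i)$ depends only on the $i$-th convex hull $D_i$, and hence only on $V^i$.
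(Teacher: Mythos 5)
Your proof is correct and follows the same essential route as the paper's own proof---reduce to a diagonal complex-scaling ambiguity on the cone via Theorem~\ref{thm:fiberedcoordinatechart}(1), and absorb it by a Euclidean isometry of $D_B$---but you make explicit the steps the paper leaves implicit. In particular, the paper's proof does not explain how the $W$-dependent factor $\operatorname{Area}(1,0,W)$ in $h_V$ disappears, nor why a complex scaling with $|\lambda_i|\neq 1$ (which is not an isometry of $D_B$) preserves the fiber volume; your normalizing substitution $V=\sqrt{\operatorname{Area}(1,0,W)}\,U$ and the decomposition $\lambda_i=|\lambda_i|\mu_i$, with the modulus absorbed by componentwise positive-real invariance of the cone and the phase acting as a unitary on each block, address both points cleanly. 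One attribution note: the block decomposition $D_B(U)=\sum_{i=1}^s D_{B_i}(U^i)$ that you correctly flag as the crux is most directly a consequence of the definition~\eqref{equ:defB} of $D_B$ as a sum of area differences, each depending only on the $i$-th convex hull (the paper records this explicitly in the proof of Lemma~\ref{lem:nonsingletonproduct}); Theorem~\ref{thm:fiberedcoordinatechart}(2) gives the corresponding product structure of the cone, which you also need for the componentwise positive-real cone invariance.
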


\begin{proof}
By Theorem~\ref{thm:fiberedcoordinatechart}, the infinite cone $C_{([X_i, F_i])_{i=1}^s}(W)$ is unique up to complex scaling on each entry $V^i$ of $V$ for different $W$ and choices of spanning tree coordinate charts. Moreover, since the metric $h_V$ is defined as $h_V = \frac{D_B(dV)}{\operatorname{Area}(1,0,W)}$,
it follows that $h_V$ is invariant under Euclidean isometries, and so is the volume form $d\mu_V$. Consequently, the volume of the fiber remains independent of $W$.
\end{proof}

Since $D(\cdot)$ is a positive definite quadratic form, the metric $D_B(dV)$ induces the Lebesgue measure $\mu_{D_B}$ on $\mathbb{C}^d$. Denote the associated volume form on $\mathbb{C}^d$ by $d\mu_{D_B}$.

Consider the substitution $G:\mathbb{C}^d\to\mathbb{C}^d$ defined by
\begin{equation}\label{equ:substitution}
    G:V \to \frac{V}{\sqrt{\operatorname{Area}(1,0,W)}}.
\end{equation}
It follows that
$$
(G^{-1})^{\ast}\left(\frac{D_B(dV)}{\operatorname{Area}(1, 0, W)}\right) = D_B(dV) \quad \text{and} \quad (G^{-1})^{\ast}(d\mu_V) = d\mu_{D_B}.
$$

\begin{lemma}\label{lem:volumeoffiber}
The volume of the fiber at $W$ satisfies
\begin{equation}\label{equ:asymvolumeoffibers}
    \mu_V\left(\phi_B^{-1}(W)\right) = \varepsilon^{2d} \mu_{D_B}\left(C^{< 1}_{([X^b_i, F^b_i])_{i=1}^s}(W)\right) + O(\varepsilon^{4d})
\end{equation}
as $\varepsilon\to 0$, where $C^{< 1}_{([X_i, F_i])_{i=1}^s}(W)$ is the bounded cone defined by
$$
C^{< 1}_{([X_i, F_i])_{i=1}^s}(W) := \left\{V\in C_{([X_i, F_i])_{i=1}^s}(W) \mid |v_i| < 1  \right\}.
$$
\end{lemma}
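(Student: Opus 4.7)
The plan is to reduce the integral defining $\mu_V(\phi_B^{-1}(W))$ to an integral with respect to the Lebesgue measure $d\mu_{D_B}$ on $\mathbb{C}^d$ (the volume form of the positive definite quadratic form $D_B$), then extract the factor $\varepsilon^{2d}$ by a simple rescaling and control the remainder by a boundary-layer estimate.

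First, I would use the diffeomorphism $G\colon V\mapsto V/\sqrt{\operatorname{Area}(1,0,W)}$ introduced in~\eqref{equ:substitution}. A direct computation shows that $(G^{-1})^{\ast}h_V = D_B(du)$, and hence $(G^{-1})^{\ast}d\mu_V = d\mu_{D_B}$. Since the cone $C_{([X_i,F_i])_{i=1}^{s}}(W)$ is invariant under complex scaling, $G$ maps it into itself. Using the identity $\operatorname{Area}(1,V,W)=\operatorname{Area}(1,0,W)-D_B(V)$ from~\eqref{equ:defdv}, the defining inequality $|v_i|/\sqrt{\operatorname{Area}(1,V,W)}<\varepsilon$ of $A(V,W,\varepsilon)$ becomes, after the substitution $u = G(V)$,
\[
    |u_i| < \varepsilon\sqrt{1-D_B(u)} \qquad (1\le i\le d).
\]

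Second, I would perform the affine rescaling $u=\varepsilon u'$. Since $D_B$ is a Hermitian quadratic form, $d\mu_{D_B}(u) = \varepsilon^{2d}\,d\mu_{D_B}(u')$, and the above constraint becomes $|u'_i|<\sqrt{1-\varepsilon^2 D_B(u')}$. Therefore
\[
    \mu_V\bigl(\phi_B^{-1}(W)\bigr) = \varepsilon^{2d}\int_{\Omega_\varepsilon} d\mu_{D_B}(u'),
\]
where $\Omega_\varepsilon:=\bigl\{u'\in C_{([X_i,F_i])_{i=1}^{s}}(W) \,\big|\, |u'_i|<\sqrt{1-\varepsilon^2 D_B(u')}\bigr\}$.

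Third, I would compare $\Omega_\varepsilon$ with $C^{<1}_{([X_i,F_i])_{i=1}^{s}}(W)$. The inclusion $\Omega_\varepsilon\subseteq C^{<1}_{([X_i,F_i])_{i=1}^{s}}(W)$ is immediate from $\sqrt{1-\varepsilon^2 D_B(u')}\le 1$. Conversely, on the bounded set $C^{<1}_{([X_i,F_i])_{i=1}^{s}}(W)\subseteq\{|u'_i|<1\}$ the quadratic form $D_B$ is bounded by some constant $M$, so $\sqrt{1-\varepsilon^2 D_B(u')}\ge 1-M\varepsilon^2/2 + O(\varepsilon^4)$ for small $\varepsilon$. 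Hence the symmetric difference $C^{<1}_{([X_i,F_i])_{i=1}^{s}}(W)\setminus \Omega_\varepsilon$ is contained in the boundary-layer
\[
    \bigl\{u'\in C_{([X_i,F_i])_{i=1}^{s}}(W) \,\big|\, |u'_i|<1\text{ for all }i,\ \max_i|u'_i|\ge 1-M\varepsilon^2/2\bigr\},
\]
whose $d\mu_{D_B}$-measure is $O(\varepsilon^2)$ by a standard thin-shell estimate (integrating in polar coordinates in each factor and using that $C_{([X_i,F_i])_{i=1}^{s}}(W)$ is semialgebraic, so its boundary has finite $(2d-1)$-dimensional measure after intersection with the unit polydisk). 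Multiplying by the prefactor $\varepsilon^{2d}$ produces the claimed remainder.

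The only delicate point is the boundary-layer estimate, since the cone $C_{([X_i,F_i])_{i=1}^{s}}(W)$ is not smooth. I expect this to follow without difficulty from Theorem~\ref{thm:semialgdelaunayregion} (which provides semialgebraicity of the relevant regions) combined with a standard semialgebraic thin-shell bound; the remaining computations are routine substitutions.
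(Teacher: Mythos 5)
Your argument is sound and follows the same substitution $G\colon V\mapsto V/\sqrt{\operatorname{Area}(1,0,W)}$ as the paper, but it proves a weaker error bound than the one stated in the lemma: you obtain $O(\varepsilon^{2d+2})$, whereas the lemma claims $O(\varepsilon^{4d})$. These agree when $d=1$, but for $d\ge 2$ your remainder is strictly larger, so strictly speaking your proposal does not establish the stated estimate.

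However, the discrepancy is not a defect of your argument --- it is the lemma's exponent that is too strong. In the paper's own proof the set $A_0(V,W,\varepsilon)\setminus A(V,W,\varepsilon)$ is identified with
$\left\{V \mid \varepsilon^2\operatorname{Area}(1,V,W)\le |v_i|^2<\varepsilon^2\operatorname{Area}(1,0,W)\ \text{for each } i\right\}$,
which forces \emph{all} $d$ coordinates into a thin annulus simultaneously and hence yields $\varepsilon^{4d}$. But negating the condition defining $A$ produces an existential, not a universal, quantifier: $V\in A_0\setminus A$ means $|v_i|^2<\varepsilon^2\operatorname{Area}(1,0,W)$ for every $i$ together with $|v_{i_0}|^2\ge\varepsilon^2\operatorname{Area}(1,V,W)$ for \emph{some} $i_0$. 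The correct description is a union over $i_0$ of slabs in which only one coordinate is confined to the annulus, and its $\mu_{D_B}$-measure is $\Theta(\varepsilon^{2d+2})$ in general --- for example, with $d=2$, $D_B=|v_1|^2+|v_2|^2$ and $C=\mathbb{C}^2$, the slab $\{\,|v_1|\ \text{small},\ |v_2|^2\ \text{near the upper threshold}\,\}$ already contributes $\Theta(\varepsilon^{2d+2})$. Your boundary-layer estimate captures precisely this. The overstated exponent is harmless for the paper, since in the proof of Lemma~\ref{lem:thickvolume} the fiber volume is multiplied by a bounded quantity and combined with an independent $O(\varepsilon^{2d+2})$ term, so only $O(\varepsilon^{2d+2})$ is ever needed.

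One minor remark: the thin-shell estimate at the end of your proposal requires no input from semialgebraicity or Theorem~\ref{thm:semialgdelaunayregion}. The shell $\{\,|u'_i|<1\ \text{for all}\ i,\ \max_i|u'_i|\ge 1-M\varepsilon^2/2\,\}$ is a union over $i_0$ of products of an annulus of area $O(\varepsilon^2)$ in the $u'_{i_0}$-plane with full unit disks in the remaining coordinates; intersecting with the cone $C_{([X_i,F_i])_{i=1}^s}(W)$ only decreases the measure, and $\mu_{D_B}$ is a constant multiple of Lebesgue measure. So the bound $O(\varepsilon^2)$ is entirely elementary.
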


\begin{proof}
By~\eqref{equ:volumefiberinfiberedcoordinatechart}, the volume of the fiber at $W$ is given by $\int_{A(V,W,\varepsilon)} d\mu_V$ in the fibered coordinate chart.

Define the subset
$$
A_0(V,W,\varepsilon):=\left\{V \in C_{([X_i, F_i])_{i=1}^s}(W) \mid \frac{|v_{i}|}{\sqrt{\operatorname{Area}(1, 0, W)}}<\varepsilon\text{ for each }i=1,\dots,d\right\}.
$$
We compare the measures of $A(V,W,\varepsilon)$ and $A_0(V,W,\varepsilon)$.

By Lemma~\ref{lem:bv}, we know that $\operatorname{Area}(1,V,W) \leq \operatorname{Area}(1,0,W)$, which implies that $A(V,W,\varepsilon) \subseteq A_0(V,W,\varepsilon)$.
It follows that
\begin{align*}
    0\leq \mu_V\big(A_0(V,W,\varepsilon)\big) - \mu_V\big(A(V,W,\varepsilon)\big) = \mu_V\big(A_0(V,W,\varepsilon)\setminus A(V,W,\varepsilon)\big).
\end{align*}
The subset $A_0(V,W,\varepsilon)\setminus A(V,W,\varepsilon)$ can be rewritten as
\begin{align*}
    & A_0(V,W,\varepsilon)\setminus A(V,W,\varepsilon) \\
    = & \left\{V \in C_{([X_i, F_i])_{i=1}^s}(W) \mid \varepsilon^2\operatorname{Area}(1, V, W) \leq |v_{i}|^2 < \varepsilon^2\operatorname{Area}(1, 0, W) \text{ for each } i=1,\dots,d \right\} \\
    = & \left\{V \in C_{([X_i, F_i])_{i=1}^s}(W) \mid \varepsilon^2\left(1 - \frac{D_B(V)}{\operatorname{Area}(1, 0, W)} \right) \leq \frac{|v_{i}|^2}{\operatorname{Area}(1, 0, W)} < \varepsilon^2 \text{ for each } i=1,\dots,d \right\} \\
    \subseteq & \left\{V \in C_{([X_i, F_i])_{i=1}^s}(W) \mid \varepsilon^2\left(1 - L\varepsilon^2 \right) \leq \frac{|v_{i}|^2}{\operatorname{Area}(1, 0, W)} < \varepsilon^2 \text{ for each } i=1,\dots,d \right\}
\end{align*}
for some constant $L>0$ independent of $(V,W)$. Here, the second equality follows from~\eqref{equ:defdv}, and the last inclusion uses Lemma~\ref{lem:someidentitiesofuniformapproximation}.

Applying the substitution~\eqref{equ:substitution}, we obtain
$$
\int_{A_0(V,W,\varepsilon)\setminus A(V,W,\varepsilon)} d\mu_V \leq \int_{\varepsilon^2\left(1 - L\varepsilon^2 \right) \leq |v_{i}|^2 < \varepsilon^2, 1\leq i\leq d} d\mu_{D_B} = O(\varepsilon^{4d}).
$$
Thus, we have that
$\mu_V(A(V,W,\varepsilon)) = \mu_V(A_0(V,W,\varepsilon)) + O(\varepsilon^{4d}).$

Next, applying the substitution~\eqref{equ:substitution} to $\int_{A_0(V,W,\varepsilon)} d\mu_V$, we obtain
\begin{align*}
    \int_{A_0(V,W,\varepsilon)} d\mu_V &= \mu_{D_B}\left(\left\{V\in C_{([X_i, F_i])_{i=1}^s}(W)\mid |v_i|<\varepsilon, \; i=1,\dots,d\right\}\right) \\
    &= \varepsilon^{2d} \mu_{D_B}\left(C^{< 1}_{([X_i, F_i])_{i=1}^s}(W)\right).
\end{align*}
This concludes the proof.
\end{proof}

\begin{remark}\label{rmk:independenceofvolumeoffibers}
Note that the leading coefficient
$$
\mu_{D_B}\left(C^{< 1}_{([X_i, F_i])_{i=1}^s}(W)\right)
$$
in the formula~\eqref{equ:asymvolumeoffibers} does not depend on $W$ or the choices of spanning tree charts. This follows from Theorem~\ref{thm:fiberedcoordinatechart}, which states that the infinite cone $C_{([X_i, F_i])_{i=1}^s}(W)$ is invariant under complex scaling on $V\in\mathbb{C}^d$, and from the fact that $\mu_{D_B}$ is invariant under Euclidean isometries. We may therefore simplify the notation as
$$
\mu_{D_B}\left(C^{< 1}_{([X_i, F_i])_{i=1}^s}\right).
$$    
\end{remark}

Let $P$ be the $\underline{k}$-admissible partition associated with $B$, and let $p_1,\ldots,p_s$ be the non-singleton elements in $P$. For each $i$, let $[X_i^{b(i)}, F_i^{b(i)}]$ for $1\le b(i)\le q_i$ be the geometric types of spanning trees as defined in Proposition~\ref{prop:goodcoordinate}, see also Lemma~\ref{lem:decompinfinitecase}. By definition, these geometric types are uniquely determined by the moduli space $\mathbb{P}\Omega(\underline{k}(p_i))$ and thus depend only on $p_i$.

By summing over all geometric types $[X_i^{b(i)}, F_i^{b(i)}]$ for $1\leq b \leq q_i$ and $1\leq i\leq d$, we define the following constant, which depends only on the partition $P$:
\begin{equation}\label{equ:svconst}
    C(P) := \sum_{j=1}^{s}\sum_{b(j)=1}^{q_i} \mu_{D_B}\left(C^{< 1}_{([X^{b(i)}_i, F^{b(i)}_i])_{i=1}^s}\right).
\end{equation}

\subsection{Proof of Theorem~\ref{thm:main1}}

We first compute the volume of the subset $U_{d,\varepsilon}(B_\lambda)$.

\begin{lemma}\label{lem:thickvolume}
    As $\varepsilon \to 0$, we have that
    $$
    \mu_{Thu}\big(U_{d,\varepsilon}(B_\lambda)\big) = \varepsilon^{2d}C(P)\mu_{Thu}(B_{\lambda}) + O(\varepsilon^{2d+2}),
    $$ 
    where $C(P)$ is defined as in~\eqref{equ:svconst}.
\end{lemma}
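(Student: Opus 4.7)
The strategy is to apply Fubini's theorem fiber-wise in each fibered coordinate chart supplied by Proposition~\ref{prop:goodcoordinate}, using the product-metric approximation of Corollary~\ref{cor:volumeformapproximation}. Concretely, by Proposition~\ref{prop:goodcoordinate} we may write
$$
\mu_{Thu}\big(U_{d,\varepsilon}(B_\lambda)\big)=\sum_{a=1}^{m}\sum_{(b_1,\ldots,b_s)}\int_{N\left(T_a,([X_i^{b_i},F_i^{b_i}])_{i=1}^s\right)\setminus\mathcal{S}_a}d\mu_{Thu},
$$
since the complement has measure zero. In each chart Corollary~\ref{cor:volumeformapproximation} yields $d\mu_{Thu}=d\mu_V\wedge d\mu_B+f\,dVd\overline{V}dWd\overline{W}$ with $f=O(\varepsilon^2)$ uniformly, so the chart integral splits into a main term and an error term.

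For the main term, I apply Fubini using the projection $\phi_B(V,W)=W$ from Theorem~\ref{thm:fiberedcoordinatechart}, whose fibers are the sets $A(V,W,\varepsilon)$ of~\eqref{equ:volumefiberinfiberedcoordinatechart}. Lemma~\ref{lem:volumeoffiber}, together with the $W$-independence statement in Remark~\ref{rmk:independenceofvolumeoffibers}, gives
$$
\int d\mu_V\wedge d\mu_B=\int_{B_\lambda\cap D^{Del}_{S,\underline{s}(P)}(\underline{k}(P);T_a)}\mu_V\big(\phi_B^{-1}(W)\big)\,d\mu_B=\varepsilon^{2d}\mu_{D_B}\!\left(C^{<1}_{([X_i^{b_i},F_i^{b_i}])_{i=1}^s}\right)\mu_{Thu}(B_\lambda\cap D^{Del}_{S,\underline{s}(P)}(\underline{k}(P);T_a))+O(\varepsilon^{4d}),
$$
where finiteness of $\mu_{Thu}(B_\lambda)$ (which follows from the positive curvature gap hypothesis on $\underline{k}(P)$) controls the remainder. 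Since $d\geq 1$, the $O(\varepsilon^{4d})$ is absorbed into $O(\varepsilon^{2d+2})$. Summing the main terms over $a$ recovers $\mu_{Thu}(B_\lambda)$ (as the Delaunay regions cover $B$ up to a null set by Lemma~\ref{lem:genericdelaunay}), and summing over the geometric types $(b_1,\ldots,b_s)$ produces precisely the constant $C(P)$ defined in~\eqref{equ:svconst}.

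The delicate point, and the main obstacle, is controlling the error term $\int f\,dVd\overline{V}dWd\overline{W}$: the Lebesgue factor $dVd\overline{V}$ is not the fiber measure $d\mu_V$, and on the fiber one only has $|v_i|\leq\varepsilon\sqrt{\operatorname{Area}(1,V,W)}$, which need not be bounded in the given spanning tree chart. I resolve this with Lemma~\ref{lem:compactdelaunay}: decompose $D^{Del}_{S,\underline{s}(P)}(\underline{k}(P);T_a)=A_1\sqcup\cdots\sqcup A_l$ so that each $A_k$ is bounded in some spanning tree coordinate chart, and rebuild the fibered coordinate chart over $A_k$ using that normalization. On the rebuilt chart $\operatorname{Area}(1,0,W)$ is bounded above, so the Lebesgue volume of the $V$-fiber is $O(\varepsilon^{2d})$, the base $A_k$ has bounded Lebesgue measure, and the uniform bound $|f|=O(\varepsilon^2)$ gives an error of $O(\varepsilon^{2d+2})$ per piece. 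Finitely many such pieces and finitely many charts sum to the claimed $O(\varepsilon^{2d+2})$.

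Combining the main and error contributions, and summing over all fibered charts and all tuples of geometric types, yields
$$
\mu_{Thu}\big(U_{d,\varepsilon}(B_\lambda)\big)=\varepsilon^{2d}C(P)\mu_{Thu}(B_\lambda)+O(\varepsilon^{2d+2}),
$$
which is the desired asymptotic.
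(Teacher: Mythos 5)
Your proof is correct and follows essentially the same approach as the paper: decompose via Proposition~\ref{prop:goodcoordinate}, split $d\mu_{Thu}$ using Corollary~\ref{cor:volumeformapproximation}, apply Fubini with Lemma~\ref{lem:volumeoffiber} and Remark~\ref{rmk:independenceofvolumeoffibers} for the main term, and control the error term via Lemma~\ref{lem:compactdelaunay} together with the bound $|v_i|\leq\varepsilon\sqrt{\operatorname{Area}(1,0,W)}$ from Lemma~\ref{lem:basicinequalityforVW}. You correctly identified the delicate point (the Lebesgue factor in the error term being unbounded in an arbitrary chart) and resolved it the same way the paper does.
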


\begin{proof}
    By Proposition~\ref{prop:goodcoordinate}, we have
    $$
    \mu_{Thu}\big(U_{d,\varepsilon}(B_\lambda)\big) = \sum_{a=1}^m\sum_{j=1}^s\sum_{b(j)=1}^{q_j} 
    \mu_{Thu}\big(N\big(T_a, ([X_i^{b(i)}, F_i^{b(i)}])_{i=1}^s\big) \setminus \mathcal{S}_a\big),
    $$
    where the Delaunay regions $\big(D_{S,\underline{s}(P)}^{Del}(\underline{k})P);T_a)\big)_{a=1}^m$ in $B$ and the sequences of geometric types $([X_i^{b(i)},F_i^{b(i)}])_{i=1}^s$ are as defined in Proposition~\ref{prop:goodcoordinate}.
    
    By Corollary~\ref{cor:volumeformapproximation}, we obtain
    \begin{align*}
         &\mu_{Thu}\big(N\big(T_a, ([X_i^{b(i)}, F_i^{b(i)}])_{i=1}^s\big) \setminus \mathcal{S}_a\big)\\ 
         =& \int_{N\big(T_a, ([X_i^{b(i)}, F_i^{b(i)}])_{i=1}^s\big) \setminus \mathcal{S}_a}d\mu_{V}d\mu_B
        +
        \int_{N\big(T_a, ([X_i^{b(i)}, F_i^{b(i)}])_{i=1}^s\big) \setminus \mathcal{S}_a} f \, dV \, d\overline{V} \, dW \, d\overline{W},
    \end{align*}
    where $f$ is as in Corollary~\ref{cor:volumeformapproximation}.
    
    The subset $N\big(T_a, ([X_i^{b(i)}, F_i^{b(i)}])_{i=1}^s\big) \setminus \mathcal{S}_a$ is fibered over the Delaunay region $D_{S,\underline{s}(P)}^{Del}(\underline{k}(P);T_a)$ via the projection $\phi_B$. By Lemma~\ref{lem:volumeoffiber} and Remark~\ref{rmk:independenceofvolumeoffibers}, the volume of the fiber is independent of the point $W$ in $D_{S,\underline{s}(P)}^{Del}(\underline{k}(P);T_a)$. Thus, by Fubini's theorem,
    \begin{align*}
        \int_{N\big(T_a, ([X_i^{b(i)}, F_i^{b(i)}])_{i=1}^s\big) \setminus \mathcal{S}_a}d\mu_{V}d\mu_B = \mu_V\left(\phi_B^{-1}(W)\right) \mu_{B}\big(D_{S,\underline{s}(P)}^{Del}(\underline{k}(P);T_a)\big).
    \end{align*}
    By Lemma~\ref{equ:compactdelaunay}, we may assume that $D_{S,\underline{s}(P)}^{Del}(\underline{k}(P);T_a)$ is a bounded subset of $\mathbb{C}^{n-3}$ in the spanning tree coordinate chart. Furthermore, by Lemma~\ref{lem:basicinequalityforVW}, the parameter $V$ satisfies
    $$
    \frac{|v_i|}{\sqrt{\operatorname{Area}(1,0,W)}} \leq \varepsilon.
    $$
    Since $W$ is bounded, $V$ is also bounded. Applying Fubini’s theorem, we obtain
    \begin{align*}
        &\left|\int_{N\big(T_a, ([X_i^{b(i)}, F_i^{b(i)}])_{i=1}^s\big) \setminus \mathcal{S}_a} f \, dV \, d\overline{V} \, dW \, d\overline{W} \right|\\ 
        &\le L\varepsilon^2\int_{D_{S,\underline{s}(P)}^{Del}(\underline{k}(P);T_a)}\mu_{Leg}
        \left(\Big\{V\in \mathbb{C}^d\mid\frac{|v_i|}{\sqrt{\operatorname{Area}(1,0,W)}} \leq \varepsilon\Big\}\right)
        dW d\overline{W}\\
        &= L\varepsilon^{2+2d}\int_{D_{S,\underline{s}(P)}^{Del}(\underline{k}(P);T_a)}\mu_{Leg}
        \left(\Big\{V\in \mathbb{C}^d\mid\frac{|v_i|}{\sqrt{\operatorname{Area}(1,0,W)}} \leq 1\Big\}\right)
        dW d\overline{W}\\
        &= O(\varepsilon^{2d + 2}),
    \end{align*}
    where $L$ is a positive constant arising from the estimate
    $f = O(\varepsilon^2)$.

    Summing over all Delaunay regions in $B$ and all geometric types, we obtain
    $$
    \mu_{Thu}\big(U_{d,\varepsilon}(B_\lambda)\big) =  \varepsilon^{2d}C(P)\mu_{Thu}(B_{\lambda}) + O(\varepsilon^{2d+2}).
    $$
\end{proof}

We now prove Theorem~\ref{thm:main1}.

\begin{proof}[Proof of Theorem~\ref{thm:main1}]
    The proof is by induction on the number $n$ of singularities in the moduli space $\mathbb{P}\Omega(\underline{k})$.

    Assume that the asymptotic formula~\eqref{equ:asymvolume} holds for all moduli spaces with fewer than $n$ singularities.

    Let $\mathbb{P}\Omega(\underline{k})$ be a moduli space with $n$ singularities. We further prove the formula~\eqref{equ:asymvolume} by reverse induction on the parameter $d$ for the subsets $U_{d,\varepsilon}$.

    Assume that the formula~\eqref{equ:asymvolume} holds for $U_{m,\varepsilon}$ whenever $m > d$. By Lemma~\ref{lem:epsilondecomp}, we have
    \begin{equation}\label{equ:1}
        \mu_{Thu}(U_{d,\varepsilon}) = \sum\limits_{\mathrm{codim}(B) = d}\mu_{Thu}\big(U_{d,\varepsilon}(B_\lambda)\big) + \mu_{Thu}\Big( U_{d,\varepsilon}\setminus\underset{\mathrm{codim}(B)=d}{\bigsqcup}U_{d,\varepsilon}(B_\lambda)\Big),
    \end{equation}
    where the sum is taken over all boundary strata of codimension $d$.

    By Lemma~\ref{lem:epsilondecomp}, we have
    $$U_{d,\varepsilon}\setminus \underset{\mathrm{codim}(B)=d}{\bigsqcup} U_{d,\varepsilon}(B_\lambda)\subseteq U_{d + 1,\frac{6n}{\delta^2} d\varepsilon}.$$
    By the induction hypothesis on $d$, it follows that as $\varepsilon\to 0$, 
    \begin{equation}\label{equ:2}
            \mu_{Thu}\Big( U_{d,\varepsilon}\setminus\underset{\mathrm{codim}(B)=d}{\bigsqcup}U_{d,\varepsilon}(B_\lambda)\Big) = O(\varepsilon^{2d+2}).
    \end{equation}

    By Lemma~\ref{lem:thickvolume}, we obtain
    \begin{equation}\label{equ:3}
        \sum\limits_{\mathrm{codim}(B) = d}\mu_{Thu}\big(U_{d,\varepsilon}(B_\lambda)\big)  = \varepsilon^{2d}\sum\limits_{\mathrm{codim}(B) = d}C(P)\mu_{S}(B_{\lambda}) + O(\varepsilon^{2d + 2}),
    \end{equation}
    where $P$ is the $\underline{k}$-admissible partition associated with $B$.

    Furthermore, by Lemma~\ref{lem:complementofthick}, we know that $B\setminus B_\lambda$ is contained in $U_{1,\lambda/\delta}$, which is defined in the moduli space corresponding to the boundary stratum $B$. The constraint~\eqref{equ:restriction} gives
    $$
    \lambda = \left(1 + \frac{n}{\delta}\right)\varepsilon.
    $$
    Since the number of singularities in $B$ is less than $n$, the induction hypotheses on $n$ and $d$ imply that
    \begin{equation}\label{equ:4}
        \mu_{Thu}(B) = \mu_{Thu}(B_\lambda) + O(\varepsilon^2).
    \end{equation}

    Substituting~\eqref{equ:2}, \eqref{equ:3}, and~\eqref{equ:4} into~\eqref{equ:1}, we complete the proof of~\eqref{equ:asymvolume} for $U_{d,\varepsilon}$ in $\mathbb{P}\Omega(\underline{k})$.

    It remains to prove the base cases for the inductions on $n$ and $d$. 

    For the base case of the induction on $d$, that is, $d = n-3$, Lemma~\ref{lem:thickvolume} gives
    $$
    \mu_{Thu}(U_{n-3,\varepsilon}) =\sum\limits_{\dim B= 0}\mu_{Thu}(U_{n-3,\varepsilon}(B)) = \sum\limits_{\dim B= 0} \varepsilon^{2(n-3)}C(P) + O(\varepsilon^{2n-4})
    $$
    as $\varepsilon \to 0$. This completes the induction on $d$.   
    
    For the base case of the induction on $n$, namely $n = 3$, the moduli space consists of a single element $X_0$. In this case, the measure of $U_{d,\varepsilon}$ tends to zero as $\varepsilon \to 0$, and the formula holds trivially.

    Therefore, the proof of the theorem is complete.
\end{proof}

\subsection{Interpretation of Leading Coefficients}\label{sec:computationconstant}

\begin{lemma}\label{lem:nonsingletonproduct}
Let $\underline{k}$ be a curvature vector with positive curvature gap. Let $B(P)$ be a boundary stratum of $\overline{\mathbb{P}\Omega}(\underline{k})$. For each non-singleton element $p_i\in P$, let $P_i$ be the $\underline{k}$-admissible partition of $\{1,\ldots,n\}$ in which $p_i$ is the only non-singleton element. Then, we have
$$
C(P) = \prod_{i=1}^{s} C(P_i).
$$
\end{lemma}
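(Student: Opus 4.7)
The plan is to show that both the domain of integration and the measure $\mu_{D_B}$ factor compatibly with respect to the product decomposition provided by Theorem~\ref{thm:fiberedcoordinatechart}~(2), and then to use distributivity of product over sum to recognize $C(P)$ as $\prod_i C(P_i)$.

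First, I would fix a sequence of geometric types $\big([X_i^{b(i)}, F_i^{b(i)}]\big)_{i=1}^s$ and analyze the bounded cone $C^{<1}_{([X_i^{b(i)}, F_i^{b(i)}])_{i=1}^s}$. By Theorem~\ref{thm:fiberedcoordinatechart}~(2), the natural map $V \mapsto (V^1,\ldots,V^s)$ is a Euclidean isometry (up to independent complex scalings on each $V^i$, which do not affect volumes in the ratios defining $C(P)$) from the infinite cone $C_{([X_i^{b(i)}, F_i^{b(i)}])_{i=1}^s}$ onto the product $\prod_{i=1}^s C_{[X_i^{b(i)}, F_i^{b(i)}]}$. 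Since the defining constraint $|v_j|<1$ is coordinate-wise, this isometry restricts to a bijection
$$
C^{<1}_{([X_i^{b(i)}, F_i^{b(i)}])_{i=1}^s} \;\longleftrightarrow\; \prod_{i=1}^s C^{<1}_{[X_i^{b(i)}, F_i^{b(i)}]},
$$
so the domains of integration factor as products.

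Next I would show that the quadratic form $D_B$ itself splits as an orthogonal direct sum $D_B(V)=\sum_{i=1}^s D_{B_i}(V^i)$ on $\mathbb{C}^d=\mathbb{C}^{d_1}\oplus\cdots\oplus\mathbb{C}^{d_s}$. This is the crux of the argument and follows from the additivity in the definition~\eqref{equ:defB} of $D$: the quantity $\mathrm{Area}(C_{\partial D_i})-\mathrm{Area}(D_i)$ depends only on the geometry of the $i$-th convex hull, hence only on $V^i$. Once this is established, the associated Lebesgue measure $\mu_{D_B}$ factors as the product $\mu_{D_{B_1}}\otimes\cdots\otimes\mu_{D_{B_s}}$, and Fubini gives
$$
\mu_{D_B}\!\left(C^{<1}_{([X_i^{b(i)},F_i^{b(i)}])_{i=1}^s}\right) \;=\; \prod_{i=1}^s \mu_{D_{B_i}}\!\left(C^{<1}_{[X_i^{b(i)},F_i^{b(i)}]}\right).
$$

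Finally, reading~\eqref{equ:svconst} as a sum over all tuples $(b(1),\ldots,b(s))$ with $1\le b(i)\le q_i$ (the $q_i$ being precisely the number of geometric types of shortest spanning trees in $\mathbb{P}\Omega(\underline{k}(p_i))$, which are intrinsic to $p_i$ and therefore shared between $P$ and $P_i$), distributivity yields
$$
C(P) \;=\; \sum_{b(1),\ldots,b(s)}\prod_{i=1}^s \mu_{D_{B_i}}\!\left(C^{<1}_{[X_i^{b(i)},F_i^{b(i)}]}\right) \;=\; \prod_{i=1}^s \sum_{b(i)=1}^{q_i}\mu_{D_{B_i}}\!\left(C^{<1}_{[X_i^{b(i)},F_i^{b(i)}]}\right) \;=\; \prod_{i=1}^s C(P_i).
$$

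The main obstacle I foresee is verifying the orthogonal splitting of $D_B$. Although conceptually transparent from~\eqref{equ:defB}, making this rigorous requires unwinding the construction of the unprojectivized fibered parameters in Section~\ref{sec:fiberedcoordinatechart}: one must check that the parameter $V^i$ genuinely encodes the geometry of the $i$-th infinitesimal sphere independently of the other $V^j$, so that the function $V\mapsto\mathrm{Area}(C_{\partial D_i})-\mathrm{Area}(D_i)$ pulls back to a quadratic form in $V^i$ alone. Once this orthogonality is in hand, the product structure on measures and the distributivity step are immediate.
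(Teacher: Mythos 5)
Your proposal is correct and follows essentially the same route as the paper: factoring the cone via Theorem~\ref{thm:fiberedcoordinatechart}~(2), using the additive splitting $D_B(V)=\sum_i D_{B_i}(V^i)$ from~\eqref{equ:defB} to factor the measure, and then applying distributivity to the sum over tuples $(b(1),\ldots,b(s))$. The "obstacle" you flag at the end is in fact handled in the paper exactly as you anticipate, by invoking~\eqref{equ:defB} directly, so there is no gap.
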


\begin{proof}
    For each $i$, $\mathbb{P}\Omega(\underline{k}(p_i))$ is the moduli space of convex infinite flat spheres with conical curvatures given by $p_i$. Let
    $$
    [X_i^{b(i)}, F_i^{b(i)}],\quad 1\le b(i)\le q_i
    $$
    be the geometric types of spanning trees in the spheres in $\mathbb{P}\Omega(\underline{k}(p_i))$, as in Lemma~\ref{lem:decompinfinitecase}. Denote by $B_i$ the boundary stratum corresponding to $P_i$.

    By definition~\eqref{equ:svconst}, we have
    \begin{equation}\label{equ:cp1}
        C(P) = \sum_{j=1}^{s}\sum_{b(j)=1}^{q_i} \mu_{D_B}\left(C^{< 1}_{([X^{b(i)}_i, F^{b(i)}_i])_{i=1}^s}\right),
    \end{equation}
    and for each $1\le i\le s$ and each $1\le b(i)\le q_i$,
    $$
    C(P_i) = \sum_{b(j)=1}^{q_i} \mu_{D_{B_i}}\left(C^{< 1}_{[X^{b(i)}_i, F^{b(i)}_i]}\right).
    $$

    Theorem~\ref{thm:fiberedcoordinatechart} implies that
    $$
    C_{([X^{b(i)}_i, F^{b(i)}_i])_{i=1}^s}\simeq C_{[X^{b(1)}_1, F^{b(1)}_1]}\times\ldots\times C_{[X^{b(s)}_s, F^{b(s)}_s]}.
    $$
    Furthermore, for $V=(V^1,\ldots,V^s)\in C_{([X^{b(i)}_i, F^{b(i)}_i])_{i=1}^s}$, by definition~\eqref{equ:defB}, we have
    $$
    D_B(V) = D_{B_1}(V^1) + \ldots + D_{B_s}(V^s).
    $$
    It follows that
    \begin{align*}
        \mu_{D_B}\left(C^{< 1}_{([X^{b(i)}_i, F^{b(i)}_i])_{i=1}^s}\right) &= \int_{C^{< 1}_{([X^{b(i)}_i, F^{b(i)}_i])_{i=1}^s}} d\mu_B\\
        &= \prod_{i=1}^s \int_{C^{< 1}_{[X^{b(i)}_i, F^{b(i)}_i]}} d\mu_{B_i}.
    \end{align*}
    Substituting this into~\eqref{equ:cp1}, we obtain
    $$
    C(P) = \prod_{i=1}^{s} C(P_i).
    $$
\end{proof}

Let $P$ be an $\underline{k}$-admissible partition of $\{1,\ldots,n\}$ with only one non-singleton element $p$, and let $B$ be the corresponding boundary stratum.

Consider the geometric types $[X^b,F^b]$ for $1\le b \le q$ in the moduli space $\mathbb{P}\Omega(\underline{k}(p))$, as given in Lemma~\ref{lem:decompinfinitecase}. Recall that Lemma~\ref{lem:decompinfinitecase} states that the complement of the union
$$
\bigsqcup_{b=1}^q D\Big(\underline{k}(p);[X^b, F^b]\Big)
$$
in the moduli space $\mathbb{P}\Omega(\underline{k}(p))$ consists of spheres that are either not Delaunay-generic or have multiple shortest spanning trees. Here, the subset $D\Big(\underline{k}(p);[X^b, F^b]\Big)$ is defined in Definition~\ref{def:domainofgeometrictype}.

Remark~\ref{rmk:specificprojectiontoinfinitesimalspheres} states that there exists a projection
\begin{equation}\label{equ:localprojection}
    \operatorname{proj}: C_{[X^b,F^b]}\to D\Big(\underline{k}(p);[X^b, F^b]\Big)\subseteq \mathbb{P}\Omega(\underline{k}(p)),
\end{equation}
where a vector $V\in C_{[X^b,F^b]}$ represents an unprojectivized spanning tree parameter of a sphere $X = \operatorname{proj}(V)\in D\Big(\underline{k}(p);[X^b, F^b]\Big)$. Then, for any function $f$ on $\mathbb{P}\Omega(\underline{k}(p))$, we can consider the composition 
$$
f\circ \operatorname{proj}: C_{[X^b,F^b]}\to \mathbb{R}.
$$

\begin{definition}
    For the moduli space $\mathbb{P}\Omega(\underline{k}(p))$, we define the following linear functional 
    \begin{equation}
        f \mapsto \sum_{b=1}^q\int_{V\in C_{[X^b,F^b]}^{<1}} f\big(proj(V)\big) \, d\mu_{D_B}    
    \end{equation}
    on the space of compactly supported continuous functions $C_c\big(\mathbb{P}\Omega(\underline{k}(p))\big)$.    We define the Radon measure  $\mu_P$ on $\mathbb{P}\Omega(\underline{k}(p))$ to be the measure associated with this linear functional.
\end{definition}

Equivalently, the measure $\mu_P$ restricted to each subset $D\big(\underline{k}(p);[X^b, F^b]\big)\subseteq \mathbb{P}\Omega(\underline{k}(p))$ is given by the pushforward measure
$$
\mu_P = (proj^{<1})_{\ast}(\mu_{D_B}),
$$
where $proj^{<1}$ is the restriction of~\eqref{equ:localprojection}:
\begin{equation}
    proj^{<1}: C_{[X^b,F^b]}^{<1} \to D\Big(\underline{k}(p);[X^b, F^b]\Big).
\end{equation}
The measure of the complement (i.e., the subset consisting of spheres that are either not Delaunay-generic or have multiple shortest spanning trees) is zero under $\mu_P$.

\begin{remark}
    Note that the projection~\eqref{equ:localprojection} is the restriction of the canonical projection $\mathbb{C}^{n-2} \to \mathbb{C}P^{n-3}$ to $C_{[X^b,F^b]}$. However, the pushforward measure $(\operatorname{proj})_{\ast}(\mu_{D_B})$ on $D\Big(\underline{k}(p);[X^b, F^b]\Big)\subseteq \mathbb{P}\Omega(\underline{k}(p))$ is not the measure induced by the Fubini–Study metric. 

    This discrepancy arises because the norm on $V$ in $C_{[X^b,F^b]}^{<1}$ is the maximum norm, $\max_{1\leq i\leq n-3}|v_i|$, rather than the Euclidean norm defined by $D_B$.
\end{remark}

\begin{remark}
    By the definition of $\mu_P$, we have
    $$
    C(P) = \mu_P\big(\mathbb{P}\Omega(\underline{k}(p))\big).
    $$    
\end{remark}

The constant $C(P)$ can be computed explicitly in the following case.

\begin{lemma}\label{lem:cod1}
    Let $\underline{k}$ be a curvature vector with positive curvature gap. Let $P$ be an $\underline{k}$-admissible partition of $\{1,\ldots,n\}$ that contains only one non-singleton subset, $p = \{k_{i_1}, k_{i_2}\}$. Then, we have
    $$
    C(P) = \pi(1 - k_{i_1} - k_{i_2}) \frac{\sin{(k_{i_1}\pi)}\sin{(k_{i_2}\pi)}}{\sin{((k_{i_1}+k_{i_2})\pi)}}.
    $$
\end{lemma}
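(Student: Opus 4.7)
The plan is to exploit the zero-dimensionality of $\mathbb{P}\Omega(\underline{k}(p))$ and compute $C(P)=\sum_b\mu_{D_B}\!\left(C^{<1}_{[X^b,F^b]}\right)$ from~\eqref{equ:svconst} directly. Since $p=\{i_1,i_2\}$ has two elements, $\underline{k}(p)=(k_{i_1},k_{i_2},2-k_{i_1}-k_{i_2})$ has three components, and Troyanov's uniqueness theorem (cf.\ Section~\ref{sec:basicdef}) produces a unique convex infinite flat sphere $X_p \in \mathbb{P}\Omega(\underline{k}(p))$ up to homothety. Its core is the unique saddle connection $\gamma_0$ from $x_{i_1}$ to $x_{i_2}$ (a degenerate convex hull), so $\gamma_0$ is its own spanning tree of that core and there is a single geometric type $[X_p,\gamma_0]$. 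Hence $C(P)=\mu_{D_B}\!\left(C^{<1}_{[X_p,\gamma_0]}\right)$.

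Next I would compute the quadratic form $D_B$ explicitly. Since $\operatorname{Area}(\gamma_0)=0$, definition~\eqref{equ:defB} gives $D_B(v)=\operatorname{Area}(C_{\gamma_0})$, where $C_{\gamma_0}$ is the replacement cone of Section~\ref{sec:thurstonsurgeries}, built from two congruent triangles with angles $\pi(1-k_{i_1}-k_{i_2})$, $\pi k_{i_1}$, $\pi k_{i_2}$ and with side $|v|$ opposite the first angle. The law-of-sines area formula, together with $\sin(\pi(1-x))=\sin(\pi x)$, yields
$$D_B(v)=|v|^2\cdot\alpha,\qquad \alpha:=\frac{\sin(\pi k_{i_1})\sin(\pi k_{i_2})}{\sin(\pi(k_{i_1}+k_{i_2}))}.$$
Consequently $\mu_{D_B}$ is $\alpha$ times the Lebesgue measure on $\mathbb{C}$.

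The main technical step is to identify $C_{[X_p,\gamma_0]}$ as an infinite sector of angular measure $2\pi(1-k_{i_1}-k_{i_2})$. Tracing the construction of the fibered chart in Section~\ref{sec:fiberedcoordinatechart}, the parameter $v$ is obtained by developing $\partial D=\gamma_0\cup\gamma_0^{-1}$ starting from the reference point $x(D)\in Sec(x_p)$, whose direction from $x_p$ is controlled by the reference segment $\gamma_i$ appearing in the inverse Thurston surgery~\eqref{equ:embeddingofboundedcone}. Since $\gamma_i$ may point in any direction within $Sec(x_p)$ (whose angular measure is the cone angle $2\pi(1-k_{i_1}-k_{i_2})$ at $x_p$), and since rotating $\gamma_i$ rotates the isometric embedding $a_i\cdot C_{\gamma_0}\hookrightarrow X_0$ rigidly, the vector $v$ sweeps out exactly an infinite sector in $\mathbb{C}$ of the same angular measure.

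Putting these pieces together, $C^{<1}_{[X_p,\gamma_0]}$ is the intersection of this sector with the open unit disk, which has Euclidean area $\pi(1-k_{i_1}-k_{i_2})$; multiplying by $\alpha$ gives the stated formula. The hard part will be verifying the sector identification in step three: one must unwind the developing of the degenerate convex hull (traversed twice) inside $Sec(x_p)$, remove a cut locus of measure zero to avoid the boundary of the sector, and confirm that rotations of $\gamma_i$ correspond bijectively to rotations of $v$ through the full angle $2\pi(1-k_{i_1}-k_{i_2})$ without overcounting from the double traversal.
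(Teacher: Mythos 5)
Your overall strategy mirrors the paper's: show there is a single geometric type, compute the sector cone $C_{[X_p,\gamma_0]}$, compute the quadratic form $D_B$, and multiply. The sector identification and the final $\pi(1-k_{i_1}-k_{i_2})\cdot\alpha$ computation are right.

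However, there is a genuine gap in the second step. You assert that the core of $X_p$ is always the degenerate convex hull $\gamma_0$, and then use $\operatorname{Area}(\gamma_0)=0$ to read off $D_B(v)=\operatorname{Area}(C_{\gamma_0})$ directly from~\eqref{equ:defB}. This only holds when $k_{i_1}\le k_{i_2}\le \tfrac12$. If $k_{i_2}>\tfrac12$, the core $D$ of $X_p$ is \emph{not} degenerate: it is the positive-area region enclosed by the simple saddle connection $\gamma'$ with both endpoints at $x_{i_1}$ (cf.\ Lemma~\ref{lem:closedtosimple} and Figure~\ref{fig:codim1}). In that case $D_B(v)=\operatorname{Area}(C_D)-\operatorname{Area}(D)$ with $\operatorname{Area}(D)>0$, and $C_D$ is the replacement cone from Lemma~\ref{lem:uniquereplacementcone}, not the doubled triangle $C_{\gamma_0}$ from the simple Thurston surgery along $\gamma_0$. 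You still need to check that this difference equals $\alpha|v|^2$; the paper does this as its Case~2, developing $D$ as the isosceles triangle $y_0x_{i_2}y_1$ and verifying that the area of the replacement cone minus the area of $D$ reproduces the same quadratic form. (It does, but it is not an immediate consequence of $\operatorname{Area}(\gamma_0)=0$.) The geometric type is indeed $[X_p,\gamma_0]$ in both cases — $\gamma_0$ is still the unique shortest spanning tree of the core — but that conclusion, and the resulting formula for $D_B$, require the case split you omitted.
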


\begin{proof}
    The moduli space $\mathbb{P}\Omega(k_{i_1}, k_{i_2}, 2 - k_{i_1} -k_{i_2})$ consists of a single sphere $X$. Let $x_i$ be the conical singularities on $X$ with curvatures $k_i$, respectively. Assume that $k_{i_1}\le k_{i_2}$. 
    
    Let $\gamma$ be the saddle connection joining $x_{i_1}$ and $x_{i_2}$ in $X$. We first determine the possible geometric types of spanning trees in $X$. There are two cases:
    \begin{enumerate}
        \item If $k_{i_1}\le k_{i_2}\le \frac{1}{2}$, the core $D$ of $X$ degenerates and is precisely the image of the saddle connection $\gamma$. The shortest spanning tree is also $\gamma$, and there is only one geometric type, $[X,\gamma]$.
        \item If $k_{i_1}< \frac{1}{2} < k_{i_2}$, there exists a simple saddle connection $\gamma'$ with the same endpoint $x_{i_1}$. The core $D$ is the finite-area domain enclosed by $\gamma'$. Again, the saddle connection $\gamma$ is the shortest spanning tree, so there is also only one geometric type, $[X,\gamma]$.
    \end{enumerate}

    Next, we compute the infinite cone $C_{[X,\gamma]}$. Since $\mathbb{P}\Omega( k_{i_1}, k_{i_2}, 2 - k_{i_1} -k_{i_2})$ consists of a single sphere $X$, the vector $V\in C_{[X,\gamma]}\subseteq \mathbb{C}$ parameterizes the saddle connection $\gamma$ in $X$. To determine the cone, we need to find the range of the argument of $V$.
    
    Let $B$ be the boundary stratum corresponding to $P$. Let $x'$ be the singularity in the spheres $X_0\in B$ that is associated with the non-singleton element $p\in P$. The curvature of $x'$ is $k_{i_1}+k_{i_2}$. It follows that the sector $\operatorname{Sec}(x')$ used in the construction of the fibered coordinate chart has an angle of $2\pi(1-k_{i_1}-k_{i_2})$. Consequently, the argument $\arg V$ is in the interval $\big(\theta, \theta + 2\pi(1-k_{i_1}-k_{i_2})\big)$ for some $\theta\in\mathbb{R}$. Since the cone $C_{[X,\gamma]}$ is defined up to complex scaling, we can write
    $$
    C_{[X,\gamma]} = \Big\{ V\in\mathbb{C} \mid \arg V \in \big(0, 2\pi(1-k_{i_1}-k_{i_2})\big) \Big\}.
    $$

    We now compute the volume form $d\mu_{D_B}$. Recall that 
    $$
    D_B(X) = \operatorname{Area}(C_{D}) - \operatorname{Area}(D).
    $$
    Since $V$ parameterizes the saddle connection $\gamma$, in both cases, we obtain
    $$
    D_B(V) = \frac{\sin{(k_{i_1}\pi)}\sin{(k_{i_2}\pi)}}{\sin{(k_{i_1}+k_{i_2})\pi}}|V|^2.
    $$
    It follows that
    $$
    d\mu_{D_B} = \frac{\sin{(k_{i_1}\pi)}\sin{(k_{i_2}\pi)}}{\sin{(k_{i_1}+k_{i_2})\pi}}\frac{i}{2} dV d\overline{V}.
    $$

    \begin{figure}[!htbp]
	\centering
	\includegraphics[width=0.9\linewidth]{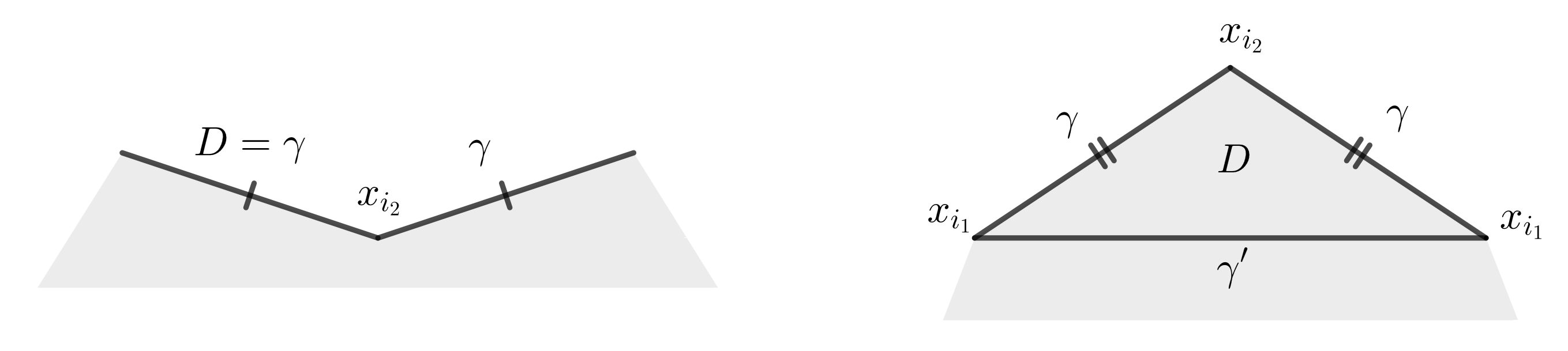}
    \caption{The developing of the convex hull $D$ in the case $k_{i_1}\le k_{i_2}\le \frac{1}{2}$ and the case $k_{i_1}< \frac{1}{2} < k_{i_2}$.}\label{fig:codim1}
    \end{figure}

    Finally, we compute the constant $C(P)$. We have
    $$
    C(P) = \int_{C_{[X,\gamma]}^{<1}}d\mu_{D_B} = \pi(1-k_{i_1}-k_{i_2})\frac{\sin{(k_{i_1}\pi)}\sin{(k_{i_2}\pi)}}{\sin{(k_{i_1}+k_{i_2})\pi}}.
    $$
\end{proof}

\section{Proof of Theorem~\ref{thm:main3}}\label{bigsec:proof}

\subsection{Length of Regular Closed Geodesics}
We recall the following lemma:

\begin{lemma}[{\cite[Proposition~6.3]{fu2024uniformlengthestimatestrajectories}}]\label{lem:lowerboundclosed}
    Let $X$ be an area-one convex flat cone sphere with $n$ singularities and a positive curvature gap $\delta$. Then, for any regular closed geodesic $\gamma$ on $X$, the length of $\gamma$ is bounded below by $\sqrt{\pi\delta}$.
\end{lemma}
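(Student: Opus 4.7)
The plan is to construct the maximal immersed flat cylinder around $\gamma$ and use the curvature gap at its boundary to force a quadratic-in-$|\gamma|$ lower bound on an embedded region of $X$.

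Since $\gamma$ is a regular closed geodesic, parallel transport around $\gamma$ must preserve its unit tangent, so the holonomy of $\gamma$ in $E(2) = \mathbb{R}^2 \rtimes O(2)$ has trivial rotational part, i.e., it is a pure translation by $\vec{\gamma}$. A tubular neighborhood of $\gamma$ therefore develops isometrically to a Euclidean strip of width $L := |\gamma|$, and extending this development maximally on both sides yields an isometric immersion
$$
\iota : \mathcal{C} = S^1 \times [-h_-, h_+] \longrightarrow X,
$$
with $\gamma = \iota(S^1 \times \{0\})$. By maximality, each of the two boundary circles of $\mathcal{C}$ must pass through at least one conical singularity of $X$.

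Develop $\mathcal{C}$ as a rectangle $R = [0, L] \times [-h_-, h_+]$ with vertical sides identified, and record in subsets $I_\pm \subseteq \{1, \ldots, n\}$ the labels of the conical singularities lying on the top and bottom boundary edges of $R$. A tangent-winding computation along each boundary circle of $\mathcal{C}$ forces the curvature sums $\sum_{i \in I_\pm} k_i$ to be non-negative integers. By the definition of the curvature gap~\eqref{equ:curvaturegap}, each such sum therefore lies at distance at least $\delta$ from $1$, producing a quantitative angular defect of at least $2\pi\delta$ distributed along each boundary component of $R$.

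The key geometric step is to convert this angular defect into a Euclidean area lower bound. By identifying a suitable embedded Euclidean disk inside $\iota(\mathcal{C})$---whose radius is controlled by $L$ and whose area admits a lower bound in terms of the angular defect $2\pi\delta$ via elementary geometry of flat regions with cone-point boundary pieces---one obtains a region of $X$ of area at least $\pi\delta$ contained in the $L$-neighborhood of $\gamma$. Comparing this area to the area $\pi L^2$ of a Euclidean disk of radius $L$ in the plane then yields $\pi L^2 \geq \pi\delta$, hence $L \geq \sqrt{\pi\delta}$.

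The main obstacle lies in this last step: $\gamma$ is typically non-simple under a positive curvature gap, and the immersion $\iota$ may then have self-overlaps, so one cannot directly read off embedded subregions of $X$ from the developed rectangle $R$. The natural remedy is to pass to the cyclic cover of $X \setminus \{x_1, \ldots, x_n\}$ corresponding to the cyclic subgroup of holonomy generated by $\gamma$ (on which $\mathcal{C}$ embeds), carry out the embedded-disk area estimate there, and descend back to $X$ using that the area of the cover controls the area on $X$ via the covering degree. This approach parallels the unfolding and embedded-disk techniques underlying the uniform length estimates of~\cite{fu2023boundssaddleconnectionsflat, fu2024uniformlengthestimatestrajectories}.
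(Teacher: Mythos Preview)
This lemma is not proved in the present paper; it is quoted from~\cite[Proposition~6.3]{fu2024uniformlengthestimatestrajectories} and used as a black box. So there is no ``paper's proof'' to compare against, and I can only assess your argument on its own merits.

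Your overall shape (maximal immersed cylinder, cone points on the boundary, an area comparison) is the natural one, but several of the key steps are asserted rather than proved, and at least two of them look incorrect as stated.

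First, the claim that a ``tangent-winding computation'' forces $\sum_{i\in I_\pm}k_i\in\mathbb{Z}$ is not justified. What trivial rotational holonomy of $\gamma$ gives you is that $\sum_i n_i k_i\in\mathbb{Z}$, where $n_i$ is the winding number of $\gamma$ around $x_i$; for a non-simple $\gamma$ these $n_i$ are typically not in $\{0,1\}$, and the boundary of the immersed cylinder can meet the same cone point several times. The curvature gap in~\eqref{equ:curvaturegap} is an infimum over \emph{subsets} (coefficients in $\{0,1\}$), so even if you had an integrality statement with multiplicities, you could not invoke $\delta(\underline{k})$ directly. You need a separate argument to produce a genuine $0$--$1$ curvature sum to which the gap applies.

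Second, the heart of the proof --- ``identifying a suitable embedded Euclidean disk inside $\iota(\mathcal{C})$ whose area admits a lower bound in terms of the angular defect $2\pi\delta$'' and then ``comparing this area to $\pi L^2$'' --- is not an argument but a wish. You have not said which disk, why it is embedded in $X$ (as you yourself note, $\iota$ overlaps), why its area is at least $\pi\delta$, or why it sits inside something of area at most $\pi L^2$. The last comparison is especially unclear: ``contained in the $L$-neighborhood of $\gamma$'' gives no upper bound of the form $\pi L^2$ on area in $X$.

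Finally, the proposed fix via a cyclic cover goes in the wrong direction: passing to a $d$-fold cover multiplies the total area by $d$, so an embedded-area lower bound upstairs yields only a bound of the form $\operatorname{Area}(X)\ge (\text{const})/d$ downstairs, with no control on $d$. To make a cover argument work you would need an \emph{upper} bound on the degree, or an argument that the relevant region is already embedded in $X$.
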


The above lemma implies the following:

\begin{lemma}\label{lem:closedsupport}
    Let $\underline k$ be a curvature vector with positive curvature gap. Then, the support of the Siegel–Veech measure $\mu^{cg}_{\underline k}$ is contained in $[\sqrt{\pi\delta},\infty)$.
\end{lemma}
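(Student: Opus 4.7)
The plan is to reduce the support statement to the pointwise length bound given by Lemma~\ref{lem:lowerboundclosed}, using the definition of $\mu^{cg}_{\underline{k}}$ via the Riesz–Markov–Kakutani representation theorem.

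First, I would observe that the quantity controlled by Lemma~\ref{lem:lowerboundclosed} is in fact the normalized length. Indeed, for an area-one representative $X$ we have $\ell_\gamma(X)=|\gamma|$, and since $\ell_\gamma(X)$ is invariant under scaling, the lemma yields $\ell_\gamma(X)\ge \sqrt{\pi\delta(\underline{k})}$ for every $X\in\mathbb{P}\Omega(\underline{k})$ and every regular closed geodesic $\gamma$ on $X$.

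Next, fix any $f\in C_c\bigl(\mathbb{R}_{>0}\bigr)$ with $\mathrm{supp}(f)\subseteq \bigl(0,\sqrt{\pi\delta(\underline{k})}\bigr)$. Then for every $X\in\mathbb{P}\Omega(\underline{k})$, each term in the defining sum
\[
\hat{f}^{cg}(X)=\sum_{\gamma} f(\ell_\gamma(X))
\]
vanishes, because $\ell_\gamma(X)$ lies outside $\mathrm{supp}(f)$. Thus $\hat{f}^{cg}\equiv 0$ on $\mathbb{P}\Omega(\underline{k})$, and in particular
\[
\int_{\mathbb{R}_{>0}} f\, d\mu^{cg}_{\underline{k}} \;=\; \int_{\mathbb{P}\Omega(\underline{k})}\hat{f}^{cg}(X)\, d\mu_{Thu} \;=\; 0.
\]

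Since this holds for every $f\in C_c\bigl((0,\sqrt{\pi\delta(\underline{k})})\bigr)$, the Radon measure $\mu^{cg}_{\underline{k}}$ assigns zero mass to the open set $(0,\sqrt{\pi\delta(\underline{k})})$, and hence its support is contained in $[\sqrt{\pi\delta(\underline{k})},\infty)$. There is no serious obstacle here; the whole content is encapsulated in Lemma~\ref{lem:lowerboundclosed}, and the remainder is a formal consequence of Definition~\ref{def:svmeasures}.
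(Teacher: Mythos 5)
Your proof is correct and follows essentially the same approach as the paper's: apply Lemma~\ref{lem:lowerboundclosed} to conclude that $\hat{f}^{cg}\equiv 0$ whenever $\mathrm{supp}(f)\subseteq(0,\sqrt{\pi\delta})$, hence the integral against $\mu^{cg}_{\underline{k}}$ vanishes and the support bound follows. The only difference is that you spell out the (easy) scaling observation reconciling the area-one statement of Lemma~\ref{lem:lowerboundclosed} with normalized length, which the paper leaves implicit.
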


\begin{proof}
    Suppose $f\in C_c(\mathbb{R}_{>0})$ has support contained in $(0,\sqrt{\pi\delta})$. Then, by Lemma~\ref{lem:lowerboundclosed}, we have $    \hat{f}^{cg}(X) = 0$
    for any $X\in \mathbb{P}\Omega(\underline{k})$. It follows that the support of $\mu^{cg}_{\underline k}$ is contained in $[\sqrt{\pi\delta},\infty)$.
\end{proof}

\subsection{Proof of Theorem~\ref{thm:main3}}\label{sec:asymptotic0}

In this section, we prove the asymptotic formula~\eqref{equ:mainasym}.

We define $U_{\varepsilon}$ as the subset of $\mathbb{P}\Omega(\underline k)$ consisting of surfaces $X$ that admit at least one saddle connection with normalized length less than $\varepsilon$. Note that we do not require the saddle connections to have distinct endpoints. By Lemma~\ref{lem:closedtosimple}, we have the inclusion
\begin{equation}\label{equ:inclusion}
    U_\varepsilon \subseteq U_{1, \varepsilon/\delta}.
\end{equation}

\begin{proof}[Proof of Theorem~\ref{thm:main3}]
The first two assertions of the theorem follow from Theorem~\ref{thm:analytic} and Corollary~\ref{cor:abc}. The third assertion is established in Lemma~\ref{lem:closedsupport}.

Now, we proceed to prove the final assertion.

    Approximate the indicator function $\mathbbm{1}_{(0,\varepsilon)}$ on $\mathbb{R}_{>0}$ by a sequence of positive compactly supported smooth functions $(f_m)$ on $\mathbb{R}_{>0}$. Since $\hat{\mathbbm{1}}^{sc}_{(0,\varepsilon)} \in L^\infty(\mathbb{P}\Omega(\underline{k}), \mu_{Thu})$ (see Theorem~\ref{thm:integrabilityandfinitedecomp}), the dominated convergence theorem implies that
    $$\mu^{sc}_{\underline{k}}(0,\varepsilon)
    =\int \mathbbm{1}_{(0,\epsilon)}d\mu^{sc}_{\underline{k}} = \lim_{m\to\infty}\int f_md\mu^{sc}_{\underline{k}}=\lim_{m\to\infty} \int_{\mathbb{P}\Omega(\underline{k})}\hat{f}_m^{sc}d\mu_{Thu} = \int_{\mathbb{P}\Omega(\underline{k})}\hat{\mathbbm{1}}^{sc}_{(0,\varepsilon)}(X)d\mu_{Thu}.$$
    Combining~\eqref{equ:inclusion}, we have that
    $$
    \mu^{sc}_{\underline{k}}(0,\varepsilon)
    = \int_{U_{\varepsilon}}\hat{\mathbbm{1}}^{sc}_{(0,\varepsilon)}(X)d\mu_{Thu}
    =\int_{U_{1,\varepsilon'}}\hat{\mathbbm{1}}^{sc}_{(0,\varepsilon)}(X)d\mu_{Thu},
    $$
    as $\varepsilon\to 0$, where $\varepsilon' = \varepsilon/\delta$.
    
    By Theorem~\ref{lem:epsilondecomp} and Theorem~\ref{thm:main1}, we have that
    $$
    \mu_{Thu}\big(U_{1,\varepsilon'}\setminus\sqcup_{\mathrm{codim}(B) =1} U_{1,\varepsilon'}(B_\lambda)\big) = O(\varepsilon^4),
    $$
    where $\varepsilon'$ and $\lambda$ satisfy the condition~\eqref{equ:restriction}.
    Combining Theorem~\ref{thm:integrabilityandfinitedecomp}, we obtain
    \begin{equation}\label{equ:asyintermediate}
        \mu^{sc}_{\underline{k}}(0,\varepsilon) = \sum\limits_{\mathrm{codim}(B)=1}\int_{ U_{1,\varepsilon'}(B_\lambda)}\hat{\mathbbm{1}}^{sc}_{(0,\varepsilon)}(X)d\mu_{Thu} + O(\varepsilon^4).
    \end{equation}

    Let $B$ be a codimension-$1$ boundary stratum, and let $P$ be the corresponding $\underline{k}$-admissible partition. Note that $P$ has only one non-singleton element, and it is written as $\{i_1.i_2\}$. Assume that $k_{i_1}\le k_{i_2}$. Let $X$ be a surface in $U_{1,\varepsilon'}(B_\lambda)$ with area one. Then,
    $$
    \hat{\mathbbm{1}}^{sc}_{(0,\varepsilon)}(X) = N^{sc}(X,\varepsilon),
    $$
    where $N^{sc}(X,\varepsilon)$ denotes the number of saddle connections in $X$ with metric length less than $\varepsilon$. By Lemma~\ref{lem:epsilonconvexhulls}, any saddle connection in $X$ must be contained in the $(1,\varepsilon')$-convex hull $D$ of $X$. Let $\gamma$ be the shortest saddle connection between $x_{i_1}$ and $x_{i_2}$ in $X$. Then, we have $|\gamma|\leq \varepsilon'$.

    \textbf{Case~1}: If $k_{i_1} \leq k_{i_2} \leq \frac{1}{2}$, the convex hull $D$ degenerates and coincides with the image of $\gamma$. It follows that $\gamma$ is the only simple saddle connection in $D$. Thus, 
    \begin{equation}\label{equ:countingcase1}
            N^{sc}(X,\varepsilon) = 1,\quad\text{for }X\in U_{1,\varepsilon}(B_\lambda)\subseteq U_{1,\varepsilon'}(B_\lambda).
    \end{equation}
    Consequently, we have
    $$
    \int_{ U_{1,\varepsilon'}(B_\lambda)}\hat{\mathbbm{1}}_{(0,\varepsilon)}(X)d\mu_{Thu} = \mu_{Thu}(U_{1,\varepsilon}(B_\lambda)).
    $$ 
    Combining this with Theorem~\ref{thm:main1}, we obtain
    \begin{equation}
        \int_{ U_{1,\varepsilon'}(B_\lambda)}\hat{\mathbbm{1}}^{sc}_{(0,\varepsilon)}(X)d\mu_{Thu} = C(P)\mu_{Thu}(B)\varepsilon^2 + O(\varepsilon^4),
    \end{equation}
    as $\varepsilon\to0$, where
    the constant $C(P)$ is as in Theorem~\ref{thm:main1} and was computed in Lemma~\ref{lem:cod1}.
    
    \textbf{Case~2}: If $k_{i_1} < \frac{1}{2} < k_{i_2}$, the $\varepsilon'$-convex hull of $x_{i_1}$ and $x_{i_2}$ is enclosed by a simple saddle connection $\gamma'$ with the same endpoints as $x_{i_1}$; see Figure~\ref{fig:codim1}.

    \begin{figure}[!htbp]
	\centering
	\includegraphics[width=0.6\linewidth]{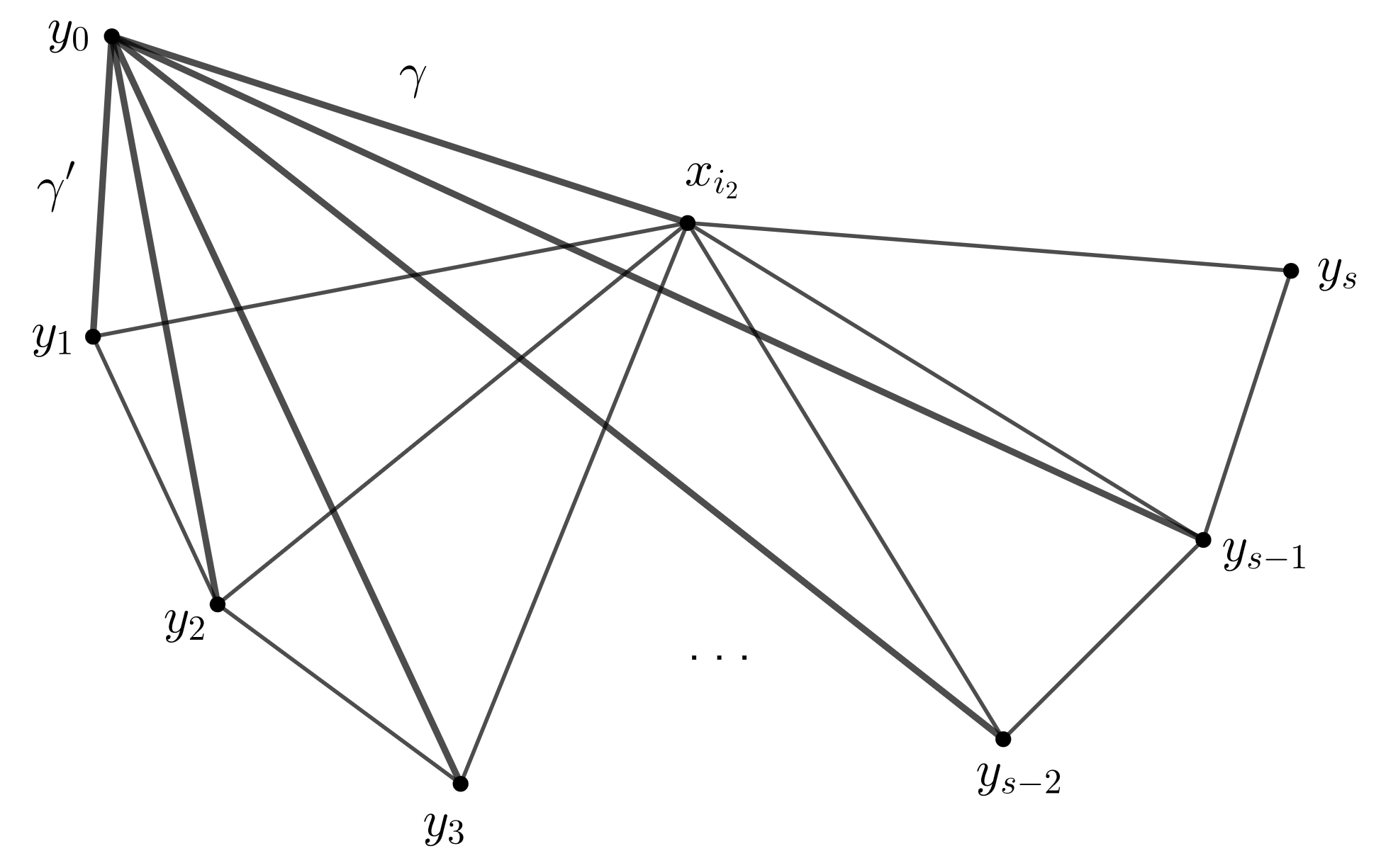}
    \caption{The developing of the convex hull $D$ and the saddle connections in $D$.}\label{fig:multishort}
    \end{figure}

    The convex hull $D$ can be unfolded into an isosceles triangle $y_0x_{i_2}y_1$ in the plane. Identifying the sides $x_{i_2}y_0$ and $x_{i_2}y_1$ glues the triangle back into $D$. In this setting, the segment $y_0y_1$ corresponds to $\gamma'$, while the sides $x_{i_2}y_0$ and $x_{i_2}y_1$ correspond to two copies of $\gamma$. The additional triangles $y_{i-1}x_{i_2}y_{i}$ are obtained via rotations about $x_{i_2}$. 

    Define
    $s:=\Big\lfloor \frac{1}{2(1-k_{i_2})} \Big\rfloor.$
    Note that we can join the vertices $y_0$ to $y_j$ by a segment contained in the union of the triangles $y_{i-1}x_{i_2}y_{i}$ for $j=1,\ldots,s$. Moreover, the length of each segment $y_0y_j$ is given by
    \begin{equation}\label{equ:multipleshortlength}
        |y_0y_j| = 2\sin (jk_{i_2}\pi)|x_{i_2}y_0|.
    \end{equation}

    \underline{Case~2.1}: If there exists $m\in\mathbb{N}$ such that $k_{i_2} = 1-\frac{1}{2m}$, then $s = m$, and the segment $y_0y_m$ passes through $x_{i_2}$. Thus, under the developing, the saddle connections in $D$ correspond to $x_{i_2}y_0$ and the segments $y_0y_j$ for $1\le j\le m-1$. By~\eqref{equ:multipleshortlength}, if $X\in U_{1,\varepsilon/2\sin (jk_{i_2}\pi)}(B_{\lambda})$, then
    the length of the saddle connection corresponding to $y_0y_j$ satisfies that 
    $|y_0y_j| = 2\sin (jk_{i_2}\pi)|\gamma|<\varepsilon.$
    It follows that $y_0y_j$ contributes to a saddle connection in $X$ of length less than $\varepsilon$.
    Therefore, 
    \begin{align*}
        \int_{U_{1,\varepsilon'}(B_\lambda)}\hat{\mathbbm{1}}_{(0,\varepsilon)}(X)d\mu_{Thu} &= \mu_{Thu}(U_{1,\varepsilon}(B_\lambda)) + \sum\limits_{j=1}^{m - 1}\mu_{Thu}(U_{1,\varepsilon/2\sin (jk_{i_2}\pi)}(B_\lambda))\\
        &= \left[1+\sum\limits_{j=1}^{m - 1}\Big(\frac{1}{2\sin jk_{i_2}\pi}\Big)^2\right]C(P)\mu_{Thu}(B)\varepsilon^2 + O(\varepsilon^4).
    \end{align*}

    \underline{Case~2.2}: If 
    $k_{i_2} \notin \left\{ 1 - \frac{1}{2m} \right\}_{m \in \mathbb{N}},$
    then the saddle connections in $D$ are induced by the segment $x_{i_2}y_0$ and the segments $y_0y_j$ for $1\le j\le s$. A similar computation yields
    \begin{align*}
        \int_{U_{1,\varepsilon'}(B_\lambda)}\hat{\mathbbm{1}}_{(0,\varepsilon)}(X)d\mu_{Thu} &= \left[1+\sum\limits_{j=1}^{s}\Big(\frac{1}{2\sin jk_{i_2}\pi}\Big)^2\right]C(P)\mu_{Thu}(B)\varepsilon^2 + O(\varepsilon^4).
    \end{align*}

    Combining~\eqref{equ:asyintermediate}, we complete the proof.
\end{proof}

\bibliographystyle{amsalpha}
\bibliography{ref}

\end{document}